\theoremstyle{plain}
\newtheorem{theorem}{Theorem}
\newtheorem{proposition}{Proposition}[chapter]
\newtheorem{theoremb}[proposition]{Theorem}
\newtheorem{lemma}[proposition]{Lemma}
\newtheorem{corollary}[proposition]{Corollary}
\theoremstyle{definition}
\newtheorem{definition}[proposition]{Definition}
\newtheorem{example}[proposition]{Example}
\theoremstyle{remark}
\newtheorem{remark}[proposition]{Remark}
\newcommand{\chapref}[1]{Chapter~\ref{#1}}
\newcommand{\secref}[1]{Section~\ref{#1}}
\newcommand{\thmref}[1]{Theorem~\ref{#1}}
\newcommand{\propref}[1]{Proposition~\ref{#1}}
\newcommand{\lemref}[1]{Lemma~\ref{#1}}
\newcommand{\corref}[1]{Corollary~\ref{#1}}
\newcommand{\remref}[1]{Remark~\ref{#1}}
\newcommand{\exemref}[1]{Example~\ref{#1}}
\newcommand{\defref}[1]{Definition~\ref{#1}}
\def\R{{\mathbb R}}
\def\ov{\overline}
\def\ob{\underline}
\def\cA{{\mathcal A}}
\def\cB{{\mathcal B}}
\def\cD{{\mathcal D}}
\def\cE{{\mathcal E}}
\def\cF{{\mathcal F}}
\def\cI{{\mathcal I}}
\def\cJ{{\mathcal J}}
\def\cL{{\mathcal L}}
\def\cM{{\mathcal M}}
\def\cN{{\mathcal N}}
\def\cP{{\mathcal P}}
\def\cR{{\mathcal R}}
\def\cS{{\mathcal S}}
\def\cU{{\mathcal U}}
\def\cV{{\mathcal V}}
\def\cZ{{\mathcal Z}}
\def\Bl{{\mathcal Bl}}
\def\gN{{\mathfrak{N}}}
\def\tS{{\tt S}}
\def\tn{{\tt n}}
\def\tm{{\tt m}}
\def\B{\mathbb{B}}
\def\C{\mathbb{C}}
\def\L{\mathbb{L}}
\def\N{\mathbb{N}}
\def\Q{\mathbb{Q}}
\def\R{\mathbb{R}}
\def\Z{\mathbb{Z}}
\def\CP{{\mathbb{C}{\rm P}}}
\def\ker{{\rm Ker\,}}
\def\coker{{\rm Coker\,}}
\def\im{{\rm Im\,}}
\def\ext{{\rm Ext}}
\def\Tor{{\rm Tor}}
\def\id{{\rm id}}
\def\ev{{\rm ev}}
\def\dga{{{\rm{DGA}}}}
\def\cdga{{{\rm{CDGA}}}}
\def\cga{{{\rm{CGA}}}}
\def\cdgaf{{{\rm CDGA}_\cF}}
\def\ch{{\rm Ch}}
\def\pr{{\rm pr}}
\def\tC{{\widetilde{C}}}
\def\ttC{\ov{C}}
\def\ttH{\ov{H}}
\def\tF{{\widetilde{F}}}
\def\tDelta{{\widetilde{\Delta}}}
\def\tA{{\widetilde{A}_{PL}}}
\def\tG{{\widetilde{G}}}
\def\ttau{{\widetilde{\tau}}}
\def\fil{{{\pmb\Delta}^{[n]}_\cF}{ -Sets}}
\def\dset{{{\pmb\Delta}}{ -Sets}}
\def\cLe{{\cL}^{\tt exp}}
\def\GM{{\mathrm{GM}}}
\def\TW{{\mathrm{TW}}}
\def\reg{{\mathrm{reg}}}
\def\ffs{{filtered face set}}
\def\ffss{{filtered face sets}}
\def\Ffss{{Filtered face sets}}
\def\pd{{\dotplus}}
\def\Th{{\mathrm{Th}}}
\def\colim{\qopname\relax m{colim}}
\begin{document}

\frontmatter

\title[Intersection Cohomology. Simplicial Blow-up and Rational Homotopy]{Intersection Cohomology. \\ Simplicial Blow-up and Rational Homotopy}
%\title[Rational homotopy and Intersection Cohomology]{Rational homotopy and Intersection Cohomology} 
%\date{\today}
%\thanks{Footnote to the title with the `thankstext' command.}

\author{David Chataur}
\address{D\'epartement de Mathematiques\\
         UMR 8524 et F\'ed\'eration CNRS Nord-Pas-de-Calais FR 2956\\
         Universit\'e de Lille~1\\
         59655 Villeneuve d'Ascq Cedex\\
         France}
\email{David.Chataur@math.univ-lille1.fr}

\author{Martintxo Saralegi-Aranguren}
\address{Laboratoire de Math{\'e}matiques de Lens\\  
      EA 2462 et F\'ed\'eration CNRS Nord-Pas-de-Calais FR 2956\\
      Universit\'e d'Artois\\
         SP18, rue Jean Souvraz\\
          62307 Lens Cedex\\
         France}
\email{saralegi@euler.univ-artois.fr}

\author{Daniel Tanr\'e}
\address{D\'epartement de Mathematiques\\
         UMR 8524 et F\'ed\'eration CNRS Nord-Pas-de-Calais FR 2956\\
         Universit\'e de Lille~1\\
         59655 Villeneuve d'Ascq Cedex\\
         France}
\email{Daniel.Tanre@univ-lille1.fr}
\thanks{The third author is partially supported by the MICINN grant MTM2010-18089,  ANR-11-BS01-002-01 ``HOGT" and ANR-11-LABX-0007-01  ``CEMPI''}
%\urladdr{http://www.tanre.org/Pro}

\date{\today}
%%%%%%%%%%%%%%%%%%%%%%  
\subjclass[2010]{55N33, 55P62, 57N80}
 %%%%%%%%%%%%%%%%%%%%
 \keywords{Intersection homology. Intersection cohomology. Thom-Whitney cohomology. Balanced perverse complex. Sullivan minimal model. Blow-up. Formality. Perverse local systems.  Filtered spaces. Stratified spaces. CS sets. Pseudomanifolds. Topological invariance. Isolated singularities. Thom spaces. Nodal hypersurfaces. Morgan's model for the complement of a divisor.}

\begin{abstract}  
Let $X$ be a pseudomanifold. In this text, we use a simplicial blow-up to define a cochain complex whose cohomology with coefficients in a field, is isomorphic to the intersection cohomology of $X$, introduced by M.~Goresky and R.~MacPherson.

We do it simplicially in the setting of a filtered version of face sets, also called simplicial sets without degeneracies, in the sense of C.P.~Rourke and B.J.~Sanderson.  We define perverse local systems over filtered face sets and intersection cohomology with coefficients in a perverse local system. In particular, as announced above when $X$ is a pseudomanifold, we get a perverse local system of cochains quasi-isomorphic to the intersection cochains of Goresky and MacPherson, over a field. We show also that these two complexes of cochains are quasi-isomorphic to a filtered version of Sullivan's differential forms over the field $\Q$. In a second step, we use these forms to extend Sullivan's presentation of  rational homotopy type to intersection cohomology.

For that, we construct a functor from  the category of filtered face sets to a category of perverse commutative differential graded ${\mathbb{Q}}$-algebras (\cdga's) due to Hovey. 
We establish also the existence and unicity of a positively  graded, minimal model of some perverse \cdga's, including the perverse forms over a \ffs~and their intersection cohomology. 
Finally, we prove the topological invariance of the minimal model  of a  PL-pseudomanifold whose regular part is connected, and this theory creates new topological invariants.
This point of view brings a definition of formality in the intersection setting and examples are given. In particular, we show that
any nodal hypersurface in $\CP(4)$, is intersection-formal.
 \end{abstract} 
 
\maketitle

\clearpage
\thispagestyle{empty}
\vspace*{13.5pc}

\tableofcontents

%!TEX root = Intersection06.tex

\chapter*{Introduction}

 Intersection  homology and cohomology  of pseudomanifolds have been introduced by M.~Goresky and 
 R.~MacPherson  in \cite{MR572580} and \cite{MR696691}. 
 The main feature of these theories concerns the transversality of a simplex relatively to a stratum, the notion of general position being replaced by a less restrictive one depending on a parameter called perversity.
 
To simplify the presentation, we consider first, instead of a pseudomanifold, a filtered space defined as a  topological space, $X$, together with a filtration by closed subspaces, $X_0\subseteq X_1\subseteq\cdots\subseteq X_n=X$. We replace also the original definition of Goresky and MacPherson perversity (henceforth GM-perversity) by the less restrictive notion of a map, $\ov{p}\colon\N\to\Z$, such that $\ov{p}(0)=0$, called
 \emph{loose perversity}.

  In \cite{MR572580},
  % (see also \cite{MR800845} for this slight modified presentation), 
  a singular simplex, $\sigma\colon \Delta\to X$, is called \emph{$\ov{p}$-admissible}  if $\sigma^{-1}(X_{n-j}\backslash X_{n-j-1})$ is included in the $(\dim\Delta-j+\ov{p}(j))$-skeleton of $\Delta$. 
  A $\ov{p}$-admissible chain is a linear combination of $\ov{p}$-admissible simplices. 
  Finally,  the boundary of a $\ov{p}$-admissible chain being not necessary $\ov{p}$-admissible,  one must introduce the notion of chain of \emph{$\ov{p}$-intersection} as a $\ov{p}$-admissible chain, $c$, whose boundary, $\partial c$, is also $\ov{p}$-admissible. 
  Let $K_{*}^{\ov{p}}(X,R)$ be the chain complex which consists of the chains of $\ov{p}$-intersection, with coefficients in a commutative ring, $R$. 
  By definition, its homology is the $\ov{p}$-intersection homology of Goresky and 
 MacPherson, denoted $H_{*}^{\ov{p}}(X;R)$.

Motivated by a simplicial framework, we replace the previous condition on the skeleton by properties on some faces of the simplex $\Delta$. More explicitely, we consider singular simplices, $\sigma\colon\Delta=\Delta^{j_0}\ast\cdots\ast\Delta^{j_n}\to X$, whose domain is decomposed as a join product such that
 $\sigma^{-1}(X_k) =\Delta^{j_0}\ast\cdots\ast\Delta^{j_k}$, for any filtered space $X=(X_i)_{0\leq i\leq n}$.
To any such simplex, called \emph{filtered simplex,} and to any number $i\in \{1,\ldots,n\}$, we associate the perverse degree
 $\|\sigma\|_i=\dim (\Delta^{j_0}\ast\cdots\ast \Delta^{j_{n-i}})$, with the convention $\dim\emptyset=-\infty$.  

On a filtered simplex, the definition of \emph{$\ov{p}$-admissibility} is equivalent to the inequality 
 $\|\sigma\|_i\leq \dim \Delta - i+\ov{p}(i)$, for all~$i\in \{1,\ldots,n\}$ and
 % a filtered chain $c$ is $\ov{p}$-admissible if there exist $\ov{p}$-admissible simplices, $\sigma_j$, so that
%$c=\sum_j\lambda_j\sigma_j$, $\lambda_j\in \Z$. 
 $\ov{p}$-admissible filtered chains are linear combinations of $\ov{p}$-admissible filtered simplices.
We denote by $C^{\ov{p}}_*(X)$ the complex of  $\ov{p}$-admissible filtered chains, $c$, whose boundary $\partial c$ is $\ov{p}$-admissible also. With a more difficult than expected proof, we establish the existence of a Mayer-Vietoris exact sequence for the homology of $C_{*}^{\ov{p}}(X)$, for any filtered space $X$ and loose perversity $\ov{p}$. Then, relying on King's paper (\cite{MR800845}), we prove that 
the canonical inclusion, $C^{\ov{p}}_*(X)\to K^{\ov{p}}_*(X)$, induces an isomorphism in homology,
if $X$ is a pseudomanifold and $\ov{p}$ a GM-perversity. 

 \section*{Simplicial Blow-up and Intersection-Cohomology}
 
At this point, we hold a complex, ready for a simplicial paradigm, and which covers the original situation of \cite{MR572580}.
We develop it in the context of face sets, also called simplicial sets without degeneracies, see  
\cite{MR0300281}.

 Let ${\pmb\Delta}^{[n]}_\cF$ be
 the category whose objects are the filtered euclidean simplices,
 $\Delta=\Delta^{j_0}\ast\cdots\ast\Delta^{j_n}$,
 and maps are joins of face operators, i.e., $f=\ast_{i=0}^n\partial_i$, with $\partial_i$  a face operator.  \emph{\Ffss}~are defined as functors, 
 $\ob{K}$, from $ {\pmb\Delta}^{[n]}_\cF$ to the category of sets. (For instance, the previous filtered simplices of a filtered topological space constitute a \ffs.)
 %~and the topological setting appears as a particular case.) %, see \exemref{exam:simplexesfiltrés}.
By mimicking the topological situation described above, we associate, to $\ob{K}$ and to a loose perversity $\ov{p}$, a chain complex
 $C^{\GM,\ov{p}}_{*}(\ob{K};R)$, with coefficients in a commutative ring $R$. 
 When $\ob{K}$ is the \ffs~associated to a filtered space, $X$, there is an isomorphism between
 $C_{*}^{\ov{p}}(X)$ and 
 $C_{*}^{\GM,\ov{p}}(\ob{K})$. Thus,
 the topological setting appears as a particular case.
 An obvious candidate for a cohomology theory is the linear dual,
 $$C_{\GM,\ov{p}}^{*}(\ob{K};R)=\hom(C^{\GM,\ov{p}}_{*}(\ob{K};R),R),$$
 whose homology is 
 called the \emph{GM-cohomology of $\ob{K}$,} with coefficients in $R$,
 and denoted
$H_{\GM,\ov{p}}^{*}(\ob{K};R)$.

 We define  another cochain complex  on $\ob{K}$, based on a \emph{simplicial version of a blow-up,} already present in \cite{MR1143404}  for differential forms on singular manifolds  and  for some particular simplicial complexes in  \cite{MR1346255}. 
 To any filtered simplex, $\Delta=\Delta^{j_0}\ast\cdots\ast\Delta^{j_n}$, we associate a prism,
 $\widetilde{\Delta}=c\Delta^{j_0}\times \cdots \times c\Delta^{j_{n-1}}\times \Delta^{j_n}$, where $c\Delta^{j_{i}}$ is the simplicial cone on $\Delta^{j_{i}}$. The prism $\widetilde{\Delta}$ is called \emph{the blow-up of $\Delta$.}
 If $C^*(\Delta^i;R)$ is the simplicial 
 cochain algebra over a commutative ring, $R$,  we set
 $$\widetilde{C}^*(\Delta;R)=C^*(c\Delta^{j_0};R)\otimes \cdots\otimes 
 C^*(c\Delta^{j_{n-1}};R)\otimes C^*(\Delta^{j_n};R).$$
 A TW-cochain on $\ob{K}$ is  a family of cochains $c_{\sigma}\in \tC^*(\Delta;R)$, for each simplex $\sigma\colon\Delta\to\ob{K}$, such that the association $\sigma\mapsto c_{\sigma}$ is compatible with any face operator of ${\pmb\Delta}^{[n]}_\cF$. We denote by  $\widetilde{C}^*(\ob{K};R)$ the cochain complex of TW-cochains on $\ob{K}$.
(We do not want to go into technical points in this introduction but the reader should be aware that this definition makes sense only if $\Delta^{j_n}\neq\emptyset$. Thus, the compatibility condition in the definition of 
 $\widetilde{C}^*(\ob{K};R)$ concerns only the face operators which satisfy this condition.)
  
  For any TW-cochain, $c\in\widetilde{C}^*(\ob{K};R)$, and any $i\in\{1,\ldots,n\}$, 
  we  define a \emph{perverse degree,} $\|c\|_{i}$, which is a sort of degree along the fiber of the projection
  $$c\Delta^{j_{0}}\times\cdots\times (\Delta^{j_{n-i}}\times\{1\})\times\cdots\times c\Delta^{j_{n-1}}\times \Delta^{j_{n}}
  \to
  c\Delta^{j_{0}}\times\cdots\times \Delta^{j_{n-i}}.$$
  (Observe that the blow-up of $\Delta$ transforms the face
  $\Delta^{j_{0}}\ast\cdots\ast\Delta^{j_{n-i}}$ in the domain of this projection.)
  For
  any loose perversity $\ov{q}$, the cochain $c\in \widetilde{C}^*(\ob{K};R)$   is said \emph{$\ov{q}$-admissible} if $\|c\|_i\leq \ov{q}(i)$ for all $i\in\{1,\ldots,n\}$. 
  
  The complex $\widetilde{C}^*_{\ov{q}}(\ob{K};R)$, generated by the $\ov{q}$-admissible TW-cochains, $c$, whose boundary $dc$ is $\ov{q}$-admissible also, is called the \emph{Thom-Whitney complex} of $\ob{K}$, with coefficients in $R$. 
Its homology is  the \emph{Thom-Whitney cohomology of $\ob{K}$,} with coefficients in $R$,
 denoted
$H_{\TW,\ov{q}}^{*}(\ob{K};R)$.
When $R$ is a \emph{field,} these two complexes are connected by a quasi-isomorphism,
 $$\tC^*_{\ov{q}}(\ob{K};R)\xrightarrow[]{\simeq} C^*_{\GM,\ov{p}}(\ob{K};R),$$
 if $\ov{p}$ and $\ov{q}$ are GM-perversities verifying $\ov{q}\geq 0$ and $\ov{p}(j)+\ov{q}(j)=j-2$. 
 
 This opens a  treatment of intersection cohomology with coefficients in $\Z_{2}$ from  TW-cochains and 
 brings a new approach for the study of Steenrod operations in intersection cohomology, as quoted by MacPherson in \cite[Question~4.3]{MR761809}. We  come back to this  aspect in \cite{2013arXiv1302.2737D}.

\section*{Rational Homotopy and Intersection-Formality}

The second part of this work is motivated by a presentation of the  intersection cohomology, similar to Sullivan's presentation of the  rational homotopy type, including the notion of minimal model,
with its topological invariance for PL-pseudomanifolds whose regular part is connected.
This rational intersection theory answers a question raised by Goresky (\cite[Introduction]{MR761809}) in view of a treatment of formality.

The classical rational theory of D.~Sullivan begins with  a functor from simplicial sets to the category of commutative differential graded  algebras (henceforth \cdga's), denoted by $A_{PL}$, 
such that  the cohomology of $A_{PL}(\ob{K})$ is isomorphic to the cohomology algebra of the simplicial set, $\ob{K}$, with rational coefficients.

For having such a functor  in the case of intersection cohomology, we   consider again a  construction on the blow-up of a simplex, similar to the construction of TW-cochains. Here, we replace $C^*(\Delta^i;R)$ by the cochain algebra of rational polynomial forms on $\Delta^i$, $A_{PL}(\Delta^i)$.  We set
 $\widetilde{A}_{PL}(\Delta)=A_{PL}(c\Delta^{j_0})\otimes \cdots\otimes 
 A_{PL}(c\Delta^{j_{n-1}})\otimes A_{PL}(\Delta^{j_n})$.
 A (global) form on a \ffs, $\ob{K}$, is a family of $\omega_{\sigma}\in \widetilde{A}_{PL}(\Delta)$, for each $\sigma\colon\Delta\to \ob{K}$, with compatibility conditions to face operators.
 We denote by $\widetilde{A}_{PL}(\ob{K})$ the \cdga~of global forms on $\ob{K}$.
 
  For any global form $\omega\in\widetilde{A}_{PL}(\ob{K})$, we  define also a \emph{perverse degree,} $\|\omega\|$, in a similar manner than for TW-cochains. For
  any loose perversity $\ov{q}$, a form $\omega$   is said \emph{$\ov{q}$-admissible} if $\|\omega\|_i\leq \ov{q}(i)$ for all 
  $i\in\{1,\ldots,n\}$. 
  The complex $\widetilde{A}_{PL,\ov{q}}(\ob{K})$ is generated by the $\ov{q}$-admissible forms, $\omega$, 
  whose boundary $d\omega$ is $\ov{q}$-admissible also.
We establish first a ``De Rham theorem,'' by proving  that the integration map  induces a quasi-isomorphism,
   $$\int\colon \widetilde{A}_{PL,\ov{q}}(\ob{K})\to C^*_{\GM,\ov{p}}(\ob{K};\Q),$$
    if $\ov{p}$ and $\ov{q}$ are  GM-perversities, verifying $\ov{q}\geq 0$ and $\ov{p}(j)+\ov{q}(j)=j-2$.
    From the previous comparison between GM-cohomology and TW-cohomology, we obtain isomorphisms,
$$H^*_{\GM,\ov{p}}(\ob{K};\Q)\cong H^*_{\TW,\ov{q}}(\ob{K};\Q)\cong H^*(\widetilde{A}_{PL,\ov{q}}(\ob{K})).$$
 If $\ob{K}$ is the \ffs~associated to a pseudomanifold, $X$, 
 these algebras are also isomorphic to the algebra of cohomology defined by G.~Friedman and J.~McClure in \cite{MR3046315}.

In \cite{MR2544388}, M. Hovey defines the algebraic category, $\cdgaf$,  of perverse \cdga's as the monoids in the category of functors from the lattice of GM-perversities to a category of cochain complexes endowed with a perverse degree. Our previous construction of the blow-up of Sullivan's forms gives the expected functor,
$\cA\colon \fil\to \cdgaf$, $\ob{K}\mapsto \cA(\ob{K})_{\bullet}$, with $\cA(\ob{K})_{\ov{q}}=\widetilde{A}_{PL,\ov{q}}(\ob{K})$.

The second main ingredient of Sullivan's theory is the notion of \emph{minimal model.} Here, we define Sullivan minimal perverse models and prove their \emph{existence} (with some connectivity conditions, as in the classical case) and \emph{unicity,}  up to isomorphism.
Some of our results are established in the general case of loose perversities but the construction of the perverse minimal model relies strongly on the  structure of the lattice of  GM-perversities. 
This was first observed by Stienne (\cite{Stienne}) who determined the properties of the predecessors of a GM-perversity and a part of the construction of our perverse minimal model  is inspired from his work. 
If $X$ is a PL-pseudomanifold whose regular part is connected, we establish \emph{the topological invariance of its Sullivan minimal perverse model.}
    
Finally, this rational setting gives a notion of formality and we say that a
    connected \ffs, $\ob{K}$, is  \emph{intersection-formal} if there is an isomorphism 
between the Sullivan minimal perverse models of $\cA(\ob{K})_{\bullet}$ and  $H_{\bullet}(\cA(\ob{K}))$. 
The analogue of triple Massey products is also introduced and their relationship with intersection-formality is detailed. 
We continue with a series of examples as the cone on a manifold which furnishes  a formal pseudomanifold which is not inter\-sect\-ion-formal. We present also (non cofibrant) models of pseudomanifolds with isolated singularities. As an illustration of this construction, we prove that  Thom spaces of vector bundles on a formal space,  projective cones of a smooth projective variety, 
the Calabi Yau quintic, and more generally, \emph{any nodal hypersurface in $\CP(4)$, are intersection-formal.}

\section*{Outline of the paper}

 We supply the  necessary definitions and properties of intersection homology along the text but, for a more complete background, the reader can  consult  the  original papers or one of the books that have appeared in the last years as, for instance,   \cite{MR2401086}, \cite{MR2207421}, \cite{MR932724}, \cite{MR2662593}, \cite{MR2286904}, \cite{IHGreg} or the historical development presented in \cite{MR2330160}.
 
\medskip \noindent
The topological setting is concentrated in the  {\sc Appendix,} with a recall of the intersection homology complex $K^{\ov{p}}_{*}(X)$, and the presentation of a new complex, $C_{*}^{\ov{p}}(X)$, for any filtered space, $X$, and any loose perversity, $\ov{p}$. Several properties of  $C_{*}^{\ov{p}}(X)$ are established:
  
\begin{itemize}
 \item \propref{prop:propertiesintersectionhomology}: The homology theory associated to $C_{*}^{\ov{p}}(X)$ owns a Mayer-Vietoris exact sequence, for any loose perversity $\ov{p}$.
 \item \thmref{thm:stratifiedmapcomplex}, page~\pageref{thm:stratifiedmapcomplex}: For any loose perversity $\ov{p}$, a stratified map (see \defref{def:appstratifiee}) $f\colon X\to Y$, induces a chain map $f_*\colon C_*^{\ov{p}}(X)\to C_*^{\ov{p}}(Y)$.
 \item \thmref{thm:pseudostratifie}, page~\pageref{thm:pseudostratifie}: A CS set (see \defref{def:CS}) is a stratified space.
 \item \propref{prop:intersectionetintersection}: In the case of a pseudomanifold
 (see \defref{def:pseudo}), the complex $C_{*}^{\ov{p}}(X)$ gives the same intersection homology than the Goresky and MacPherson original one.
 \end{itemize}
 \thmref{thm:stratifiedmapcomplex} is a key point in the proof of the topological invariance of the minimal model, stated in \secref{sec:modelCS}. 

 \medskip \noindent
 The description of a simplicial setting for the study of intersection cohomology is done in {\sc \chapref{chap:blowup}.}
Various cohomologies are defined on a \ffs, $\ob{K}$, from the blow-up of universal sytems on euclidean simplices, as, for instance,
the cochains and the forms, $\tC^*(\ob{K};R)$ and $\widetilde{A}_{PL,\ov{q}}(\ob{K})$, introduced above.
\thmref{thm:extendable} (Page \pageref{thm:extendable}) gives a sufficient condition for having an isomorphism between two such theories and \thmref{thm:thetwocochains} (Page \pageref{thm:thetwocochains}) proves the existence of an isomorphism between GM and TW cohomologies, with coefficient in a field, for appropriate GM-perversities.
 
In \emph{\secref{sec:normalisation},} we determine particular cases and examples. 
 % For instance, we prove that, 
 In the case $\ov{p}\equiv\ov{\infty}$, the intersection cohomology is the cohomology of the \emph{regular} part, $\ob{K}^{[0]}$, which consists of simplices $\Delta^{j_0}\ast\cdots\ast\Delta^{j_n}$ such that $\Delta^{j_i}=\emptyset$ if $i\leq n-1$. Also, the   $\ov{0}$-intersection cohomology is the cohomology of the face set associated to $\ob{K}$, when  $\ob{K}$ is a normal \ffs~(see \defref{def:ffsregulier}). 
 As it is done in \cite{MR572580} for pseudomanifolds, we prove that any \ffs, $\ob{K}$, admits a unique normalization, $N(\ob{K})$, which has the same intersection cohomology than $\ob{K}$, see \propref{prop:existencenormal}. 
 Examples of cone and suspension of a face set~are also detailed. 
% We emphasize the fact that we can handle  intersection cohomologies associated to a filtration, independently of any geometrical dimension. 
% This can be done with the original Goresky and MacPherson perversities if we work with the blow-up of differential forms or cochains but we must consider negative perversities if we use  the original Goresky and MacPherson cochains. \exemref{exam:cone} is a concrete illustration of this situation.
\emph{In \secref{sec:homotopyffs},} we  supply a cylinder object, define the product of a \ffs~with a face set and  a notion of homotopy between filtered face maps.

 \medskip \noindent
%In \emph{Sections \ref{sec:perversedifferentialalgebras} to \ref{sec:examples}, 
In {\sc \chapref{chap:rational},} we develop our rational homotopy project. (The reader can consult \cite{MR0646078}, \cite{MR0258031}, \cite{MR1802847}, \cite{MR2403898}, \cite{MR736299}  or  \cite{MR764769} for a presentation of  algebraic models for the rational homotopy type.)
The ring of coefficients is the rational field $\Q$ in all this chapter.

\emph{\secref{sec:perversedifferentialalgebras}} is mainly a brief recall of Hovey's work (\cite{MR2544388}), augmented with the definition and unicity of the perverse minimal model.
In \emph{\secref{sec:balanced},} we focus on the properties of particular perverse \cdga's, called balanced and based on an explicit description of the predecessors of a GM-perversity. 
The main examples of balanced perverse \cdga's are the blow-ups, $\tF(\ob{K})$, 
and their cohomology, $H_{\bullet}(\ob{K};F)$, for any \ffs, $\ob{K}$, and any universal system of \cdga's, $F$.
In \thmref{thm:constructionminimalmodel} (Page \pageref{thm:constructionminimalmodel}) of \emph{\secref{sec:minimalalgebraic},}  we prove that any balanced, cohomologically connected, perverse \cdga~admits a 
Sullivan minimal perverse model. 
 
%from the categodefined by Hovey in \cite{MR2544388}, see \defref{def:perversevs} and \defref{def:perversecdga}. 
In \emph{\secref{sec:modelCS},} we consider a first series of geometric properties of the perverse minimal model of a \ffs, $\ob{K}$, which, by definition, is the Sullivan minimal perverse model of $\cA(\ob{K})_{\bullet}$. 
If $\ob{K}$ is connected and normal, the \cdga~of elements of perverse degree~0 in the minimal model of $\cA(\ob{K})_{\bullet}$ is the minimal model of the face set associated to $\ob{K}$. 
We prove also that the two perverse algebras of cohomology, built from the PL-forms on one side and from the Thom-Whitney cochains on the other side, are isomorphic. 
%If $\ob{K}$ is the \ffs~associated to a pseudomanifold, $X$, these algebras are also isomorphic to the algebra of cohomology defined by G.~Friedman and J.~McClure in \cite{MR3046315}. 
Moreover, if $X$ is a PL-pseudomanifold whose regular part is connected, we establish the topological invariance of its minimal model, see \thmref{thm:topologicalinvarianceofminimal}, page \pageref{thm:topologicalinvarianceofminimal}. This uses 
the existence of an intrinsic filtration, due to Sullivan, and detailed in \cite{MR800845}, see also \cite{IHGreg}.
%If $X$ is also normal, the \cdga~of elements of perverse degree~0 in the minimal model of $\cA(\ob{K})$ is the minimal model of $X$.

 \medskip \noindent
{\sc \chapref{chap:formalityexamples}} is a presentation of intersection-formality with examples,
and the ring of coefficients is the rational field $\Q$ in all this chapter. \emph{\secref{subsec:formality}} contains the definition of intersection-formality, its link with some Massey products and the examples of cone and suspension of a connected face set. In \emph{\secref{subsec:isolated}}, we present an ad hoc perverse model for pseudomanifolds with isolated singularities. This model is applied to the study of Thom spaces in \emph{\secref{subsec:Thom}}, where an explicit perverse model of Thom spaces  is provided. We use it to establish the intersection-formality of the singular quadrics, see \propref{prop:projectiveformal}, page \pageref{prop:projectiveformal}.

\emph{\secref{subsec:nodal}} is concerned with the \emph{intersection-formality of the nodal hypersurfaces in $\C P(4)$,} see \thmref{thm:nodalformal}, page \pageref{thm:nodalformal}. 
Let $\ov{V}$ be a nodal hypersurface, $\ov{V}_{\reg}$ its regular component, $V$ a small resolution of $\ov{V}$ and $W$ a second resolution obtained from $V$ by a succession of blow-ups of
$\C P(1)$'s. As the regular part, $\ov{V}_{\reg}$, is a complement of a divisor in $W$, we may use a model of J. Morgan (\cite{MR516917}) for having a model of $\ov{V}_{\reg}$. The next step in the proof of \thmref{thm:nodalformal} is the construction of a model of $\ov{V}$ using the results of  \secref{subsec:isolated}. Finally, an explicit computation shows the intersection-formality of $\ov{V}$.

 \medskip \noindent
 
{\hrule width 2cm}
 
 \medskip\noindent
 In all this text, for a graded perverse object, $a$, we denote by $|a|$ its degree and by $\|a\|$ its perverse degree. A chain map is called a quasi-isomorphism (or a weak equivalence) if it induces an isomorphism in homology.

 \medskip \noindent
 
{\hrule width 2cm}
 
 \medskip\noindent
 
We would like to thank Greg Friedman for reading a previous version of this work and making valuable suggestions which have contributed to improve the writing. 

\mainmatter
%%%%%%%%%%%%%%%%%%%%%%
%\part{Simplicial setting}
\chapter{Simplicial blow-up}\label{chap:blowup}
\section{Filtered face sets}\label{sec:filteredfacesets}

\begin{quote} 
Singular homology and cohomology of a topological space, $X$, can be obtained simplicially, thanks to the simplicial set of continuous simplices. In the case of a filtered space, there is a notion of filtered simplices (\defref{def:filteredsimplex}) whose set, denoted by $\ob{\rm ISing}_*^{\cF}(X)$, generates a chain complex (\defref{def:chaineintersection}) giving the intersection homology of a pseudomanifold (\propref{prop:intersectionetintersection}). In this section, we abstract the properties of $\ob{\rm ISing}_*^{\cF}(X)$ in the concept of \ffs,  giving a simplicial setting to intersection homology and cohomology. We present also the simplicial notions of link and expanded link which are crucial in the proofs of Sections \ref{sec:filteredandlocalsystem}, \ref{sec:proofofC} and \ref{sec:chaincochain}.  
\end{quote}

Let us recall the basics on face sets, introduced by Rourke and Sanderson (\cite{MR0300281}).
For any integer $n$, we set $[n]=\{0,1,\ldots,n\}$.
Denote by $\pmb\Delta$ the category whose morphisms are the \emph{injective} order-preserving maps $f\colon [n]\to [m]$.
A morphism of $\pmb{\Delta}$ is a composition of face operators $\delta_i\colon [n]\to[n+1]$ defined by
$$\delta_i(j)=\left\{
\begin{array}{cl}
j&\text{ if } j<i,\\
j+1&\text{ if } j\geq i,
\end{array}\right.$$
which satisfy $\delta_j\delta_i=\delta_i\delta_{j-1}$, if $i<j$.

\begin{definition}\label{def:ensembleface}
A \emph{face set} 
\index{Face!set}
is a contravariant functor, $\ob{K}$, from $\pmb{\Delta}$ to the category of sets, $[n]\mapsto \ob{K}_n$.
An element $\sigma\in\ob{K}_n$ is called \emph{a simplex of dimension $n$,} that we denote $|\sigma|=n$. The \emph{$n$-skeleton} of $\ob{K}$ is $K^{(n)}=\cup_{p\leq n}\ob{K}_p$.
The images by  $\ob{K}$ of face  operators are maps $\partial_i\colon \ob{K}_{n+1}\to \ob{K}_n$   such that $\partial_i\partial_j=\partial_{j-1}\partial_{i}$, if $i<j$.

If $\ob{K}$ and $\ob{K}'$ are face sets, a \emph{face map,}
\index{Face!map}
 $\ob{f}\colon\ob{K}\to\ob{K}'$, is a natural transformation between the two functors $\ob{K}$ and $\ob{K}'$. We denote by 
$\pmb{\Delta}-{\rm Sets}$ the category of face maps between face sets.
\end{definition}

We emphasize that morphisms between face sets are called face maps and that we keep the expression ``face operators''
\index{Face!operator}
 for the maps $\partial_{i}$ or $\delta_{i}$.
 For basic properties of face sets and their relation with the more classical simplicial setting, we refer to the foundational paper \cite{MR0300281},
 observing that the homotopy theory of face sets is the same as the homotopy theory of simplicial sets.
 
\begin{example}\label{exam:simplexestandard}
The face set $\ob{\Delta}^n$ is defined as follows: $\ob{\Delta}^n_p$ is the set of injective order-preserving maps from $[p]$ to $[n]$. It has only one  simplex of  dimension $n$, denoted by $[\Delta^n]$.
If $\ob{K}$ is a face set, there is a bijection between the set of elements  $\sigma\in \ob{K}_n$ and the set of face maps, $\underline{\sigma}\colon \ob{\Delta}^n\to \ob{K}$, defined by $\underline{\sigma}(f)=\ob{K}(f)(\sigma)$. In the sequel, we do not make any distinction between $\underline{\sigma}\colon\ob{\Delta}^n\to \ob{K}$ and $\sigma\in\ob{K}_n$.
\end{example}
 
Recall from \cite{MR0300281}, that the realization of a face set,
\index{Face!set!realization of a} $\ob{K}$, is the CW-complex, $\|\ob{K}\|$, defined by
$$\|\ob{K}\|=\bigcup_{n=0}^{\infty} (\ob{K}_n\times \Delta^n)/(\partial_i x,t)\sim (x,\delta_{i} t),$$
where
\begin{itemize}
\item $\Delta^n$ is the standard $n$-simplex of $\R^{n+1}$, whose vertices $v_0,\ldots,v_n$ verify  $v_i=(t_0,\ldots,t_n)$, $t_j=0$ if $i\neq j$ and $t_i=1$,
\item 
the map $\delta_{i}\colon \Delta^{n-1}\to \Delta^n$ is the linear application defined by 
$\delta_{i}(v_j)=v_{\delta_i(j)}$.
\end{itemize}
For instance, the realization of the face set $\ob{\Delta}$ described in Example~\ref{exam:simplexestandard} is the standard simplex $\Delta^n$ and, in the sequel, we identify $[n]$, $\ob{\Delta}^n$ and $\Delta^n$.
The CW-complex $\|\ob{K}\|$ has one $n$-cell for each $\sigma\in \ob{K}_n$, of characteristic map
$\tilde{\sigma}\colon \Delta^n\to \|\ob{K}\|$ defined by
$\tilde{\sigma}(t_0,\ldots,t_n)=[\sigma,(t_0,\ldots,t_n)]$, where $[-]$ denotes the equivalence classes in the realization $\|\ob{K}\|$.

\medskip
We adapt now these objects to the context of intersection theory. First, \emph{we fix an integer $n$ which corresponds to the formal dimension of filtered spaces; that means, we fix the number of elements in the filtration to~$n$,} see  \remref{rem:toutendimensionn} for the topological meaning of this process.

\medskip
Let ${\pmb\Delta}^{[n]}_\cF$ be the category whose
\begin{itemize}
\item objects are the join $\Delta=\Delta^{j_0}\ast\Delta^{j_1}\ast\cdots\ast\Delta^{j_n}$, where $\Delta^{j_i}$  is the simplex of dimension $j_i$, possibly empty, with the conventions $\Delta^{-1}=\emptyset$ and  $\emptyset\ast X=X$,
\item maps are the $\sigma\colon \Delta=\Delta^{j_0}\ast\Delta^{j_1}\ast\cdots\ast\Delta^{j_n}\to\Delta'=\Delta^{k_0}\ast\Delta^{k_1}\ast\cdots\ast\Delta^{k_n}$, 
of the shape $\sigma=\ast_{i=0}^n\sigma_i$, with $\sigma_i\colon \Delta^{j_i}\to \Delta^{k_i}$ an injective order-preserving map for each~$i$ or the map $\emptyset\to \Delta^{k_{i}}$.
\end{itemize}

For all $0\leq i\leq n$, any face operator, $\delta_{\ell}\colon \Delta^{j_{i}}\to \Delta^{j_{i}+1}\in \pmb\Delta$, gives rise to a face operator in 
${\pmb\Delta}^{[n]}_\cF$, obtained from the join with the identity map, and still denoted
$\delta_{\ell}\colon
\Delta^{j_{0}}\ast\cdots\ast
\Delta^{j_{i}}\ast\cdots\ast \Delta^{j_{n}}
\to 
\Delta^{j_{0}}\ast\cdots\ast
\Delta^{j_{i}+1}\ast\cdots\ast \Delta^{j_{n}}$.

\medskip
The category
${\pmb\Delta}^{[n],+}_\cF$ is the full subcategory of ${\pmb\Delta}^{[n]}_\cF$ whose objects are the $\Delta^{j_0}\ast\Delta^{j_1}\ast\cdots\ast\Delta^{j_n}$ with
$\Delta^{j_n}\neq\emptyset$, i.e., $j_n\geq 0$.

\begin{definition}\label{def:filteredfaceset}
A \emph{filtered face set}
\index{Filtered!face set}
 is a contravariant functor, $\ob{K}$, from the category ${\pmb\Delta}^{[n]}_\cF$ to the category of sets, i.e., $(j_0,\ldots,j_n)\mapsto \ob{K}_{j_0,\ldots,j_n}$. Any face operator, $\delta_{\ell}\colon \Delta^{j_{i}}\to \Delta^{j_{i}+1}$, induces a face operator, defined by
$\partial_{\ell}\colon \ob{K}_{j_{0},\ldots, j_{i}+1,\ldots,j_{n}}\to \ob{K}_{j_{0},\ldots,j_{i},\ldots,j_{n}}$, $\partial_{\ell}(\sigma)=\sigma\circ\delta_{\ell}$.

The restriction of a filtered face set, $\ob{K}$, to ${\pmb\Delta}^{[n],+}_\cF$ is denoted $\ob{K}_+$. 
Morphisms between filtered face sets are natural transformations; we call them \emph{filtered face maps.}
\index{Filtered!face map}
\end{definition}

\emph{Face sets} are  \ffss~whose simplices,
$\sigma\colon \Delta^{j_0}\ast\cdots\ast\Delta^{j_n}\to \ob{K}$, 
are such that $j_i=-1$ for all $i<n$.

\begin{definition}\label{def:perversedegreesimplexffs}
The \emph{perverse degree} of a simplex,
\index{Perverse!degree!of a simplex}
$\sigma\colon \Delta^{j_0}\ast\Delta^{j_1}\ast\cdots\ast\Delta^{j_n}\to \ob{K}$, of a filtered face set, $\ob{K}$, is the $(n+1)$-uple,
$$\|\sigma\|=(\|\sigma\|_0,\ldots,\|\sigma\|_n),$$  
where $\|\sigma\|_{\ell}=\dim (\Delta^{j_0}\ast\cdots\ast\Delta^{j_{n-\ell}})$ 
if $\Delta^{j_0} \ast\cdots\ast\Delta^{j_{n-\ell}}\neq \emptyset$
and $\|\sigma\|_\ell=-\infty$ otherwise.
\end{definition}

Observe that \emph{any filtered face map preserves the perverse degree.}
The next example is the main motivation for the introduction of  filtered face sets.

\begin{example}\label{exam:simplexesfiltrés} 
In \secref{sec:filteredspaces}, for any filtered space, $X_{0}\subset X_{1}\subset \cdots\subset X_{n}=X$, we define a filtered singular simplex as a continuous map,
$\sigma\colon \Delta^m\to X$, such that each $\sigma^{-1}X_{i}$ is a face of $\Delta^m$ or is the empty set. Such a map induces a decomposition $\Delta^m=\Delta^{j_{0}}\ast\cdots\ast\Delta^{j_{n}}\to X$,
with
$\sigma^{-1}X_{i} = \Delta^{j_0}\ast\Delta^{j_1}\ast\cdots\ast\Delta^{j_i}$. (\exemref{exam:filteredsimplex} provides an illustration of this situation.) All together, these filtered singular simplices define a \ffs, $\ob{\rm ISing}_*^{\cF}(X)$, by
$$\ob{\rm ISing}_*^{\cF}(X)_{j_0,\ldots,j_n}=\{\sigma\colon \Delta^{j_0}\ast\cdots\ast \Delta^{j_n}\to X, \,\sigma \text{ continuous and filtered}\}.$$
Moreover, any stratum preserving stratified map,
$X\to Y$,
 induces a filtered face map, 
 $\ob{\rm ISing}_*^{\cF}(X)\to \ob{\rm ISing}_*^{\cF}(Y)$,
 see \thmref{thm:stratifiedmapcomplex} and \corref{cor:amalgamation2}.
\end{example}

\begin{example}\label{exam:siplicialcomplex}
The second barycentric decomposition of a triangulated pseudomanifold gives rise also to a \ffs, see \cite[Proposition 1.1.4]{MR1346255}.
\end{example}

\begin{remark}\label{rem:toutendimensionn}
Imposing a common formal dimension, $n$, to filtered face sets, is not a restriction for the study of intersection homology.
Observe, from \defref{def:chaineadmis}, that the admissibility of a simplex of a filtered space, $X$, depends only on the codimension in   $X$.
Therefore, as noticed by G.~Friedman in \cite{MR2009092}, one can add a fixed integer to each index of the filtration of a space without modifying its intersection homology. A practical consequence is that we can always suppose that the spaces we are considering have the same formal dimension without loosing generality. 

The restriction to the subcategory 
${\pmb\Delta}^{[n],+}_\cF$
is due to the fact that
we do not consider simplices entirely included in the singular part, see \remref{rem:petitemaisbien}. One may observe also 
(see \defref{def:blowup})
that the blow-ups are not defined if $\Delta^{j_n}=\emptyset$.
\end{remark}

In \cite[Theorem~7.1]{MR0646078}, D. Sullivan gives a short proof of de Rham's theorem for PL-forms, based on the fact that the \cdga~of forms on the relative skeleton $(X^{(p)},X^{(p-1)})$ ``breaks into a product over $p$ cells''. For the \ffss, we need the analogue of the previous relative skeleton, satisfying a breaking decomposition, as PL-forms do in the case of a simplicial set. \propref{prop:relativehomeo} reaches this objective and we introduce first the necessary tools, as  the filtered notions of 
skeleta and links. They coincide with usual notions when $n=0$. 
At the end of this section, we detail \exemref{exam:linkandothers} which illustrates these definitions and may  help to their understanding.

For face sets, a skeleton is characterized by an integer, the dimension of simplices. Here, for the \ffss, we need two integers. One is still the dimension of simplices and the other one, called the depth, refers to the decomposition of the domain.

\begin{definition}\label{def:depth}
The \emph{depth of the simplex $\sigma\colon\Delta^{j_0}\ast\cdots\ast\Delta^{j_n}\to\ob{K}$}
\index{Depth!of a simplex}
 is the integer $v(\sigma)\in\{0,\ldots,n\}$ defined by
$$v(\sigma)=n-\min\,\{a\mid \Delta^{j_{a}}\neq \emptyset\}
,$$
i.e., $v(\sigma)=n-k$ if, and only if, $j_{k}\geq 0$ and $j_{i}=-1$ if $i< k$.
The \emph{depth $v(\ob{K})$ of a filtered face set,}
\index{Depth!of a filtered face set}
 $\ob{K}$, is the maximum of the depth of its simplices.
\end{definition}

\begin{definition}\label{def:filteredskeleton}
Let $r\geq 0$ and $k\geq 0$. The \emph{filtered skeleta} of a \ffs,
\index{Filtered!skeleta}
 ${\ob{K}}$, are the \ffss, ${\ob{K}}^{[r],k}$, defined by 
$${\ob{K}}^{[r],k}=
\{{\sigma} \in\ob{K}\mid
v(\sigma)<r\text{ or } (v(\sigma)=r \text{ and } j_{n-v(\sigma)}\leq k)\},$$
i.e., $\sigma\colon \Delta^{j_{0}}\ast\cdots\ast\Delta^{j_{n}}\to \ob{K}^{[r],k}$ if, and only if,
$j_{n-r}\leq k$ and $j_{i}=-1$ if $i<n-r$.
We set ${\ob{K}}^{[r]}=\cup_{k\geq 0} {\ob{K}}^{[r],k}$ and, by convention,  ${\ob{K}}^{[r],-1}={\ob{K}}^{[r-1]}$, if $r>0$, and
${\ob{K}}^{[0],-1}=\emptyset$.
 The \emph{regular part} of $\ob{K}$ is the face set $\ob{K}^{[0]}$.
 \index{Regular!part of a filtered face set}
\end{definition}

\begin{definition}\label{def:ffsconected}
A \ffs, $\ob{K}$, is \emph{connected} if its regular part, $\ob{K}^{[0]}$, is a connected face set.
\index{Filtered!face set!connected}
\end{definition}

Recall that $K^{[0]}$ is said connected if, for any couple $(v,v')$ of vertices, there exists a finite chain of 1-simplices,
$(\sigma_{0},\ldots,\sigma_{k})$, such that $\partial_{1}\sigma_{i}=\partial_{0}\sigma_{i+1}$, for all $0\leq i<k$, 
$\partial_{0}\sigma_{0}=v$ and $\partial_{1}\sigma_{k}=v'$.

\smallskip
With \defref{def:filteredspaceconnected}, we observe that the \ffs, $\ob{\rm ISing}_*^{\cF}(X)$, associated to  a connected filtered space, $X$, is connected.

\smallskip
Let $\ob{K}$ be a filtered face set. For any $\sigma\colon \Delta=\Delta^{j_0}\ast\cdots\ast\Delta^{j_n}\to \ob{K}^{[r]}\backslash K^{[r-1]}$, we denote by 
\begin{itemize}
\item $R_1(\sigma)$ the restriction of $\sigma$ to $\Delta^{j_{n-r}}$,
\item $R_2(\sigma)$ the restriction of $\sigma$ to
$\Delta^{j_{n-r+1}}\ast\cdots\ast\Delta^{j_n}$.
\end{itemize}
Observe  that $R_2$ can be written as a composite of face operators and that the domain of $R_2(\sigma)$ can be empty.
We denote by $\cJ(\ob{K},{[r],k})$ the set of $R_1(\sigma)$, when 
$\sigma\in\ob{K}^{[r],k}\backslash \ob{K}^{[r],k-1}$.
If $\sigma\in \ob{K}^{[r],k}\backslash \ob{K}^{[r],k-1}$, then $R_{1}(\sigma)\in \ob{K}^{[r],k}\backslash \ob{K}^{[r],k-1}$ and $R_{1}(R_{1}(\sigma))=R_{1}(\sigma)$. Therefore, we may write,
$$\cJ(\ob{K},{[r],k})=\{\sigma\in\ob{K}^{[r],k}\backslash \ob{K}^{[r],k-1} \mid R_{1}(\sigma)=\sigma\}.$$
 We set
$\xi \vartriangleleft \sigma$ if $\xi$ is a face (of any codimension) of a simplex $\sigma$.

\begin{definition}\label{def:linkstar}
Let $\ob{K}$ be a filtered face set and $\tau\in \cJ(\ob{K},{[r],k})$.
\begin{enumerate}[(a)]
\item \emph{The star of $\tau$ in $\ob{K}$} is the filtered face subset,
\index{Star of a simplex}
 $\ob{K}(\tau)$, of $\ob{K}$ defined  by
$$\ob{K}(\tau)=\{\xi\in\ob{K}\mid \exists \sigma\in\ob{K}\text{ with } R_1(\sigma)=\tau \text{ and } \xi \vartriangleleft \sigma\}.$$
\item \emph{The link of $\tau$ in $\ob{K}$}  is the filtered face subset, 
\index{Link!of a simplex}
$\cL(\ob{K},\tau)$, of $\ob{K}$ defined by 
$$\cL(\ob{K},\tau)=\{\xi\in\ob{K}\mid
\exists \sigma \in \ob{K} \text{ with } R_{1}(\sigma)=\tau \text{ and } \xi \vartriangleleft R_2(\sigma)\}.$$
\end{enumerate}
\end{definition}

The \ffs~$\ob{K}(\tau)$ is never empty and  the link $\cL(\ob{K},\tau)$ can be empty.
We observe also that 
\begin{equation}\label{equa:skeleta}
\ob{K}^{[r],k}=\ob{K}^{[r],k-1} \cup_{\tau\in \cJ(\ob{K},{[r],k})} \ob{K}( \tau).
\end{equation}

Let $\tau \in \cJ(\ob{K},{[r],k})$, $r\geq 0$ and $k\geq 1$. As $\ob{K}(\tau)$ is a filtered face set, the filtered face set $\ob{K}(\tau)(\partial_i\tau)$ is already defined for any face operator 
$\partial_i\colon \Delta^{k}\to\Delta^{k-1}$, i.e.,
$$\ob{K}(\tau)(\partial_i\tau)=\{\xi\in\ob{K}(\tau)\mid
\exists \sigma\in \ob{K}(\tau)\text{ with } R_1(\sigma)=\partial_i\tau\text{ and }\xi \vartriangleleft \sigma \}.$$
The fact that $\xi\in\ob{K}(\tau)$ can be detailed as: $\xi\in\ob{K}$ with the existence of $\gamma\in\ob{K}$ such that $R_{1}(\gamma)=\tau$ and $\xi \vartriangleleft \gamma$. If, moreover, $\xi\in \ob{K}(\tau)(\partial_i\tau)$, there exists also $\sigma\in\ob{K}(\tau)$ with $R_{1}(\sigma)=\partial_{i}\tau$ and $\xi\vartriangleleft\sigma$, from which we deduce $\xi \vartriangleleft\partial_{i}\gamma$. Thus, we can write,
$$\ob{K}(\tau)(\partial_i\tau)=
\{\xi\in\ob{K}\mid
\exists \gamma\in\ob{K} \text{ with }  
R_1(\gamma)=\tau \text{ and }\xi \vartriangleleft\partial_i\gamma \},
$$
since $R_{1}(\gamma)=\tau$ implies $R_{1}(\partial_{i}\gamma)=\partial_{i}\tau$.
\begin{definition}\label{def:ktaudeltau}
Let $\ob{K}$ be a filtered face set, $r\geq 0$, $k\geq 1$ and $\tau \in \cJ(\ob{K},{[r],k})$.
We define a filtered face set $\ob{K}(\tau,\partial\tau)$ by
$$\ob{K}(\tau,\partial\tau)=\cup_{i=0}^k\ob{K}(\tau)(\partial_i\tau).$$
\end{definition}
  
 Observe that 
 $\ob{K}(\tau)=\ob{K}(\tau)^{[r],k} \text{ and } \ob{K}(\tau,\partial \tau)=\ob{K}(\tau,\partial \tau)^{[r],k-1}$.
The next result is the determination of the intersection of the two filtered face sets in the right-hand side of equation (\ref{equa:skeleta}). 

\begin{proposition}\label{prop:intersectionKtau}
If $\ob{K}$ is a filtered face set, $r\geq 1$, $k\geq 0$ and $\tau \in \cJ(\ob{K}^{[r],k})$, we have
$$\ob{K}^{[r],k-1}\cap  \ob{K}( \tau)=\left\{
\begin{array}{l}
\ob{K}(\tau,\partial \tau), \text{ if } k\geq 1,\\[.2cm]
\cL(\ob{K},\tau), \text{ if } k=0.
\end{array}\right.$$
\end{proposition}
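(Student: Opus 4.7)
The plan is to unwrap the definitions and analyze how a face factorization $\xi \vartriangleleft \sigma$ (with $R_1(\sigma) = \tau$) interacts with the piece of $\sigma$ sitting at position $n-r$, which is exactly the piece on which $\tau$ lives. Set $k = j_{n-r}(\tau)$, so that $\tau$ is the only non-empty piece of its filtered type and has dimension $k$. Membership in $\ob{K}^{[r],k-1}$ means either $v(\xi) < r$ (position $n-r$ is empty in $\xi$) or $v(\xi) = r$ with $j_{n-r}(\xi) \leq k-1$ (position $n-r$ is a strictly smaller simplex). A face operator at position $n-r$ of $\sigma$ lowers the dimension of the first piece by one (or empties it if that piece was a single vertex), while face operators at other positions leave position $n-r$ intact; this is the content of the shift formula $\partial_i R_2 = R_2 \partial_{i+1+j_{n-r}}$ noted in Remark~\ref{rem:starlink}.

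For the case $k \geq 1$, I would prove both inclusions directly. For $(\supseteq)$, take $\xi \in \ob{K}(\tau)(\partial_i\tau)$. By the second characterization given before Definition~\ref{def:ktaudeltau}, there is $\gamma \in \ob{K}$ with $R_1(\gamma) = \tau$ and $\xi \vartriangleleft \partial_i\gamma$, so $\xi \vartriangleleft \gamma$ gives $\xi \in \ob{K}(\tau)$. Moreover $R_1(\partial_i\gamma) = \partial_i\tau$ has dimension $k-1$ (or $\partial_i$ empties position $n-r$), so either $v(\xi) < r$ or $j_{n-r}(\xi) \leq k-1$, hence $\xi \in \ob{K}^{[r],k-1}$. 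For $(\subseteq)$, take $\xi \in \ob{K}^{[r],k-1} \cap \ob{K}(\tau)$ and pick $\sigma$ with $R_1(\sigma) = \tau$ and $\xi \vartriangleleft \sigma$. If $v(\xi) = r$ with $j_{n-r}(\xi) \leq k-1$, then the first piece of $\xi$ is a proper face of $\tau$, so can be written as $\partial_i$ followed by further face operators on position $n-r$; choose such $i$ and set $\gamma = \sigma$, yielding $\xi \vartriangleleft \partial_i\gamma$ as required. If $v(\xi) < r$, then the position $n-r$ piece is entirely removed when passing from $\sigma$ to $\xi$, so $\xi \vartriangleleft \partial_i\sigma$ for every choice of $i \in \{0, \dots, k\}$; take $i = 0$ and $\gamma = \sigma$.

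For the case $k = 0$, the simplex $\tau$ reduces to a single vertex at position $n-r$, so there is no operator $\partial_i\tau$; the entire piece must be removed, and this removal is exactly $R_2$. For $(\supseteq)$, if $\xi \vartriangleleft R_2(\sigma)$ for some $\sigma \in \ob{K}(\tau)$, then $\sigma \vartriangleleft \sigma'$ with $R_1(\sigma') = \tau$, so $\xi \vartriangleleft \sigma'$ gives $\xi \in \ob{K}(\tau)$; and $R_2(\sigma)$ has empty piece at position $n-r$, hence $v(\xi) \leq v(R_2(\sigma)) < r$, so $\xi \in \ob{K}^{[r-1]} = \ob{K}^{[r],-1}$. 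For $(\subseteq)$, take $\xi \in \ob{K}^{[r-1]} \cap \ob{K}(\tau)$ with witness $\sigma'$ satisfying $R_1(\sigma') = \tau$ and $\xi \vartriangleleft \sigma'$. Since $v(\xi) < r$ and $\sigma'$ has only the single vertex $\tau$ at position $n-r$, the face map $\xi \to \sigma'$ necessarily factors through removing that vertex, i.e., $\xi \vartriangleleft R_2(\sigma') = \partial_0^{(n-r)}\sigma'$, and $\sigma' \in \ob{K}(\tau)$ tautologically.

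The main obstacle is to argue cleanly that the face map $\xi \vartriangleleft \sigma$ can be factored through an intermediate simplex of the desired filtered type, using only the combinatorial commutation rules of face operators between different join-factors. The shift identity $\partial_i R_2 = R_2 \partial_{i+1+j_{n-r}}$ together with a careful splitting of the composite of $\partial$-operators defining $\xi \vartriangleleft \sigma$ into ``position $n-r$'' operators and ``other positions'' operators makes this bookkeeping tractable, but writing it rigorously is the technical core of both inclusions.
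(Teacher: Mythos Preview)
Your proof is correct and follows essentially the same approach as the paper: the same two-case analysis on $v(\xi)$ for the inclusion $(\subseteq)$ when $k\geq 1$, and the same factorization through $R_2(\sigma')$ when $k=0$. The only cosmetic difference is that for the reverse inclusions the paper simply cites the already-noted equalities $\ob{K}(\tau,\partial\tau)=\ob{K}(\tau,\partial\tau)^{[r],k-1}$ and $\cL(\ob{K},\tau)=\cL(\ob{K},\tau)^{[r-1]}$, whereas you unfold these directly.
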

In the sequel, we abuse the notation by using  $\cL(\ob{K},\tau)=\ob{K}(\tau,\partial\tau)$ when $k=|\tau|=0$.

\begin{proof}
Suppose first $k\geq 1$. 
An element $\xi\in \ob{K}^{[r],k-1}\cap \ob{K}(\tau)$ is a face of a $\sigma\in\ob{K}(\tau)$ with $R_1(\sigma)=\tau$ and we have two possibilities:
\begin{itemize}
\item $v(\xi)=r$ and $j_{n-r}\leq k-1$. 
In this case, $R_1(\xi)$ is a proper face of~$\tau$. Let $\tau'=\partial_i\tau$ such that $R_1(\xi)\subset \partial_i\tau$ and set $\sigma'=\partial_i\sigma$. 
Then we have $R_1(\sigma')=\tau'$, $\sigma'\in\ob{K}(\tau)$ and $\xi$ is a face of $\sigma'$. 
This implies $\xi\in\ob{K}(\tau,\partial \tau)$.
\item or $v(\xi)\leq r-1$. In this case, $\xi$ is a face of $R_2(\sigma)$. Set $\tau'=\partial_i\tau$ and $\sigma'=\partial_i\sigma$ for a face operator $\partial_i$. Then, we have $\sigma'\in\ob{K}(\tau)$, $R_1(\sigma')=\tau'$ and $R_2(\sigma)$ is a face of $\sigma'$. Thus $\xi$ is also a face of $\sigma'$ and we conclude $\xi\in\ob{K}(\tau,\partial\tau)$.
\end{itemize}

The reverse inclusion comes directly from
$\ob{K}(\tau,\partial\tau)=\ob{K}(\tau,\partial\tau)^{[r],k-1}$.
The case $k=0$ is proved in a similar way, by using 
$\cL(\ob{K},\tau)=\cL(\ob{K},\tau)^{[r-1]}$. 
\end{proof}

This result can also be stated as follows.

\begin{corollary}\label{cor:intersectionKtau}
Let $\ob{K}$ be a filtered face set, $r\geq 1$, $k\geq 0$. Then we have a push-out,
$$
\xymatrix{ 
\sqcup_{\tau\in \cJ(\ob{K}^{[r],k})}\ob{K}(\tau,\partial \tau)\ar[r]\ar[d]&\sqcup_{\tau\in\cJ(\ob{K}^{[r],k})}\ob{K}(\tau)\ar[d]\\
\ob{K}^{[r],k-1}\ar[r]&
\ob{K}^{[r],k}.
}$$
\end{corollary}

The expanded link, defined below, allows ``the break'' of $(\ob{K}^{[r],k},\ob{K}^{[r],k-1})$ in \propref{prop:relativehomeo}.

\begin{definition}\label{def:expandedlink}
Let $\ob{K}$ be a filtered face set, $r\geq 1$, $k\geq 0$ and $\tau \in \cJ(\ob{K},{[r],k})$.
The \emph{expanded link} of $\tau$ in $\ob{K}$
\index{Link!of a simplex!expanded}
 is the filtered face set $\cLe(\ob{K},\tau)$ defined by
$$\cLe(\ob{K},\tau)=\left\{
(R_2\sigma,\sigma)\mid
\sigma\in\ob{K}, R_1(\sigma)=\tau \text{ and } \sigma\neq \tau\right\}
,$$
with face operators,  
$$\partial_i(R_2(\sigma),\sigma)=(\partial_iR_2(\sigma),\partial_{i+1+k}\sigma)=
(R_2(\partial_{i+1+k}\sigma),\partial_{i+1+k}\sigma),$$
for all $i\in\{0,\ldots,\dim (R_2(\sigma),\sigma)=\dim R_2(\sigma)\}$, and perverse degrees,
$$\|(R_2(\sigma),\sigma)\|_\ell=\|R_2(\sigma)\|_{\ell},$$
for all $\ell\in\{0,\ldots,n\}$.
\end{definition}

First, we note that the previous definition of $\partial_{i}$ implies that the expanded link is stable with face operators and thus is a well defined \ffs. By definition, a simplex $(\alpha,\sigma)\in \cLe(\ob{K},\tau)$
if, and only if,
$R_1(\sigma)=\tau$ and $R_2(\sigma)=\alpha$.
The difference between the link and the expanded link lies in the fact that two distinct simplices, $\sigma$ and $\sigma'$, of $\ob{K}(\tau)$, verifying $R_2(\sigma)=R_2(\sigma')$, give the same element in $\cL(\ob{K},\tau)$ and two distinct elements in
$\cLe(\ob{K},\tau)$. In the expanded link, we keep track of $\sigma$ and $\sigma'$ as parameters. Also, the condition $\sigma\neq \tau$ means
$R_2(\sigma)\neq \emptyset$, for any $(\alpha,\sigma)\in \cLe(\ob{K},\tau)$.
\exemref{exam:linkandothers} illustrates the difference between the two links.
Observe also that, by definition, we have
$\cLe(\ob{K},\tau)=\cLe(\ob{K},\tau)^{[r-1]}$.

 \begin{definition}\label{def:relativeisomorphism}
 A \emph{morphism of pairs of filtered face sets,} $f\colon (\ob{K}_1,\ob{L}_1)\to (\ob{K}_2,\ob{L}_2)$,  is a filtered face map, $f\colon \ob{K}_1\to \ob{K}_2$, such that $f(\ob{L}_1)\subset \ob{L}_2$.
 A \emph{relative isomorphism,} $f\colon (\ob{K}_1,\ob{L}_1)\to (\ob{K}_2,\ob{L}_2)$, 
 \index{Relative isomorphism of filtered face sets}
 is a morphism of pairs of filtered face sets, such that 
 the restriction of $f$ to the complementary subsets is a bijection respecting the perverse degree, $f\colon \ob{K}_1\backslash \ob{L}_1\cong \ob{K}_2\backslash \ob{L}_2$.
 \end{definition}
 
 \begin{example}\label{exam:relativeiso}
 From \propref{prop:intersectionKtau}, we deduce that the map
 $$(\sqcup_{\tau\in \cJ(\ob{K},{[r],k})} \ob{K}(\tau), \sqcup_{\tau\in \cJ(\ob{K},{[r],k})} \ob{K}(\tau,\partial\tau))\to
 (\ob{K}^{[r],k},\ob{K}^{[r],k-1})$$
 is a relative isomorphism.
 \end{example}
 
 \begin{example}\label{exam:simplejoin}
Let $\ob{K}$ be a filtered face set of depth $v(\ob{K})\leq r-1$ with $r\in\{1,\ldots,n\}$. Let  $k\in\N$. 
We denote by $\Delta^k\ast\ob{K}$ the filtered face set, of depth $r$, generated by the simplices,
$$
\xymatrix@1{
\Delta^k\ast\Delta^{j_{n-r+1}}\ast\cdots\ast\Delta^{j_{n}}\ar[rr]^-{\id\ast\sigma}&&\Delta^k\ast \ob{K},
}$$
with $\sigma\in\ob{K}$. Set  $j_{n-r}=k$.
The simplices of this \ffs~$\Delta^k\ast\ob{K}$ are of three types: $\partial_I\Delta^k$, $(\partial_I\Delta^k)\ast \sigma$ with $I\subsetneqq \{0,\ldots,k\}$, and those of $\ob{K}$.
The perverse degree of the simplices of $\ob{K}$ is keeping unchanged. For the other ones, we set:
\begin{itemize}
\item $\|\partial_I\Delta^k\|_{r'}=\left\{
\begin{array}{cl}
-\infty&\text{ if } r< r',\\
k-|I|&\text{ if } r\geq r',
\end{array}\right.
$
\item $\|\partial_I\Delta^k\ast\sigma\|_{r'}=\left\{
\begin{array}{cl}
-\infty&\text{ if } r<r',\\
k-|I|&\text{ if } r'=r, \\
&\text{ or } (r>r' \text{ and } \|\sigma\|_{r'}=-\infty),\\
k-|I|+\|\sigma\|_{r'}+1&\text{ if }r>r' \text{ and } \|\sigma\|_{r'}\neq-\infty.
\end{array}\right.$
\end{itemize}

If $\ob{L}$ is a subcomplex of $\Delta^k$, we define similarly a filtered face set $\ob{L}\ast\ob{K}$, called \emph{the join of $\ob{L}$ and $\ob{K}$,} as, for instance, $\partial \Delta^k\ast\ob{K}$.
\index{Join}
There exists certainly a notion of join of two filtered face sets (see \cite{MR2061452}) but we do not need it.
\end{example}

 \begin{proposition}\label{prop:relativehomeo}
 Let $\ob{K}$ be a filtered face set, $r\geq 1$, $k\geq 0$ and $\tau\in\cJ(\ob{K},{[r],k})$. There exists a relative isomorphism,
 $$f\colon (\Delta^k\ast\cLe(\ob{K},\tau),\partial\Delta^k\ast\cLe(\ob{K},\tau))\to (\ob{K}(\tau), \ob{K}(\tau,\partial\tau)),$$
 preserving the perverse degree.
 \end{proposition}
 
 \begin{proof}
 Let $(\alpha,\sigma)\in \cLe(\ob{K},\tau)$
 and 
 $I\subsetneqq \{0,\ldots,k\}$.
 The application $f$ is defined by
 $$f(\partial_I\Delta^k)=\partial_I\tau,\;
 f(\alpha,\sigma)=\alpha \text{ and }
 f(\partial_I\Delta^k\ast(\alpha,\sigma))=\partial_I\sigma.$$
 By construction, we have
$f(\partial\Delta^k\ast\cLe(\ob{K},\tau))=\ob{K}(\tau,\partial \tau)$.
Decompose $\partial_I\Delta^k\ast(\alpha,\sigma)$ in $\Delta^{j_{0}}\ast\cdots\ast\Delta^{j_{n}}$. If $r<r'$, then the product $\Delta^{j_{0}}\ast\cdots\ast\Delta^{j_{n-r'}}$ is empty and we have $\|\partial_I\Delta^k\ast(\alpha,\sigma)\|_{r'}=-\infty$. As
$\tau\in \cJ(\ob{K},{[r],k})$ and $R_{1}(\sigma)=\tau$, we know that
$\|\partial_I\Delta^k\ast(\alpha,\sigma)\|_{r'}=k-|I|$, if
$r'=r$ or  ($r> r'$ and $\|\alpha\|_{r'}=-\infty$). Finally, in perverse degree $r'$, $r'<r$ and $\|\alpha\|_{r'}\neq -\infty$, we have to take in account 
$\alpha=R_{2}(\sigma)$, and we get
$\|\partial_I\Delta^k\ast(\alpha,\sigma)\|_{r}=k-|I|+\|\alpha\|_{r'}+1$,
the element +1 coming from the fact that a join $\beta_{1}\ast\beta_{2}$ has for dimension $\dim\beta_{1}+\dim\beta_{2}+1$.
From these observations, the definition of $f$ and  \exemref{exam:simplejoin}, we may check first that $f$ keeps the perverse degree. 
\begin{eqnarray*}
\|f(\partial_I\Delta^k\ast(\alpha,\sigma))\|_{r'}&=&\|\partial_I\sigma\|_{r'}=-\infty=
\|\partial_I\Delta^k\ast(\alpha,\sigma)\|_{r'}, \text{ if } r<r',\\
\|f(\partial_I\Delta^k\ast(\alpha,\sigma))\|_{r'}&=&\|\partial_I\sigma\|_{r'}=
k-|I|=\|\partial_I\Delta^k\ast(\alpha,\sigma)\|_{r'},\\
&&\;\text{ if } r'=r \text{ or } (r> r' \text{ and } \|\alpha\|_{r'}=-\infty),\\
\|f(\partial_I\Delta^k\ast(\alpha,\sigma))\|_{r'}&=&\|\partial_I\sigma\|_{r'}=
k-|I|+\|\alpha\|_{r'}+1=\|\partial_I\Delta^k\ast(\alpha,\sigma)\|_{r'},\\
&&\; \text{ if } r>r' \text{ and } \|\alpha\|_{r'}\neq -\infty.
\end{eqnarray*}
A similar computation is done for the other cases. We leave also to the reader the verifications of the compatibility with face operators.

We study now the restriction to the complementary subsets. 
Let $\gamma\in\ob{K}(\tau)\backslash \ob{K}(\tau,\partial\tau)$. 
By definition, we have $R_1(\gamma)=\tau$. 
If $R_2(\gamma)=\emptyset$, then $\gamma=\tau=f(\Delta^k)$. 
If $R_2(\gamma)\neq\emptyset$, then $\gamma=f(\Delta^k\ast(R_2(\gamma),\gamma))$. 
This gives the surjectivity and the injectivity is obvious from inspection of the definition of $f$.
 \end{proof}

\begin{example}\label{exam:linkandothers}
We determine $\ob{K}^{[r],k}$ and the links for the  face set, $\ob{K}$, defined by the following picture
\setlength{\unitlength}{.3mm}
$$
\begin{picture}(200,130)(00,-120)

\put(0,-50){\makebox(0,0){$\bullet$}}
\put(25,0){\makebox(0,0){$\bullet$}}
\put(50,-50){\makebox(0,0){$\bullet$}}
\put(25,-100){\makebox(0,0){$\bullet$}}
\put(120,-16){\makebox(0,0){$\bullet$}}

\put(-10,-58){\makebox(0,0){$b$}}
\put(33,8){\makebox(0,0){$a$}}
\put(55,-58){\makebox(0,0){$b'$}}
\put(33,-108){\makebox(0,0){$a$}}
\put(128,-25){\makebox(0,0){$b''$}}

\put(25,-44){\makebox(0,0){$\tau$}}
\put(25,-25){\makebox(0,0){$F$}}
\put(25,-75){\makebox(0,0){$F'$}}
\put(75,-25){\makebox(0,0){$F''$}}

\put(3,-25){\makebox(0,0){$\alpha$}}
\put(48,-25){\makebox(0,0){$\alpha'$}}
\put(3,-75){\makebox(0,0){$\alpha''$}}
\put(48,-75){\makebox(0,0){$\alpha'''$}}
\put(85,-40){\makebox(0,0){$\beta$}}
\put(70,0){\makebox(0,0){$\beta'$}}

\linethickness{,7mm}
\put(0,-50){\line(1,0){50}} 
\thinlines
\put(0,-50){\line(1,2){25}} 
\put(25,0){\line(1,-2){25}} 
\put(0,-50){\line(1,-2){25}} 
\put(50,-50){\line(-1,-2){25}}

\put(50,-50){\line(-1,-2){25}} 

\put(50,-50){\line(2,1){70}} 
\put(25,0){\line(6,-1){95}}

\end{picture} 
$$
$$\begin{array}{llll}
\text{ and }&\ob{K}_{(-1,0)}=\{a,b''\},&
\ob{K}_{(-1,1)}=\{\beta'\},&
\\
&\ob{K}_{(0,-1)}=\{b,b'\},&
\ob{K}_{(0,0)}=\{\alpha,\alpha',\alpha'',\alpha''',\beta\},&
\ob{K}_{(0,1)}=\{F''\},\\
&&\ob{K}_{(1,-1)}=\{\tau\},&
\ob{K}_{(1,0)}=\{F,F'\}.\\
\end{array}$$
Denoting by $\langle -\rangle$ the face set generated by $-$, we have:\\
$\ob{K}^{[0],0}=\ob{K}_{(-1,0)}=\langle a,b''\rangle$,
$\ob{K}^{[0],1}=\ob{K}^{[0],0}\cup \ob{K}_{{(-1,1)}}=\langle \beta'\rangle$,\\
$\ob{K}^{[1],0}=\ob{K}^{[0],1}\cup \ob{K}_{(0,-1)}\cup \ob{K}_{(0,0)}\cup \ob{K}_{(0,1)}=\langle \alpha,\alpha'',\alpha''',F''\rangle$,\\
$\ob{K}^{[1],1}=\ob{K}^{[1],0}\cup \ob{K}_{(1,-1)}\cup \ob{K}_{(1,0)}=\ob{K}$,\\
$\ob{K}(\tau)=\langle F,F'\rangle$, $\ob{K}(\tau,\partial \tau)=\langle \alpha,\alpha', \alpha'', \alpha'''\rangle$, $\ob{K}(b)=\langle \alpha,\alpha''\rangle$, 
\\
$\cJ(\ob{K},{[0],0})=\{a,b''\}$, $\cJ(\ob{K},{[0],1})=\{\beta'\}$, $\cJ(\ob{K},{[1],0})=\{b,b'\}$, $\cJ(\ob{K},{[1],1})=\{\tau\}$,
$\cL(\ob{K},\tau)=\langle a\rangle$,
${{\cL}^{\tt exp}}(\ob{K},\tau)=\langle (a,F),(a,F')\rangle$.
\end{example}

%%%%%%%%%%%%%%%%%%
\section{Perverse local systems on filtered face sets}\label{sec:filteredandlocalsystem}

\begin{quote} 
We define perverse local systems of coefficients, $\cM$, over a filtered face set $\ob{K}$ and the intersection cohomology of $\ob{K}$ with coefficients in $\cM$, for a loose perversity $\ov{q}$.
The blow-up $\tF$ of certain local systems over face sets, $F$, are examples of this situation. We detail the particular case of universal systems, as cochains  and Sullivan forms (over $\Q$). The main result (\thmref{thm:extendable}) gives sufficient conditions on the local systems, $F$ and $G$, for having an isomorphism between the intersection cohomologies of a filtered face set, $\ob{K}$, with coefficients in the blow-ups $\tF$ and~$\tG$.

In this section, all cochain complexes are over a commutative ring $R$. The particular case $R=\Q$ is explicitly quoted if necessary.
\end{quote} 

 We introduce  the notion of perversity which is the fundamental tool of intersection theory. We follow the convention of \cite{MR800845}.
 
\begin{definition}\label{def:perversité} 
A \emph{loose perversity} is a map $\ov{q}\colon \N\to\Z$, $i\mapsto \ov{q}(i)$, such that $\ov{q}(0)=0$. 
\index{Perversity!loose}
\index{Perversity}
A loose perversity is \emph{positive} if $\ov{q}(i)\geq 0$, for any $i\in\N$.

A \emph{perversity} is a loose perversity such that
$\ov{q}(i)\leq\ov{q}(i+1)\leq\ov{q}(i)+1$, for all $i\geq 1$.
A \emph{Goresky-MacPherson perversity} (or \emph{GM-perversity})
\index{Perversity!of Goresky-MacPherson}\index{GM!perversity}
 is a perversity such that $\ov{q}(1)=\ov{q}(2)=0$.
\end{definition}

If $\ov{q}_1$ and $\ov{q}_2$ are two loose perversities, we set $\ov{q}_1\leq \ov{q}_2$ if we have $\ov{q}_1(i)\leq\ov{q}_2(i)$, for all $i\in\N$. The lattice of GM-perversities, denoted $\cP^n$, admits a maximal element, $\ov{t}$, called the \emph{top perversity} and defined by $\ov{t}(i)=i-2$, if $i\geq 2$.
\index{Perversity!top}

To these perversities, we  add an element, $\ov{\infty}$, which is the constant map on $\infty$. We call it the \emph{infinite perversity} despite the fact that it is not a perversity in the sense of the previous definition.
\index{Perversity!infinite}

Homology and cohomology theories on a face set, $\ob{K}$, can be expressed with local systems, see \cite{MR736299} for instance in the simplicial case. We adapt these local systems to the filtered situation as follows. 

\begin{definition}\label{def:perversecochains}
A \emph{strict perverse cochain complex} is a cochain complex, $(C,d)$, 
\index{Strict perverse!cochain complexes}
in which each element, $\omega$, has a perverse degree,
$$\|\omega\|=(\|\omega\|_1,\ldots,\|\omega\|_n),$$
with $\|\omega\|_i\in\N\cup\{-\infty,\infty\}$,  such that
 $\|\omega+\omega'\|_{i}\leq \max(\|\omega\|_{i},\|\omega'\|_{i})$, for any $i\in\{1,\ldots,n\}$.

If $\ov{q}$ is a loose perversity, a cochain $\omega\in C$ is \emph{$\ov{q}$-admissible} if 
$\|\omega\|_{i}\leq \ov{q}(i)$, for any $i\in\{1,\ldots,n\}$. A  cochain $\omega\in C$ is of \emph{intersection for $\ov{q}$} (or of $\ov{q}$-intersection) if $\omega$ and $d\omega$ are $\ov{q}$-admissible. 
\index{Cochain!admissible for a perversity}
\index{Cochain!of intersection for a perversity}
We denote by
$$C_{\ov{q}}=\{\omega\in C\mid
\|\omega\|_{i}\leq \ov{q}(i)
\text{ and }
\|d\omega\|_{i}\leq \ov{q}(i) \text{ for any } i\in\{1,\ldots,n\}\},$$ the complex of $\ov{q}$-intersection cochains and by $H_{\ov{q}}(C)$ its homology.

\emph{A morphism of strict perverse cochain complexes,} $f\colon (C,d)\to (C',d)$, is a morphism of cochain complexes which decreases the perverse degree, i.e., $\|f(\omega)\|\leq \|\omega\|$, for any $\omega$.
\index{Strict perverse!cochain complexes!morphism of}

 A \emph{strict perverse differential graded algebra} (henceforth strict perverse \dga)
 \index{Strict perverse!DGA}
  is a strict perverse cochain complex and a DGA such that
$\|\omega\cdot \omega'\|_{i}\leq \|\omega\|_{i}+\|\omega'\|_{i}$, for any $i\in\{1,\ldots,n\}$.
A morphism of strict perverse cochain complexes, compatible with the products, is called a \emph{morphism of strict perverse} DGA's.
\end{definition}

\begin{definition}\label{def:localsystemfiltered}
A \emph{strict perverse local system of cochains over a filtered face set}, $\ob{K}$,
\index{Strict perverse!cochain complexes!local system of}
 is a family of strict perverse cochain complexes, $\cM_\sigma$, indexed by the simplices $\sigma$ of $\ob{K}_{+}$, and a family of cochain maps, $\tilde{\partial}_i\colon \cM_\sigma\to \cM_{\partial_i\sigma}$, decreasing the perverse degree
(i.e., $\|\tilde{\partial}_{i} \omega\|_{j}\leq \|\omega\|_{j}$, for any $j\in \{1,\ldots, n\}$)
and such that $\tilde{\partial}_i\tilde{\partial}_j=\tilde{\partial}_{j-1}\tilde{\partial}_i$, if 
$i<j$ and $\partial_i\partial_j$ corresponds to $\delta_j\delta_i\in {\pmb\Delta}^{[n],+}_\cF$.

The \emph{space of global sections} of $\cM$ over $\ob{K}$ is the strict perverse cochain complex $\cM(\ob{K})$ defined as follows: 
\index{Space!of global sections}
an element $\omega\in \cM^j(\ob{K})$ is a function which assigns to each simplex $\sigma\in\ob{K}_{+}$ an element $\omega_\sigma\in \cM^j_\sigma$ such that
$\omega_{\partial_i\sigma}=\tilde{\partial}_i(\omega_\sigma)$ for all $\sigma$  and all face operators $\delta_i\in{\pmb\Delta}^{[n],+}_\cF$. The \emph{perverse degree of $\omega\in \cM(\ob{K})$} is the supremum 
\index{Perverse!degree!of a global section}
(possibly infinite)
of the perverse degrees of the $\omega_\sigma$, for all $\sigma\in\ob{K}_{+}$, i.e.,
$\|\omega\|=(
\sup_{\sigma}\|\omega_{\sigma}\|_{1},\ldots,
\sup_{\sigma}\|\omega_{\sigma}\|_{n})$.
The laws and differential on $\cM(\ob{K})$ are defined by
$
(\lambda\omega+\mu\omega')_{\sigma}=\lambda \omega_{\sigma}+\mu\omega'_{\sigma}$,
$(d\omega)_{\sigma}=d\omega_{\sigma}$.

If  $\cM$ is a local system  of strict perverse \dga's, the space of  global sections is a  
strict perverse
\dga, with the law
$(\omega\cdot\omega')_{\sigma}=\omega_{\sigma}\cdot\omega'_{\sigma}$.
\end{definition}

\begin{definition}\label{def:localsystemadmissible}
Let $\ov{q}$ be a  loose perversity and $\cM$ be a 
strict
perverse local system of cochains over a filtered face set $\ob{K}$.   
We denote by $\cM_{\ov{q}}({\ob{K}})$ the complex 
 of  global sections which are of intersection for $\ov{q}$ and by $H_{\ov{q}}^*({\ob{K}};\cM)$ its homology, called the \emph{intersection cohomology of ${\ob{K}}$ with coefficients in $\cM$, for the loose perversity $\ov{q}$.}
 \index{Intersection cohomology!with coefficients in a local system}
\end{definition} 

In the case of a \ffs~of formal dimension~0 (i.e., $n=0$) the 
strict
perverse local systems are the usual local systems of \cite{MR736299}.  

\medskip
Let $\cM$ be a 
strict
perverse local system on a filtered face set $\ob{K}$. Observe that any filtered face map, $f\colon \ob{L}\to \ob{K}$, induces a 
strict
perverse local system, $f^*{\cM}$, on $\ob{L}$, defined by
$$(f^*{\cM})_\sigma=\cM_{f\circ \sigma}.$$
Therefore, a 
strict
perverse local system, $\cM$, on $\ob{K}$ induces a 
strict
perverse local system on any filtered face subset of $\ob{K}$; we still denote by $\cM$ these induced systems.

\begin{definition}\label{def:universalsystem}
\index{Universal system!of cochains}
A \emph{universal system (of co\-chains)} is a 
contravariant
functor $F$ from the category
$\pmb\Delta$
to the category of cochain complexes, \emph{free as $R$-modules.} If $F$ takes its values in the category of \dga's, free as $R$-modules, we say that $F$ is a \emph{universal system of \dga's.}
\index{Universal system!of \dga's}
\end{definition}

A universal system of cochains  defines a local system of cochains on any \emph{face set,} $\ob{K}$, by setting
$F_\sigma=F(\Delta^{|\sigma|})$
and
$\tilde{\partial}_i=F(\delta_i)\colon F_\sigma\to F_{\partial_i\sigma}$.
The cochain complex of global sections is denoted $F(\ob{K})$. If $f\colon \ob{L}\to \ob{K}$ is a filtered face map, then 
$f^*F=F$ and we obtain a morphism of cochain complexes,
$F(f)\colon F(\ob{K})\to F(\ob{L})$.

\smallskip
The PL-forms of Sullivan and the cochains are examples of universal systems of \dga's. We recall their construction.

\begin{example}[Cochain algebra]\label{exam:simplicialcochain}
Let $R$ be a commutative ring.
We define a universal system of DGA's from the simplicial cochain complex, by setting 
$C^r([n])=C^r(\ob{\Delta}^n;R)$.
 If $\ob{K}$ is a face set, the  associated DGA of global sections, $C^*(\ob{K};R)$, is defined by:
\begin{itemize}
\item $C^j(\ob{K};R)$, is the free $R$-module of all set  maps  $f\colon \ob{K}_j\to R$,
\item the differential $\delta$ is given by 
$$(\delta f)(\sigma)=\sum_{i=0}^{j+1}(-1)^if(\partial_i\sigma),\,
f\in C^j(\ob{K};R),\,\sigma\in \ob{K}_{j+1},$$
\item the product of $f\in C^j(\ob{K};R)$ and $g\in C^k(\ob{K};R)$ is defined by
$$(f\cdot g)(\sigma)=f(\partial_j^F\sigma)\, g(\partial_k^B\sigma),\,
\sigma\in \ob{K}_{j+k},$$
where $\partial_j^F\colon \ob{K}_{j+k}\to \ob{K}_j$ and
$\partial_k^B\colon \ob{K}_{j+k}\to \ob{K}_k$ are given by
$\partial^F_j\sigma=(\partial_{j+1}\circ\cdots\circ\partial_{j+k})\sigma$
and
$\partial_k^B\sigma=(\partial_0\circ\cdots\circ \partial_0)\sigma$.
\end{itemize}
\end{example}

\begin{example}[Sullivan's polynomial forms]\label{exam:sullivanforms}
\index{Sullivan!polynomial forms of}
Let $R=\Q$. We define a universal system of CDGA's by setting
$$A_{PL}([n])=\land (t_0,\ldots,t_n,dt_0,\ldots,dt_n)/(t_0+\cdots +t_n=1,\,dt_0+\cdots +dt_n=0),$$ 
with $|t_i|=0$ and $|dt_i|=1$.  
Geometrically, the elements of $A_{PL}([n])$ are the polynomial differential forms, with rational coefficients, on the simplex $\Delta^n$. 
The face operator, $\delta_{i}\colon [n]\to [n+1]$, induces $A_{PL}(\delta_{i})\colon A_{PL}([n+1])\to A_{PL}([n])$, defined by: $A_{PL}(\delta_{i})(t_{k})$ is equal to $t_{k}$ if $k<i$, 0 if $k=i$ and $t_{k-1}$ if $k>i$.
The associated CDGA of global sections on a face set, $\ob{K}$, is denoted $A_{PL}(\ob{K})$.
\end{example}

We mention the existence of local systems which are not universal. For instance, if $\ob{f}\colon\ob{E}\to\ob{K}$ is a Kan fibration between face sets,  for any $\sigma\colon \Delta\to \ob{K}$, we denote by $\ob{E}_\sigma$ the pullback of $\ob{f}$ along $\sigma$. The association $\sigma\mapsto A_{PL}(\ob{E}_\sigma)$ is a local system over $\ob{K}$ which is the key tool in \cite{MR736299} and \cite{MR1653355}. These more general local systems are not considered here.

\medskip
We introduce now the fundamental notion of \emph{blow-up of a universal system}, based on the blow-up of a simplex which first appears in \cite{MR1143404}, see also \cite{MR2210257}.

\begin{definition}\label{def:blowupsimplex}
The  \emph{blow-up of a filtered simplex,}
\index{Blow-up!of a filtered simplex}
$\Delta=\Delta^{j_0}\ast\cdots\ast\Delta^{j_n}$, with $j_{n}\geq 0$,
is the map,
$$\mu\colon \widetilde{\Delta}=c\Delta^{j_0}\times\cdots\times c\Delta^{j_{n-1}}\times \Delta^{j_n}\to
\Delta=\Delta^{j_0}\ast\cdots\ast\Delta^{j_n},$$
defined by
\begin{eqnarray*}
\mu([y_0,s_0],\ldots,[y_{n-1},s_{n-1}],y_n)&=&
s_0y_0+(1-s_0)s_1y_1+\cdots\\
&&
+(1-s_0)\cdots (1-s_{n-2})s_{n-1}y_{n-1}\\&&
+(1-s_0)\cdots (1-s_{n-2})(1-s_{n-1})y_n,
\end{eqnarray*}
where  $c\Delta^k=\Delta^k\times [0,1]/\Delta^k\times \{0\}$ is the cone on $\Delta^k$, $[y_i,s_i]\in c\Delta^{j_i}$ and $y_n\in\Delta^{j_n}$. 
\end{definition}

The prism $\widetilde{\Delta}$ is sometimes also called the blow-up of  $\Delta=\Delta^{j_0}\ast\cdots\ast\Delta^{j_n}$.
The map $\mu$ \emph{induces a diffeomorphism} between the interior of the blow-up $\widetilde{\Delta}$ and the interior of the simplex $\Delta$.
In the previous representation of the elements of $\widetilde{\Delta}$, the face $\Delta^{j_i}\times\{1\}$ of the simplex $c\Delta^{j_i}$ corresponds to $s_i=1$.

\begin{remark}\label{rem:blowup}
The blow-up map, $\mu$, can also be defined as follows. We express a point of $c\Delta^{j_{i}}$ by
$(x_{i},t_{i})$, with $x_i=(x_{i,0},\ldots,x_{i,j_{i}})\in\R^{j_i+1}$, $t_i\in\R$, $t_{i}+\sum_{k=0}^{j_{i}}x_{i,k}=1$, and set
$$\mu((x_0,t_0),\ldots,(x_{n-1},t_{n-1}),x_n)
=
x_0+t_0x_1+t_0t_1x_2+\cdots +t_0\cdots t_{n-1}x_n.$$
In this context, the face $\Delta^{j_i}\times\{1\}$ of the simplex $c\Delta^{j_i}$ corresponds to the set of elements satisfying the equation $t_i=0$ and the cone point corresponds to $(0,1)$. In the case $\Delta^{j_{i}}=\emptyset$ with $i<n$, we have $c\Delta^{j_{i}}=\{(0,1)\}$. 
\end{remark}

%%%%%%%%%%%%%%%%%%%%%%%%%%%%%%%%%%%%%%%%%
\begin{example} We draw two explicit examples of the blow-up of a simplex. 

%=========================================================================
\definecolor{zzttqq}{rgb}{0.6,0.2,0.9}

\makebox{}

\medskip

\begin{tikzpicture}
\draw [color=black] (0,0)-- (1,0.5);
\draw [color=black] (1,0.5)--  (0,2);
\draw [color=black]  (0,2)-- (0,0);
\draw [color=black]  (1,0.5)-- (-2,0.5);
\draw [color=black]  (0,0)-- (-2,0.5);
\draw [color=black]  (0,2)-- (-2,0.5);
\fill [color=zzttqq] (-2,0.5) circle (3pt);
\draw[color=black] (-2.5,0.5) node {$\Delta^{j_0}$};
\draw[color=black] (-0.5,-1.2) node {$\Delta = \Delta^{j_0} *\Delta^{j_1}$  };
\draw[color=black] (2.5,-1.3) node {has for blow-up };
\draw[color=black] (5.5,-1.2) node {$\widetilde\Delta = c \Delta^{j_0} \times  \Delta^{j_1}$ };
\draw[color=black] (1.2,1.5) node {$\Delta^{j_1}$};
\draw[color=zzttqq] (2.5,2.5) node {$\left(\Delta^{j_0} \times \{1\} \right)\times \Delta^{j_1} $};
\draw[->] (0.8,1.3) -- (0.3,0.8);
\draw[color=zzttqq, ->] (2.5,2.2) -- (4.3,1.3);
\draw [color=black] (6,0)-- (7,0.5);
\draw [color=black] (7,0.5)--  (6,2);
\draw [color=black]  (6,2)-- (6,0);
\fill[color=red,fill=zzttqq,fill opacity=0.3] (4,0) -- (5,0.5) -- (4,2) -- (4,0) -- cycle;
\draw [color=zzttqq] (4,0)-- (5,0.5);
\draw [color=black] (5,0.5)--  (7,0.5);
\draw [color=black] (6,2)--  (4,2);
\draw [color=zzttqq]  (4,2)-- (4,0);
\draw [color=black]  (6,0)-- (4,0);
\draw [color=zzttqq]  (5,0.5)-- (4,2);
\draw[color=black] (7.2,1.5) node {$\Delta^{j_1}$};
\draw[->] (6.8,1.3) -- (6.3,0.8);
\draw[color=black] (5,-0.3) node {$c\Delta^{j_0}$};
\end{tikzpicture}

\vspace{1cm}

\begin{tikzpicture}
\draw [color=black] (0,0)-- (1,0.5);
\draw [color=black] (1,0.5)--  (0,2);
\draw [color=black]  (0,2)-- (0,0);
\draw [color=black]  (1,0.5)-- (-2,0.5);
\draw [color=red,very thick,dashed]  (0,0)-- (-2,0.5);
\draw [color=black]  (0,2)-- (-2,0.5);
\fill [color=zzttqq] (-2,0.5) circle (3pt);
\draw[color=black] (-2.5,0.5) node {$\Delta^{j_0}$};
\draw[color=black] (0,-0.2) node {$\Delta^{j_1}$};
\draw[color=black] (-0.5,-2) node {$\Delta = \Delta^{j_0} *\Delta^{j_1} *\Delta^{j_2}$  };
\draw[color=black] (3,-2.1) node {has for blow-up };
\draw[color=black] (6.5,-2) node {$\widetilde\Delta = c \Delta^{j_0} \times c\Delta^{j_1} \times \Delta^{j_2}$ };
\draw[color=black] (1,1.5) node {$\Delta^{j_2}$};
\draw[color=zzttqq] (2.5,2.5) node {$\left(\Delta^{j_0} \times \{1\} \right)\times c\Delta^{j_1} \times \Delta^{j_2}$};
\draw[color=zzttqq, ->] (2.5,2.2) -- (3.8,1.6);
\draw[color=red] (2.5,-1) node {$ c\Delta^{j_0}  \times \left(\Delta^{j_1} \times \{1\} \right)\times \Delta^{j_2}$};
\draw[color=red, ->] (3,-0.5) -- (5,0.3);
\draw [color=black] (6,0)-- (7,0.5);
\draw [color=black] (7,0.5)--  (6,2);
\draw [color=black]  (5,1.5)-- (6,2);
\fill[color=black,fill=zzttqq,fill opacity=0.2,dashed] (4,0) -- (5,0.5) -- (4,2) -- (3,1.5) -- (4,0) -- cycle;
\fill[color=black,fill=red,fill opacity=0.3] (4,0) -- (6,0)-- (5,1.5)  -- (3,1.5) -- (4,0) -- cycle;
\draw [color=zzttqq] (4,0)-- (5,0.5);
\draw [color=black] (5,0.5)--  (7,0.5);
\draw [color=black] (6,2)--  (4,2);
\draw [color=red,very thick]  (4,0)-- (3,1.5);
\draw [color=red,very thick,dashed]  (6,0)-- (4,0);
\draw [color=red,very thick,dashed]  (3,1.5)-- (5,1.5);
\draw [color=zzttqq]  (3,1.5)-- (4,2);
\draw [color=red,very thick,dashed] (6,0 )--  (5,1.5);
\draw [color=zzttqq]  (5,0.5)-- (4,2);
\draw[color=black] (7.2,1.5) node {$\Delta^{j_2}$};
\draw[color=black] (5,-0.3) node {$c\Delta^{j_0}$};
\draw[color=black] (7,0) node {$c\Delta^{j_1}$};
\end{tikzpicture}

%=========================================================================

%\vspace{1cm}
%
%%=========================================================================

The faces containing $\Delta^{j_{i}}\times\{1\}$ as a factor, which play a fundamental role in the next definition, have been shadowed in the previous drawings.
\end{example}
%%%%%%%%%%%%%%%%%%%%%%%%%%%%%

%\vskip 1cm

As already observed in \cite{MR1143404} in the case of differential forms,
any universal system of cochains, $F$, 
gives rise to a 
\emph{
strict
perverse local system of cochains,} $\tF$, on any filtered face set, $\ob{K}$, defined as follows.
For any simplex, $\sigma\colon\Delta^{j_0}\ast\cdots\ast \Delta^{j_n}\to\ob{K}_{+}$, we set
$$\tF_\sigma=F(c\Delta^{j_0})\otimes\cdots\otimes F(c\Delta^{j_{n-1}})\otimes F(\Delta^{j_n}).$$
%%%%%%%%%%%%%%%%%%%%%%%
%%%%%%%%%%%%%%%%
With a method similar to a construction of Brylinski (\cite{MR1197827}), any  $\omega_{\sigma}\in \tF_{\sigma}$
can be provided with an extra degree, called the \emph{perverse degree}, 
\index{Perverse!degree}
that we describe now.
Let 
$\ell\in \{1,\ldots,n\}$ such that  $\Delta^{j_{n-\ell}}\neq\emptyset$, the restriction of $\omega_{\sigma}$,
$$\omega_{\sigma,n-\ell}\in 
F(c\Delta^{j_0})\otimes\cdots\otimes F(\Delta^{j_{n-\ell}}\times\{1\})\otimes\cdots\otimes F(c\Delta^{j_{n-1}})\otimes F(\Delta^{j_n}),$$
can be written
$\omega_{\sigma,n-\ell}=\sum_k \omega'_{\sigma,n-\ell}(k)\otimes \omega''_{\sigma,n-\ell}(k)
 $, with 
 \begin{itemize}
\item $\omega'_{\sigma,n-\ell}(k)\in F(c\Delta^{j_0})\otimes\cdots\otimes F(c\Delta^{j_{n-\ell-1}})\otimes
F(\Delta^{j_{n-\ell}}\times\{1\})$ and
\item $\omega''_{\sigma,n-\ell}(k)\in  F(c\Delta^{j_{n-\ell+1}})\otimes\cdots\otimes F(\Delta^{j_n})$.
\end{itemize}

\begin{definition}\label{def:transversedegreeblowup}
If $\omega_{\sigma,n-\ell}\neq 0$, the \emph{$\ell$-perverse degree,} $\|\omega_{\sigma}\|_{\ell}$, of  
$\omega_{\sigma}$ is equal to 
$$\|\omega_\sigma\|_{\ell}=\sup_k\left\{ |\omega''_{\sigma,n-\ell}(k)|\text{ such that } \omega'_{\sigma,n-\ell}(k)\neq 0\right\},$$
where $|\omega''_{\sigma,n-\ell}(k)|$ is the degree of 
$\omega''_{\sigma,n-\ell}(k)$,
as an element of the graded module\linebreak
$F(c\Delta^{j_{n-\ell+1}})\otimes\cdots\otimes F(\Delta^{j_n})$.
If $\omega_{\sigma,n-\ell}= 0$ or $\Delta^{j_{n-\ell}}=\emptyset$, we set
$\|\omega_{\sigma}\|_{\ell}=-\infty$.
\end{definition}
Observe now that any face operator,
$\delta_{i}\colon \Delta^{j_{0}}\ast\cdots\ast\Delta^{j_{n}}\to
 \Delta^{k_{0}}\ast\cdots\ast\Delta^{k_{n}}\in {\pmb\Delta}^{[n],+}_\cF$,
 induces a chain map
 $${\delta}_{i}^*\colon F(c\Delta^{k_{0}})\otimes \cdots\otimes F(c\Delta^{k_{n-1}})\otimes F(\Delta^{k_{n}})
 \to F(c\Delta^{j_{0}})\otimes \cdots\otimes F(c\Delta^{j_{n-1}})\otimes F(\Delta^{j_{n}}),
 $$
which decreases the perverse degree. 
These chain maps are the operators,
$\tilde{\partial}_{i}\colon F_{\sigma}\to F_{\partial_{i}\sigma}$,
required in \defref{def:localsystemfiltered} and we have proved that \emph{$\tF$ is a 
strict
perverse local system of cochains over $\ob{K}$.}
%,
%
%

Observe that we have two notions of perverse degree, one for the simplices of a \ffs~(\defref{def:perversedegreesimplexffs}) and the previous one for global sections associated to  a universal system. As they qualify different objects, they are different in essence but we will specify the one in use when there is some possible ambiguity. They are also linked in some sense, as we show now in the following remark.

\begin{remark}\label{rem:degreealongfibre}
Let $\sigma\colon \Delta^{j_{0}}\ast\cdots\ast\Delta^{j_{n}}\to \ob{K}_{+}$.
In the case of differential forms, the $\ell$-perverse degree of $\omega_{\sigma}$ is the degree
along the fibers of $\omega_{\sigma,n-\ell}$ relatively to the projection (see \cite[Page 122]{MR0336650})
$$ 
c\Delta^{j_0}\times \cdots\times\left(\Delta^{j_{n-\ell}}\times\{1\}\right)\times\cdots\times c\Delta^{j_{n-1}}\times \Delta^{j_n} \to
c\Delta^{j_0}\times\cdots\times\left(\Delta^{j_{n-\ell}}\times\{1\}\right).$$
Because of the dimension of the fiber, the perverse degree, $\|\omega_{\sigma}\|$, of the form $\omega_{\sigma}$ and the perverse degree, $\|\sigma\|$, of the simplex $\sigma$ satisfy the inequality,
$$\|\omega_{\sigma}\|\leq (\dim\sigma)-1-\|{\sigma}\|.$$
\end{remark}

\begin{definition}\label{def:blowup}
Let $F$ be a universal system  
and  $\ob{K}$ be a filtered face set. The 
strict
perverse local system, $\tF$, described above, is called the \emph{blow-up} of $F$ over $\ob{K}$. 
\index{Blow-up!of a universal system}
\end{definition}

\begin{example}
The algebra of Sullivan's polynomial forms and the  cochain algebra of Examples \ref{exam:sullivanforms} and \ref{exam:simplicialcochain} give the main  examples of blow-ups used in this work, denoted respectively by $ \widetilde{A}_{PL}$ and $\tC^*$. The cochain complexes of $\ov{q}$-intersection are denoted  by $ \widetilde{A}_{PL,\ov{q}}$ and $\tC^*_{\ov{q}}$. 
The differential forms corresponding to $\ov{q}=\ov{0}$ are the forms introduced by A.~Verona in \cite{MR771120}. 
\begin{definition}\label{def:thomwhitney}
Let $R$ be a commutative ring and $\ob{K}$ be a \ffs.
The complex $\tC^*(\ob{K})$ over $R$
is called the \emph{Thom-Whitney complex} with coefficients in $R$. If $\ov{q}$ is a loose perversity, the homology of  $\tC^*_{\ov{q}}(\ob{K})$ is denoted
$H^*_{\TW,\ov{q}}(\ob{K};R)$, and called the \emph{Thom-Whitney cohomology} (henceforth TW-cohomology) of $\ob{K}$ with coefficients in $R$.
\index{TW!cohomology}
\index{Thom-Whitney|see{TW}}
\index{TW!complex}
\end{definition}
We compare it with the Goresky-MacPherson cohomology in \secref{sec:chaincochain}.
\end{example}

\begin{proposition}\label{prop:inducedbyfacemaps}
Let $F$ be a universal system.
Any filtered face map, ${f}\colon \ob{L}\to\ob{K}$, induces
 a  cochain map, ${f}^*\colon \tF(\ob{K})\to \tF(\ob{L})$,
defined by
$({f}^*\omega)_{\sigma}=\omega_{{f}(\sigma)}$.
Moreover, the map $f^*$ decreases the perverse degree and induces a cochain map
$f^*\colon \tF_{\ov{q}}(\ob{K})\to \tF_{\ov{q}}(\ob{L})$,
for any loose perversity $\ov{q}$. In the case of a universal system of DGA's, the map ${f}^*\colon \tF(\ob{K})\to \tF(\ob{L})$ is a morphism of strict perverse DGA's.
\end{proposition}

\begin{proof}
Let $\omega\in\tF(\ob{K})$ and $\sigma\in\ob{L}$.  Any map ${f}$ which is compatible with face operators satisfies
$$(f^*\omega)_{\partial_i\sigma}=\omega_{f(\partial_i\sigma)}=\omega_{\partial_if(\sigma)}=
\tilde{\partial}_i\omega_{f(\sigma)}=
\tilde{\partial}_i(f^*(\omega_{\sigma})).$$
Thus $f^*\omega$ is an element of $\tF(\ob{L})$. The compatibility with the perverse degree (or the laws of algebras) is direct.
\end{proof}

The next result is  the key point in the existence of quasi-isomorphisms. It is based on the following definitions introduced in \cite{MR736299}.

\begin{definition}\label{def:extendable}%
A universal  system, $F$, is \emph{extendable} if the restriction map,
\index{Universal system!extendable}
${F}({\Delta}^n)\to{F}({\partial\Delta}^n)$,
is surjective, for any $n\geq 1$.
\end{definition}

\begin{definition} \label{def:ofdifferentialcoeff}%
A universal  system, $F$, is of \emph{differential coefficients} if, for any $n$ and any $i$,  the face operator $\delta_i\colon \Delta^{n-1}\to \Delta^n$ induces an isomorphism
$ H(F(\Delta^n))\cong H(F(\Delta^{n-1}))$.
\index{Universal system!of differential coefficients}
\end{definition}

\begin{theorem}\label{thm:extendable}
Let $R$ be a commutative ring and $\ov{q}$ be a loose perversity. 
Let $(F,G)$ be a pair of  extendable universal  systems of differential coefficients, with a natural transformation,
$\psi\colon F\to G$, given by
$\psi_{\Delta^{i}}\colon F(\Delta^{i})\to G(\Delta^{i})$.
Then, there exists  a morphism
$\Psi\colon \tF_{\ov{q}}(\ob{K})\to \tG_{\ov{q}}(\ob{K})$, 
defined as follows: if $\omega \in \tF(\ob{K})$,
 $\sigma\colon \Delta^{j_0}\ast\cdots\ast \Delta^{j_n}\to \ob{K}_{+}$ and $\omega_{\sigma}
 =\omega_0\otimes\cdots\otimes\omega_n\in F(c\Delta^{j_0})\otimes\cdots\otimes F(c\Delta^{j_{n-1}})\otimes F(\Delta^{j_n})$, we set
 $(\Psi(\omega))_{\sigma}=\psi_{c\Delta^{j_{0}}}(\omega_{0})\otimes\cdots\otimes\psi_{\Delta^{j_{n}}}(\omega_{n})$.
 If $\psi$ is also a natural transformation of DGA's, then $\Psi$ induces a morphism, $\tF(\ob{K})\to \tG(\ob{K})$, of strict perverse DGA's.
 
 Moreover,  if $\psi_{\Delta^0}$ induces an isomorphism, % in homology in the case 
$H^0(\psi_{\Delta^0})\colon H^0(F(\Delta^0))\cong H^0(G(\Delta^0))\cong R$, then the  map, induced by $\Psi$ in homology, is an isomorphism,
$$\Psi^*\colon H^*_{\ov{q}}(\ob{K};\tF)
\xrightarrow[]{\cong}
%\stackrel{\cong}{\longrightarrow} 
H^*_{\ov{q}}(\ob{K};\tG).$$
\end{theorem}

If $R=\Q$, S. Halperin proves (\cite[Chapters 13 and 14]{MR736299}) that the two local systems of \exemref{exam:simplicialcochain} and  \exemref{exam:sullivanforms} are extendable  universal systems of differential coefficients.
Furthermore, the classical integration map,
$\int\colon A_{PL}(\Delta^n)\to C^*(\Delta^n)$,
 defines a morphism  between the blow-ups 
 of Sullivan's forms and the Thom-Whitney cochains,
$$\int\colon \widetilde{A}_{PL}(\ob{K})\to \widetilde{C}^*(\ob{K}).$$
In the case of face sets, Sullivan proves that this integration is a quasi-isomorphism, see \cite[Theorem 7.1]{MR0646078}. This  theorem of Sullivan and \thmref{thm:extendable} imply the next result.

\begin{corollary}\label{cor:Aplandcochains}
Let $R=\Q$, $\ob{K}$ be a filtered face set and $\ov{q}$ be a loose perversity. 
The integration map
$\int\colon \widetilde{A}_{PL,\ov{q}}(\ob{K})\to \tC^{*}_{\ov{q}} (\ob{K})$
induces an isomorphism in homology.
\end{corollary}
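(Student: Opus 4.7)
The plan is to deduce this corollary directly from \thmref{thm:extendable}, applied with $F = A_{PL}$, $G = C^*(-;\Q)$, and natural transformation given by the integration map $\psi_n = \int\colon A_{PL}(\Delta^n)\to C^*(\Delta^n;\Q)$. So the work is to verify the four hypotheses of that theorem and then to notice that $\Psi = \int$ at the level of blow-ups.

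First I would check that integration is indeed a natural transformation of universal local systems: naturality with respect to face operators $\delta_i$ follows from the change-of-variable formula (integration along the face $\partial_i\Delta^n$ is computed by pulling the form back along $\delta_i$), and that this is a cochain map at each $n$ is exactly Stokes' theorem on $\Delta^n$. Next, the two systems are extendable in the sense of \defref{def:extendable}: for $A_{PL}$ this is a well-known feature of Sullivan's polynomial forms (any PL-form on $\partial\Delta^n$ extends to $\Delta^n$, see \cite{MR0646078}), and for $C^*(-;\Q)$ the restriction $C^*(\Delta^n;\Q)\to C^*(\partial\Delta^n;\Q)$ is surjective since every cochain on a subcomplex of a simplex extends trivially by zero on the missing simplices. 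Both are of differential coefficients in the sense of \defref{def:ofdifferentialcoeff}: each face operator $\partial_i\colon \Delta^{n-1}\to \Delta^n$ is a homotopy equivalence, so the induced maps in cohomology are isomorphisms (both $H^*(A_{PL}(\Delta^n))$ and $H^*(\Delta^n;\Q)$ are concentrated in degree $0$ and equal to $\Q$).

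The condition on $\psi_0$ is now immediate: in the case $n=0$ we have $A_{PL}(\Delta^0) = \Q$ in degree $0$ and $C^*(\Delta^0;\Q) = \Q$ in degree $0$, and integration sends $1\in A_{PL}(\Delta^0)$ to the unique $0$-cochain of value $1$, hence is an isomorphism; moreover $H^0(A_{PL}(\Delta^0)) = H^0(C^*(\Delta^0;\Q)) = \Q$. Thus all hypotheses of \thmref{thm:extendable} are met.

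Applying \thmref{thm:extendable} gives a map $\Psi\colon \widetilde{A}_{PL,\ov{q}}(\ob{K}) \to \tC^*_{\ov{q}}(\ob{K})$ defined by $(\Psi\omega)_\sigma = \psi_{|\sigma|}(\omega_\sigma)$ inducing an isomorphism in cohomology. It remains only to observe that $\Psi$ coincides with the integration map of the corollary: on a tensor product $\omega_\sigma = \omega_0\otimes\cdots\otimes\omega_n \in A_{PL}(c\Delta^{j_0})\otimes\cdots\otimes A_{PL}(\Delta^{j_n})$, Fubini's theorem combined with the definition of integration on a product of simplices gives exactly $\int_{c\Delta^{j_0}\times\cdots\times\Delta^{j_n}}\omega_0\wedge\cdots\wedge\omega_n$, which is the formula given before the statement. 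I expect no serious obstacle — the whole point of formulating \thmref{thm:extendable} was precisely to make this kind of comparison a formal consequence of the classical Sullivan quasi-isomorphism; the only minor check is that the perverse degree is preserved by $\int$, which is clear because integration over the factors of the blow-up is computed fibrewise over the projections used to define $\|-\|_\ell$ in \exemref{exam:universalfiltered}.
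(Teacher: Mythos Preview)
Your proposal is correct and matches the paper's approach exactly: the paper states the corollary as an immediate consequence of Sullivan's classical theorem (that $\int\colon A_{PL}\to C^*$ is a quasi-isomorphism) together with \thmref{thm:extendable}, and you have simply written out the verification of the hypotheses of that theorem. One minor observation: since both systems are of differential coefficients, the hypothesis of \thmref{thm:extendable} only requires $\psi_0$ to be a quasi-isomorphism, which is trivial; so strictly speaking the full strength of Sullivan's theorem is not needed here, only the well-known extendability of $A_{PL}$ and Stokes' theorem for compatibility with differentials---your write-up reflects this correctly.
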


The compatibility with products, of the map induced by $\int$ in cohomology, is detailed in \propref{prop:producto}.

%%%%%%%%%%%%%%%%%%%%%  
\section{Proof of Theorem~A%\thmref{thm:extendable}
}\label{sec:proofofC} 
\begin{quote}
This section is devoted to the proof  of \thmref{thm:extendable}. For that, we introduce the notion of filtered theory of cochains that will be used also for the comparison between Goresky-MacPherson and Thom-Whitney cochains in \secref{sec:chaincochain}. All cochain complexes are over a commutative ring $R$.
\end{quote}

\begin{definition}\label{def:perversetheory}
A functor $C^*$ from the category of filtered face sets to the category of  cochain complexes, is a \emph{filtered theory of cochains} if the following properties are satisfied.
\index{Filtered!theory of cochains}
\begin{enumerate}[(a)]
\item {\sc Extension axiom.} For any \ffs, $\ob{K}$, any $r\geq 0$ and any $k\geq 0$, the restriction map,
$C^*(\ob{K}^{[r],k})\to C^*(\ob{K}^{[r],k-1})$,
is surjective. The kernel is denoted by $C^*(\ob{K}^{[r],k},\ob{K}^{[r],k-1})$.
\item {\sc Relative isomorphism axiom.} Each relative isomorphism (see \defref{def:relativeisomorphism}), of the shape 
$f\colon (\ob{K}^{[r],k},\ob{K}^{[r],k-1})\to (\ob{L}^{[r],k},\ob{L}^{[r],k-1})$, induces an isomorphism
$$C^*(\ob{K}^{[r],k},\ob{K}^{[r],k-1})\cong C^*(\ob{L}^{[r],k},\ob{L}^{[r],k-1}).$$
\item {\sc Wedge axiom.} 
Let $(\ob{K}_{i})_{i}$ be a family of \ffss~such that
the complementary subsets, $ \ob{K}_i^{[r],k}\backslash\ob{K}_i^{[r],k-1}$, are  disjoint. Then, there is
an isomorphism
$$C^*(\cup_i \ob{K}_i^{[r],k},\cup_i \ob{K}_i^{[r],k-1}))\cong\prod_iC^* (\ob{K}_i^{[r],k},\ob{K}_i^{[r],k-1}).$$
\item {\sc Filtered Dimension axiom.} For any filtered face set, $\ob{K}$, and  any $r$, we have
$H^m(C^*(\ob{K}^{[r]},\ob{K}^{[r],k}))=0$ if $m< k$.
\end{enumerate}
\end{definition}

Recall the \ffs~$\Delta^k\ast\ob{K}$ defined in \exemref{exam:simplejoin}.

\begin{definition}\label{def:mapconelike}
Let $R$ be a commutative ring and $\Psi\colon C^*\to D^*$ be a natural transformation between two filtered theories of cochains, such that 
$\Psi(\ob{K})\colon C^*(\ob{K})\to D^*(\ob{K})$
is a quasi-isomorphism for a \ffs~$\ob{K}$, of depth $v(\ob{K})<n$. 
The natural transformation $\Psi$ is
 \emph{cone-compatible for $\ob{K}$} if the map
$$\Psi(\Delta^k\ast\ob{K})\colon C^*(\Delta^k\ast\ob{K})\to D^*(\Delta^k\ast\ob{K})$$
is a quasi-isomorphism, for any $k\geq 0$.
\index{Cone-compatible}
\end{definition}

\begin{proposition}\label{prop:qipervestheory}
Let $R$ be a commutative ring. 
Let  $\Psi\colon C^*\to D^*$ be a  natural transformation between two filtered theories of cochains, cone-compatible for any \ffs~of depth strictly less than $n$,
and inducing an isomorphism in homology, \linebreak
$H^*(\Psi)\colon C^*(\{\vartheta\})\to D^*(\{\vartheta\})$,
for any singleton $\{\vartheta\}$, 
of any depth $\leq n$.
Then the cochain map $\Psi(\ob{K})\colon C^*(\ob{K})\to D^*(\ob{K})$
is a quasi-isomorphism for any filtered face set $\ob{K}=\ob{K}^{[n]}$.
\end{proposition}
 
\begin{proof}
Suppose the result is true for any filtered face set $\ob{L}$ such that $\ob{L}={\ob{L}}^{[r],k-1}$.
If $r=0$, $k=1$, the hypothesis of induction is satisfied by hypothesis and the wedge axiom. The extension axiom  gives a morphism of exact sequences 
$$
\xymatrix{ 
0\ar[r]&C^*({\ob{K}}^{[r],k},{\ob{K}}^{[r],k-1})\ar[r]\ar[d]_{f_1}
&C^*({\ob{K}}^{[r],k})\ar[r]\ar[d]^{f_2}&
C^*({\ob{K}}^{[r],k-1})\ar[r]\ar[d]^{f_3}&0\\
0\ar[r]&D^*({\ob{K}}^{[r],k},{\ob{K}}^{[r],k-1})\ar[r]
&D^*({\ob{K}}^{[r],k})\ar[r]&
D^*({\ob{K}}^{[r],k-1})\ar[r]&0,
}$$
where the morphisms $f_{i}$ are induced by $\Psi$, $i=1,\,2,\,3$.
The map $f_3$ is a quasi-isomorphism by induction. To prove that $f_2$ is one also, we are reduced to study the map $f_1$ induced between the kernels. 
Recall 
that $\cJ(\ob{K},{[r],k})$ is the set of $\tau=R_1(\sigma)$, when $\sigma\in\ob{K}^{[r],k}\backslash \ob{K}^{[r],k-1}$.  From \exemref{exam:relativeiso},
the wedge and the relative isomorphism axioms,  we get 
\begin{equation}\label{equa:relativesplit}
C^*( {\ob{K}}^{[r],k},{\ob{K}}^{[r],k-1})=
\prod_{\tau\in 
\cJ(\ob{K},{[r],k})}
C^*(\ob{K}(\tau), \ob{K}(\tau,\partial\tau))
\end{equation}
and a similar result for $D^*$. 

$\bullet$ Suppose first $k\neq 0$. 
From \propref{prop:relativehomeo} and the relative isomorphism axiom, there exist isomorphisms,
$$C^*(\ob{K}(\tau), \ob{K}(\tau,\partial\tau))\cong C^*(\Delta^k\ast\cLe(\ob{K},\tau),\partial \Delta^k\ast\cLe(\ob{K},\tau))$$
and
$$D^*(\ob{K}(\tau), \ob{K}(\tau,\partial\tau))\cong D^*(\Delta^k\ast\cLe(\ob{K},\tau),\partial \Delta^k\ast\cLe(\ob{K},\tau)).$$
As we have already observed, we have
$$\cLe(\ob{K},\tau)=(\cLe(\ob{K},\tau))^{[r-1]}
\text{ and }
\partial \Delta^k\ast\cLe(\ob{K},\tau)=(\partial \Delta^k\ast\cLe(\ob{K},\tau))^{[r],k-1}.$$
Therefore, from the induction hypothesis and the cone compatibility condition, we obtain quasi-isomorphisms, induced by 
$\Psi$,
$C^*(\Delta^k\ast\cLe(\ob{K},\tau))\to D^*(\Delta^k\ast\cLe(\ob{K},\tau))$
and
$C^*(\partial \Delta^k\ast\cLe(\ob{K},\tau))\to D^*(\partial \Delta^k\ast\cLe(\ob{K},\tau)),
$
which bring out a quasi-isomorphism, also induced by $\Psi$,
$$C^*(\Delta^k\ast\cLe(\ob{K},\tau),\partial \Delta^k\ast\cLe(\ob{K},\tau))\to
D^*(\Delta^k\ast\cLe(\ob{K},\tau),\partial \Delta^k\ast\cLe(\ob{K},\tau)).
$$

$\bullet$  If $k=0$, we have $\ob{K}(\tau,\partial\tau)=\cL(\ob{K},\tau)$ and
$\ob{K}^{[r],-1}=\ob{K}^{[r-1]}$. If $\cLe(\ob{K},\tau)\neq \emptyset$, the argument above is still valid. 
If $\cLe(\ob{K},\tau)= \emptyset$, then $\ob{K}(\tau)=\tau=\{\vartheta\}$ and we apply the hypothesis on the singletons.

Finally, we  have proved that $f_{1}$ is a quasi-isomorphism and the result is established in the case
${\ob{K}^{[r]}}=\ob{K}^{[r],k}$ for some~$k$.

The axiom of filtered dimension implies the nullity of  the relative cohomologies,\linebreak 
$H^m (C( {\ob{K}^{[r]}},{\ob{K}}^{[r],k}))$ and 
$H^m(D( {\ob{K}^{[r]}},{\ob{K}}^{[r],k}))$,
in any degree $m$ for $k$ great enough. Therefore,  we get the isomorphism 
$H^m(C({\ob{K}^{[r]}}))\cong H^m(D({\ob{K}^{[r]}}))$, for any $m$.
As $\ob{K}=\ob{K}^{[n]}$, the proof is done.
\end{proof}

 \propref{prop:qipervestheory} admits a variation on the connectivity of the involved \ffss~(used in \propref{prop:casonulo}) whose proof follows from an inspection of the previous arguments.
 
 \begin{proposition}\label{prop:qipervestheoryconnected}
 Let $R$ be a commutative ring. 
Let  $\Psi\colon C^*\to D^*$ be a  natural transformation between two filtered theories of cochains, cone-compatible for any \emph{connected} 
\ffs~of depth strictly less than $n$,
and inducing an isomorphism in homology, 
$H^*(\Psi)\colon C^*(\{\vartheta\})\to D^*(\{\vartheta\})$,
for any singleton $\{\vartheta\}$,
of any depth $\leq n$.
\index{Cone-compatible}
Then $\Psi(\ob{K})\colon C^*(\ob{K})\to D^*(\ob{K})$
is a quasi-isomorphism for any filtered face set $\ob{K}=\ob{K}^{[n]}$ whose expanded links,
$\cLe(\ob{K},\tau)$,
are connected.
 \end{proposition}
 
 We determine a class of universal local systems generating filtered theory of cochains by blow-ups.
 
\begin{proposition}\label{prop:systemlocalperversetheory} 
 Let $R$ be a commutative ring. 
Let $F$ be  an extendable universal  system of differential coefficients, such that $H^0(F(\{\vartheta\}))=R$, and $\ov{q}$ be a loose perversity. Then, the cochain complex $\tF_{\ov{q}}$ is a filtered theory of cochains.
\index{Filtered!theory of cochains}
\end{proposition}
 
\begin{proof}
We follow the properties of \defref{def:perversetheory}.

\medskip
$\bullet$ {\sc Extension axiom.} 
Let $\ob{K}$ be a \ffs~and $\tau\in \cJ(\ob{K},{[r],k})$.
We first prove that
$\tF_{\ov{q}}(\ob{K}(\tau))\to \tF_{\ov{q}}( \ob{K}(\tau,\partial\tau))$ is surjective. 

--- Begin with  $k>0$. 
Let $\sigma\in \ob{K}(\tau)\backslash \ob{K}(\tau,\partial\tau)$ and $\omega\in \tF(\ob{K}(  \tau,\partial\tau))$. 
If $\sigma\colon \Delta^k \ast \Delta^{j_{n-r+1}}\ast\cdots\ast \Delta^{j_n}\to \ob{K}_{+}$, we denote by
${\omega}_{\partial'\sigma}$ the restriction of $\omega$ to
$\partial'\sigma=\partial\Delta^k\ast  \Delta^{j_{n-r+1}}\ast\cdots\ast \Delta^{j_n}$. This restriction can be decomposed as
${\omega_{\partial'\sigma}}=\sum_m\omega'_m\otimes \omega''_m$
with $\omega'_m\in F(c\partial \Delta^k)$ and
$\omega''_m\in F(c\Delta^{j_{n-r+1}})\otimes \cdots\otimes F(\Delta^{j_n})$.
By hypothesis and \cite[Proposition 12.21]{MR736299}, the map $F(c\Delta^k)\to F(c\partial \Delta^k)$ admits a section,  $\rho$, and we may define 
$\mu(\omega)_{\sigma}=\sum_m\rho(\omega'_m)\otimes \omega''_m$. By construction, the correspondence $\sigma\mapsto \mu(\omega)_{\sigma}$ is compatible with face operators and we get a global section $\mu(\omega)\in \tF(\ob{K}(\tau))$.

As in the expression of $\sigma\colon \Delta^k \ast \Delta^{j_{n-r+1}}\ast\cdots\ast \Delta^{j_n}\to \ob{K}_{+}$, the euclidean simplices, $\Delta^{j_{i}}$, are empty for $i<n-r$, we have $\|\mu(\omega)_{\sigma}\|_{\ell}=-\infty$ if $r<\ell$. If $\ell=r$, by definition of the perverse degree of a global section, we consider the restriction,
$\beta=\sum_{m}\beta'_{m}\otimes \omega''_{m}$ of $\mu(\omega)_{\sigma}$ to
$F(\Delta^k\times\{1\})\otimes F(c\Delta^{j_{n-r+1}})\otimes\cdots\otimes F(\Delta^{j_{n}})$. The $r$-perverse degree of 
$\mu(\omega)_{\sigma}$ is the maximum of the (cohomological) degrees of the $ \omega''_{m}$'s, with $\beta'_{m}\neq 0$. As $k>0$, we have $\partial\Delta^k\neq\emptyset$ and 
 $\|\mu(\omega)_{\sigma}\|_{r} \leq\|\omega_{\partial'\sigma}\|_{r}$. A similar argument works for $\ell<r$ and we have proved
$\|\mu(\omega)_{\sigma}\|\leq \|\omega_{\partial'\sigma}\|$. 
As $\omega$ is of $\ov{q}$-intersection, we have
\begin{equation}\label{equa:extensionaxiom}
\|\mu(\omega)\|_{r}\leq \ov{q}(r) \text{ and } \|\mu(d\omega)\|_{r}\leq \ov{q}(r).
\end{equation}
This proves the $\ov{q}$-admissibility of $\mu(\omega)$ and gives also
 \begin{eqnarray*}
 \|d\mu(\omega)_{\sigma}\|_{r}&\leq_{(1)}&\max(\|d\mu(\omega)_{\sigma}-\mu(d\omega)_{\sigma}\|_{r},\|\mu(d\omega)_{\sigma}\|_{r})\\
 &\leq_{(2)}&\max(\|\sum_{m}(d\rho(\omega'_{m})-\rho(d\omega'_{m}))\otimes \omega''_{m}\|_{r},\ov{q}(r))\\
 &\leq_{(3)}&\max(\ov{q}(r),\ov{q}(r))=\ov{q}(r),
 \end{eqnarray*}
where 
\begin{itemize}
\item $\leq_{(1)}$ comes from the compatibility condition of the perverse degree with sums, see \defref{def:perversecochains},
\item $\leq_{(2)}$ is the replacement of $\mu(\omega)_{\sigma}$ by its value,
\item $\leq_{(3)}$ is a computation similar at the previous one used in the proof of the displayed formula (\ref{equa:extensionaxiom}).
\end{itemize}
Therefore,  we get an induced surjective map,
$\tF_{\ov{q}}(\ob{K}(\tau))\to \tF_{\ov{q}}( \ob{K}(\tau,\partial\tau))$.

--- We consider now the case $k=0$. 
Here, in perverse degree $r$, we have $\|\omega_{\partial'\sigma}\|_{r}=-\infty$. Therefore, for having the inequality
$\|\mu(\omega)_{\sigma}\|\leq \|\omega_{\partial'\sigma}\|$,
we need a section,
$\mu(\omega)_{\sigma}$,
with a vanishing restriction to
$F(\{\vartheta\}\times \{1\})\otimes F(c\Delta^{j_{n-r+1}})\otimes\cdots\otimes F(\Delta^{j_{n}})$.
We proceed now to the construction of this section.

As $H^0(F(\{\vartheta\})=R$, there are two cocycles in $F^0(\{\vartheta\})$ which correspond to $0\in R$ and $1\in R$, respectively. We denote them by~0 and~1.
The surjectivity of $F(\Delta^1)=F(c\{\vartheta\})\to
F(\partial \Delta^1)=F((\{\vartheta\}\times\{1\})\sqcup( \{\vartheta\}\times \{0\}))$
 implies the existence of
${\alpha}\in F^0(c\{\vartheta\})$ such that
$${\alpha}_{|\{\vartheta\}\times\{1\}}=0 \text{ and }
{\alpha}_{|\{\vartheta\}\times\{0\}}=1. 
$$
Let $\sigma\colon \{\vartheta\}\ast \Delta^{j_{n-r+1}}\ast\cdots\ast \Delta^{j_n}\to \ob{K}_{+}$ be a simplex of $\ob{K}(\{\vartheta\})$.
We identify the product $c\Delta^{j_{n-r+1}}\times\cdots\times \Delta^{j_n}$
to the subset
$\left(\{\vartheta\}\times\{0\}\right)\times c\Delta^{j_{n-r+1}}\times\cdots\times \Delta^{j_n}$
of
$c\{\vartheta\}\times c\Delta^{j_{n-r+1}}\times\cdots\times \Delta^{j_n}$ and denote by $\sigma'$ the restriction of $\sigma$ to
$c\Delta^{j_{n-r+1}}\times\cdots\times \Delta^{j_n}$.
Let $\omega\in\widetilde{F}(\cL(\ob{K},\{\vartheta\}))$. We set $\mu(\omega)_{\sigma}={\alpha}\otimes \omega_{\sigma'}$. This construction  is compatible with the face operators and define an element $\mu(\omega)\in \tF(\ob{K}(\{\vartheta\}))$ whose restriction to $\widetilde{F}(\cL(\ob{K},\{\vartheta\}))$ coincides with $\omega$. As  
the restriction of ${\alpha}$ and $d{\alpha}$ to $\{\vartheta\}\times\{1\}$
are equal to 0, we have, for any $\omega'\in\widetilde{F}(\cL(\ob{K},\{\vartheta\}))$, 
$$
\|d{\alpha}\otimes \omega'\|_{\ell}\leq \|{\alpha}\otimes \omega'\|_{\ell}=
\left\{
\begin{array}{lcl}
-\infty&\text{ if } & \ell\geq r,\\
\|\omega'\|_{\ell}&\text{ if }& \ell <r.
\end{array}\right.
$$
By applying these inequalities alternatively to $\omega'=\omega_{\sigma'}$ and $\omega'=d\omega_{\sigma'}$, we obtain
\begin{eqnarray*}
\|\mu(\omega)_{\sigma}\|
&=&
\|{\alpha}\otimes \omega_{\sigma'}\|\leq \|\omega_{\sigma'}\|,\\
\|d\mu(\omega)_{\sigma}\|
&\leq&
\max\left(\|d{\alpha}\otimes\omega_{\sigma'}\|,\|{\alpha}\otimes d\omega_{\sigma'}\|\right)\\
&\leq&
\max\left(\|\omega_{\sigma'}\|,\|d\omega_{\sigma'}\|\right).
\end{eqnarray*}
The surjectivity of
$\tF_{\ov{q}}(\ob{K}(\{\vartheta\}))\to \tF_{\ov{q}}(\cL(\ob{K},\{\vartheta\}))$
follows.

\medskip
--- We prove now the surjectivity of
$\tF_{\ov{q}}(\ob{K}^{[r],k})\to \tF_{\ov{q}}(\ob{K}^{[r],k-1})$, by detailing only the case $k>0$, the case $k=0$ being similar.
Observe,  from \corref{cor:intersectionKtau}, that $\ob{K}^{[r],k}$ can be obtained as a push-out, built on disjoint unions,
\begin{equation}\label{equa:pushout}
\xymatrix{ 
\sqcup_{\tau\in \cJ(\ob{K},{[r],k})}\ob{K}(\tau,\partial \tau)\ar[r]\ar[d]&\sqcup_{\tau\in\cJ(\ob{K},{[r],k})}\ob{K}(\tau)\ar[d]\\
\ob{K}^{[r],k-1}\ar[r]&
\ob{K}^{[r],k}.
}\end{equation}
By definition, the  blow-up $\tF_{\ov{q}}$  is compatible with colimits and we get a pullback
$$\xymatrix{ 
\tF_{\ov{q}}\left(\sqcup_{\tau\in \cJ(\ob{K},{[r],k})}\ob{K}(\tau,\partial \tau)\right)&\ar[l]
\tF_{\ov{q}}\left(\sqcup_{\tau\in\cJ(\ob{K},{[r],k})}\ob{K}(\tau)\right)\\
\tF_{\ov{q}}\left(\ob{K}^{[r],k-1}\right)\ar[u]&
\tF_{\ov{q}}\left(\ob{K}^{[r],k}\right).\ar[l]\ar[u]
}$$
By definition also, we have 
$\tF_{\ov{q}}\left(\sqcup_{\tau\in \cJ(\ob{K},{[r],k})}\ob{K}(\tau,\partial \tau)\right)=
\prod_{\tau\in \cJ(\ob{K},{[r],k})} \tF_{\ov{q}}(\ob{K}(\tau,\partial \tau)$
and \linebreak
$\tF_{\ov{q}}\left(\sqcup_{\tau\in\cJ(\ob{K},{[r],k})}\ob{K}(\tau)\right)=
\prod_{\tau\in \cJ(\ob{K},{[r],k})} \tF_{\ov{q}}(\ob{K}(\tau))$.
The surjectivity of the map
$\tF_{\ov{q}}(\ob{K}(\tau))\to \tF_{\ov{q}}( \ob{K}(\tau,\partial\tau))$
implies the surjectivity of
$\tF_{\ov{q}}(\ob{K}^{[r],k})\to \tF_{\ov{q}}(\ob{K}^{[r],k-1})$.

\smallskip
Moreover, observe that, if $\ob{L}$ and $\ob{L}'$ are \ffss, connected by a push-out of filtered face maps,
\begin{equation}\label{equa:addcell}\xymatrix{ 
\sqcup_{\tau\in I}\ob{K}(\tau,\partial \tau)\ar[r]\ar[d]&\sqcup_{\tau\in I}\ob{K}(\tau)\ar[d]\\
\ob{L'}\ar[r]&
\ob{L},
}\end{equation}
as in (\ref{equa:pushout}), the previous argument implies the surjectivity of $\tF_{\ov{q}}(\ob{L})\to\tF_{\ov{q}}(\ob{L'})$. 

\medskip
$\bullet$ {\sc Relative isomorphism axiom.} \\ A relative isomorphism 
$f\colon (\ob{K}^{[r],k},\ob{K}^{[r],k-1})\to (\ob{L}^{[r],k},\ob{L}^{[r],k-1})$
induces a morphism of complexes
$$f^*\colon \tF_{\ov{q}}(\ob{L}^{[r],k},\ob{L}^{[r],k-1})\to \tF_{\ov{q}}(\ob{K}^{[r],k},\ob{K}^{[r],k-1}).
$$
If $\eta\in \tF_{\ov{q}}(\ob{K}^{[r],k},\ob{K}^{[r],k-1})$ and $\sigma\in\ob{L}^{[r],k}$, we set
$$\omega_{\sigma}=\left\{
\begin{array}{ccl}
\eta_{f^{-1}(\sigma)}&\text{ if }& \sigma\in \ob{L}^{[r],k}\backslash \ob{L}^{[r],k-1},\\
0&\text{ if }& \sigma\in \ob{L}^{[r],k-1}.
\end{array}\right.$$
As $f^{-1}(\sigma)$ exists and is uniquely defined, the element $\omega$  is well defined.
We have to check the compatibility with face operators. Let $\partial_j$ be a face operator and $\sigma\in\ob{L}^{[r],k}$.
\begin{itemize}
\item if $\partial_j\sigma\in \ob{L}^{[r],k}\backslash \ob{L}^{[r],k-1}$, then we have
$$\omega_{\partial_j\sigma}=\eta_{f^{-1}(\partial_j\sigma)}
=\eta_{\partial_jf^{-1}(\sigma)}=\tilde{\partial}_j\eta_{f^{-1}(\sigma)}=\tilde{\partial}_j\omega_{\sigma}.$$
(We used ${\partial_jf^{-1}(\sigma)}={f^{-1}(\partial_j\sigma)}$ which can be verified by composing with $f$.)
\item if $\partial_j\sigma\in \ob{L}^{[r],k-1}$, then we have
$\tilde{\partial}_j\omega_{\sigma}=\tilde{\partial}_j\eta_{f^{-1}(\sigma)}=\eta_{\partial_jf^{-1}(\sigma)}=0$, because ${\partial_jf^{-1}(\sigma)}\in \ob{K}^{[r],k-1}$. (If ${\partial_jf^{-1}(\sigma)}\in  \ob{K}^{[r],k}\backslash \ob{K}^{[r],k-1}$ then we should have $\partial_j\sigma\in \ob{L}^{[r],k}\backslash \ob{L}^{[r],k-1}$.)
By construction, we have also $\omega_{\partial_j\sigma}=0$.
\end{itemize}

\medskip
$\bullet$ {\sc Wedge axiom.} The restriction maps give a morphism of complexes
$$\tF_{\ov{q}}(\cup_i\ob{K}_i^{[r],k},\cup_i\ob{K}_i^{[r],k-1})\to
\prod_i\tF_{\ov{q}}(\ob{K}_i^{[r],k},\ob{K}_i^{[r],k-1}).$$
This morphism  admits an inverse: for any family $(\omega_i)_i\in \prod_i\tF_{\ov{q}}(\ob{K}_i^{[r],k},\ob{K}_i^{[r],k-1})$, we define a global section $\omega\in \tF_{\ov{q}}(\cup_i\ob{K}_i^{[r],k},\cup_i\ob{K}_i^{[r],k-1})$ by
$$\omega_{\sigma}=\left\{
\begin{array}{cl}
(\omega_i)_{\sigma}&\text{ if } \sigma\in \ob{K}_i^{[r],k}\backslash\ob{K}_i^{[r],k-1},\\
0&\text{ otherwise.}
\end{array}\right.$$
The index $i$ such that $ \sigma\in \ob{K}_i^{[r],k}\backslash\ob{K}_i^{[r],k-1}$ being unique, the element $\omega_{\sigma}$  is well defined. We have to check its compatibility with face operators. Let $ \sigma\in \ob{K}_i^{[r],k}$.
\begin{itemize}
\item If $\partial_j\sigma\in \ob{K}_i^{[r],k}\backslash\ob{K}_i^{[r],k-1}$, we have
$\omega_{\partial_j\sigma}=(\omega_i)_{\partial_j\sigma}=\tilde{\partial}_j(\omega_i)_{\sigma}=\tilde{\partial}_j\omega_{\sigma}$.
\item If $\partial_j\sigma\in \ob{K}_i^{[r],k-1}$, we have $\omega_{\partial_j\sigma}=0$ and
$\tilde{\partial}_j\omega_{\sigma}=\tilde{\partial}_j(\omega_i)_{\sigma}=(\omega_i)_{\partial_j\sigma}=0$, because
$\omega_i\in \tF_{\ov{q}}(\ob{K}_i^{[r],k},\ob{K}_i^{[r],k-1})$.
\end{itemize}

\medskip
$\bullet$ {\sc Filtered dimension axiom.}
We have to prove the nullity of the relative cohomology with coefficients in $\tF_{\ov{q}}$, i.e.,
$H^m_{\ov{q}}( {\ob{K}^{[r]}},{\ob{K}}^{[r],k};\tF)=0$ for $m<k$.
We establish it in three steps.
 
\smallskip
(i) If $\tau\in \cJ(\ob{K},{[r],k})$ and  $m<k$, we prove$$H^m_{\ov{q}}(\ob{K}(\tau), \ob{K}(\tau,\partial\tau);\tF)=0.$$ 
We denote by $\varLambda^{m,k}$ the subcomplex of $\Delta^k$ generated by the simplices containing the vertex $v_0$ and of dimension less than, or equal, to $m$ and by $\varLambda^{m,\tau}$ the restriction of $\tau$ to $\varLambda^{m,k}$. 
(Observe that $\varLambda^{k,k}=\Delta^k$ and $\varLambda^{k,\tau}=\tau$.)
 Let $f_i\colon \Delta^m\to \varLambda^{m,k}$, $i\in I_{m}$, the $m$-dimensional simplices of $\varLambda^{m,k}$. We  set $\tau_i=\varLambda^{m,\tau}\circ f_i$ and
 define $\ob{K}(\varLambda^{m,\tau})=\cup_{i\in I} \ob{K}(\tau)(\tau_i)$,
 where (see \defref{def:linkstar})
 $$\ob{K}(\tau)(\tau_{i})=\{\xi\in\ob{K}(\tau)\mid \exists \sigma\in\ob{K}(\tau) \text{ with } \xi \vartriangleleft \sigma \text{ and } R_{1}(\sigma)=\tau_{i}\}.$$
 We first prove that
\begin{equation}\label{equa:relative}
H^*_{\ov{q}}(\ob{K}(\tau),\ob{K}(\varLambda^{m,\tau});\tF)=0, \text{ for any } m,\, 0\leq m\leq  k.
\end{equation}
If $\tau\in \cJ(\ob{K},{[r],0})$, then $\varLambda^{0,\tau}=\tau$ and we have $H^*_{\ov{q}}(\ob{K}(\tau),\ob{K}(\varLambda^{0,\tau});\tF)=0$ as required. We use a first induction on the dimension $|\tau|$ of $\tau$, by supposing that
(\ref{equa:relative})
is true for any $\tau'$ of dimension strictly less than $|\tau|$. 

Secondly, we do a decreasing induction assuming
$H^*_{\ov{q}}(\ob{K}(\tau),\ob{K}(\varLambda^{j,\tau});\tF)=0$ for some $j$, $j\leq k$.
As $\varLambda^{k,\tau}=\tau$, we have 
$H^*_{\ov{q}}(\ob{K}(\tau),\ob{K}(\varLambda^{k,\tau});\tF)=0$ and the inductive property is fulfilled for $j=k$. The proof of (\ref{equa:relative}) is reduced to
\begin{equation}\label{equa:induction2}
H^*_{\ov{q}}(\ob{K}(\tau),\ob{K}(\varLambda^{j-1,\tau});\tF)=0.
\end{equation}
Using the (already established) wedge axiom, we have an isomorphism
$$\tF_{\ov{q}}(\ob{K}(\varLambda^{j,\tau}), \ob{K}(\varLambda^{j-1,\tau}))\cong
\prod_{i\in I_{j}}
\tF_{\ov{q}}(\ob{K}(\tau_i),\ob{K}(\varLambda^{j-1,\tau_i})).$$
The first induction hypothesis (on the dimension of $\tau$) gives
$$H^*_{\ov{q}}(\ob{K}(\tau_i),\ob{K}(\varLambda^{j-1,\tau_i});\tF)=0,$$
from which we deduce
\begin{equation}\label{equa:relative2}
H^*_{\ov{q}}(\ob{K}(\varLambda^{j,\tau}),\ob{K}(\varLambda^{j-1,\tau});\tF)=0.
\end{equation}
By combining the short exact sequence of pairs with the snake lemma, the extension axiom generates a short exact sequence
$$
\xymatrix@1{
0\ar[r]&
\tF_{\ov{q}}(\ob{K}(\tau),\ob{K}(\varLambda^{j,\tau}))
\ar[r]&
\tF_{\ov{q}}(\ob{K}(\tau),\ob{K}(\varLambda^{j-1,\tau}))
\ar[r]&
\tF_{\ov{q}}(\ob{K}(\varLambda^{j,\tau},\ob{K}(\varLambda^{j-1,\tau}))
\ar[r]&
0.
}$$
This sequence and the equality (\ref{equa:relative2}) imply
$$
H^*_{\ov{q}}(\ob{K}(\tau),\ob{K}(\varLambda^{j-1,\tau});\tF)\cong
H^*_{\ov{q}}(\ob{K}(\tau),\ob{K}(\varLambda^{j,\tau});\tF),
$$
which is trivial with the second (decreasing) induction. Finally, we have proved the equality
(\ref{equa:relative}).

\smallskip
Consider now $\iota\colon \Delta^{k-1}\to\Delta^k$, the face opposite to $v_0$ and set $\tau'=\tau\circ\iota$. With the extension axiom, we have an exact sequence,
$$
{\xymatrix@=16pt{
0\ar[r]&
\tF_{\ov{q}}(\ob{K}(\tau),\ob{K}(\tau,\partial\tau))
\ar[r]&
\tF_{\ov{q}}(\ob{K}(\tau),\ob{K}(\varLambda^{k-1,\tau}))
\ar[r]&
\tF_{\ov{q}}(\ob{K}(\tau'),\ob{K}(\tau',\partial\tau'))
\ar[r]&
0.}
}$$
The equality (\ref{equa:relative}) implies the existence of an isomorphism of degree +1 between the relative cohomologies involving $\tau$ and $\tau'$.
Starting from $\tau'=\{\vartheta\}$ and
$\tF_{\ov{p}}(\ob{K}(\tau'),\ob{K}(\tau',\partial\tau'))=
\tF_p(\ob{K}(\{\vartheta\}),\cL(\ob{K},\{\vartheta\}))$,
whose cohomology is zero in negative degree, we deduce
$$
H^m_{\ov{q}}(\ob{K}(\tau),\ob{K}(\tau,\partial\tau);\tF)=0, \text{ if } m< |\tau|=k,
$$ 
and the result follows.

\smallskip
(ii) As 
the map
 $(\cup_{\tau\in \cJ(\ob{K},{[r],k})} \ob{K}(\tau), \cup_{\tau\in \cJ(\ob{K},{[r],k})} \ob{K}(\tau,\partial\tau))\to
 (\ob{K}^{[r],k},\ob{K}^{[r],k-1})$
 is a relative isomorphism, the  previous step, the wedge and the relative isomorphism axioms imply
 $$H^m_{\ov{q}}( {\ob{K}}^{[r],k},{\ob{K}}^{[r],k-1};\tF))=
 \prod_{\tau\in \cJ(\ob{K},{[r],k})} H^m_{\ov{q}}(\ob{K}(\tau),\ob{K}(\tau,\partial\tau),\tF)=0,$$
for any $m$, $m<k$. 

\smallskip
(iii) Finally, we prove $H^m_{\ov{q}}( {\ob{K}^{[r]}},{\ob{K}}^{[r],k};\tF)=0$,  if $m< k$.\\
Let $\omega\in \tF_{\ov{q}}( {\ob{K}^{[r]}},{\ob{K}}^{[r],k})$, of degree $m$, with $d\omega=0$.  The restriction $\omega_{|{\ob{K}}^{[r],k+1}}$ is a coboundary (with step (ii)) and there exists $\psi_0\in \tF_{\ov{q}}( {\ob{K}}^{[r],k+1},{\ob{K}}^{[r],k})$ such that
$\omega_{|{\ob{K}}^{[r],k+1}}=d\psi_0$. 
By iterating the extension axiom, the global section $\psi_0$ can be extended in a global section defined on ${\ob{K}^{[r]}}$ that we still denote $\psi_0$, i.e., we have $\psi_{0}\in \tF_{\ov{q}}(\ob{K}^{[r]},\ob{K}^{[r],k})$ and 
$\omega-d\psi_{0}\in \tF_{\ov{q}}(\ob{K}^{[r]},\ob{K}^{[r],k+1})$.

Suppose that, for any $i$, $0\leq i\leq j$, we have built global sections
$\psi_{i}\in \tF_{\ov{q}}(\ob{K}^{[r]},\ob{K}^{[r],k+i})$
such that
$\omega-d(\sum_{i=0}^j\psi_{i})\in \tF_{\ov{q}}(\ob{K}^{[r]},\ob{K}^{[r],k+j+1})$.
The restriction of
$\omega-d(\sum_{i=0}^j\psi_{i})$
to $\ob{K}^{[r],k+j+2}$ being a coboundary (by step (ii)), there exists
$\psi_{j+1}\in
\tF_{\ov{q}}(\ob{K}^{[r],k+j+2},\ob{K}^{[r],k+j+1})$
such that
$d\psi_{j+1}=(\omega-d(\sum_{i=0}^j\psi_{i}))_{|\ob{K}^{[r],k+j+2}}$. We extend 
$\psi_{j+1}$ in a global section defined on $\ob{K}^{[r]}$, still denoted $\psi_{j+1}$. This section verifies
$$\psi_{j+1}\in
\tF_{\ov{q}}(\ob{K}^{[r]},\ob{K}^{[r],k+j+1})
\text{ and }
\omega-\sum_{i=0}^{j+1}d\psi_{i}\in
\tF_{\ov{q}}(\ob{K}^{[r]},\ob{K}^{[r],k+j+2}).
$$
 We set $\psi=\sum_{i=0}^{\infty} \psi_i$. When we apply $\psi$ to a simplex $\sigma\in\ob{K}^{[r]}_{+}$, the sum $\psi_{\sigma}$ is finite and we have $\psi\in \tF_{\ov{q}}( {\ob{K}^{[r]}},{\ob{K}}^{[r],k})$ with $\omega=d\psi$.
\end{proof}

\begin{definition}\label{def:troncature}
For any positive integer $s$ and any cochain complex, $C^*$, over the commutative ring $R$, the \emph{$s$-truncation} of $C^*$ is the cochain complex, $\tau_{\leq s} C^*$, defined  by
\index{Truncation@$s$-Truncation}
$$(\tau_{\leq s} C)^r=\left\{
\begin{array}{cl}
C^r&\text{ if } r<s,\\
\cZ C^s&\text{ if } r=s,\\
0&\text{ if } r>s,
\end{array}\right.$$
where $\cZ C^s$ denotes the $R$-module of cocycles of $C$ in degree $s$.
\end{definition}

\begin{proposition}\label{prop:blowupcohomologycone} 
Let $R$ be a commutative ring and $\ell$ be an integer such that $1\leq \ell\leq n$.
Let $F$ be  an extendable universal  system of differential coefficients and $\ov{q}$ be a loose perversity.
Let $\ob{K}$ be a filtered face set of depth $v(\ob{K})\leq \ell-1$ and $\Delta^k\ast\ob{K}$ be a join of depth $\ell$ with $k\geq 0$.
Then we have
$$H^i_{\ov{q}}(\Delta^k\ast\ob{K};\tF)=
H^i(F(\{\vartheta\})\otimes \tau_{\leq\ov{q}(\ell)}\tF_{\ov{q}}(\ob{K})).$$
\end{proposition}

\begin{proof}
An element of $F(c\Delta^k)$ is  determined by its value on the maximal simplex $[c\Delta^k]$. 
As the universal system, $F$, and $\Delta^k$ are given, $F(c\Delta^k)_{[c\Delta^k]}$ is already determined and
an element $\eta\in F(c\Delta^k)\otimes \tF(\ob{K})$ is given by the values $\eta_{\alpha}\in\tF(\ob{K})$ for $\alpha\in\ob{K}_{+}$.
 Also, for the same reason, any element of $\tF(\Delta^k\ast\ob{K})$ is  determined by the values $\omega_{\alpha}\in\tF(\ob{K})$ for 
$\alpha\in\ob{K}_{+}$. This remark implies the existence of an isomorphism,
$$\tF(\Delta^k\ast\ob{K})\cong F(c\Delta^k)\otimes \tF(\ob{K}).$$

Consider now the short exact sequence
$$\xymatrix@1{
0\ar[r]&
F(c\Delta^k,\Delta^k\times\{1\})\ar[r]&
F(c\Delta^k)\ar[r]&
F(\Delta^k\times\{1\})\ar[r]
&
0.
}$$
As $F$ is a universal system of differential coefficients and $F(-)$ is free as $R$-module, this sequence splits and we have an isomorphism of cochain complexes,
$$F(c\Delta^k)\cong F(c\Delta^k,\Delta^k\times\{1\})\oplus F(\Delta^k\times\{1\}).$$
Let $\alpha\colon \Delta^{j_{n-\ell+1}}\ast\cdots\ast\Delta^{j_n}\to \ob{K}_{+}$,
$\eta \in F(c\Delta^k)\otimes \tF(\ob{K})$ and $\ov{q}$ be a loose perversity. We decompose
 $$\eta_{\Delta^k\ast\alpha}=\sum_i(\eta'_{i,\alpha}+\eta''_{i,\alpha})\otimes \eta'''_{i,\alpha}\in 
( F(c\Delta^k,\Delta^k\times\{1\})\oplus F(\Delta^k\times\{1\}))\otimes \tF(\ob{K}).$$
 By definition, the perverse degree of $\eta_{\Delta^k\ast\alpha}$ is determined as follows:
 \begin{itemize}
 \item if $\ell<\ell'$, then 
 $\|\eta_{\Delta^k\ast\alpha}\|_{\ell'}=-\infty$,
 \item if $\ell'=\ell$, then, the $\ell$-perverse degree of $\eta'_{i,\alpha}\otimes \eta'''_{i,\alpha}$ is equal to $-\infty$. Therefore,
 \begin{itemize}
\item if $\eta''_{i,\alpha\mid \Delta^k\times\{1\}}\otimes \eta'''_{i,\alpha}=0$ for all $i$, we have 
$ \|\eta_{\Delta^k\ast\alpha}\|_{\ell}=-\infty$, 
\item if not, we have
 $$\|\eta_{\Delta^k\ast\alpha}\|_{\ell}=
\displaystyle{\max_i\{|\eta'''_{i,\alpha}|\text{ such that } {\eta''_{i,\alpha}}_{|\Delta^k\times\{1\}}\neq 0\},}
$$
\end{itemize}
  \item if $\ell>\ell'$, then, 
  \begin{itemize}
  \item if $\eta''_{i,\alpha}=0$ for all $i$, we have ${\|\eta_{\Delta^k\ast\alpha}\|}_{\ell'}=-\infty$, 
  \item if not, we have
 $${\|\eta_{\Delta^k\ast\alpha}\|}_{\ell'}=\max_i\left\{\|\eta'''_{i,\alpha}\|_{\ell'}\text{ such that } \eta''_{i,\alpha}\neq 0\right\},$$
 \end{itemize}
 and the perverse degree of $\eta$ coincide with the perverse degree of its component in $\tF(\ob{K})$.
 \end{itemize}
 As a consequence, if we consider the $\ell'$-perverse degree for $\ell'<\ell$, the fact of being of $\ov{q}$-intersection involves only the $\ov{q}$-intersection condition of the component in $\tF(\ob{K})$. 
 For the $\ell$-perverse degree, the situation is different: there is no restriction for the component which involves
 $F(c\Delta^k,\Delta^k\times\{1\})$
 and there is a restriction on the (cohomological) degree in $\tF(\ob{K})$ for the component having
 $F(\Delta^k\times\{1\})$ as first factor.
 In summary, we have a decomposition of $\tF_{\ov{q}}(\Delta^k\ast \ob{K})$ as
 $$(F(c\Delta^k,\Delta^k\times\{1\})\otimes \tF_{\ov{q}}(\ob{K}))\oplus (F(\Delta^k\times\{1\})\otimes \tau_{\leq\ov{q}(\ell)}\tF_{\ov{q}}(\ob{K})).
 $$
 
 As $F$ is a system of differential coefficients, there is a quasi-isomorphism between
 $F(\Delta^k\times\{1\})$ and $F(\{\vartheta\})$
 and 
 we get the quasi-isomorphisms:
 \begin{eqnarray*}
 \tF_{\ov{q}}(\Delta^k\ast \ob{K})&\simeq&
F(\Delta^k\times\{1\})\otimes \tau_{\leq\ov{q}(\ell)}\tF_{\ov{q}}(\ob{K})\\
&\simeq&
F(\{\vartheta\})\otimes \tau_{\leq\ov{q}(\ell)}\tF_{\ov{q}}(\ob{K}).
\end{eqnarray*}
\index{TW!cohomology!of a cone}
\end{proof}

\begin{corollary}\label{cor:blowupcohomologycone}
If we add $H^*(F(\{\vartheta\}))=R$ to the hypotheses of \propref{prop:blowupcohomologycone}, we obtain:
$$H^i_{\ov{q}}(\Delta^k\ast\ob{K};\tF)=\left\{
\begin{array}{cl}
H^i_{\ov{q}}(\ob{K};\tF)&\text{ if } i\leq \ov{q}(\ell),\\
0&\text{ if } i> \ov{q}(\ell).
\end{array}\right.$$
\end{corollary}

\begin{proof}
By \defref{def:universalsystem}, the module $F(\{\vartheta\})$ is $R$-free. Thus,  there is a K\"unneth spectral sequence, converging to 
$H^*(F(\{\vartheta\})\otimes \tau_{\leq\ov{q}(\ell)}\tF_{\ov{q}}(\ob{K}))$,
and whose second page is
$$E_{2}^{i,j}=\oplus_{u+v=j} \Tor^i(H^u(F(\{\vartheta\})),H^v(\tau_{\leq\ov{q}(\ell)}\tF_{\ov{q}}(\ob{K}))).
$$
As $H^*(F(\{\vartheta\})=R$, all the terms $\Tor^i(-,-)$ are zero if $i>0$, and this spectral sequence collapses in
$$E_{2}^{0,j}=H^j(\tau_{\leq\ov{q}(\ell)}\tF_{\ov{q}}(\ob{K}))\cong E_{\infty}^{0,j}\cong H^j(F(\{\vartheta\})\otimes \tau_{\leq\ov{q}(\ell)}\tF_{\ov{q}}(\ob{K}))=H^j_{\ov{q}}(\Delta^k\ast\ob{K};\tF),$$
where the last equality comes from \propref{prop:blowupcohomologycone}. By definition of the truncation, we have
$$H^j(\tau_{\leq\ov{q}(\ell)}\tF_{\ov{q}}(\ob{K}))=\left\{
\begin{array}{cl}
H^j_{\ov{q}}(\ob{K};\tF)&\text{ if } j\leq \ov{q}(\ell),\\
0&\text{ if } j> \ov{q}(\ell).
\end{array}\right.$$
\end{proof}

\begin{proof}[Proof of \thmref{thm:extendable}]
Let $\sigma\colon \Delta^{j_0}\ast\cdots\ast \Delta^{j_n}\to \ob{K}_{+}$. We define
$$\psi_{\sigma}\colon
\widetilde{F}_{\sigma}=F(c\Delta^{j_0})\otimes\cdots\otimes F(\Delta^{j_n})\to
\widetilde{G}_{\sigma}=G(c\Delta^{j_0})\otimes\cdots\otimes G(\Delta^{j_n})$$
by $\psi_{\sigma}=\psi_{c\Delta^{j_0}}\otimes\cdots\otimes \psi_{\Delta^{j_n}}$. By construction, the map $\psi_{\sigma}$ decreases the perverse degree and induces a cochain map $\Psi({\ob{K}})\colon \widetilde{F}_{\ov{q}}(\ob{K})\to \widetilde{G}_{\ov{q}}(\ob{K})$. Also, in the case of a natural transformation between universal systems of DGA's, the induced map
$\tF(\ob{K})\to \tG(\ob{K})$
is compatible with the laws of algebras, as follows directly from their construction in \defref{def:localsystemfiltered}.

The hypothesis of \propref{prop:qipervestheory} are satisfied: the blow-ups $\tF_{\ov{q}}$ and $\tG_{\ov{q}}$ are filtered theories of cochains thanks to \propref{prop:systemlocalperversetheory}. Moreover, the natural transformation $\tF_{\ov{q}}(-)\to \tG_{\ov{q}}(-)$
is cone-compatible for any \ffs, thanks to \corref{cor:blowupcohomologycone}.
\end{proof}

%%%%%%%%%%%%%%%%
\section{Cochains on  filtered face sets}\label{sec:chaincochain} 

\begin{quote}
We compare the Thom-Whitney  complex (cf. \defref{def:thomwhitney}) to a cochain complex which is the linear dual of the analogous for \ffss~of the classical Goresky and MacPherson intersection chain complex. 
The main result (\thmref{thm:thetwocochains}) is the existence of a quasi-isomorphism between these two complexes, over any field of coefficients, for complementary perversities.
\end{quote}

 Let $\ob{K}$ be a filtered face set and $R$ be a commutative ring. The chain complex $C_*^{\GM}(\ob{K};R)$ is defined as follows,
 \begin{itemize}
 \item as module, $C_*^{\GM}(\ob{K};R)$ is the free $R$-module generated by the simplices $\sigma\in\ob{K}$,
 \item any simplex $\sigma\colon \Delta^{j_0}\ast\cdots\ast \Delta^{j_n}\to\ob{K}$ has a perverse degree defined by
 $\|\sigma\|_{\ell}=\dim(\Delta^{j_0}\ast\cdots\ast\Delta^{j_{n-\ell}})$,
 if $\Delta^{j_0}\ast\cdots\ast\Delta^{j_{n-\ell}}\neq\emptyset$ and 
 $\|\sigma\|_{\ell}=-\infty$ otherwise,
 \item the differential of a simplex
 $\sigma\colon \Delta^k= \Delta^{j_0}\ast\cdots\ast \Delta^{j_n}\to\ob{K}$ is given as usual by
 $\partial\sigma=\sum_{i=0}^k(-1)^i\sigma\circ\delta_{i}$, with $\delta_{i}\in {\pmb\Delta}^{[n]}_\cF$.
 \end{itemize}
 Let $\ov{p}$ be a loose perversity.
A simplex $\sigma\colon \Delta= \Delta^{j_0}\ast\cdots\ast \Delta^{j_n}\to\ob{K}$
is $\ov{p}$-admissible if
$$\dim (\Delta^{j_0}\ast\cdots\ast\Delta^{j_{n-\ell}})=\|\sigma\|_\ell\leq \dim \Delta -\ell+\ov{p}(\ell),$$
for any $\ell$, such that $\ell\in\{1,\ldots,n\}$. (See \remref{rem:perversity0} for the case $\ell=0$.)
\index{Chain!admissible for a perversity}\index{Chain!of intersection for a perversity}
A chain $c$ is \emph{$\ov{p}$-admissible} if $c$ can be written as a linear combination of $\ov{p}$-admissible simplices. A chain $c$ is of \emph{$\ov{p}$-intersection} if $c$ and $\partial c$ are $\ov{p}$-admissible.
We denote by $C_{*}^{\GM,\ov{p}}(\ob{K};R)$ the complex of $\ov{p}$-intersection chains. 
The \emph{GM-cochain complexes} are the duals of these complexes, i.e.,
\index{GM!cochain complex}\index{Goresky-MacPherson|see{GM}}\index{GM!cohomology}
$C^{*}_{\GM}(\ob{K};R)=\hom(C_*^{\GM}(\ob{K};R),R)$
{ and }
$C^{*}_{\GM,\ov{p}}(\ob{K};R)=\hom(C^{\GM,\ov{p}}_{*}(\ob{K};R),R)$.

We denote by
$H_*^{\GM,\ov{p}}(\ob{K};R)$
the homology of the complex $C_{*}^{\GM,\ov{p}}(\ob{K};R)$,
by
$H^*_{\GM,\ov{p}}(\ob{K};R)$
 the homology of the complex $C^{*}_{\GM,\ov{p}}(\ob{K};R)$ and called them, respectively, Goresky and MacPherson homology and cohomology of $\ob{K}$.
 
 If $X$ is a filtered space and $\ob{L}=\ob{\rm ISing}_*^{\cF}(X)$, as in \exemref{exam:simplexesfiltrés}, we have
 $C^*_{\GM,\ov{p}}(\ob{L};R)=\hom(C_*^{\ov{p}}(X;R),R)$, where $C_*^{\ov{p}}(X;R)$ is introduced in \defref{def:chaineintersection}.

\begin{proposition}\label{prop:GMcochainsperversetheory}
Let $R$ be a commutative ring and $\ov{p}$ be a loose perversity. The \emph{GM-cochain complex} $C^{*}_{\GM,\ov{p}}(-;R)$ is a filtered theory of cochains.
\index{Filtered!theory of cochains}
\end{proposition}

\begin{proof}
As all the chain and cochain complexes of this proof are over $R$, we do not mention explicitly the coefficients. We  establish the properties of \defref{def:perversetheory} for a \ffs, $\ob{K}$.

\medskip
$\bullet$ {\sc Extension axiom.}
We denote by $\cR$ the map induced by the canonical inclusion,
$$\cR\colon \hom(C^{\GM,\ov{p}}_{*}(\ob{K}^{[r],k}),R)\to
\hom(C^{\GM,\ov{p}}_{*}(\ob{K}^{[r],k-1}),R).
$$
A chain $c\in C_*(\ob{K}^{[r],k})$ can be decomposed as $c=c_1+c_2$, with
$c_1\in C_{*}(\ob{K}^{[r],k-1})$
and
$c_2\in C_{*}(\ob{K}^{[r],k}\backslash \ob{K}^{[r],k-1})$.
If $c$ is of $\ov{p}$-intersection, then $c_1$, $c_2$ are $\ov{p}$-admissible and the  boundary, $\partial c$,  can be written as a linear combination of $\ov{p}$-admissible simplices. We decompose $\partial c_2$ in
$\partial c_2=\partial'c_2+\partial''c_2$, with
$\partial'c_2\in C_{*}(\ob{K}^{[r],k-1})$
and
$\partial''c_2\in C_{*}(\ob{K}^{[r],k}\backslash \ob{K}^{[r],k-1})$.
The elements of the linear combination $\partial''c_2$ cannot be canceled with elements coming from $\partial' c_2$ or $\partial c_1$, therefore $\partial'' c_2$ is $\ov{p}$-admissible.

Let $\partial_i$ be any face operator and $\sigma$ be a $\ov{p}$-admissible simplex. If we have strict inequalities, 
$\|\partial_i\sigma\|_{\ell}<\|\sigma\|_{\ell}$, for any $\ell$, then, directly from the definition, we observe that $\partial_i\sigma$ is $\ov{p}$-admissible. As these strict inequalities occur when $\partial_i$ is a face operator acting on the first factor $\Delta^{j_{n-r}}$ of
$\Delta^{j_{n-r}}\ast\cdots\ast\Delta^{j_n}$, we get the $\ov{p}$-admissibility of $\partial'c_2$.
We have proved that $c_2$ is of $\ov{p}$-intersection; therefore $c_1=c-c_2$ is of $\ov{p}$-intersection also.

This property allows the construction of a section, $s$, to $\cR$,  as follows:\\
Let $\Phi\in\hom(C^{\GM,\ov{p}}_{*}(\ob{K}^{[r],k-1}),R)$, we define
$s(\Phi)\in \hom(C^{\GM,\ov{p}}_{*}(\ob{K}^{[r],k}),R)$ by
$s(\Phi)(c_1+c_2)=\Phi(c_1)$.

\medskip
$\bullet$ {\sc Relative isomorphism axiom.} Let
$$f\colon (\ob{K}^{[r],k},\ob{K}^{[r],k-1})\to
(\ob{L}^{[r],k},\ob{L}^{[r],k-1})$$
be a relative isomorphism preserving the perverse degree.
The relative isomorphism axiom is equivalent to the fact that $f$ induces an isomorphism between the quotients,
$$\ov{f}_*\colon\frac{C^{\GM,\ov{p}}_{*}(\ob{K}^{[r],k})}{C^{\GM,\ov{p}}_{*}(\ob{K}^{[r],k-1})}
\stackrel{\cong}{\longrightarrow}
\frac{C^{\GM,\ov{p}}_{*}(\ob{L}^{[r],k})}{C^{\GM,\ov{p}}_{*}(\ob{L}^{[r],k-1})}.
$$
Let $c\in C_{*}(\ob{K}^{[r],k})$. As in the previous item, we have a unique decomposition of $c$ as
$c=c_{1}+c_{2}$, with
$c_{1}\in C_{*}(\ob{K}^{[r],k-1})$
and
$c_{2}$ a linear combination of simplices of $\ob{K}^{[r],k}\backslash \ob{K}^{[r],k-1}$. 
Moreover,
if $c\in C^{\GM,\ov{p}}_{*}(\ob{K}^{[r],k})$ then $c_{1}\in C^{\GM,\ov{p}}_{*}(\ob{K}^{[r],k-1})$ and $c_{2}$ is 
a linear combination of simplices of $\ob{K}^{[r],k}\backslash \ob{K}^{[r],k-1}$ that is of $\ov{p}$-intersection. A similar decomposition exists for the elements of $C^{\GM,\ov{p}}_{*}(\ob{L}^{[r],k})$.

We denote by $[c]$ the class of $c\in C^{\GM,\ov{p}}_{*}(\ob{K}^{[r],k})$ in the quotient above. As $f$ is a bijection between
$\ob{K}^{[r],k}\backslash \ob{K}^{[r],k-1}$
and
$\ob{L}^{[r],k}\backslash \ob{L}^{[r],k-1}$,
we have $\ov{f}_*([c])=\ov{f}_*([c_{2}])$,
which is a linear combination of simplices in
$\ob{L}^{[r],k}\backslash \ob{L}^{[r],k-1}$, that is of $\ov{p}$-intersection, by the hypothesis on the conservation of the perverse degree. This implies that $\ov{f}_*$ is an isomorphism.

\medskip
$\bullet$ {\sc Wedge axiom.} The dual of a direct sum being a product, we have to prove that the canonical injections  induce an isomorphism,
$$\psi\colon 
\bigoplus_i\frac{C^{\GM,\ov{p}}_{*}\left(\ob{K}_i^{[r],k}\right)}{C^{\GM,\ov{p}}_{*}\left(\ob{K}_i^{[r],k-1}\right)}
\stackrel{\cong}{\longrightarrow}
\frac{C^{\GM,\ov{p}}_{*}\left(\cup_i\ob{K}_i^{[r],k}\right)}{C^{\GM,\ov{p}}_{*}\left(\cup_i\ob{K}_i^{[r],k-1}\right)}.$$
Let $([c_i])_i$ be an element of the left-hand term.
The map $\psi$ is defined by $\psi(([c_i])_i)=\sum_i[c_i]$, where $[-]$ denotes the equivalence classes of the quotients.

As we already did,  we decompose a chain $c\in C^{\GM,\ov{p}}_{*}\left(\cup_i\ob{K}_i^{[r],k}\right)$ in
$c=c'+c''$,
with
$c'\in C^{\GM,\ov{p}}_{*}\left(\cup_i\ob{K}_i^{[r],k-1}\right)
$
and
$c''\in C^{\GM,\ov{p}}_{*}\left(\cup_i\ob{K}_i^{[r],k}\backslash \cup_i\ob{K}_i^{[r],k-1}\right)$. As the intersection of the sets, $\ob{K}^{[r],k}_i\backslash \ob{K}^{[r],k-1}_i$, is empty, the element $c''$ can be written $c''=\sum_ic''_i$, in a unique way, with 
$c''_i\in C^{\GM,\ov{p}}_{*}\left(\ob{K}_i^{[r],k}\backslash \ob{K}_i^{[r],k-1}\right)$.
An inverse to $\psi$ is defined by
$\psi^{-1}([c])=([c''_i])_i$.

\medskip
$\bullet$ {\sc Filtered dimension axiom.}
Let $c\in C^{\GM,\ov{p}}_{\ast}(\ob{K}^{[r]})$ of homological degree $|c|$ such that $|c|< k$. Denote by $[c]$ the class of $c$ in the quotient
$$
\frac{C^{\GM,\ov{p}}_{*}(\ob{K}^{[r]})}{C^{\GM,\ov{p}}_{*}(\ob{K}^{[r],k})}.
$$
For degree reason, we have $[c]=0$ and the filtered dimension axiom is proved.
\end{proof}

The next result generalizes \propref{prop:propertiesintersectionhomology} (v), which corresponds to the case $k=0$ and
$\ob{K}=\ob{\rm ISing}_*^{\cF}(X)$.

\begin{proposition}\label{prop:coneenGM} 
 Let $R$ be a commutative ring and $\ell$ be an integer such that $1\leq \ell\leq n$.
Let $\ob{K}$ be a filtered face set of depth $v(\ob{K})\leq \ell-1$ and $\Delta^k\ast\ob{K}$ be a join of depth $\ell$ with $k\geq 0$.
\index{Intersection homology!of a cone}
For any  perversity $\ov{p}$ and any $k\in\N$, we have
$$H_{i}^{\GM,\ov{p}}(\Delta^k\ast\ob{K};R)=\left\{
\begin{array}{cl}
H_{i}^{\GM,\ov{p}}(\ob{K};R)
&\text{ if } i\leq \ell -2 -\ov{p}(\ell),\\
R&\text{ if } i>\ell-2-\ov{p}(\ell) \text{ and } i=0,\\
0&\text{ if } i>\ell -2 -\ov{p}(\ell) \text{ and } i\neq 0.
\end{array}\right.
$$
\end{proposition}

\begin{proof}
As the elements of degree zero are playing an important role, we suppose that each chain complex $C_{*}(-)$ is augmented. We denote   by $\varepsilon$ the various augmentations and by  $\ttC_{*}(-)=C_{*}(-)\oplus R$  the associated augmented complexes, with the component $R$ in degree -1.
 We define a linear map,
 $$\psi\colon \ttC_*(\Delta^k)\otimes \ttC_*(\ob{K})\to \ttC_{*+1}(\Delta^k\ast\ob{K}),$$
 by  $\psi(\gamma\otimes \sigma)=\gamma\ast\sigma$, for any $\gamma\in \ttC_{*}(\Delta^k)$ and any $\sigma\in \ttC_{*}(\ob{K})$, with the convention $\gamma\ast 1=\gamma$, $1\ast\sigma=\sigma$ and $1\ast 1=1$. We check easily that this map, of degree +1, is  compatible with the differentials. 
 Moreover, $\psi$ is injective on the basis of simplices, thus injective. The surjectivity comes from the definition of $\Delta^k\ast\ob{K}$ in \exemref{exam:simplejoin}.

Recall, from  \exemref{exam:simplejoin}, the determination of the perverse degrees of the simplices of $\Delta^k\ast\ob{K}$.
Let $\ell'\geq 1$, $\sigma\colon \Delta^N=\Delta^{j_{n-\ell+1}}\ast\cdots\ast\Delta^{j_n}\to\ob{K}$
  and 
  $\gamma\colon \Delta^{k'}\to \Delta^k$. 
  The perverse degree of $\sigma$ is the same, as simplex of $\ob{K}$ or as simplex of $\Delta^k\ast\ob{K}$. 
For $\gamma\in \Delta^k\subset \Delta^k\ast\ob{K}$, we have
$$\|\gamma\|_{\ell'}=\left\{
\begin{array}{cl}
-\infty&
\text{ if } \ell<\ell',\\
k'&
\text{ if } \ell'\leq \ell.
\end{array}\right.$$
The perverse degree of $\gamma\ast\sigma\in \Delta^k\ast\ob{K}$ is determined by 
 $$\|\gamma\ast\sigma\|_{\ell'}=\left\{\begin{array}{cl}
 -\infty&\text{ if } \ell<\ell',\\
 k'&\text{ if } \ell'=\ell \text{ or } (\ell>\ell' \text{ and } \|\sigma\|_{\ell'}=-\infty),\\
 k'+\|\sigma\|_{\ell'}+1&\text{ if }\ell>\ell' \text{ and } \|\sigma\|_{\ell'}\neq-\infty.
 \end{array}\right.$$
 From these determinations, we deduce the $\ov{p}$-admissibility conditions of the various types of simplices of $\Delta^k\ast\ob{K}$.
 
 $\bullet$ The simplex $\sigma$ is $\ov{p}$-admissible as simplex of $\ob{K}$ if, and only if, it is admissible as simplex of $\Delta^k\ast\ob{K}$.
 
 $\bullet$ The $\ov{p}$-admissibility condition of $\gamma\colon \Delta^{k'}\to \Delta^k$ as simplex of $\Delta^k\ast\ob{K}$ is, by definition,
\begin{equation}\label{equa:admissiblegamma}
\|\gamma\|_{\ell'}\leq k'-\ell'+\ov{p}(\ell'), \text{ for any } \ell'\geq 1.
\end{equation}
 \begin{itemize}
 \item[---] If $\ell<\ell'$, this condition is always satisfied.
 \item[---] If $\ell'\leq \ell$, this condition is satisfied if, and only if, $0\leq -\ell'+\ov{p}(\ell')$. As $\ov{p}$ is a perversity, all these conditions are equivalent to $\ell\leq \ov{p}(\ell)$.
 \end{itemize}

  $\bullet$
The $\ov{p}$-admissibility condition of
 $\gamma\ast\sigma\in \Delta^k\ast\ob{K}$
 is, by definition,
\begin{equation}\label{equa:padmissible}
\|\gamma\ast\sigma\|_{\ell'}\leq k'+N+1-\ell'+\ov{p}(\ell'), \text{ for any } \ell'\geq 1.
\end{equation}
  \begin{itemize}
 \item[---] If $\ell<\ell'$, this condition is always satisfied.
  \item[---] If $\ell'<\ell$ and $\|\sigma\|_{\ell'}\neq -\infty$, this condition is satisfied if, and only if, $\|\sigma\|_{\ell'}\leq N-\ell'+\ov{p}(\ell')$. This last inequality is exactly the $\ov{p}$-admissibility of $\sigma$ at $\ell'$.
 \item[---] If $\ell'=\ell$ or ($\ell>\ell'$ and $\|\sigma\|_{\ell'}=-\infty$), the condition (\ref{equa:padmissible}) is equivalent  to $N\geq \ell' - 1 -\ov{p}(\ell')$. As $\ov{p}$ is a perversity,  all these inequalities are equivalent to 
 $N\geq \ell-1-\ov{p}(\ell)$.
 \end{itemize}
 
 We determine now $H_{*}^{\GM,\ov{p}}(\Delta^k\ast\ob{K};R)$ by considering two cases. 
 
 \medskip
 \emph{First case: $\ov{p}(\ell)\geq \ell$.}
The previous determination of perverse degrees implies the bijectivity of
$$\psi\colon \ttC_{*}(\Delta^k)\otimes \ttC_{*}^{\GM,\ov{p}}(\ob{K})\to \ttC_{*+1}^{\GM,\ov{p}}(\Delta^k\ast\ob{K}),$$
from which we deduce $\ttH^{\GM,\ov{p}}_{*}(\Delta^k\ast\ob{K};R)=0$.

\medskip
 \emph{Second case: $\ov{p}(\ell)\leq \ell -1$.}
By comparing to the previous case, we observe that the chains of $\ov{p}$-intersection are sums of chains of the shape
$\nu_{1}\ast \nu_{2}$ with $\nu_{2}\in \ttC_{*}^{\GM,\ov{p}}(\ob{K})$, such that $|\nu_{2}|>\ell-1-\ov{p}(\ell)$ or, with $|\nu_{2}| =\ell-1-\ov{p}(\ell)$ and $\partial \nu_{2}=0$.
By setting,
$$ \tau^{\ov{p}}_{\ell}\ttC_*(\ob{K})=
 \left\{\begin{array}{cl}
 0&\text{ if } *<\ell-1-\ov{p}(\ell),\\
 \cZ \ttC_{*}^{\GM,\ov{p}}(\ob{K})&\text{ if } *=\ell-1-\ov{p}(\ell),\\
 \ttC_{*}^{\GM,\ov{p}}(\ob{K})&\text{ if } *>\ell-1-\ov{p}(\ell),
 \end{array}\right.$$
 the previous calculations of perverse degree give an isomorphism
\begin{equation}\label{equa:shortcone}
\psi\colon  D_{*}=\left(C_*(\Delta^k)\otimes  \tau^{\ov{p}}_{\ell}\ttC_*(\ob{K})\right)\oplus \left(R\otimes \ttC_{*}^{\GM,\ov{p}}(\ob{K})\right)
 \to \ttC_{*+1}^{\GM,\ov{p}}(\Delta^k\ast\ob{K}).
 \end{equation}
   For computing the homology of $D_{*}$, we decompose it in the next short exact sequence,
$$\xymatrix{
 0\ar[r]&
 R\otimes \ttC_{*}^{\GM,\ov{p}}(\ob{K})\ar[r]&
 D_{*}\ar[r]&
 C_*(\Delta^k)\otimes  \tau^{\ov{p}}_{\ell}\ttC_*(\ob{K})\ar[r]&
 0.
 }$$
 In the first term, on the left, the factor $R$ is concentrated in degree -1 and the exact sequence can be rewritten as
 $$\xymatrix{
 0\ar[r]&
\ttC_{*+1}^{\GM,\ov{p}}(\ob{K})\ar[r]&
 D_{*}\ar[r]&
 C_*(\Delta^k)\otimes  \tau^{\ov{p}}_{\ell}\ttC_*(\ob{K})\ar[r]&
 0.}
 $$
 The associated long exact sequence with its connecting map, $\delta$, appears as
 $$\ldots\to H_{i+1}(\ttC^{\GM,\ov{p}}(\ob{K}))
 \to H_{i}(D)\to
 H_{i}(\tau_{\ell}^{\ov{p}}\ttC(\ob{K}))\xrightarrow[]{\delta}
 H_{i}(\ttC^{\GM,\ov{p}}(\ob{K}))\to
 H_{i-1}(D)\to\ldots
 $$
 The connecting map, $\delta$, is induced by the canonical inclusion
 $\tau_{\ell}^{\ov{p}}\ttC(\ob{K}))
 \hookrightarrow
 \ttC^{\GM,\ov{p}}(\ob{K})$. By definition of
 $\tau_{\ell}^{\ov{p}}\ttC(\ob{K})$
 this inclusion induces an injection in homology. Thus, with the isomorphism (\ref{equa:shortcone}), we have
 \begin{eqnarray*}
\ttH_{i}^{\GM,\ov{p}}(\Delta^k\ast\ob{K};R)
&=&
\coker\left(H_{i}\tau_{\ell}^{\ov{p}}\ttC_{*}(\ob{K})\to
\ttH_{i}^{\GM,\ov{p}}(\ob{K};R)\right)\\
&=&
\left\{\begin{array}{cl}
\ttH_{i}^{\GM,\ov{p}}(\ob{K};R)&\text{ if }  i\leq \ell -2 -\ov{p}(\ell),\\
0&\text{ if } i>\ell-2-\ov{p}(\ell).\end{array}\right.
\end{eqnarray*}
This argument determines the reduced cohomology of $\Delta^k\ast\ob{K}$. The non-reduced one has an extra term $R$ in degree~$i=0$. In the case $i\leq \ell-2-\ov{p}(\ell)$, this component appears in
$H_{i}^{\GM,\ov{p}}(\ob{K};R)$ but for $0\geq \ell-1-\ov{p}(\ell)$, we have to add
$H_{0}^{\GM,\ov{p}}(\Delta^k\ast\ob{K};R)=R$, as stated.
\end{proof}

We may note that the isomorphism $\ttH_{i}^{\GM,\ov{p}}(\Delta^k\ast\ob{K};R)\cong \ttH_{i}^{\GM,\ov{p}}(\ob{K};R)$, obtained for low degrees in \propref{prop:coneenGM}, is induced by the inclusion $\ob{K}\hookrightarrow \Delta^k\ast\ob{K}$.

\smallskip
In the particular case of the cone on a smooth manifold, the next result already appears in \cite{TheseSalem}.

\begin{corollary}\label{cor:coneenGM}
Let $R$ be a principal ideal domain.
\index{GM!cohomology!of a cone}
Let $\ob{K}$ be a filtered face set of depth $v(\ob{K})\leq \ell-1$,  with $\ell\in\{1,\ldots,n\}$, and $\Delta^k\ast\ob{K}$ be a join of depth $\ell$ with $k\geq 0$.
For any  perversity $\ov{p}$ and any $k\in\N$, we have
$$H^{i}_{\GM,\ov{p}}(\Delta^k\ast\ob{K};R)=\left\{
\begin{array}{cl}
H^{i}_{\GM,\ov{p}}(\ob{K};R)&\text{ if } i\leq  \ell -2 -\ov{p}(\ell),\\
R&\text{ if } i\geq\ell-1-\ov{p}(\ell) \text{ and } i=0,\\
\ext(H_{i-1}^{\GM,\ov{p}}(\ob{K};R),R)&\text{ if } i= \ell -1 -\ov{p}(\ell) \text{ and } i\neq 0,\\
0&\text{ if } i\geq \ell  -\ov{p}(\ell) \text{ and } i\neq 0.
\end{array}\right.
$$
\end{corollary}

\begin{proof}
As $R$ is a principal ideal domain, the complex $C_{*}^{\GM,\ov{p}}(\ob{K};R)$ is free as $R$-module and we may use  the universal coefficients formula. The announced results for
$ i= \ell -1 -\ov{p}(\ell)$
and
$i\geq \ell  -\ov{p}(\ell)$
are direct consequences of this formula and \propref{prop:coneenGM}. For the last case, $i\leq  \ell -2 -\ov{p}(\ell)$, we consider the morphism of {short} exact sequences induced by the \ffs~map $\ob{K}\to\Delta^k\ast\ob{K}$:
$${%\scriptsize
\small
 \xymatrix@=14pt{
0\ar[r]&
\ext(H_{i-1}^{\GM,\ov{p}}(\Delta^k\ast\ob{K}),R)\ar[r]\ar[d]&
H^i_{\GM,\ov{p}}(\Delta^k\ast\ob{K})\ar[r]\ar[d]&
\hom(H_{i}^{\GM,\ov{p}}(\Delta^k\ast\ob{K}),R)\ar[d]
\ar[r]&0
\\
0\ar[r]&
\ext(H_{i-1}^{\GM,\ov{p}}(\ob{K}),R)\ar[r]&
H^i_{\GM,\ov{p}}(\ob{K})\ar[r]&
\hom(H_{i}^{\GM,\ov{p}}(\ob{K}),R)
\ar[r]&0
}}$$
As the left-hand side and right-hand side arrows are isomorphisms, the middle one is an isomorphism also.
\end{proof}

\begin{theorem}\label{thm:thetwocochains}
Let $R$ be a \emph{field.}
Let $\ob{K}$ be a filtered face set, $\ov{p}$ and $\ov{q}$  be two  perversities such that
$\ov{q}\geq 0$ and $\ov{p}(i)+\ov{q}(i)=i-2$ for any $i\in \{1,\ldots,n\}$. 
 Then, the {GM-cochain complex,} $C^{*}_{\GM,\ov{p}}(\ob{K})$, and the Thom-Whitney  complex,
$\tC^*_{\ov{q}}(\ob{K})$, are related by a quasi-isomorphism, i.e.,
$H^*_{\GM,\ov{p}}(\ob{K};R)\cong H^*_{\TW,\ov{q}}(\ob{K};R)$. 
\index{GM!cohomology}\index{TW!cohomology}
\end{theorem}

\begin{proof}
First, we define $\chi\colon \tC^*(\ob{K})\to\hom(C^{\GM}_{*}(\ob{K}),R)$. 
If $\sigma\colon \Delta=\Delta^{j_0}\ast\cdots\ast \Delta^{j_n}\to \ob{K}_{+}$,
recall that 
$\tC^*(\ob{K})_{\sigma}=C^*(c\Delta^{j_0})\otimes\cdots\otimes C^*(\Delta^{j_n})$.
If $\Phi\in \tC^*(\ob{K})$, 
 we may write $\Phi_{\sigma}$ as $\Phi_{\sigma}=\sum_{i}\Phi_{0,\sigma,i}\otimes\cdots\otimes \Phi_{n,\sigma,i}\in C^*(c\Delta^{j_0})\otimes\cdots\otimes C^*(\Delta^{j_n})$.
The cochains $\Phi_{k,\sigma,i}$ can be evaluated on the maximal simplex in each factor and we set
$$\chi(\Phi)(\sigma)=\sum_{i}\Phi_{0,\sigma,i}([c\Delta^{j_0}])\cdot\ldots\cdot\Phi_{n,\sigma,i}([\Delta^{j_n}]).$$ 
Denote by $\Phi_{i}=\Phi_{0,\sigma,i}\otimes\cdots\otimes \Phi_{n,\sigma,i}$.
The compositions of $\chi$ with the differentials verify
\begin{eqnarray*}
\chi(d\Phi_{i})(\sigma)&=&
\sum_{k=0}^n\pm \Phi_{0,\sigma,i}([c\Delta^{j_0}])\cdot\ldots\cdot
d\Phi_{k,\sigma,i}([c\Delta^{j_k}])
\cdot\ldots\cdot\Phi_{n,\sigma}^i([\Delta^{j_n}]),\\
d\chi(\Phi_{i})(\sigma)
&=&
\chi(\Phi_{i})(\partial \sigma)\\
&=&
\sum_{k=0}^n\pm \Phi_{0,\sigma,i}([c\Delta^{j_0}])\cdot\ldots\cdot
\Phi_{k,\sigma,i}(c\partial[\Delta^{j_k}])
\cdot\ldots\cdot\Phi_{n,\sigma,i}([\Delta^{j_n}]).
\end{eqnarray*}
From the  definition of the differential of a cochain, we have  
$d\Phi_{k,\sigma,i}([c\Delta^{j_k}])=\Phi_{k,\sigma,i}(\partial[ c\Delta^{j_k}])$. 
The sign convention of the boundary operator of a cone is given by
$\partial [c\Delta^{j_i}]=c\partial[\Delta^{j_i}]+ (-1)^{j_{i}+1}[\Delta^{j_i}\times\{1\}]$. 
Therefore, to get the equality 
$d\chi(\Phi)=\chi(d\Phi)$, we have to prove the nullity of  the products
$$\Pi_k^i=\Phi_{0,\sigma,i}([c\Delta^{j_0}])\cdot\ldots\cdot
\Phi_{k,\sigma,i}([\Delta^{j_k}\times\{1\}])
\cdot\ldots\cdot\Phi_{n,\sigma,i}([\Delta^{j_n}]),$$
for any $k\in\{0,\ldots,n-1\}$. 
Suppose $\Delta^{j_{k}}\neq\emptyset$ and the restriction of $\Phi_{k,\sigma,i}$  to $\Delta^{j_k}\times\{1\}$   not equal to~0. 
For having $\Pi_k^i\neq 0$,  the perverse degrees of $\Phi_{i}$ must verify
$$\|\Phi_{i}\|_{n-k}=\dim(c\Delta^{j_{k+1}}\times\cdots\times c\Delta^{j_{n-1}}\times \Delta^{j_n}),$$ which is equivalent to
\begin{equation}\label{equa:stokes}
\|\Phi_{i}\|_{n-k}+\|\sigma\|_{n-k}=\dim \Delta-1.
\end{equation}
If $\Phi_{i}\in \tC^*_{\ov{q}}(\ob{K})$ and $\sigma\in C_{*,\ov{p}}^{\GM}(\ob{K})$, their perverse degrees verify
$$\|\Phi_{i}\|_{n-k}\leq \ov{q}(n-k)\text{ and } \|\sigma\|_{n-k}\leq \dim\Delta-(n-k)+\ov{p}(n-k)$$
which imply
\begin{eqnarray*}
\|\Phi_{i}\|_{n-k}+\|\sigma\|_{n-k}&\leq& \dim \Delta-(n-k)+\ov{p}(n-k)+\ov{q}(n-k)\\
&\leq& \dim\Delta-2.
\end{eqnarray*}
Thus the condition (\ref{equa:stokes}) cannot be satisfied and the products $\Pi_k^i$ are equal to~0 for any $k\in\{0,\ldots,n-1\}$. We have proved the compatibility of the map
$$\chi\colon \tC^*_{\ov{q}}(\ob{K})\to\hom(C^{\GM,\ov{p}}_{*}(\ob{K}),R)$$ 
with the differentials. 
The conclusion is a recollection of previous results.
\begin{itemize}
\item From \propref{prop:systemlocalperversetheory} and the fact that $C^*$ is an extendable universal system of coefficients (\cite[Chapter 14]{MR736299}), $\tC^*_{\ov{q}}$ is a filtered theory of cochains.
\item \propref{prop:GMcochainsperversetheory} says that $C^*_{\GM,\ov{p}}$ is a filtered theory of cochains too.
\item As we are working over a field and with a perversity, $\ov{p}$, the conclusion of \corref{cor:coneenGM} simplifies in
$$H^{i}_{\GM,\ov{p}}(\Delta^k\ast\ob{K};R)=\left\{
\begin{array}{cl}
H^{i}_{\GM,\ov{p}}(\ob{K};R)&\text{ if } i\leq  \ell -2 -\ov{p}(\ell),\\
0&\text{ if } i\geq \ell -1 -\ov{p}(\ell).
\end{array}\right.
$$
(Observe that, with the hypotheses on $\ov{p}$ and $\ov{q}$, we cannot have $0=\ell-1-\ov{p}(\ell)$.)
This computation coincides with \corref{cor:blowupcohomologycone}.
\item As noted before, the equality in low degrees between the cohomology of $\ob{K}$ and the cohomology of $\Delta^k\ast\ob{K}$ is induced by the inclusion $\ob{K}\hookrightarrow \Delta^k\ast\ob{K}$. Therefore, a quasi-isomorphism,
$\tC^*(\ob{K})\to\hom(C^{\GM}_{*}(\ob{K}),R)$, induced by a natural map, gives also a quasi-isomorphism
$\tC^*(\Delta^k\ast \ob{K})\to\hom(C^{\GM}_{*}(\Delta^k\ast \ob{K}),R)$.
\item Finally, \propref{prop:qipervestheory} implies that the map $\chi$ is a quasi-isomorphism.
\end{itemize}
\end{proof}

\begin{remark}\label{rem:perversitytop}
The hypothesis \emph{$R$ is a field} is used for having an isomorphism between 
$H^*_{\GM,\ov{p}}(\Delta^k\ast\ob{K};R)$
and
$H^*_{\TW,\ov{q}}(\Delta^k\ast\ob{K};R)$,
by killing the {\rm Ext}-term. We may observe also that this {\rm Ext}-term may also be avoided in a particular case on a principal ideal domain.

More precisely, let $\ov{t}'$ be the  perversity defined by $\ov{t}'(i)=i-2$, for any $i\in\{1,\ldots,n\}$. This perversity coincides with the top perversity $\ov{t}$ (\defref{def:perversité}) for any $i\in\{2,\ldots,n\}$ and the difference between $\ov{t}$ and $\ov{t}'$ does not matter if we work with filtered spaces without strata of codimension~1, as it is the case for pseudomanifolds. 
Directly from the previous proof, for any \ffs, $\ob{K}$, and any \emph{principal ideal domain,} $R$, we have an isomorphism
$$H^*_{\TW,\ov{0}}(\ob{K};R)\cong H^*_{\GM,\ov{t}'}(\ob{K};R),$$
since, in \corref{cor:coneenGM}, the  Ext-term appears in  homological degree~0  and we have\linebreak
 $\ext(H_{0}^{\GM,\ov{t}'}(\ob{K};R),R)=0$.
\end{remark}

The first part of the next corollary is a direct consequence of \thmref{thm:thetwocochains} \and \corref{cor:Aplandcochains}.
The second part follows from  \propref{prop:intersectionetintersection} and 
 \cite{MR800845}.

\begin{corollary}\label{cor:quasiisossurQ}
Let $R=\Q$. 
Let  $\ob{K}$ be a filtered face  set, $\ov{p}$ and $\ov{q}$ be  perversities 
such that
$\ov{q}\geq 0$ and $\ov{p}(i)+\ov{q}(i)=i-2$ for any $i\in \{1,\ldots,n\}$.
Then the complexes 
$\widetilde{A}_{PL,\ov{q}}(\ob{K})$, $\tC^*_{\ov{q}}(\ob{K})$ and $C^{*}_{\GM,\ov{p}} (\ob{K})$  are related by quasi-isomorphisms.

In the particular case that $\ov{p}$ and $\ov{q}$ are GM-perversities and $\ob{K}$ is the filtered face set associated to a pseudomanifold, $X$, the homology of these complexes coincides with the original Goresky-MacPherson intersection cohomology of~$X$.
\end{corollary}

\begin{remark}\label{rem:petitemaisbien}
{The simplices in $\ob{K}\backslash \ob{K}_+$ (i.e., $j_{n}=-1$) are not considered in $\tC^*_{\ov{q}}(\ob{K})$, by definition, and they do not appear  in 
$C^*_{\GM,\ov{p}}(\ob{K})$}
 if $\ov{q}(1)\geq 0$, or equivalently if $\ov{p}(1)\leq -1$. Indeed, any $\ov{p}$-admissible simplex 
 $\sigma\colon\Delta=\Delta^{j_0}\ast\cdots\ast\Delta^{j_n}\to \ob{K}$
 verifies
 $$
  \|\sigma\|_1=\dim(\Delta^{j_0}\ast\cdots\ast\Delta^{j_{n-1}})\leq \dim \Delta - 1+\ov{p}(1)\leq \dim\Delta -2.
 $$
If $\Delta^{j_0}\ast\cdots\ast \Delta^{j_{n-1}}\neq \emptyset$, we get $2\leq \dim \Delta-\dim (\Delta^{j_0}\ast\cdots\ast \Delta^{j_{n-1}})=j_n+1$ and $j_n\geq 1$. This implies also that all the $\ov{p}$-admissible 0-simplices and 1-simplices belong to the regular part of $\ob{K}$.
\end{remark}

%%%%%%%%%%%%%%%%%%%%%%
\section{Particular cases: $\ov{q}=\ov{0}$, $\ov{q}=\ov{\infty}$, cone and suspension}\label{sec:normalisation}
\begin{quote}
We  characterize the Thom-Whitney intersection cohomology in the cases $\ov{q}=\ov{\infty}$ and $\ov{q}=\ov{0}$.  For $\ov{q}=\ov{0}$, we need to introduce the normalization of a \ffs, as Goresky and MacPherson do (see \cite{MR572580}) in the framework of pseudomanifolds. Here, the connectivity of the link is not sufficient, we need to use the expanded link.
\end{quote}

Recall  from \defref{def:filteredskeleton} that the face set $\ob{K}^{[0]}$ is  the regular part of the \ffs, $\ob{K}$.

\begin{proposition}\label{prop:casoinfinito}
Let $R$ be a commutative ring. For any \ffs, $\ob{K}$, the restriction map, 
$\tC_{\ov{\infty}}^*(\ob{K})\to C^*(\ob{K}^{[0]})
$,
induces an isomorphism,
$$H^*_{\TW,\ov{\infty}}(\ob{K};R)\cong
H^*(\ob{K}^{[0]};R),$$
where the last term is the ordinary cohomology of the face set  $\ob{K}^{[0]}$.
\index{TW!$\ov{\infty}$-cohomology}
\end{proposition}

\begin{proof}
 If $\omega\in\tC_{\ov{\infty}}^*(\ob{K})$, we have an element
  $\omega_{\sigma}\in C^*(c\Delta^{j_0})\otimes\cdots\otimes C^*(c\Delta^{j_{n-1}})\otimes C^*(\Delta^{j_n})$, 
  for  each $\sigma\colon \Delta^{j_0}\ast\cdots\ast\Delta^{j_n}\to \ob{K}_+$. 
  The simplices $\sigma$ of $\ob{K}^{[0]}$ correspond to the particular case  $j_i=-1$ for all $i\leq n-1$. 
  A global section, still denoted  $\omega\in C^*(\ob{K}^{[0]})$, is thus defined by restriction from $\ob{K}_+$ to $\ob{K}^{[0]}$.
  
We check easily that $\ob{K}\mapsto \tC_{\ov{\infty}}^*(\ob{K})$ and $\ob{K}\mapsto C^*(\ob{K}^{[0]})$ are  filtered theory of cochains, see \propref{prop:systemlocalperversetheory}. Let  $\ob{K}$ be a \ffs~of depth  $v(\ob{K})=\ell-1$, $\ell\in\{1,\ldots,n\}$, and such that $\tC_{\ov{\infty}}^*(\ob{K})\to C^*(\ob{K}^{[0]})$ is a quasi-isomorphism. 
Let $\Delta^k\ast\ob{K}$ be the join of depth~$\ell$.
Using \propref{prop:qipervestheory}, we are reduced to prove that
  $\tC_{\ov{\infty}}^*(\Delta^k\ast\ob{K})\to C^*((\Delta^k\ast\ob{K})^{[0]})$ is a quasi-isomorphism. On the right-hand side, we have
  $(\Delta^k\ast\ob{K})^{[0]}=\ob{K}^{[0]}$. On the left-hand side, with \corref{cor:blowupcohomologycone}, we know that
  $$H^*_{\TW,\ov{\infty}}(\Delta^k\ast\ob{K};R)\cong H^*_{\TW,\ov{\infty}}(\ob{K};R).$$
This isomorphism being induced by the canonical inclusion $\ob{K}\hookrightarrow \Delta^k\ast\ob{K}$, the hypotheses of \propref{prop:qipervestheory} are fulfilled and the statement is proved.
\end{proof}

For the study of the $\ov{0}$-cohomology, we introduce a concept similar to the normalization of Goresky and MacPherson in \cite[Page 151]{MR572580}. We denote by $\sigma \vartriangleleft \phi$ the relation ``$\sigma$ is a face of $\phi$''.

\begin{definition}\label{def:ffsregulier}
A \ffs, $\ob{K}$, is called \emph{normal} if  the two following conditions are satisfied.
\index{Filtered!face set!normal}
\begin{enumerate}[(a)]
\item Any simplex $\sigma\in\ob{K}\backslash \ob{K}_+$ is a face of a simplex $\phi$ of $\ob{K}_+$, i.e., $\sigma \vartriangleleft \phi$.
\item Moreover, the simplex $\phi\in\ob{K}_+$ is unique in the following sense.\\ Let $\sigma\in\ob{K}\backslash \ob{K}_+$. For any pair $(\phi,\phi')$ of simplices of $\ob{K}_+$ such that
$\sigma \vartriangleleft \phi$
and 
$\sigma \vartriangleleft \phi'$, there exists a family
$\phi_1,\ldots,\phi_m$ 
of simplices of $\ob{K}_+$ such that 
$\sigma \vartriangleleft \phi_i$ for $i\in\{1,\,\ldots,\,m\}$ and
$\phi \vartriangleleft\phi_1 \vartriangleright \cdots \vartriangleleft \phi_m \vartriangleright \phi'$.
\end{enumerate}
\index{Link!of a simplex!expanded}
\end{definition}

Recall the expanded link, $\cLe(\ob{K},\tau)$, introduced in \defref{def:expandedlink}.

\begin{proposition}\label{prop:propertynormalffs}
Let $\ob{K}$ be a normal \ffs. For any $r>0$, $k\geq 0$ and  $\tau\in \cJ(\ob{K},{[r],k})$, the expanded link,
$\cLe(\ob{K},\tau)$, is a connected, non empty and normal \ffs.
\end{proposition} 

\begin{proof}
As $r>0$, we have $\tau\in \ob{K}\backslash \ob{K}_+$ and there exists $\sigma\in±\ob{K}_+$ such that $\tau \vartriangleleft \sigma$. We may suppose $R_1(\sigma)=\tau$. As $\sigma\neq\tau$, we have $(R_2(\sigma),\sigma)\in \cLe(\ob{K},\tau)$ and $\cLe(\ob{K},\tau)\neq\emptyset$.

We prove now the connectivity of $\cLe(\ob{K},\tau)$ by connecting any element $(R_2(\sigma'),\sigma')\in \cLe(\ob{K},\tau)$ to $(R_2(\sigma),\sigma)$. We consider two cases.
\begin{itemize}
\item If $\sigma'\in \ob{K}_+$, as $R_1(\sigma')=R_1(\sigma)=\tau$, we may apply the uniqueness axiom of normal \ffs. There exists a family 
$\phi_1,\ldots,\phi_m$ 
of simplices of $\ob{K}_+$ such that 
$\tau \vartriangleleft \phi_i$ for $i\in\{1,\,\ldots,\,m\}$ and
$\sigma \vartriangleleft\phi_1 \vartriangleright \cdots \vartriangleleft \phi_m \vartriangleright \sigma'$.
We may suppose $R_1(\phi_i)=\tau$  and, as $\phi_i\neq\tau$, we have $(R_2(\phi_i),\phi_i)\in \cLe(\ob{K},\tau)_{+}$
for $i\in\{1,\ldots,\,m\}$.  This implies the next relations in $\cLe(\ob{K},\tau)$,
$$(R_2(\sigma),\sigma) \vartriangleleft
(R_2(\phi_1),\phi_1) \vartriangleright
\cdots
\vartriangleleft
(R_2(\phi_m),\phi_m)
\vartriangleright
(R_2(\sigma'),\sigma').$$
\item If $\sigma'\notin \ob{K}_+$, there exists $\phi\in\ob{K}_+$ such that $\sigma' \vartriangleleft \phi$. As $R_1(\sigma')=\tau$, we may suppose $R_1(\phi)=\tau$ and we get
$(R_2(\sigma'),\sigma') \vartriangleleft (R_2(\phi),\phi)$. By applying  the first case to 
$(R_2(\phi),\phi)\in \cLe(\ob{K},\tau)_{+}$, 
we can relate $(R_2(\sigma'),\sigma')$ to $(R_2(\sigma),\sigma)$.
\end{itemize}
We are reduced to the proof of the normality of $\cLe(\ob{K},\tau)$. 

Let $(R_2(\sigma),\sigma)\in \cLe(\ob{K},\tau)\backslash \cLe(\ob{K},\tau)_+$. Then $\sigma\in\ob{K}\backslash \ob{K}_+$ and there exists $\phi\in\ob{K}_+$ with $\sigma \vartriangleleft \phi$. We may suppose $R_1(\phi)=\tau$ and we get
$(R_2(\sigma),\sigma) \vartriangleleft (R_2(\phi),\phi)$ with $(R_2(\phi),\phi)\in \cLe(\ob{K},\tau)_{+}$. This gives  the first property of a normal \ffs.  For the second one,  consider
$(R_2(\sigma),\sigma)\in \cLe(\ob{K},\tau)\backslash \cLe(\ob{K},\tau)_+$,
$(R_2(\phi),\phi)\in   \cLe(\ob{K},\tau)_+$ and
$(R_2(\phi'),\phi')\in   \cLe(\ob{K},\tau)_+$ such that
$$
(R_2(\phi),\phi)\vartriangleright
(R_2(\sigma),\sigma)\vartriangleleft
(R_2(\phi'),\phi').$$
We deduce
$\phi$ and $\phi'$ in $\ob{K}_+$, $\sigma\in \ob{K}\backslash \ob{K}_+$, $R_1(\phi)=R_1(\phi')=R_1(\sigma)=\tau$
and
$\phi
\vartriangleright
\sigma
\vartriangleleft
\phi'$. From the uniqueness condition in a normal \ffs, we get the existence of a family
$\phi_1,\ldots,\phi_m$ 
of simplices of $\ob{K}_+$ such that 
$\sigma \vartriangleleft \phi_i$ for $i\in\{1,\,\ldots,\,m\}$ and
$\phi \vartriangleleft\phi_1 \vartriangleright \cdots \vartriangleleft \phi_m \vartriangleright \phi'$.
We may suppose $R_1(\phi_i)=\tau$ which gives
$(R_2(\sigma),\sigma) \vartriangleleft (R_2(\phi_i),\phi_i)$ and
$(R_2(\phi_i),\phi_i)\in \cLe(\ob{K},\tau)_{+}$
for all $i\in\{1,\ldots,\,m\}$,  and
$$(R_2(\phi),\phi) \vartriangleleft
(R_2(\phi_1),\phi_1) \vartriangleright
\cdots
\vartriangleleft
(R_2(\phi_m),\phi_m)
\vartriangleright
(R_2(\phi'),\phi').$$
\end{proof}

\begin{proposition}\label{prop:casonulo}
Let $R$ be a principal ideal domain and $\ov{t}'$ be the perversity defined by $\ov{t}'(i)=i-2$, $i\neq 0$. For any normal \ffs, $\ob{K}$, we have isomorphisms,
$$H^*_{\TW,\ov{0}}(\ob{K};R)\cong H^*_{\GM,\ov{t}'}(\ob{K};R)\cong H^*(\ob{K};R),$$
where the last term is the ordinary cohomology of the face set underlying $\ob{K}$.
\index{TW!$\ov{0}$-cohomology}\index{GM!top-cohomology}
\end{proposition}

\begin{proof}
The first isomorphism comes directly from \thmref{thm:thetwocochains} and \remref{rem:perversitytop}.

For the second one, we use the canonical inclusion, $C_{*}^{\GM,\ov{t}'}(\ob{K})\to C_{*}(\ob{K})$,
to define, by duality, a cochain map,
$C^*(\ob{K})\to C^*_{\GM,\ov{t}'}(\ob{K})$.
Using the classical theory of ordinary cochains on a face set, we know that the association
$\ob{K}\mapsto C^*(\ob{K})$
is a filtered theory of cochains. (The proof of \propref{prop:GMcochainsperversetheory}, in which all perverse degrees are taking out,
gives an explicit confirmation of this fact.)
Therefore, $C^*(\ob{K})\to C^*_{\GM,\ov{t}'}(\ob{K})$ is a cochain map between two filtered theories of cochains.
As $\ob{K}$ is normal, its expanded links are connected (cf. \propref{prop:propertynormalffs}) and an application of \propref{prop:qipervestheoryconnected} gives the second isomorphism.
\end{proof}

\begin{remark}\label{rem:3.3or3.4}
The cochain map,
$C^*(-)\to C^*_{\GM,\ov{t}'}(-)$,
is not cone-compatible in general. Indeed, if $\ob{L}$ is a \ffs, non connected as face set, such that
$C^*(\ob{L})\to C^*_{\GM,\ov{t}'}(\ob{L})$
is a quasi-isomorphism, we have
$H^0(\Delta^k\ast\ob{L};R)=R$, 
because $\Delta^k\ast\ob{L}$ is connected as face set,
and
$H^0_{\GM,\ov{t}'}(\Delta^k\ast\ob{L};R)\cong H^0_{\GM,\ov{t}'}(\ob{L};R)\cong H^0(\ob{L};R)\neq R$.
Thus, 
we must restrict to connected expanded links on this point. 
That justifies the use of \propref{prop:qipervestheoryconnected} in the proof of \propref{prop:casonulo}. (\exemref{exam:linkconnexenevautpas} comes back on this point.)
\end{remark}

We prove now that we can associate to any \ffs~a unique normal \ffs, with the same intersection cohomology.

\begin{definition}\label{def:normalisation}
The \emph{normalization of a \ffs,} $\ob{K}$, is a map of \ffss,
\index{Filtered!face set!normalization of}\index{Normalization of a filtered face set}
$$\cN\colon N(\ob{K})\to \ob{K},$$ such that
$N(\ob{K})$ is normal and the restriction map, $\cN\colon N(\ob{K})_+\to \ob{K}_+$, is an isomorphism.
\end{definition}

\begin{proposition}\label{prop:existencenormal}
Any \ffs, $\ob{K}$, admits a unique normalization
defined by
$$N(\ob{K})=\ob{K}_+\cup \{\langle \sigma,\phi\rangle\mid \sigma\in\ob{K}\backslash \ob{K}_+,\;\phi\in\ob{K}_+ \text{ and } \sigma \vartriangleleft \phi\},$$
where
\begin{itemize}
\item $\langle -,-\rangle$ denotes the equivalence class for the equivalence relation generated by
$$(\sigma,\phi)\sim (\sigma,\phi')\iff \phi \vartriangleleft \phi',$$
\item the perverse degree of elements of $\ob{K}_+$ is kept and 
$\|\langle \sigma,\phi\rangle\|_i=\|\sigma\|_i$.
\end{itemize}
Moreover, if $F$ is an extendable universal  system of differential coefficients, over a commutative ring $R$, of blow-up $\tF$, the map $\cN$ induces
\begin{itemize}
\item an isomorphism
$H_{\ov{q}}^*(N(\ob{K});\tF)\cong H_{\ov{q}}^*(\ob{K};\tF)$, for any loose perversity $\ov{q}$ or $\ov{q}=\ov{\infty}$,
\item
an isomorphism
$H_*^{\GM,\ov{p}}(N(\ob{K});R)\cong H_*^{\GM,\ov{p}}(\ob{K};R)$, for the Goresky-MacPherson homology if $\ov{p}$ is a loose perversity such that $\ov{p}(1)<  0$.
\end{itemize}
\end{proposition} 

\begin{proof}
Let $\sigma\colon \Delta^{j_{0}}\ast\cdots\ast\Delta^{j_{k}}\to \ob{K}$ be a simplex with $k<n$ and
$\phi\in\ob{K}_{+}$ with $\sigma \vartriangleleft \phi$. The simplex $\langle\sigma,\phi\rangle$ of $N(\ob{K})$ is considered also as
$\langle\sigma,\phi\rangle \colon \Delta^{j_{0}}\ast\cdots\ast\Delta^{j_{k}}\to N(\ob{K})$. A simplex of  this type cannot be in $N(\ob{K})_{+}$ and we have $N(\ob{K})_+=\ob{K}_+$.
We specify now the definition of a face operator, $\partial_i$, by considering three cases:
\begin{itemize}
\item if $\alpha\in\ob{K}_+$ is a simplex whose $\partial_i$-face in $\ob{K}$ belongs to $\ob{K}_+$, we keep the same face operator than in $\ob{K}$,
\item if $\alpha\in\ob{K}_+$ is a simplex whose $\partial_i$-face in $\ob{K}$ belongs to $\ob{K}\backslash\ob{K}_+$, we set
$\partial_i \alpha=\langle \partial_i\alpha,\alpha\rangle$, (in this case, $\partial_i$ is the last face operator of  $\alpha$),
\item we set $\partial_i\langle\sigma,\phi\rangle=\langle\partial_i\sigma,\phi\rangle$ otherwise.
\end{itemize}
Let $\alpha\colon \Delta=\Delta^{j_0}\ast\cdots\ast\Delta^{j_n}\to \ob{K}$ with $j=\dim\Delta$. For the commutation rule of face operators, we can reduce the verification to the next two cases:
\begin{itemize}
\item if $j_n=0$ and $i<j$, we have
$$\partial_i\partial_j\alpha=\partial_i\langle \partial_j\alpha,\alpha\rangle=\langle \partial_i\partial_j\alpha,\alpha\rangle=\langle \partial_{j-1}\partial_i\alpha,\alpha\rangle=\langle \partial_{j-1}\partial_i\alpha,\partial_{i}\alpha\rangle=\partial_{j-1}\partial_i\alpha,$$
\item if $j_n=1$ and $i=j-1$, we have
$$\partial_i\partial_j\alpha=
\langle\partial_i\partial_j\alpha,\partial_j\alpha\rangle=
\langle\partial_i\partial_j\alpha,\alpha\rangle=
\langle\partial_{j-1}\partial_i\alpha,\alpha\rangle=\langle\partial_{j-1}\partial_i\alpha,\partial_i\alpha\rangle=\partial_{j-1}\partial_i\alpha.$$
\end{itemize}

We continue with the verification of the properties of a normal \ffs. 
Let $\langle\sigma,\phi\rangle\in N(\ob{K})\backslash N(\ob{K})_+$ with
$\phi\colon \Delta^{j_0}\ast\cdots\ast\Delta^{j_n}\to \ob{K}_{+}$. 
Let $J$ be the smallest subset of indices such that $\partial_J\phi\notin\ob{K}_+$ and choose $I$ such that $\partial_I\partial_J\phi=\sigma$. By definition of face operators, we have
$$\partial_I\partial_J\phi=\partial_I\langle\partial_J\phi,\phi\rangle=\langle\partial_I\partial_J\phi,\phi\rangle=\langle\sigma,\phi\rangle$$
and $\langle\sigma,\phi\rangle \vartriangleleft \phi$,
 which means that $\langle\sigma,\phi\rangle$
is a face of an element of $N(\ob{K})_+$. We study now the uniqueness property of the definition of normal \ffs.
Let $\langle\sigma,\phi\rangle\in N(\ob{K})\backslash N(\ob{K})_+$ and $\beta$, $\beta'$ in $N(\ob{K})_+$ such that
$\langle \sigma,\phi\rangle \vartriangleleft \beta$
and 
$\langle \sigma,\phi\rangle \vartriangleleft \beta'$.
Let $J$ be the smallest subset such that $\partial_J\beta\notin\ob{K}_+$ and $I$ such that
$\langle\sigma,\phi\rangle=\partial_I\partial_J\beta$. By definition of the face operator, we have
$$\langle\sigma,\phi\rangle=\partial_I\partial_J\beta=\partial_I\langle\partial_J\beta,\beta\rangle=\langle\partial_I\partial_J\beta,\beta\rangle,$$
which implies $\sigma=\partial_I\partial_J\beta$ (in $\ob{K}$!) and
$\langle\sigma,\phi\rangle=\langle\sigma,\beta\rangle$. Similarly, we prove $\langle\sigma,\beta'\rangle=\langle\sigma,\phi\rangle$ and deduce
$$\langle\sigma,\phi\rangle=\langle\sigma,\beta\rangle=\langle\sigma,\beta'\rangle.$$
By definition of the equivalence relation, there exists a family
of elements of $\ob{K}_+=N(\ob{K})_+$, $(\beta_1,\ldots,\beta_p,\ldots,\beta_m)$, with
\begin{equation}\label{equa:facesandfaces}
\beta \vartriangleleft \beta_1 \vartriangleright\cdots \vartriangleleft\beta_p \vartriangleright \phi \vartriangleleft \beta_{p+1} \vartriangleright \cdots \vartriangleleft \beta_m \vartriangleright \beta'
\text{ and } \sigma \vartriangleleft \beta_i,
\end{equation}
for all $i\in\{1,\ldots,m\}$. We are reduced  to prove that $\langle\sigma,\phi\rangle \vartriangleleft \beta_i$, for all $i\in\{1,\ldots,m\}$. Observe that the relations (\ref{equa:facesandfaces}) imply
$\langle\sigma,\phi\rangle=\langle \sigma,\beta_i\rangle$, for all $i\in\{1,\ldots,m\}$.
Let $J$ be the smallest set of indices such that
$\partial_J\beta_i\notin \ob{K}_+$ and $I$ such that $\partial_I\partial_J\beta_i=\sigma$. By definition of the face operators, we get
$$\partial_I\partial_J\beta_i=\partial_I\langle\partial_J\beta_i,\beta_i\rangle=\langle\partial_I\partial_J\beta_i,\beta_i\rangle=\langle\sigma,\beta_i\rangle,$$
which implies $\langle\sigma,\beta_i\rangle \vartriangleleft \beta_i$. We have obtained
$\langle\sigma,\phi\rangle=\langle\sigma,\beta_i\rangle \vartriangleleft \beta_i$.

A map of \ffss, $\cN\colon N(\ob{K})\to \ob{K}$, is defined by $\cN(\alpha)=\alpha$, if $\alpha\in\ob{K}_+$ and $\cN(\langle\sigma,\phi\rangle)=\sigma$. The verification of the compatibility with face operators is direct from the definitions and the restriction of $\cN$ is the identity map from $N(\ob{K})_+$ to $\ob{K}_+$. 

Suppose now that $\cN\colon N(\ob{K})\to \ob{K}$ and $\cN'\colon N'(\ob{K})\to \ob{K}$ are two normalizations of $\ob{K}$. We construct a map of \ffss, $\gN\colon N(\ob{K})\to N'(\ob{K})$, such that $\cN'\circ\gN=\cN$, by:
$$\begin{array}{ll}
\gN(\alpha)=\cN'^{-1}(\cN(\alpha)),&\text{ if } \alpha\in N(\ob{K})_+,\\
\gN(\alpha)=\partial_I(\cN'^{-1}(\cN(\phi))),&\text{ if } \alpha=\partial_I\phi\in N(\ob{K})\backslash N(\ob{K})_+\text{ and } \phi\in N(\ob{K})_+.
\end{array}$$
We first have to prove that $\gN$ is well defined in the case $\alpha\notin N(\ob{K})_+$. Let $\phi'\in N(\ob{K})_+$ such that $\alpha=\partial_{I'}\phi'$. We may suppose that $\alpha \vartriangleleft \phi \vartriangleleft \phi'$ and set $\phi=\partial_J\phi'$. We have $\alpha=\partial_I\partial_{J}\phi'$ and
$$\partial_I(\cN'^{-1}(\cN(\phi))=\partial_I(\cN'^{-1}(\cN(\partial_J\phi'))=\partial_I\partial_J(\cN'^{-1}(\cN(\phi'))),$$
which proves that $\gN$ is well defined.

We establish now the bijectivity of the map $\gN$. A map $\gN'\colon N'(\ob{K})\to N(\ob{K})$ is defined in a similar manner and, on $N(\ob{K})_+$, they are obviously inverse. If $\alpha=\partial_I\phi$, we have
$$\gN'(\gN(\alpha))=\gN'(\partial_I(\cN'^{-1}(\cN(\phi))))=\partial_I(\cN^{-1}(\cN'(\cN'^{-1}(\cN(\phi)))))=
\partial_I\phi=\alpha.$$

The map $\gN$ preserves the perverse degree. That is obvious on $N(\ob{K})_+$. Let $\alpha\in N(\ob{K})\backslash N(\ob{K})_+$. Then there exists $\phi\in N(\ob{K})_+$ and we may suppose $\alpha=\partial_m\phi$, with $m=|\phi|$. This implies
$$\|\gN(\alpha)\|_{\ell}=\|\partial_m\cN'^{-1}(\cN(\phi))\|_{\ell}=\|\cN'^{-1}(\cN(\phi))\|_{\ell}-1=
\|\phi\|_{\ell}-1=\|\alpha\|_{\ell}.$$

For proving the compatibility of $\gN$  with face operators, we consider three cases.
\begin{itemize}
\item It is direct if $\alpha$ and $\partial_i\alpha$ are in $N(\ob{K})_+$.
\item Let  $\alpha\in N(\ob{K})_+$ with $\partial_i\alpha\notin N(\ob{K})_+$. Then, we have
$$\partial_i\gN(\alpha)=\partial_i(\cN'^{-1}(\cN(\alpha)))=\gN(\partial_i \alpha).$$
\item Let $\alpha\notin N(\ob{K})_+$. There exists $\phi\in N(\ob{K})_+$ with $\alpha=\partial_I\phi$ and we have
$$\partial_i\gN(\alpha)=\partial_i\partial_I(\cN'^{-1}(\cN(\phi)))=\gN(\partial_i \alpha).$$
\end{itemize}

The equality $\cN'\circ \gN=\cN$ is obvious on $N(\ob{K})_+$. If $\alpha=\partial_I\phi$, $\alpha\notin N(\ob{K})_+$ and $\phi\in N(\ob{K})_+$, we have
$$\cN'(\gN(\alpha))=\cN'(\partial_I(\cN'^{-1}(\cN(\phi)))) =
\partial_I \cN(\phi)=\cN(\partial_I\phi)=\cN(\alpha).$$

We have established the existence and unicity of the normalization. We show now the existence of isomorphisms in cohomology and homology.

The restriction map of $\cN$ to  $N(\ob{K})_+$ being an isomorphism,  $N(\ob{K})_+\cong \ob{K}_+$, 
the map $\cN$ induces an isomorphism for the  cohomology with coefficients  in $\tF_{\ov{q}}$, 
for any loose perversity~$\ov{q}$ or $\ov{q}=\ov{\infty}$.

Let $\sigma\colon \Delta=\Delta^{j_0}\ast\cdots\ast\Delta^{j_n}\to N(\ob{K})$ be a $\ov{p}$-admissible simplex for a 
loose perversity $\ov{p}$ such that  $\ov{p}(1)< 0$.
From $\|\sigma\|_{1}=\dim(\Delta^{j_0}\ast\cdots\ast\Delta^{j_{n-1}})$ and $$\|\sigma\|_{1}\leq \dim \Delta -1 +\ov{p}(1)\leq \dim\Delta-2,$$
we deduce $j_n\geq 1$ and  $\sigma\in \ob{K}_+\cong N(\ob{K})_+$. A similar argument works for $\partial \sigma$ and we get  an isomorphism
of chain complexes,
$C_*^{\GM,\ov{p}}(N(\ob{K}))\cong C_*^{\GM,\ov{p}}(\ob{K})$.
\end{proof}

\begin{example}\label{exam:linkconnexenevautpas}
This example shows  that the connectivity of $\cL(\ob{K},\tau)$ is not sufficient for having a quasi-isomorphism between 
$C(\ob{K})$ and $\tC_{\ov{0}}(\ob{K})$. Consider a face set, $\ob{K}$, which looks like a pinch ribbon. It is formed of two triangles, $\Delta^2_{(1)}$ and $\Delta^2_{(2)}$, with a common vertex and the opposite edge in common also. We can represent it as
$${\footnotesize \xymatrix@=12pt{
a\ar@{-}[rrrd]&&&&&&b\ar@{-}[llld]\\
&&&c\ar@{-}[llld]\ar@{-}[rrrd]\\
d\ar@{-}[uu]&&&&&&\ar@{-}[uu]e}
}$$ 
with $[ad]=[be]$. 
We decompose the two triangles as $\{c\}\ast [ad]$ and $\{c\}\ast[be]$ creating a \ffs~still denoted $\ob{K}$. The blow-up of these triangles are
$\tilde{\Delta}_{(1)}^2=c\Delta^0\times \Delta^1=\Delta^1_{(1)}\times \Delta^1$
and
$\tilde{\Delta}_{(2)}^2=c\Delta^0\times \Delta^1=\Delta^1_{(2)}\times \Delta^1$,
with $\Delta^1_{(1)}\neq \Delta^1_{(2)}$
and the second factor in common. The cochains on these blow-ups (compatible with the restriction to the common faces) have the behavior of cochains on a rectangle. Thus, there is no cohomology of degree 1 in perverse degree $\ov{0}$, i.e.,
$H_{\TW,\ov{0}}^1(\ob{K};\Z)=0$. The cohomology of the face set $\ob{K}$ has a generator in degree~1 and, in this case, we have
$H_{\TW,\ov{0}}^1(\ob{K};\Z)\neq H^1(\ob{K};\Z)$. It is easy to check that the link of $\{c\}$ is connected but its expanded link is not connected.

The normalization $N(\ob{K})$ of $\ob{K}$ is formed of $\ob{K}_+$ and of two 0-simplices 
$c_{1}=\langle \{c\}, \Delta^2_{(1)}\rangle$ and
$c_{2}=\langle \{c\}, \Delta^2_{(2)}\rangle$. 
The determination of the other boundaries shows that in $N(\ob{K})$  the two triangles still have a common edge
but the two cone points are now different. Thus $N(\ob{K})$ can be represented as
$${\footnotesize \xymatrix@=12pt{
&&&a=b\ar@{-}[dd]\ar@{-}[drrr]&&&\\
c_1\ar@{-}[rrru]\ar@{-}[rrrd]&&&&&&c_2\ar@{-}[llld]\\
&&&c=d&&&}
}$$ 
with $N(\ob{K})_+\cong \ob{K}_+\cong N(\ob{K})\backslash \{c_1,c_2\}\cong \ob{K}\backslash\{c\}$.
\end{example}

\begin{example}\label{exam:cone}
Here, we work over $\Q$.
\emph{The cone on the face set  $\ob{S}$} is  the \ffs~$c\ob{S}=\{\vartheta\}\ast \emptyset\ast\cdots\ast\emptyset\ast\ob{S}\in {\pmb\Delta}^{[n]}_\cF$. 
\index{Cone on a face set}
The simplices of $c\ob{S}$ are of three kinds,\linebreak
$\{\vartheta\}\ast\sigma\colon \Delta^0\ast \emptyset\ast\cdots\ast\emptyset\ast\Delta^{j_{n}}\to c\ob{S}$,
$\{\vartheta\}\colon \Delta^0\ast \emptyset\ast\cdots\ast\emptyset\to \{\vartheta\}\subset c\ob{S}$
or $\sigma\colon \emptyset\ast\cdots\ast\emptyset\ast\Delta^{j_{n}}\to \ob{S}\subset c \ob{S}$,
with $\sigma\in\ob{S}$. 
Observe that the $\{\vartheta\}\ast\sigma$ and $\sigma$ are the only elements of $c\ob{S}_{+}$.
The blow-up of  Sullivan's forms on these simplices is defined by  
$\widetilde{A}_{PL}(c\ob{S})_{\{\vartheta\}\ast\sigma}=A_{PL}(c\Delta^0)\otimes A_{PL}(\Delta^{j_{n}})$
and
$\widetilde{A}_{PL}(c\ob{S})_{\sigma}=\Q\otimes A_{PL}(\Delta^{j_{n}})$.
As the compatibility to face operators involves only the face operators on $\{\vartheta\}\ast \emptyset\ast\cdots\ast\emptyset\ast\ob{S}$, we are reduced to the 
compatibility on the factors $A_{PL}(\Delta^{j_{n}})$ and we get
$$\widetilde{A}_{PL}(c\ob{S})=A_{PL}(\Delta^1)\otimes A_{PL}(\ob{S})=\land(t,dt)\otimes  A_{PL}(\ob{S}).$$
Recall from \remref{rem:blowup} that in this system of coordinates, the face $\{\vartheta\} \times\{1\}$ of $c\Delta^0$ corresponds to $t=0$. Thus the perverse degrees are determined by
$\|t\otimes\omega\|=\|dt\otimes\omega\|=-\infty$, $\|1\otimes \omega\|=|\omega|$,
if $\omega \in A_{PL}(\ob{S})$ with $\omega\neq 0$.

As there is only one singular stratum, a loose perversity, $\ov{q}$, is given by a non-negative number, $\ov{q}(n)$. We want to determine the forms of $\ov{q}$-intersection. First, if this form has  a component in $t$ or $dt$, there is no restriction on the second factor and all the elements of 
$\land^+(t,dt)\otimes A_{PL}(\ob{S})$ are $\ov{q}$-admissible. (Here, $\land^+(t,dt)$ is the sum of elements of the shape $t^{\alpha}$, $t^{\beta}dt$, with $\alpha\geq 1$ and $\beta\geq 0$.) On the other hand, the element $1\otimes \omega$ is $\ov{q}$-admissible if, and only if, $|\omega|\leq\ov{q}(n)$. From that, we deduce that the set of forms of $\ov{q}$-intersection is
$$\left(A_{PL}(\Delta^1)\otimes A_{PL}(\ob{S})\right)_{\ov{q}}
=
 \left(\land^+(t,dt)\otimes A_{PL}(\ob{S})\right)\oplus
 A_{PL}^{< \ov{q}(n)}(\ob{S})\oplus \cZ A_{PL}^{\ov{q}(n)}(\ob{S}).
 $$
 The \cdga~$\land^+(t,dt)$ being contractible, there is a  quasi-isomorphism,
 $$\left(\widetilde{A}_{PL}(c\ob{S})\right)_{\ov{q}}
 \simeq  A_{PL}^{< \ov{q}(n)}(\ob{S})\oplus \cZ A_{PL}^{\ov{q}(n)}(\ob{S})=\tau_{\leq \ov{q}(n)}A_{PL}(\ob{S}),$$
 where the truncation $\tau_{\leq \ov{q}(n)}$ is introduced in \defref{def:troncature}.
This last cochain complex involves only the \cdga~$A_{PL}(\ob{S})$ and inherits a structure of \cdga. We may define a structure of strict perverse algebra on
$A_{PL}(\ob{S})$ (see \defref{def:perversecochains}) by setting $\|\omega\|=\max(|\omega|,|d\omega|)$, if $\omega\neq 0$. For this structure, we observe that
$A_{PL}(\ob{S})_{\ov{q}}=A_{PL}^{< \ov{q}(n)}(\ob{S})\oplus \cZ A_{PL}^{\ov{q}(n)}(\ob{S})=\tau_{\leq \ov{q}(n)}A_{PL}(\ob{S})$. 
In conclusion, \emph{the blow-up of Sullivan's forms on a cone, $c\ob{S}$, is related by a quasi-isomorphism to a strict perverse \cdga, defined on the  PL-forms, $A_{PL}(\ob{S})$.}

We end this example with an explicit computation.
 In the case of the \ffs~associated to $c\C P(2)$, only three values of $\ov{q}(n)$ suffice for the description of all the possible intersection cohomologies; they are $\ov{q}(n)=0,\, 2$ and~4. 
 If we denote by $\ov{\ell}$ the perversity such that $\ov{\ell}(n)=\ell$, we have
  \begin{eqnarray*}
 H^i_{\ov{0}}(c\C P(2);\Q)=\Q, &\text{ if } i=0 \text{ and } 0 \text{ otherwise, }\\
 H^i_{\ov{2}}(c\C P(2);\Q)=\Q, &\text{ if } i=0,\,2 \text{ and } 0 \text{ otherwise, }\\
H^i_{\ov{4}}(c\C P(2);\Q)=\Q, &\text{ if } i=0,\,2,\,4 \text{ and } 0 \text{ otherwise.}
 \end{eqnarray*}
 If we choose $n=1$, we did a computation of perverse cohomologies with filtrations that do not correspond to any geometrical notion of dimension but, as expected, the results coincide with the classical determination of the perverse cohomology of a cone. 
 If we want to obtain these cohomologies from $C^*_{\GM}(\ob{K};\Q)$ with the same filtration, we 
 need to choose perversities $\ov{p}$, related to the previous $\ov{q}$'s by
 $\ov{p}+\ov{q}=-1$, which corresponds to the loose perversities, $\ov{p}(1)=-1$, $\ov{p}(1)=-3$ and $\ov{p}(1)=-5$. Evidently, if we choose a geometric filtration of length $n=5$, we recover usual GM-perversities, $\ov{p}$.
\end{example}

 \begin{example}\label{exam:suspensionCP2}
 We work over $\Q$.
\emph{The suspension of the face set $\ob{S}$} is the \ffs~$\Sigma \ob{S}=\{\vartheta_1,\vartheta_2\}\ast \emptyset\ast\cdots\ast\emptyset\ast\ob{S}\in {\pmb\Delta}^{[n]}_\cF$.  
\index{Suspension of a face set}
By definition, the blow-up of  Sullivan's forms is 
  $$\widetilde{A}_{PL}(\Sigma\ob{S})=\left(A_{PL}(\Delta^1)\oplus A_{PL}(\Delta^1)\right)_{t_1=t_2=0}\otimes A_{PL}(\ob{S}),$$
   where $\left(A_{PL}(\Delta^1)\oplus A_{PL}(\Delta^1)\right)_{t_1=t_2=0}$ is the subspace of the direct sum generated by polynomials, $P(t_1,dt_1)+Q(t_2,dt_2)$,
  such that $P(0,0)=Q(0,0)$. 
These polynomials coincide on one side of the intervals; thus, the situation corresponds to two intervals attached together by one vertex and this can be  assimilated to one interval, $I$. We may therefore replace the previous  complex by
$C=A_{PL}(\Delta^1)\otimes A_{PL}(\ob{S})=\land(t,dt)\otimes A_{PL}(\ob{S})$. In this replacement, the 
boundary of the first component, is given by the values $t=0$ and $t=1$. 
Thus any polynomial $f$ in $t$ such that $f(0)=f(1)=0$ has a restriction to theses faces equal to zero. 
This observation allows the determination of the perverse degrees as,
\begin{itemize}
\item if $f\in \land t$, then $\| f\otimes \omega\|=-\infty$ if ($f(0)=f(1)=0$ or $\omega=0$) and
$\| f\otimes \omega\|=|\omega|$ otherwise,
\item if $\alpha=f dt$ with $f\in \land t$, then $\|\alpha\otimes\omega\|=-\infty$.
\end{itemize}  
In conclusion, \emph{the blow-up  of Sullivan's forms on the suspension, $\Sigma \ob{S}$, 
is related by a quasi-isomorphism to a strict perverse \cdga, defined on $\land(t,dt)\otimes A_{PL}(\ob{S})$.}

We introduce now a second cochain complex, $E_{\ov{q}}$, related to $C_{\ov{q}}$ by a quasi-isomorphism.
Let $\ov{q}$ be a loose perversity, given by a non-negative number, $\ov{q}(n)$. We define a cochain complex
by
$$(\ttau^{\geq \ov{q}(n)}A_{PL}(\ob{S}))^i=
\left\{\begin{array}{cl}
0&\text{if }i<\ov{q}(n),\\
(\cZ_cA_{PL}(\ob{S}))^i&\text{if } i=\ov{q}(n),\\
A_{PL}(\ob{S})^i&\text{if } i>\ov{q}(n),
\end{array}\right.
$$
where $\cZ_cA_{PL}(\ob{S})$ denotes a supplementary subspace of the vector space of cocycles. This complex has the same cohomology than the truncation denoted by
$\tau^{\geq \ov{q}(n)+1}A_{PL}(\ob{S})$ 
in \cite[Page 52]{MR2401086}, i.e.,
$H^i(\ttau^{\geq \ov{q}(n)}A_{PL}(\ob{S}))=H^i(A_{PL}(\ob{S}))$, if $i\geq \ov{q}(n)+1$ and 0 otherwise. 
We note that our truncation, $\ttau$, depends on the choice of a supplementary subspace and therefore is not canonical, 
but it has the advantage of being a subspace of $A_{PL}(\ob{S})$ and allows the construction of a morphism,
  $$\varphi\colon  E_{\ov{q}}=\tau_{\leq \ov{q}(n)}A_{PL}(\ob{S})\oplus s(\ttau^{\geq \ov{q}(n)}A_{PL}(\ob{S}))\to C_{\ov{q}},$$
  defined by
$\varphi(\eta)=1\otimes \eta$ if $|\eta|\leq \ov{q}(n)$ and $\varphi(s\eta)=dt\otimes \eta$, 
if $\eta\in \ttau^{\geq \ov{q}(n)}A_{PL}(\ob{S})$.  
We have to prove that $\varphi$ is a quasi-isomorphism.
It is easy to see that
$C_{\ov{q}}^{<\ov{q}(n)}=(\land(t,dt)\otimes A_{PL}(\ob{S}))^{<\ov{q}(n)}$
and
$\cZ C_{\ov{q}}^{\ov{q}(n)}=\cZ (\land(t,dt)\otimes A_{PL}(\ob{S}))^{\ov{q}(n)}$, which imply that $H^{j}(\varphi)$ is an isomorphism for $j\leq \ov{q}(n)$.

\medskip
In degree $\ov{q}(n)+1$, if
$\omega=\sum_{i\geq 0}t^i \alpha_i +\sum_{i\geq 0} t^i\,dt\,\beta_i\in \land(t,dt)\otimes A_{PL}(\ob{S})$ is such that $d\omega=0$, we know that
$$\omega-\alpha_0=d\left(\sum_{i\geq 0}\frac{t^{i+1}}{i+1}\beta_i\right).$$
If $\omega$ is an element of $C_{\ov{q}}$, we have $\alpha_0=0$ and we observe that the forms
$\frac{t^{i+1}}{i+1}\beta_i$ are elements of $C_{\ov{q}}^{\ov{q}(n)}$. Thus any cocycle of degree $\ov{q}(n)+1$ is a coboundary and we obtain
$H^{\ov{q}(n)+1}(C_{\ov{q}}^*)=0$.

In degrees strictly greater than $\ov{q}(n)+1$,  from the previous determination of the perverse degree, there is no restriction for the polynomials in $t$, $f(t)$, in the expressions $f(t)dt\beta_{i}\in C_{\ov{q}}^{>\ov{q}(n)}$ 
 but the  polynomials $f(t)$ in the expressions $f(t)\alpha_{i}\in C_{\ov{q}}^{>\ov{q}(n)}$ have to verify $f(0)=f(1)=0$. 
 In short, an element $\omega\in C_{\ov{q}}^{>\ov{q}(n)}$ is of the shape
$$\omega=\sum_{i\geq 0}(t^{i+2}-t^{i+1})\alpha_{i}+\sum_{i\geq 0}t^idt\beta_{i}.$$
We construct a morphism
$\psi\colon  C_{\ov{q}}^{>\ov{q}(n)}\to s(A^{>\ov{q}(n)}_{PL}(\ob{S}))$ and a homotopy $K$ %on $C_{\ov{q}}^{>\ov{q}(n)}$
 by
$$\psi(\omega)=s\sum_{i\geq 0}\frac{\beta_{i}}{i+1} \text{ and } 
K(\omega)=\sum_{i\geq 0}\frac{t^{i+1}}{i+1}\beta_{i}-t\left(\sum_{i\geq 0}\frac{\beta_{i}}{i+1}\right).$$
We check $\psi\circ d =d\circ \psi$, $K\circ d+d\circ K=\id-\varphi\circ\psi$, which imply that $H^{j}(\varphi)$ is an isomorphism for $j> \ov{q}(n)$.

\smallskip
 In the case of $\Sigma\C P(2)$, only  three values of $\ov{q}(n)$ are sufficient for the description of all the possible intersection cohomologies. If we denote by $\ov{\ell}$ the perversity such that $\ov{\ell}(n)=\ell$, we have
   \begin{eqnarray*}
   H^i_{\ov{0}}(\Sigma\C P(2);\Q)=\Q
   &\text{ if } i=0,\,3,\,5 \text{ and } 0 \text{ otherwise, }\\
H^i_{\ov{2}}(\Sigma\C P(2);\Q)=\Q
&\text{ if } i=0,\,2,\,5 \text{ and } 0 \text{ otherwise, }\\
 H^i_{\ov{4}}(\Sigma\C P(2);\Q)=\Q
 &\text{ if } i=0,\,2,\,4 \text{ and } 0 \text{ otherwise.}
    \end{eqnarray*} 
\end{example}

%%%%%%%%%%%%%%%%%%%%%
\section{Homotopy of filtered  face sets}\label{sec:homotopyffs}

\begin{quote}
In this section, we define the product of a \ffs~with a face set and use it for a definition of homotopy between filtered face maps. Kan fibrations are also defined in the category of \ffss~and linked to their analogue in the category of face sets, see \propref{prop:kanetKan}.
\end{quote}

Let $\Delta^N=\Delta^{j_0}\ast\cdots\ast \Delta^{j_n}$ be a filtered simplex and $\Delta^k$ be a simplex. We describe a \ffs~ $(\Delta^{j_0}\ast\cdots\ast \Delta^{j_n})\otimes \Delta^k$ by its vertices, as follows.
Denote by $z_0<\cdots<z_k$ the vertices of $\Delta^k$ and by $a_{\ell}^{0}<\cdots < a_{\ell}^{j_{\ell}}$ the  vertices of   $\Delta^{j_{\ell}}$, for  any $\ell\in \{0,\ldots,n\}$.
 We extend the order between the vertices of the $\Delta^{j_{k}}$'s to an order on the vertices of $\Delta^N$ by setting
 $a^s_{\ell}\leq a^{s'}_{\ell'}$ if $\ell <\ell'$ or ($\ell=\ell'$ and $s\leq s'$).
 The vertices of $(\Delta^{j_0}\ast\cdots\ast \Delta^{j_n})\otimes \Delta^k$ are the couples $(a_{\ell}^{s},z_i)$ of vertices of $\Delta^N$ and $\Delta^k$. 
Consider a sequence of distinct vertices,
$\left((a^{s_{i}}_{\ell_{i}},z_{w_{i}})\right)_{0\leq i\leq \nu}\,$,
with $\ell_{i}\in \{0,\ldots,n\}$,
$s_{i}\in \{0,\ldots,j_{\ell_{i}}\}$
and
$w_{i}\in \{0,\ldots,k\}$. Such a sequence spans a simplex of
 $(\Delta^{j_0}\ast\cdots\ast \Delta^{j_n})\otimes \Delta^k$
if, and only if, we have
$a_{\ell_{i}}^{s_{i}}\leq a_{\ell_{i+1}}^{s_{i+1}}$
and
$z_{w_{i}}\leq  z_{w_{i+1}}$, for all $i\in\{0,\ldots,\nu-1\}$.
For instance, the maximal simplices are sequences of distinct $(k+N+1)$ couples. The filtration degree is given by the index $\ell$, i.e.,
if $\nabla$ is a simplex of
$(\Delta^{j_0}\ast\cdots\ast \Delta^{j_n})\otimes \Delta^k$,
the number $1+\|\nabla\|_{\ell}$ is equal to the cardinal of the set of vertices whose first component belongs to
$\Delta^{j_0}\ast\cdots\ast \Delta^{j_{n-\ell}}$.

This product is still of formal dimension $n$, as \ffs, and  this construction is compatible with face operators, see \cite[Section~3]{MR0300281}. This justifies the next definition.

\begin{definition}\label{def:productffsandfaceset}
Let $\ob{K}\in \fil$ be a \ffs~and $\ob{T}$ be a face set. \emph{The  
product of $\ob{K}$ and $\ob{T}$}  is the
\ffs~defined by$$\ob{K}\otimes \ob{T}=\colim_{\Delta^{j_0}\ast\cdots\ast \Delta^{j_n}\to \ob{K}}\left(\colim_{\Delta^k\to \ob{T}}\;(\Delta^{j_0}\ast\cdots\ast \Delta^{j_n})\otimes \Delta^k\right).$$
\end{definition}

This product 
is clearly associative, i.e.,
$$\ob{K}\otimes(\ob{T}\otimes \ob{S})=(\ob{K}\otimes\ob{T})\otimes \ob{S},$$
 extends naturally  in a bifunctor,
$$\otimes\colon \fil \times \dset\to \fil,$$
and commutes with colimits by definition.

\medskip
We define now some structures on the sets of morphisms. We keep the previous notation: $\ob{K}$, $\ob{L}$ are  \ffss~and $\ob{T}$ is a  face set. We define \emph{a \ffs, $\ob{K}^{\ob{T}}$,} by
$$(\ob{K}^{\ob{T}})_{j_0,\ldots,j_n}=\hom_{\fil}((\Delta^{j_0}\otimes\cdots\otimes\Delta^{j_n})\otimes\ob{T},\ob{K}),$$
and \emph{a face set, $\hom^{\pmb\Delta}(\ob{K},\ob{L})$,} by
$$\hom^{\pmb\Delta}(\ob{K},\ob{L})_k=\hom_{\fil}(\ob{K}\otimes \Delta^k,\ob{L}).$$
The following formulae of adjunctions derive directly from the definitions and the usual preservation of colimits by a $\hom$-functor .

\begin{proposition}\label{prop:adjonctionproduct}
If $\ob{K}$, $\ob{L}$ are two \ffss~and $\ob{T}$ is a face set,  we have natural bijections,
\begin{eqnarray*}\hom_{\fil}(\ob{K}\otimes\ob{T},\ob{L})&\cong& \hom_{\fil}(\ob{K},\ob{L}^{\ob{T}})\\
&\cong&
\hom_{\dset}(\ob{T},\hom^{\pmb\Delta}(\ob{K},\ob{L})).
\end{eqnarray*}
\end{proposition}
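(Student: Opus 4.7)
The plan is to produce the two bijections by direct construction of explicit maps, exploiting the fact that everything in sight is defined by colimits of representable (filtered) simplices, so that a morphism out of $\ob{K}\otimes\ob{T}$ is determined by its values on pairs $(\sigma,\tau)$ with $\sigma$ a filtered simplex of $\ob{K}$ and $\tau$ a simplex of $\ob{T}$.

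For the first bijection, I would define $\Phi\colon\hom_{\fil}(\ob{K}\otimes\ob{T},\ob{L})\to\hom_{\fil}(\ob{K},\ob{L}^{\ob{T}})$ by sending $f$ to the natural transformation which, on a filtered simplex $\sigma\colon\Delta^{j_0}\ast\cdots\ast\Delta^{j_n}\to\ob{K}$, takes the value
\[
\Phi(f)(\sigma)\;=\;f\circ\bigl(\sigma\otimes \id_{\ob{T}}\bigr)\in\hom_{\fil}\bigl((\Delta^{j_0}\ast\cdots\ast\Delta^{j_n})\otimes\ob{T},\ob{L}\bigr)=(\ob{L}^{\ob{T}})_{j_0,\ldots,j_n}.
\]
Conversely, I would define $\Psi$ by using the colimit presentation of $\ob{K}\otimes\ob{T}$ of Definition~\ref{def:productffsandfaceset}: given $g\colon\ob{K}\to\ob{L}^{\ob{T}}$ and an elementary simplex $\sigma\otimes\tau$ of $\ob{K}\otimes\ob{T}$ (with $\tau\colon\Delta^k\to\ob{T}$), set $\Psi(g)(\sigma\otimes\tau)=g(\sigma)\circ(\id\otimes\tau)$. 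The compatibility of this assignment with the face relations in the colimit follows because $g$ is a natural transformation, so that $g(\partial_i\sigma)=\partial_i^*\,g(\sigma)$, matching the face operators used to form the colimit; this is the only nontrivial bookkeeping step. A direct inspection shows $\Psi\Phi=\id$ and $\Phi\Psi=\id$, and both $\Phi$ and $\Psi$ are natural in $\ob{K}$, $\ob{T}$, $\ob{L}$.

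For the second bijection, the same mechanism applies with the roles of $\ob{K}$ and $\ob{T}$ interchanged: given $f\colon\ob{K}\otimes\ob{T}\to\ob{L}$, define a face-set map $\widetilde{f}\colon\ob{T}\to\hom^{\pmb\Delta}(\ob{K},\ob{L})$ by sending $\tau\in\ob{T}_k$ to the composition
\[
\widetilde{f}(\tau)\;=\;f\circ(\id_{\ob{K}}\otimes\tau)\in\hom_{\fil}(\ob{K}\otimes\Delta^k,\ob{L})=\hom^{\pmb\Delta}(\ob{K},\ob{L})_k.
\]
The inverse recovers $f$ from its values on $\sigma\otimes\tau$ via $\widetilde{f}(\tau)\circ(\sigma\otimes\id_{\Delta^k})$. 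Again, compatibility with the face operators $\partial_i\colon\ob{T}_k\to\ob{T}_{k-1}$ amounts to noting that restricting $\tau$ along a face map of $\Delta^k$ corresponds exactly to precomposing $\widetilde{f}(\tau)$ with the same face inclusion tensored with $\id_{\ob{K}}$.

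The only real obstacle is the verification that both constructions genuinely land in the stated hom-sets and respect the face structure — i.e.\ that the explicit formulas are well-defined on the colimit and are natural transformations in the sense of Definition~\ref{def:filteredfaceset}. Once that is settled, the rest is a purely formal Yoneda-style check. I expect no subtleties concerning the perverse degree: face operators in $\ob{L}^{\ob{T}}$ are induced by face operators on the first tensor factor $\Delta^{j_0}\ast\cdots\ast\Delta^{j_n}$, which by definition preserve the join decomposition, so the whole adjunction takes place entirely within $\fil$ without any perversity constraint to control.
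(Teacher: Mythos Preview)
Your proposal is correct and is precisely the standard adjunction argument one expects; the paper itself gives no proof beyond the sentence ``The following formulae of adjonctions derive directly from the definitions,'' so your explicit construction of $\Phi$, $\Psi$ and their inverses is simply a careful unpacking of what the authors leave implicit.
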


The two canonical maps, $\iota_0,\,\iota_1\colon \Delta^0\to \Delta^1$, give a notion of homotopy in the category $\fil$.

\begin{definition}\label{def:homotopyffs}
Let $f,\,g\colon \ob{K}\to \ob{L}$ be two filtered face maps. They are \emph{homotopic} if there exists a filtered face map, $F\colon \ob{K}\otimes \Delta^1\to \ob{L}$, such that $F\circ (\id\otimes \iota_0)=f$ and $F\circ (\id\otimes \iota_1)=g$. We denote this relation by $f\simeq g$.
\index{Homotopy!of filtered face maps}
\end{definition}

In particular, the two injections,
$\id\otimes\iota_{i}\colon \ob{K}\to \ob{K}\otimes \Delta^1$, for $i=0,\,1$,
are homotopic.
As in the case of face sets, for getting an equivalence relation, we have to impose some restrictions. Let $m\geq 1$. We denote by $\Delta^{m,k}$ the face subset of $\partial \Delta^m$ obtained by removing the $k$-th $(m-1)$-face.

\begin{definition}\label{def:Kanffs}
A filtered face map, $f\colon \ob{K}\to \ob{K}'$, is a \emph{Kan fibration} if, for each \ffs, $\ob{L}$, and each commutative diagram of filtered face maps,
$$\xymatrix{
\ob{L}\otimes \Delta^{m,k}\ar[r]^-{\varphi}\ar[d]&\ob{K}\ar[d]^f\\
\ob{L}\otimes\Delta^m\ar[r]^-{\psi}\ar@{-->}[ur]^-{g}&\ob{K}',
}$$
there exists a filtered face map, $g\colon \ob{L}\otimes \Delta^m\to \ob{K}$, extending $\varphi$ and lifting~$\psi$. In the case $\ob{K}'=\ob{K}'_+=\emptyset\ast\cdots\ast\emptyset\ast\Delta^0$, the \ffs~$\ob{K}$ is called a \emph{Kan \ffs}.
\index{Kan fibration}
\end{definition}

This definition is connected to the notion of Kan fibration between face sets, see \cite[Section~5]{MR0300281}.

\begin{proposition}\label{prop:kanetKan}
A filtered face map, $f\colon \ob{K}\to \ob{K}'$, is a Kan fibration if, and only if, the corresponding filtered face  map,
$\hat{f}\colon \hom^{\pmb\Delta}(\ob{L},\ob{K})\to \hom^{\pmb\Delta}(\ob{L},\ob{K}')$,
is a Kan fibration for any \ffs, $\ob{L}$.
\end{proposition}

\begin{proof}
With  \propref{prop:adjonctionproduct}, the existence of a filtered face map $g$, making commutative the following diagram,
$$\xymatrix{
\ob{L}\otimes \Delta^{m,k}\ar[r]^-{\varphi}\ar[d]&\ob{K}\ar[d]^{f}\\
\ob{L}\otimes \Delta^m\ar[r]^-{\psi}\ar@{-->}[ru]^-{g}&\ob{K}',
}$$
is equivalent to the existence of a filtered face map, $\hat{g}$, making commutative the following diagram,
$$\xymatrix{
\Delta^{m,k}\ar[r]^-{\hat{\varphi}}\ar[d]&\hom^{\pmb\Delta}(\ob{L},\ob{K})\ar[d]^{\hat{f}}\\
\Delta^m\ar[r]^-{\hat{\psi}}\ar@{-->}[ur]^-{\hat{g}}&\hom^{\pmb\Delta}(\ob{L},\ob{K}'),}$$
where $\hat{\varphi}$, $\hat{\psi}$ correspond to $\varphi$, $\psi$ in the adjunction correspondence, respectively.
This last property is the definition of a Kan fibration in $\dset$, see \cite[Section~5]{MR0300281}.
\end{proof}

\begin{corollary}\label{cor:Kanmapspace}
Let $\ob{K}$ be a Kan \ffs~and $(\ob{T},\ob{S})$ be a pair of face sets. Then the map $\ob{K}^{\ob{T}}\to \ob{K}^{\ob{S}}$ is a Kan fibration in $\fil$.
\end{corollary}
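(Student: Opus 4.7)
The plan is to combine the two adjunctions of Proposition~\ref{prop:adjonctionproduct} with the characterization of Kan fibrations given in Proposition~\ref{prop:kanetKan}, and then reduce the problem to a lifting property against $\ob{K}$ itself via a classical anodyne-extension argument for face sets.

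First, by Proposition~\ref{prop:kanetKan} applied to $\ob{K}^{\ob{T}}\to\ob{K}^{\ob{S}}$, it suffices to produce, for an arbitrary \ffs~$\ob{L}$ and an arbitrary horn inclusion $\Delta^{m,k}\hookrightarrow\Delta^m$, a filler in every square
$$\xymatrix{
\ob{L}\otimes\Delta^{m,k}\ar[r]\ar[d]&\ob{K}^{\ob{T}}\ar[d]\\
\ob{L}\otimes\Delta^m\ar[r]\ar@{-->}[ur]&\ob{K}^{\ob{S}}.
}$$
Applying the adjunction $\hom_{\fil}(\ob{M}\otimes\ob{U},\ob{K})\cong\hom_{\fil}(\ob{M},\ob{K}^{\ob{U}})$ of Proposition~\ref{prop:adjonctionproduct} to both rows, and invoking the associativity of the mixed product together with the fact that it commutes with colimits in each variable, this lifting problem is transformed into finding the diagonal in
$$\xymatrix{
\ob{L}\otimes\bigl((\ob{T}\otimes\Delta^{m,k})\cup_{(\ob{S}\otimes\Delta^{m,k})}(\ob{S}\otimes\Delta^m)\bigr)\ar[r]\ar[d]&\ob{K}\\
\ob{L}\otimes(\ob{T}\otimes\Delta^m).\ar@{-->}[ur]&
}$$

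The key point is then that the face-set inclusion
$$(\ob{T}\otimes\Delta^{m,k})\cup_{(\ob{S}\otimes\Delta^{m,k})}(\ob{S}\otimes\Delta^m)\hookrightarrow\ob{T}\otimes\Delta^m$$
is an anodyne extension in the sense of Rourke and Sanderson, i.e., a transfinite composition of pushouts of horn inclusions $\Delta^{m',k'}\hookrightarrow\Delta^{m'}$. This is the standard pushout-product (Kan-type) lemma for face sets and is already available in \cite{MR0300281}. Since the functor $\ob{L}\otimes(-)$ from face sets to \ffss~preserves colimits (by the very definition, which expresses $\ob{L}\otimes\ob{U}$ as a colimit over the simplices of $\ob{U}$), tensoring the whole tower with $\ob{L}$ exhibits the left-hand map above as a transfinite composition of pushouts of maps of the form $\ob{L}\otimes\Delta^{m',k'}\to\ob{L}\otimes\Delta^{m'}$.

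To conclude, it suffices to observe that the class of maps against which $\ob{K}$ has the right lifting property is stable under pushouts, coproducts and transfinite compositions; this is a purely formal statement. Since $\ob{K}$ is a Kan \ffs, it lifts against every generator $\ob{L}\otimes\Delta^{m',k'}\to\ob{L}\otimes\Delta^{m'}$ by Definition~\ref{def:Kanffs}, hence also against the composite, and the desired filler exists. The main obstacle in this plan is the Kan-type lemma for face sets, but we can simply cite it from \cite{MR0300281}; once it is in hand, the rest is a routine adjunction-and-colimit manipulation made legitimate by the colimit-preservation built into Definition~\ref{def:productffsandfaceset}.
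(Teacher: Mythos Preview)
Your proof is correct and follows essentially the same route as the paper's. Both arguments reduce the lifting problem, via the adjunctions of Proposition~\ref{prop:adjonctionproduct}, to the fact that the pushout-product inclusion
\[
(\Delta^{m}\otimes\ob{S})\cup_{(\Delta^{m,k}\otimes\ob{S})}(\Delta^{m,k}\otimes\ob{T})\hookrightarrow\Delta^{m}\otimes\ob{T}
\]
is anodyne in face sets. The only cosmetic difference is in packaging: the paper adjoints once more to land in $\dset$, observes that $\hom^{\pmb\Delta}(\ob{L},\ob{K})$ is a Kan face set by Proposition~\ref{prop:kanetKan}, and then lifts there; you stay in $\fil$, tensor the anodyne tower by $\ob{L}$, and lift directly against the Kan \ffs~$\ob{K}$. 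These two descriptions are adjoint to one another and neither buys anything the other does not. (A minor point: your opening appeal to Proposition~\ref{prop:kanetKan} is really just Definition~\ref{def:Kanffs}; the proposition itself is the adjoint reformulation you do not actually use.)
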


\begin{proof}
Let $\ob{L}$ be a \ffs. 
With \propref{prop:kanetKan}, we have to show the existence of
a dotted arrow making commutative the following diagram in $\dset$,
$$\xymatrix{
\Delta^{m,k}\ar[r]\ar[d]&\hom^{\Delta}(\ob{L},\ob{K}^{\ob{T}})\ar[d]\\
\Delta^m\ar[r]\ar@{-->}[ru]&\hom^{\Delta}(\ob{L},\ob{K}^{\ob{S}}).
}$$
This problem of existence can also be rewritten as the following lifting problem in $\dset$,
$$\xymatrix{
\Delta^m\otimes \ob{S}\cup_{\Delta^{m,k}\otimes\ob{S}}\Delta^{m,k}\otimes \ob{T}\ar[r]\ar[d]&
\hom^{\Delta}(\ob{L},\ob{K}).\\
\Delta^m\otimes \ob{T}\ar@{-->}[ru]&
}$$
As $\ob{K}\to\emptyset\ast\cdots\ast \emptyset\ast\Delta^0$ is a Kan fibration, by \propref{prop:kanetKan}, the induced map
$\hom^{\Delta}(\ob{L},\ob{K})\to \hom^{\Delta}(\ob{L},\emptyset\ast\cdots\ast \emptyset\ast\Delta^0)=
\Delta^0$ is a Kan fibration (i.e.,  $\hom^{\Delta}(\ob{L},\ob{K})$ is a Kan face set) and this last dotted arrow exists.
\end{proof}

Two maps, $f,\,g\colon \ob{K}\to \ob{L}$ in $\fil$, can be viewed as elements of $\hom^{\pmb\Delta}(\ob{K},\ob{L})_0$. 
From \defref{def:homotopyffs}, they are homotopic in $\fil$~if, and only if, there exists 
$F\in \hom^{\pmb\Delta}(\ob{K},\ob{L})_1$ 
such that $\partial_0 F=f$ and $\partial_1 F=g$, 
where $\partial_0$, $\partial_1$ are face operators of the face set 
$\hom^{\pmb\Delta}(\ob{K},\ob{L})$. This
observation implies the next result, by definition of homotopy groups of Kan face sets, see \cite[Section~6]{MR0300281}.

\begin{corollary}\label{cor:homotopyffs}
If $\ob{K}$ is a Kan \ffs, the relation of homotopy between filtered face maps, of codomain $\ob{K}$, is an equivalence relation and the set of equivalence classes verifies
$$[\ob{L},\ob{K}]_{\fil}=\pi_0\hom^{\pmb\Delta}(\ob{L},\ob{K}).$$
\end{corollary}

\begin{proof}
We have only to check that
$\hom^{\Delta}(\ob{L},\ob{K})$
is a Kan face set and this is already done at the end of the proof of \corref{cor:Kanmapspace}.
\end{proof}

We study now the behavior of the product of a \ffs~and a face set with intersection cohomology.

\begin{proposition}\label{prop:preKunneth}
Let $\ob{K}$ be a \ffs, $f\colon\ob{X}\to\ob{Y}$ be a face map between face sets and $\ov{p}$ be a loose perversity. Then, the map
$\id\otimes f\colon \ob{K}\otimes\ob{X}\to \ob{K}\otimes\ob{Y}$
induces a chain map
$$(\id\otimes f)_*\colon C_*^{\GM,\ov{p}}(\ob{K}\otimes\ob{X})\to
C_*^{\GM,\ov{p}}(\ob{K}\otimes\ob{Y}).$$
Moreover, if $f$ is homotopic to $g$, then we have 
$H_*^{\GM,\ov{p}}(\id\otimes f)=H_*^{\GM,\ov{p}}(\id\otimes g)$. In particular, if $f$ is a homotopy equivalence, the chain map
$(\id\otimes f)_*$ is a quasi-isomorphism. 
\end{proposition}

\begin{proof}
The map $\id\otimes f$ induces a chain map $C_*(\ob{K}\otimes\ob{X})\to C_*(\ob{K}\otimes\ob{Y})$ and we have only to
determine its behavior with the perverse degree. As it is a local computation, we suppose $f\colon \Delta^k\to\Delta^{k'}$ and consider the map $\id\otimes f\colon  (\Delta^{j_0}\ast\cdots\ast\Delta^{j_n})\otimes \Delta^k\to (\Delta^{j_0}\ast\cdots\ast\Delta^{j_n})\otimes \Delta^{k'}$.

Let $\sigma\colon \nabla\to (\Delta^{j_0}\ast\cdots\ast\Delta^{j_n})\otimes \Delta^k$ be a simplex of the product defined at the beginning of this section; it has a perverse degree induced by the perverse degree of a product. Thus, $\nabla$ has a decomposition such that
$\|\sigma\|_{\ell}=\dim\left(\nabla^{k_0}\ast\cdots\ast\nabla^{k_{n-\ell}}\right)$. Recall also that $\nabla$ can be described by its vertices and these last ones are couples
of a vertex of
$\Delta^{j_0}\ast\cdots\ast\Delta^{j_n}$
and a vertex of $\Delta^k$. In this context, the integer
$1+\|\sigma\|_{\ell}$ is the cardinal of the set of vertices whose first component belongs to
$\Delta^{j_0}\ast\cdots\ast\Delta^{j_{n-\ell}}$. The image 
$((\id\otimes f)\circ \sigma)(\nabla)$ is a simplex whose vertices are the images of the vertices of $\nabla$ by $\id\otimes f$. Thus the number of vertices whose first components are in
$\Delta^{j_0}\ast\cdots\ast\Delta^{j_{n-\ell}}$
cannot increase and we get an induced  map 
$C_*^{\GM,\ov{p}}((\Delta^{j_0}\ast\cdots\ast\Delta^{j_n})\otimes \Delta^k)\to
C_*^{\GM,\ov{p}}((\Delta^{j_0}\ast\cdots\ast\Delta^{j_n})\otimes \Delta^{k'})$.

For the second part of the statement, it is sufficient to prove that the two injections
$\iota_0,\,\iota_1\colon \ob{K}\to \ob{K}\otimes \Delta^1$
induce the same map in homology. The argument used in the proof of \lemref{lem:homotopie1} can be, word for word, adapted to this situation.
\end{proof}

%%%%%%%%%%%%%%%%%%%%%%
%\part{Rational algebraic models}
\chapter{Rational algebraic models}\label{chap:rational}

In this chapter, we consider algebraic structures defined on the field of the rational numbers and any vector space is supposed to be a rational vector space.

\section{Perverse differential graded algebras}\label{sec:perversedifferentialalgebras}

\begin{quote}
We collect here some results of Hovey \cite{MR2544388} concerning the closed model structure on the category of perverse \cdga's and specify the notion of homotopy. A definition of minimal model and its unicity up to isomorphisms are provided. 
\end{quote}

The notion of strict perverse CDGA, introduced in \secref{sec:filteredandlocalsystem},  is extended in the one of perverse CDGA defined by M. Hovey, see \cite{MR2544388}. We enlarge the lattice ${\cP}^n$ of GM-perversities used in \cite{MR2544388} by considering the lattice $\hat{\cP}^n=\cP^n\cup\{\ov{\infty}\}$.
 If $\ov{p}$ and $\ov{q}$ are elements of $\hat{\cP}^n$, we define $\ov{p}\oplus \ov{q}$ as the smallest element, $\ov{r}$, of $\hat{\cP}^n$ such that $\ov{p}+\ov{q}\leq \ov{r}$, see \cite{MR572580}. (In particular, if $\ov{p}(i)+\ov{q}(i)> i-2$ for some $i>2$, then $\ov{p}\oplus\ov{q}=\ov{\infty}$.)
We may construct $\ov{p}\oplus \ov{q}$ by induction (see \cite{MR2529162}), starting from $(\ov{p}\oplus\ov{q})(n)=\ov{p}(n)+\ov{q}(n)$. The inductive step is given by:
\begin{eqnarray*}
\text{ if } \ov{p}(k)+\ov{q}(k)<(\ov{p}\oplus\ov{q})(k+1),&\text{ one sets}&
(\ov{p} \oplus\ov{q})(k)=(\ov{p} \oplus\ov{q})(k+1)-1,\\
\text{ if } \ov{p}(k)+\ov{q}(k)=(\ov{p}\oplus\ov{q})(k+1),&\text{ one sets}&
(\ov{p} \oplus\ov{q})(k)=(\ov{p} \oplus\ov{q})(k+1).
\end{eqnarray*}
This law is commutative, associative and has the null perversity as neutral element. 

\begin{definition}\label{def:perversevs}
A \emph{perverse vector space} is 
\index{Perverse!vector space}\index{Perverse!linear map}
a functor from $\hat{\cP}^n$ to the category of vector spaces. 
If this functor takes value in the category of graded vector spaces, we use the expression \emph{perverse graded vector space.}
\emph{Perverse linear maps} are natural transformations between perverse vector spaces.
\end{definition}

If $M_{\bullet}$ is a perverse vector space, we denote by $\varphi^{\ov{q}}_{\ov{p}}\colon M_{\ov{p}}\to M_{\ov{q}}$ the morphism associated to $(\ov{p}\leq\ov{q})$.
As in  \cite[Section~2]{MR2544388},we define a tensor product of perverse vector spaces by:
\begin{equation}\label{equa:perversedegreeproduct}
(M_{\bullet}\otimes N_{\bullet})_{\ov{r}}=\colim_{\ov{p}+\ov{q}\leq \ov{r}} M_{\ov{p}}\otimes N_{\ov{q}}.
\end{equation}

For any perversity $\ov{p}\in\hat{\cP}^n$, there is an evaluation functor, $\ev_{\ov{p}}$, which associates to a perverse vector space, $M_{\bullet}$, the vector space $M_{\ov{p}}$. This functor admits a left adjoint, $\cV_{\ov{p}}$, that generates  free objects.

\begin{definition}\label{def:freeperversevs}
\index{Perverse!vector space!$\ov{p}$-free}
Let $V$ be a $\Q$-vector space and $\ov{p}\in\hat{\cP}^n$. The \emph{$\ov{p}$-free perverse vector space generated by $V$}
is defined by,
\begin{itemize}
\item $\cV_{\ov{p}}(V)_{\ov{r}}=V$ if $\ov{r}\geq \ov{p}$ and 0 otherwise;
\item the map
$\varphi_{\ov{r}_{1}}^{\ov{r}_{2}}$
is the identity map $V\to V$ if $\ov{p}\leq \ov{r}_{1}\leq \ov{r}_{2}$, and the canonical injection $0\to V$ or the identity map $0\to 0$ otherwise.
\end{itemize}
\end{definition}

 The \emph{projective perverse vector spaces} are direct factors of 
$\oplus_{\ov{p}} \cV_{\ov{p}}(V(\ov{p}))_{\bullet}$, where the $V(\ov{p})$'s are vector spaces.
More precisely, (\cite[Proposition 2.1]{MR2544388}),
a  perverse vector space, $M_{\bullet}$, is \emph{projective} if the map
$\colim_{\ov{p}<\ov{q}}M_{\ov{p}}\to M_{\ov{q}}$
is a split monomorphism for all  $\ov{q}$.

\begin{definition}\label{def:perversecga}
A \emph{perverse algebra} is a monoid in the category of perverse vector spaces. In the case of a  commutative monoid, we have a \emph{perverse commutative graded algebra} (henceforth perverse \cga).
\index{Perverse!algebra}\index{Perverse!CGA}
We denote by $\Q_{\bullet}$ the commutative perverse algebra defined by $\varphi_{\ov{p}}^{\ov{q}}=\id\colon \Q_{\ov{p}}=\Q\to \Q_{\ov{q}}=\Q$, for any $\ov{p}\leq \ov{q}$.
\end{definition}

A \emph{perverse chain complex} is a functor from $\hat{\cP}^n$ in
the category of chain complexes.
\index{Perverse!chain complex}\index{Perverse!chain map}\index{Perverse!cochain complex}
A \emph{perverse chain map} is a perverse linear map compatible with the differentials. We denote by $\ch(\hat{\cP}^n)$ the associated category.

The projective objects of $\ch(\hat{\cP}^n)$ are the perverse chain complexes, $A_{\bullet}$, formed  of  perverse projective vector spaces and such that each perverse chain map into an acyclic complex, $A_{\bullet}\to B_{\bullet}$,  is chain homotopic to zero, (\cite[Section~3]{MR2544388}). 

\begin{theoremb}[cf. \cite{MR1912401}, \cite{MR2544388}]
The category $\ch(\hat{\cP}^n)$ is a closed model category for the following classes of objects.
\begin{itemize}
\item The \emph{weak equivalences} are the  perverse  chain maps $f_{\bullet}\colon M_{\bullet}\to N_{\bullet}$ such that
$H_*(f_{\ov{p}})\colon H_*(M_{\ov{p}})\to H_*(N_{\ov{p}})$ is an isomorphism for any perversity~$\ov{p}\in\hat{\cP}^n$.
\item The \emph{fibrations} are the surjections.
\item The \emph{cofibrations} are the injective maps with a projective cokernel.% 
\end{itemize}
This structure is monoidal and each object of $\ch(\hat{\cP}^n)$ is fibrant.
\end{theoremb}

\begin{definition}\label{def:perversecdga}
\index{Perverse!DGA}\index{Perverse!CDGA}
A \emph{perverse differential graded algebra} (henceforth perverse  \dga)~is a monoid in the category $\ch(\hat{\cP}^n)$.
In the case of a  commutative monoid, we have a \emph{perverse commutative differential graded algebra} (henceforth perverse \cdga).
We denote by $\cdgaf$ the category of  perverse \cdga's.
\end{definition}

\begin{theoremb}[cf.  \cite{MR2544388}]
The category $\cdgaf$  is a closed model category for the following classes of objects.
\begin{itemize}
\item The \emph{weak equivalences} are the morphisms which are weak equivalen\-ces in $\ch(\hat{\cP}^n)$.
\item The \emph{fibrations} are the surjections.
\item The \emph{cofibrations} are defined by the lifting property relatively to trivial fibrations.
\end{itemize}
\end{theoremb}

To any strict perverse cochain complex, $A$, we associate a perverse cochain complex, $A_{\bullet}$, defined by 
\begin{equation}\label{equa:strictpasstric}A_{\ov{p}}=\{\omega \mid \|\omega\|_i\leq \ov{p}(i) \text{ and } \|d\omega\|_i\leq \ov{p}(i)\text{ for all } i\}.
\end{equation}
Moreover, if $A$ is a strict perverse DGA,  then $A_{\bullet}$ is a perverse DGA and $H(A_{\bullet})$ is a perverse algebra. 

For instance, if  $\ob{K}$ is a \ffs~and $F$ is a universal system, the blow-up, $\tF(\ob{K})$, generates a perverse cochain complex $\tF(\ob{K})_{\bullet}$. In particular, the blow-up
of Sullivan's forms gives a perverse \cdga, $\cA(\ob{K})_\bullet$, defined by
$\cA(\ob{K})_{\ov{q}}= \widetilde{A}_{PL,\ov{q}}(\ob{K})$.

In the perverse cohomology of the suspension of $\C P(2)$ (see  \exemref{exam:suspensionCP2}) an element of $H_{\ov{0}}^3(\Sigma\CP(2))$ disappears in $H_{\ov{2}}^3(\Sigma\CP(2))$ 
and we have no inclusion of $H_{\ov{0}}^3(\Sigma\CP(2))$ into $H_{\ov{2}}^3(\Sigma\CP(2))$.
If the perverse cochain complex is coming from a strict perverse cochain complex,  any 
morphism
$\varphi^{\ov{q}}_{\ov{p}}\colon A_{\ov{p}}\to A_{\ov{q}}$
is injective. Therefore, the cohomology of a strict perverse CDGA is a perverse algebra which, in general, is not coming from a structure of strict perverse algebra.
 For the study of formality, it is important that the involved cochain complexes and their cohomology are objects of the same category. 
 Thus, the introduction of  the perverse CDGA's defined by M. Hovey (\cite{MR2544388}) is essential in this context. 

\smallskip
We introduce now free objects in the category of perverse CGA's.
The  perverse tensor algebra on a perverse graded vector space, $M_{\bullet}$, is defined by $T(M_{\bullet})= \oplus_{k\geq 0} M_{\bullet}^{\otimes k}$, where the tensor products $(M_{\bullet}^{\otimes k})_{\bullet}$ are defined in (\ref{equa:perversedegreeproduct}). 
The \emph{perverse commutative tensor algebra}   is defined as 
$\land M_{\bullet}=\oplus_{k\geq 0} M_{\bullet}^{\otimes k}/\Sigma_{k}$, where the action of the symmetric group $\Sigma_{k}$ is characterized by the next action of the transposition $(i, i+1)$,
$$x_0\otimes \cdots \otimes x_k\mapsto 
(-1)^{|x_i|\,|x_{i+1}|}
x_0\otimes \cdots \otimes x_{i-1}\otimes x_{i+1}\otimes x_i\otimes x_{i+2}\otimes \cdots \otimes x_n.$$
If there is no ambiguity, the element $x\otimes x'$ of $\land M_{\bullet}$ is also denoted $xx'$.

\begin{definition}\label{def:freeperversecga}
\index{Perverse!CGA!free}
A perverse \cga, $B_{\bullet}$, is called \emph{free} if there exists a family of graded vector spaces, $(V_{[\ov{p}]})_{\ov{p}\in\hat{\cP}^n}$, such that $B_{\bullet}=\land \left(\oplus_{\ov{p}\in\hat{\cP}^n}\cV_{\ov{p}}(V_{[\ov{p}]})_{\bullet}\right)$.
\end{definition}

From the definition, we deduce the existence of a canonical injection $\Q_{\bullet}\to B_{\bullet}$, for any free perverse \cga, $B_{\bullet}$.
Observe also that a free perverse \cga, $B_{\bullet}$, comes from a strict perverse \cga:
the elements  $v\in V_{[\ov{p}]}^i$ have a  degree denoted by $|v|=i$, and a perverse degree denoted by $\|v\|=\ov{p}$. This perverse degree is extended to the free \cga~by $\|v_1v_2 \|=\|v_1\| + \|v_2\|$. 
We use this structure to simplify the notation in $$B=\land (\oplus_{\ov{p}\in\hat{\cP}^n} V_{[\ov{p}]}),$$
with $B_{\ov{p}}$  the vector space generated by the products, $v_{1} \ldots v_{k}$, with  $v_{i}\in V_{[\ov{p}_{i}]}$ and  $ \ov{p}_{1}+\cdots+\ov{p}_{k}\leq \ov{p}$, i.e.,
\begin{equation}\label{equa:freepbar}
B_{\ov{p}}=\langle v_{1} \ldots v_{k}\mid v_{i}\in V_{[\ov{p}_{i}]}\text{ and } \ov{p}_{1}+\cdots+\ov{p}_{k}\leq \ov{p}\rangle.
\end{equation}
For a perverse \cga, $B_{\bullet}$, we denote by $B^k_{\ov{p}}$ the set of elements of degree $k$ and perverse degree $\ov{p}$. 
We describe some particular cases of free perverse \cga's. 
\begin{itemize}
\item If $V_{[\ov{p}]}=\Q x$, with $x$ of odd degree, and $V_{[\ov{r}]}=0$ for all $[\ov{r}]\neq \ov{[p]}$, then
$B_{\ov{q}}=\Q x$ if $\ov{q}\geq \ov{p}$ and $B_{\ov{q}}=\Q$ otherwise.
\item If $V_{[\ov{p}]}=\Q x$, with $x$ of even degree $k$, and $V_{[\ov{r}]}=0$ for all $[\ov{r}]\neq \ov{[p]}$, then
$B_{\ov{q}}^{ik}=\Q x^i$ if $\ov{q}\geq i\ov{p}$ and $B_{\ov{q}}=\Q$ otherwise.
\end{itemize}
As $\land (V\oplus W)=\land V \otimes \land W$, the general situation can be expressed as  tensor products of these two cases.

\smallskip
We introduce now  the concept of Sullivan minimal model of perverse \cdga's. Recall that, if $V$ is a graded vector  space,
$(\land V)^+$ is the ideal of $\land V$ formed of the elements of strictly positive degree and $\land^{\geq 2}V$ the ideal formed of decomposable elements in $\land V$.

\begin{definition}\label{def:algebredeSullivan}
A \emph{Sullivan minimal perverse \cdga}  is a strict perverse \cdga, $(B,d)$, free as perverse \cga, i.e.,
\index{Perverse!CDGA!minimal}
$B=\land (\oplus_{\ov{p}\in\hat{\cP}^{n}}V_{[\ov{p}]})$, and such that, for any $\ov{p}\in \hat{\cP}^{n}$
and any $m\geq 1$,
we have
$V_{\ov{p}}=\oplus_{k\geq 1} V_{\ov{p}}^k$, and $V^m_{\ov{p}}$ can be written as,
$V^m_{\ov{p}}=\cup_{i=0}^{\infty}V_{[\ov{p}]}(i)^m
$,
with $V_{[\ov{p}]}(i)^m=V_{[\ov{p}]}(i-1)^m\oplus V_{[\ov{p}]}[i]^m$ and
\begin{equation}\label{equa:minimalinduction}
dV^m_{[\ov{p}]}[i]\subset \left((\land \oplus_{\ov{r}<\ov{p}}V_{[\ov{r}]})\otimes
(\land V_{[\ov{p}]}^{<m})\otimes (\land V^m_{[\ov{p}]}(i-1))\right)_{\ov{p}}.
\end{equation}
\end{definition}

Observe that the condition (\ref{equa:minimalinduction}) implies that $(\land V_{[\ov{0}]},d)$ is a classical minimal \cdga, and that the canonical inclusion,
$(\land (\oplus_{\ov{p}\in{\cP}^{n}}V_{[\ov{p}]}),d)\to
(\land (\oplus_{\ov{p}\in\hat{\cP}^{n}}V_{[\ov{p}]}),d)$,
is a relative minimal \cdga, see \cite[(6.5)]{MR736299}.
\begin{definition}\label{def:minimalmodel}
If $\rho_{\bullet}\colon B_{\bullet}\to A_{\bullet}$
\index{Sullivan!minimal model}\index{Perverse!model}
is a quasi-isomorphism of perverse \cdga's, we say that $B_{\bullet}$ is a \emph{perverse model} of $A_{\bullet}$. If $B_{\bullet}$
is a Sullivan minimal perverse \cdga, we call it a \emph{Sullivan minimal model} of $A_{\bullet}$, or a minimal model of 
$A_{\bullet}$.
\end{definition}

\begin{proposition}
Sullivan minimal perverse \cdga's are cofibrant in $\cdgaf$.
\end{proposition}

\begin{proof}
This is a classical argument. If $k\geq 1$ and $\ov{p}\in\cP^n$, we denote by $\Q(k,\ov{p})$ the free perverse cochain complex generated by $z$, $|z|=k$, $\|z\|=\ov{p}$, $dz=0$. If $k\geq 2$, we define $\Q(k-1,k,\ov{p})$ as the free perverse cochain complex generated by $x$ and $y$, $|x|=k-1$, $\|x\|=\ov{p}$, $|y|=k$, $\|y\|=\ov{p}$ and $dx=y$.
The natural perverse cochain maps,
$\Q\to\Q(k,\ov{p})$
and
$\Q(k,\ov{p})\to \Q(k-1,k,\ov{p})$, $z\mapsto y$,
are cofibrations in $\ch(\hat{\cP}^n)$, see \cite[Section~3]{MR2544388}. Thus the perverse \cdga's maps,
$\Q\to\land \Q(k,\ov{p})$
and
$\land \Q(k,\ov{p})\to \land \Q(k-1,k,\ov{p})$,
are cofibrations in $\cdgaf$.

As Sullivan minimal perverse \cdga's can be obtained as successive push-outs built from the previous cofibrations, we get the result, see \cite[Proposition 7.5]{MR0425956} for a similar treatment. 
\end{proof}

As in the classical case (\cite[Theorem 5.1]{MR0646078}), minimal models are unique up to isomorphisms.

\begin{proposition}\label{prop:unicityminimal}
 Any quasi-isomorphism between Sullivan minimal perverse \cdga's is an isomorphism.
\end{proposition}

\begin{proof}
Let $\varphi\colon B=(\land\oplus_{\ov{p}}V_{[\ov{p}]},d)\to
C= (\land\oplus_{\ov{p}}W_{[\ov{p}]},d)$ be a quasi-isomorphism between two Sullivan minimal perverse \cdga's.
 We adapt to the perverse setting a proof made by Antonio G\'omez-Tato in \cite{MR1207067}, by building a perverse \cdga's map, 
 $\psi\colon C\to B$, such that $\varphi\circ\psi=\id$.
 
 First, we observe that the restriction
 $\varphi\colon (\land V_{[\ov{0}]},d)\to (\land W_{[\ov{0}]},d)$
 is a weak equivalence between (classical) minimal models therefore it is an isomorphism and the map $\psi$ is defined on $\land W_{[\ov{0}]}$.
 
 Let $\ov{p}$ be a perversity. By definition, (see (\ref{equa:minimalinduction})) there exists a filtration of $W^k_{[\ov{p}]}$ as
 $W^k_{[\ov{p}]}(i)=W^k_{[\ov{p}]}(i-1)\oplus W^k_{[\ov{p}]}[i]$
 such that
 \begin{equation}
dW_{[\ov{p}]}^k[i]\subset (\land\oplus_{\ov{r}<\ov{p}}W_{[\ov{r}]})\otimes (\land W_{[\ov{p}]}^{< k})\otimes (\land W_{[\ov{p}]}^{k}(i-1)).
\end{equation}
 Set $M= (\land\oplus_{\ov{r}<\ov{p}}W_{[\ov{r}]})\otimes (\land W_{[\ov{p}]}^{< k})\otimes (\land W_{[\ov{p}]}^{k}(i-1))$. We suppose that $\psi$, such that $\varphi\circ\psi=\id$, is defined on $M$. By induction, the construction of $\psi$  on $C$ is therefore reduced to its extension to
 $W_{[\ov{p}]}^k[i]$. With the 5-lemma, we get a quasi-isomorphism
 $$\ov{\varphi}\colon \left(B/\psi(M)\right)_{\ov{p}}\to \left(C/M\right)_{\ov{p}}.$$
 Let $(w_i)$ be a basis of $W_{[\ov{p}]}^k[i]$. Any $w_i$ is a cocycle in $\left(C/M\right)_{\ov{p}}$ that cannot be a coboundary.  We deduce the existence of $v_i\in B_{\ov{p}}$, $m_i\in M_{\ov{p}}$,  $m'_i\in M_{\ov{p}}$ such that
 $$\varphi(v_i)=w_i+m_i \text{ and } dv_i=\psi(m'_i).$$
 We set $\psi(w_i)=v_i-\psi(m_i)$ and we check:
 \begin{eqnarray*}
 dw_i&=&d\varphi(v_i)-dm_i=\varphi(\psi(m'_i))-dm_i= m'_i-dm_i,\\
 \psi(dw_i)&=&\psi(m'_i)-\psi(dm_i)=dv_i-d\psi(m_i) =d\psi(w_i),\\
 \varphi(\psi(w_i))&=&\varphi(v_i)-\varphi(\psi(m_i))=\varphi(v_{i})-m_{i}=w_i.
 \end{eqnarray*}
 From the equality $\varphi\circ\psi=\id$, we deduce that $\psi$ is a quasi-isomorphism as well and the same construction applied to $\psi$ brings a morphism $\varphi'$ such that $\psi\circ\varphi'=\id$. Therefore $\psi$ is an isomorphism and $\varphi=\psi^{-1}$ is one also.
\end{proof}
  
\begin{definition}\label{def:cylinderadgc}
A \emph{path object} of an element $B_{\bullet}$ of  $\cdgaf$ is given by
\index{Path object}
$$\xymatrix{
(B \otimes \land(t,dt))_{\bullet}\ar@<.4ex>[r]^-{\pi_0}
\ar@<-.4ex>[r]_-{\pi_1}&B_{\bullet}
}$$
where $|t|=0$,  $|dt|=1$, 
$(B \otimes \land(t,dt))_{\ov{p}}=B_{\ov{p}}\otimes \land(t,dt)$
and the maps $\pi_i$ are defined by the identity on $B_{\bullet}$ and the evaluations
$\pi_i(t)=i$, for $i=0,\,1$.
\emph{Let $A_{\bullet}$ be cofibrant.}
Two  morphisms,
$f_{0},\,f_{1}\colon A_{\bullet}\to B_{\bullet}$ of $\cdgaf$,
 are \emph{homotopic} if there exists a  diagram in $\cdgaf$, 
$$\xymatrix{
A_{\bullet}\ar@<.6ex>[rr]^-{f_{0}}\ar@<-.4ex>[rr]_-{f_{1}}\ar[rrd]_F&&B_{\bullet}\\
&& (B \otimes \land(t,dt))_{\bullet}
\ar@<.4ex>[u]^{\pi_0}
\ar@<-.4ex>[u]_{\pi_1}
}$$
such that $\pi_0\circ F=f_{0}$ and $\pi_1\circ F=f_{1}$.
\end{definition}

\begin{remark}
The sequence
$B_{\bullet}\xrightarrow[]{\iota}(B \otimes \land(t,dt))_{\bullet}\xrightarrow[]{(\pi_{0},\pi_{1})}B_{\bullet}\oplus B_{\bullet}$
is a decomposition of the diagonal $B_{\bullet}\to B_{\bullet}\oplus B_{\bullet}$ in a weak-equivalence followed by a fibration. This corresponds to the classical definition of a path object in a closed model category.
\end{remark}

As $\cdgaf$ is a closed model category, the following properties  are standard, see \cite{MR0223432}.

\begin{proposition}\label{prop:propertiesofhomotopy}
Let $A'_{\bullet}$ and $A_{\bullet}$ be cofibrant objects of $\cdgaf$.
\begin{enumerate}
\item  If
$$\xymatrix@1{
A'_{\bullet}\ar[r]^g&A_{\bullet}\ar@<.6ex>[r]^-{f_0}\ar@<-.4ex>[r]_-{f_1}&B_{\bullet}\ar[r]^h&B_{\bullet}'
}$$
is a diagram of morphisms of $\cdgaf$,  the next properties are satisfied.
\begin{enumerate}
\item If $f_0$ is homotopic to $f_1$, then $h\circ f_0\circ g$ is homotopic to $h\circ f_1\circ g$.
\item Homotopy  is an equivalence relation on the set of morphisms from $A_{\bullet}$ to $B_{\bullet}$. We denote this relation by $\simeq$.
\end{enumerate}
\item  Any weak equivalence, $h\colon B_{\bullet}\to B'_{\bullet}$ in $\cdgaf$, induces an isomorphism between the homotopy classes,
$$h_{\sharp}\colon [A_{\bullet},B_{\bullet}]\to[A_{\bullet},B_{\bullet}'].$$
\item  Any weak equivalence, $g\colon A'_{\bullet}\to A_{\bullet}$ in $\cdgaf$, induces an isomorphism between the homotopy classes,
$$g^{\sharp}\colon [A_{\bullet},B_{\bullet}]\to [A'_{\bullet},B_{\bullet}].$$
\end{enumerate}
\end{proposition}

The notion of homotopy can be extended to any pair of morphisms, not necessarily with a cofibrant domain. With \propref{prop:propertiesofhomotopy} (3),  the next definition does not depend on the choice of a cofibrant model of $B_{\bullet}$.

\begin{definition}\label{def:homotopyallmaps}
Let $f_0,\,f_1\colon B_{\bullet}\to B'_{\bullet}$ be two morphisms in $\cdgaf$ and $\varphi\colon A_{\bullet}\to B_{\bullet}$ be a cofibrant model. The morphisms $f_0$ and $f_1$ are \emph{homotopic} if $f_0\circ \varphi\simeq f_1\circ\varphi$.
\index{Homotopy!of perverse filtered CDGA maps}
\end{definition}

 %%%%%%%%%%%%%%%%%%%%%
 \section{Balanced perverse cochain complex}\label{sec:balanced}
 
 \begin{quote}
 In this section, we define and study a notion of balanced perverse \cdga's, which are the basis of the construction of a minimal model in the next section. (If $\ob{K}$ is a connected \ffs~and $F$ is a universal system of \cdga's, we prove in \secref{sec:modelCS} that the blow-up, $\tF(\ob{K})_{\bullet}$, and its cohomology, $H(\tF(\ob{K})_{\bullet})$, are balanced perverse \cdga's.)
 \end{quote}
 
We begin with a description  of the predecessors of a given GM-perversity $\ov{p}$. 

\begin{definition}\label{def:peak}
Let $\ov{p}\in {\cP}^n$ be a GM-perversity. 
\index{Peak of a GM-perversity}\index{Perversity!of Goresky-MacPherson!peak of a}
A \emph{peak} of $\ov{p}$ is an integer $j\in\{3,\ldots,n-1\}$ such that $\ov{p}(j+1)=\ov{p}(j)>\ov{p}(j-1)$. The integer $n$ is a peak if $\ov{p}(n)>\ov{p}(n-1)$. We  order  the peaks $(\tn_1,\ldots,\tn_s)$ of $\ov{p}$ by
$3\leq \tn_1<\tn_2<\ldots <\tn_s\leq n$.
For each peak $\tn_i$ of $\ov{p}$, we define a perversity $\ov{p}_i$ by
$$\left\{\begin{array}{l}
\ov{p}_i(k)=\ov{p}(k), \text{ if } k\ne \tn_i,\\
\ov{p}_i(\tn_i)=\ov{p}(\tn_i)-1.
\end{array}\right.$$
\end{definition}
We check easily the following properties.
\begin{enumerate}[(i)]
\item The previous perversities $(\ov{p}_1,\ldots,\ov{p}_s)$ are in ${\cP}^n$.
\item The perversities $(\ov{p}_1,\ldots,\ov{p}_s)$ are the predecessors of $\ov{p}$. \emph{We order them by the corresponding peaks.}
\item We have $\ov{p}(\tn_i)\geq i$ and $\ov{p}_i(\tn_i)\geq i-1$.
\item If $(\ov{p}_1,\ldots,\ov{p}_s)$ are the ordered  predecessors of $\ov{p}$, we denote by $\ov{p}_{i,\ell}$ the GM-perversity $\inf(\ov{p}_i,\ov{p}_\ell)$. The GM-perversities
$$(\ov{p}_{1,2}, \ov{p}_{1,3},\ldots,\ov{p}_{1,s})
$$
are  ordered predecessors of $\ov{p}_1$ and the corresponding ordered peaks are
$(\tn_2,\ldots,\tn_s)$.
\end{enumerate}

We define now the main objects of this section.

\begin{definition}\label{def:balanced}
\index{Perverse!cochain complex!balanced}
A perverse cochain complex, $A_{\bullet}$, defined by the  morphisms, \linebreak
  $\varphi^{\ov{p}}_{\ov{q}}\colon A_{\ov{q}}\to A_{\ov{p}}$, for any $\ov{q}\leq \ov{p}$, is called \emph{balanced} if it satisfies the next properties, for any GM-perversity, $\ov{p}$, and \emph{any} sequence of ordered predecessors of $\ov{p}$, $(\ov{p}_{\nu_{1}},\ldots,\ov{p}_{\nu_{j}})$.
\begin{enumerate}[(a)]
\item  Recall $\ov{p}_{\nu_{i},\nu_{\ell}}=\inf(\ov{p}_{\nu_{i}},\ov{p}_{\nu_{\ell}})$. For any $j\geq 1$ and any $k\leq \ov{p}(\tn_{\nu_{j}})-j+1$, the  sequence,
$$\xymatrix@1{
\oplus_{1\leq i< \ell\leq j}A^k_{\ov{p}_{\nu_{i},\nu_{\ell}}}\ar[r]^-{\varphi}
&
\oplus_{i=1}^jA^k_{\ov{p}_{\nu_{i}}}
\ar[r]^-{\psi}
&
A^k_{\ov{p}},
}$$
is exact, where
\begin{itemize}
\item the morphism $\varphi$ is defined on $\beta_{i,\ell}\in A_{\ov{p}_{\nu_{i},\nu_{\ell}}}$ by
$\varphi(\beta_{i,\ell})=(\alpha_{m})_{1\leq m\leq j}$,
with $\alpha_{i}=\varphi_{\ov{p}_{\nu_{i},\nu_{\ell}}}^{\ov{p}_{\nu_{i}}}(\beta_{i,\ell})$, $\alpha_{\ell}=-\varphi_{\ov{p}_{\nu_{i},\nu_{\ell}}}^{\ov{p}_{\nu_{\ell}}}(\beta_{i,\ell})$ and $\alpha_{m}=0$ otherwise,
\item  and the morphism $\psi$ by
$\psi(\alpha_{1},\ldots,\alpha_{j})=\varphi_{\ov{p}_{\nu_{1}}}^{\ov{p}}(\alpha_{1})+\cdots+\varphi_{\ov{p}_{\nu_{j}}}^{\ov{p}}(\alpha_{j})$.
\end{itemize}
\item The map $\varphi_{\ov{p}_{\nu_{j}}}^{\ov{p}}\colon
A^0_{\ov{p}_{\nu_{j}}}\to A^0_{\ov{p}}$
is surjective if $\ov{p}(\tn_{\nu_{j}})\geq 2$.
\item For any GM-perversity $\ov{q}$, with $\ov{q}\leq \ov{p}$, the induced morphisms,\\
\centerline{$H^1(\varphi^{\ov{p}}_{\ov{q}})\colon H^1(A_{\ov{q}})\to H^1(A_{\ov{p}})$ and  
$H^1(\varphi^{\ov{\infty}}_{\ov{q}})\colon H^1(A_{\ov{q}})\to H^1(A_{\ov{\infty}})$,}
are injective.
\end{enumerate}
\index{Perverse!CDGA!balanced}
\emph{A balanced perverse \cdga}~is a perverse \cdga~which is balanced as perverse cochain complex.
\end{definition}

\begin{remark}\label{rem:petitaj=1}
In the case of a sequence of predecessors of length $j=1$, Property (a) of \defref{def:balanced} can also be stated as follows:\\
for any predecessor $\ov{p}_{i}$ of $\ov{p}$, the map $\varphi^{\ov{p}}_{\ov{p}_{i}}\colon A^k_{\ov{p}_{i}}\to A_{\ov{p}}^k$ is an injection, for any $k\leq \ov{p}(\tn_{i})$.
\end{remark}

\begin{definition}\label{def:+point}
Let $A_{\bullet}$ be a perverse cochain complex and $({\ov{p}_{\nu_{i}}})_{1\leq i\leq j}$ be an ordered sequence of predecessors of a GM-perversity, $\ov{p}$. The \emph{dotted sum,} $\pd_{i=1}^{ j} A_{\ov{p}_{\nu_{i}}}$, is the cochain complex,   quotient of $\oplus_{i=1}^jA^k_{\ov{p}_{\nu_{i}}}$ by the image of $\varphi$, i.e., 
$$\pd_{i=1}^{ j} A_{\ov{p}_{\nu_{i}}}=\oplus_{i=1}^jA_{\ov{p}_{\nu_{i}}}/\varphi(\oplus_{1\leq i< \ell\leq j}A_{\ov{p}_{\nu_{i},\nu_{\ell}}}).$$
We denote by $\langle \alpha_{1},\ldots,\alpha_{j}\rangle\in \pd_{i=1}^{ j} A_{\ov{p}_{\nu_{i}}}$ the class of 
$(\alpha_{1},\ldots,\alpha_{j})\in \oplus_{i=1}^jA_{\ov{p}_{\nu_{i}}}$
and by \linebreak
$\xymatrix@1{
\oplus_{i=1}^jA_{\ov{p}_{\nu_{i}}}\ar[r]_-{\Psi}&\pd_{i=1}^jA_{\ov{p}_{\nu_{i}}}\ar[r]_-{\ov{\psi}}&A_{\ov{p}}
}$
the decomposition of $\psi$.
\end{definition}

The dotted sum, $\pd$, can also be defined inductively, on the number of predecessors, as shows the second property of the next statement.
 
\begin{proposition}\label{prop:+point2}
Let $A_{\bullet}$ be a perverse cochain complex and $({\ov{p}_{\nu_{i}}})_{1\leq i\leq j}$ be an ordered sequence of predecessors of a GM-perversity, $\ov{p}$. The following properties are satisfied.
\begin{enumerate}[(i)]
\item Property (a) of \defref{def:balanced} is equivalent to the injectivity, in degrees $k \leq \ov{p}(\tn_{\nu_{j}})-j+1$, of the map, 
$\ov{\psi}\colon 
\pd_{i=1}^j A^k_{\ov{p}_{\nu_{i}}}
\to A^k_{\ov{p}}$.
\item Let $(\ov{\psi}_{1},\ov{\varphi})\colon  \pd_{\ell=2}^{ j} A_{\ov{p}_{\nu_{1},\nu_{\ell}}}\to 
A_{\ov{p}_{\nu_{1}}}\oplus (\pd_{\ell=2}^{ j} A_{\ov{p}_{\nu_{\ell}}})$, where $\ov{\psi}_{1}$ is the  map  $\ov{\psi}$ associated to $\ov{p}_{1}$ and $\ov{\varphi}$ is induced by $\varphi$. 
(See the proof for an explicit description of these maps.)
There is an isomorphism,
$$\pd_{1\leq i\leq j} A_{\ov{p}_{\nu_{i}}}\cong (A_{\ov{p}_{\nu_{1}}}\oplus (\pd_{\ell=2}^{ j} A_{\ov{p}_{\nu_{\ell}}}))/(\ov{\psi}_{1},\ov{\varphi})(\pd_{\ell=2}^{ j} A_{\ov{p}_{\nu_{1},\nu_{\ell}}}).
$$
In the sequel, we identify these two expressions.
\end{enumerate}
\end{proposition}

\begin{proof}
The first property is obvious. As for the second one, 
let $(\alpha_{1},\ldots,\alpha_{j})\in \oplus_{i=1}^j A_{\ov{p}_{\nu_{i}}}$
and $(\beta_{i,\ell})_{1\leq i<\ell\leq j}\in \oplus_{1\leq i<\ell\leq j} A_{\ov{p}_{\nu_{1},\nu_{\ell}}}$. 
Using the notation of \defref{def:balanced}, we define:
\begin{itemize}
\item $f((\beta_{i,\ell})_{1\leq i<\ell\leq j})=\langle \beta_{1,2},\ldots,\beta_{1,\ell}\rangle\in \pd_{\ell=2}^j A_{\ov{p}_{\nu_{1},\nu_{\ell}}}$,
\item $f'(\alpha_{1},\ldots,\alpha_{j})=(\alpha_{1},\langle \alpha_{2},\ldots,\alpha_{j}\rangle)\in
A_{\ov{p}_{\nu_{1}}}\oplus (\pd_{\ell=2}^j A_{\ov{p}_{\nu_{\ell}}})$,
\item $\ov{\psi}_{1}(\langle \beta_{1,2},\ldots,\beta_{1,j}\rangle)=
\varphi^{\ov{p}_{\nu_{1}}}_{\ov{p}_{\nu_{1},\nu_{2}}}(\beta_{1,2})
+\cdots+
\varphi^{\ov{p}_{\nu_{1}}}_{\ov{p}_{\nu_{1},\nu_{j}}}(\beta_{1,j})
\in A_{\ov{p}_{\nu_{1}}}$,
\item $\ov{\varphi}(\langle \beta_{1,2},\ldots,\beta_{1,j}\rangle)=
\langle \varphi(\beta_{1,2}),\ldots,\varphi(\beta_{1,j})\rangle\in
\pd_{\ell=2}^j A_{\ov{p}_{\nu_{\ell}}}$.
\end{itemize}
The equality
$f'\circ \varphi=(\ov{\psi}_{1},\ov{\varphi})\circ f$
follows from the definitions of these maps. In the next commutative diagram, the maps ${\Psi}$ and $\ov{\Psi}$ are defined as  cokernels of ${\varphi}$ and $(\ov{\psi}_{1},\ov{\varphi})$, respectively.
$$\xymatrix{
\oplus_{1\leq i< \ell\leq j}A_{\ov{p}_{\nu_{i},\nu_{\ell}}}\ar[r]^-{\varphi}
\ar[d]_{f}&
\oplus_{i=1}^jA_{\ov{p}_{\nu_{i}}}
\ar[r]^-{\Psi}
\ar[d]_{f'}&
\pd_{i=1}^j A_{\ov{p}_{\nu_{i}}}
\ar[d]^{f''}\\
\pd_{\ell=2}^j A_{\ov{p}_{\nu_{1},\nu_{\ell}}}\ar[r]^-{(\ov{\psi}_{1},\ov{\varphi})}&
A_{\ov{p}_{\nu_{1}}}\oplus (\pd_{\ell=2}^j A_{\ov{p}_{\nu_{\ell}}})\ar[r]^-{\ov{\Psi}}&
C
}$$
As $\ov{\Psi}$ and $f'$ are surjective, the map $f''$ is surjective also. Let $\ov{x}\in \pd_{i=1}^j A_{\ov{p}_{\nu_{i}}}$ such that $f''(x)=0$. Then there exists $x\in \oplus_{i=1}^jA_{\ov{p}_{\nu_{i}}}$ such that $\Psi(x)=\ov{x}$. 
By exactitude of the bottom line, there is $z\in \pd_{\ell=2}^j A_{\ov{p}_{\nu_{1},\nu_{\ell}}}$ such that $(\ov{\psi}_{1}\oplus\ov{\varphi})(z)=f'(x)$ and $u\in \oplus_{1\leq i< \ell\leq j}A_{\ov{p}_{\nu_{i},\nu_{\ell}}}$ such that $f(u)=z$. The element $x-\varphi(u)$ is in the kernel of $f'$, which is the image by $\varphi$ of
$\oplus_{2\leq i< \ell\leq j}A_{\ov{p}_{\nu_{i},\nu_{\ell}}}$, by definition of $\pd$. This implies, the existence of $v\in
\oplus_{2\leq i< \ell\leq j}A_{\ov{p}_{\nu_{i},\nu_{\ell}}}$ such that
$x-\varphi(u)=\varphi(v)$. Finally, $\ov{x}=\Psi(x)=\Psi(\varphi(u+v))=0$ and the map $f''$ is injective. 
\end{proof}

The next statement is a major point in the construction of a minimal model of balanced perverse \cdga's.

\begin{corollary}\label{cor:+point2}
Let $({\ov{p}_{\nu_{i}}})_{1\leq i\leq j}$ be an ordered sequence of predecessors of a GM-perversity, $\ov{p}$. 
For any balanced perverse  cochain complex, $A_{\bullet}$, the  following properties are satisfied.
\begin{enumerate}[(i)]
\item The sequence $$\xymatrix{
0\ar[r]&
\pd_{\ell=2}^j A_{\ov{p}_{\nu_{1},\nu_{\ell}}}^k\ar[r]^-{(\ov{\psi}_{1},\ov{\varphi})}&
A_{\ov{p}_{\nu_{1}}}^k\oplus (\pd_{\ell=2}^j A_{\ov{p}_{\nu_{\ell}}})^k\ar[r]^-{\ov{\Psi}}&
\pd_{i=1}^j A_{\ov{p}_{\nu_{i}}}^k
\ar[r]&0,
}$$
is exact, for any $k\leq \ov{p}(\tn_{\nu_{j}})-j+2$.
\item The homomorphism
$H^1(\ov{\psi})\colon H^1(A_{\ov{p}_{\nu_{1}}}\pd\cdots\pd A_{\ov{p}_{\nu_{j}}})\to
H^1(A_{\ov{p}})$
is injective.
\end{enumerate}
\end{corollary}

\begin{proof}
(i) With property (ii) of \propref{prop:+point2}, the proof is reduced to the fact that the map
$(\ov{\psi}_{1},\ov{\varphi})$
is injective in the specified degrees, for $j\geq 2$. From the first property of \propref{prop:+point2}, we know that the map
$\ov{\psi}_{\nu_{1}}\colon \pd_{\ell=2}^j A_{\ov{p}_{\nu_{1},\nu_{\ell}}}\to A_{\ov{p}_{\nu_{1}}}$ is injective in degree $k\leq \ov{p}_{\nu_{1}}(\tn_{\nu_{j}})-(j-1)+1=\ov{p}(\tn_{\nu_{j}})-j+2$, and the result is established.

(ii) This property is true for $j=1$ by (a) of \defref{def:balanced}, see \remref{rem:petitaj=1}. Let $j\geq 2$ and consider a cocycle, $\langle \alpha_{1},\ldots,\alpha_{j}\rangle\in A_{\ov{p}_{\nu_{1}}}\pd\cdots\pd A_{\ov{p}_{\nu_{j}}}$, of degree 1 such that there exists $f\in A^0_{\ov{p}}$ with
$df=\varphi_{\ov{p}_{\nu_{1}}}^{\ov{p}}(\alpha_{1})+\cdots+ \varphi_{\ov{p}_{\nu_{j}}}^{\ov{p}}(\alpha_{j})$.
 By (b) of \defref{def:balanced}, there exists $g\in A^0_{\ov{p}_{\nu_{j}}}$ such that $f=\varphi_{\ov{p}_{\nu_{j}}}^{\ov{p}}(g)$ and $\langle \alpha_{1},\ldots,\alpha_{j}-dg\rangle$ is in the kernel of
$\ov{\psi}\colon A^1_{\ov{p}_{\nu_{1}}}\pd\cdots\pd A^1_{\ov{p}_{\nu_{j}}}\to A^1_{\ov{p}}$.
As $\ov{p}(\tn_{\nu_{j}})-j+1\geq 1$, we obtain 
$\langle \alpha_{1},\ldots,\alpha_{j}-dg\rangle=0$ (cf. \propref{prop:+point2}(i)) and
$\langle \alpha_{1},\ldots,\alpha_{j}\rangle=d\langle 0,\ldots,0,g\rangle$.
\end{proof}

If, for any $\ov{p}$, the cochain complex $A_{\ov{p}}$ is a subcomplex of $A_{\ov{\infty}}$, the ordinary sum of vector
 subspaces is defined and can be different from the dotted sum. (We  consider the case of  $\cA(\ob{K})_{\bullet}$
in \remref{rem:+ou+point}.) But these two sums, $+$ and $\pd$, coincide for free perverse \cga's.

%%%%%%%%%%%%%
\begin{proposition}\label{prop:capandgraded}
Let $B=\land \oplus_{\ov{p}} V_{[\ov{p}]}$ be a free perverse \cga. The following properties are satisfied.  
\begin{enumerate}[(i)]
\item For any perversities, $\ov{p}$, $\ov{q}_1,\ldots,\ov{q}_j$, we have
\begin{eqnarray*}
B_{\ov{q}_1}\cap B_{\ov{q}_2}&=&B_{\inf(\ov{q}_1,\ov{q}_2)},\\
B_{\ov{p}}\cap (B_{\ov{q}_1}+\cdots+B_{\ov{q}_j})&=&(B_{\ov{p}}\cap B_{\ov{q}_1})+\cdots+(B_{\ov{p}}\cap B_{\ov{q}_j}).
\end{eqnarray*}
\item For any ordered sequence, $({\ov{p}_{\nu_{i}}})_{1\leq i\leq j}$, of predecessors of a GM-perversity, $\ov{p}$, one has
$$B_{\ov{p}_{\nu_{1}}}\pd\cdots\pd B_{\ov{p}_{\nu_{j}}}=B_{\ov{p}_{\nu_{1}}}+\cdots+ B_{\ov{p}_{\nu_{j}}}.$$
\end{enumerate}
\end{proposition}

\begin{proof}
(i) 
Define $B_{[\ov{p}]}=\{v_{1}\ldots v_{k}\mid v_{i}\in V_{\ov{q}_{i}} \text{ and } \ov{q}_{1}+\cdots+\ov{q}_{k}=\ov{p}\}$. From \defref{def:freeperversecga}, the equality (\ref{equa:freepbar}) can also be written as
$B_{\ov{p}}=\oplus_{\ov{r}\leq \ov{p}}B_{[\ov{r}]}$.
The inclusion
$(B_{\ov{p}}\cap B_{\ov{q}_1})+\cdots+(B_{\ov{p}}\cap B_{\ov{q}_j})
\subset
B_{\ov{p}}\cap (B_{\ov{q}_1}+\cdots+B_{\ov{q}_j})$
being obvious, let $\omega\in B_{\ov{p}}\cap (B_{\ov{q}_1}+\cdots+B_{\ov{q}_j})$. 
Denote by $$\cJ_i=\{\ov{r}\mid \ov{r}\leq \ov{q}_i\}
\text{ and }
\cJ=\{\ov{r}\mid\ov{r}\leq \ov{p}\}.$$ 
We write $\omega$ as a linear combination of elements of $B_{[\ov{r}]}$, $\ov{r}\in \cup_{i=1}^j \cJ_i$,
$$\omega=\sum_{i=1}^j\sum_{\ov{r}\in\cJ_{i}}v_{i,[\ov{r}]},
\text{ with } v_{i,[\ov{r}]}\in B_{[\ov{r}]}.$$
Decompose now each $\cJ_i$ in two disjoint subsets
$$\cJ_{i,1}=\cJ_i\cap \cJ=\{\ov{r}\mid \ov{r}\leq \ov{q}_i\text{ and } \ov{r}\leq \ov{p}\},\; %
\cJ_{i,2}=\cJ_i\backslash \cJ_{i,1}=\{\ov{r}\mid \ov{r}\leq \ov{q}_i\text{ and } \ov{r}\not\leq \ov{p}\}.$$
Observe  that
$$\sum_{i=1}^j\sum_{\ov{r}\in\cJ_{i,2}}v_{i,[\ov{r}]}=\omega -
\sum_{i=1}^j\sum_{\ov{r}\in\cJ_{i,1}}v_{i,[\ov{r}]}\in B_{\ov{p}}.$$
Any element of $B_{\ov{p}}$ can be written as a sum of elements of $B_{[\ov{r}]}$ with $\ov{r}\in\cJ$.
The intersections $\cJ\cap \cJ_{i,2}$, $B_{[\ov{r}]}\cap B_{[\ov{r}']}$ being the empty set if $\ov{r}\neq\ov{r}'$, we get
$\sum_{i=1}^j\sum_{\ov{r}\in\cJ_{i,2}}v_{i,[\ov{r}]}=0$
and
$$\omega=\sum_{i=1}^j\sum_{\ov{r}\in\cJ_{i,1}}v_{i,[\ov{r}]}\in (B_{\ov{p}}\cap B_{\ov{q}_1})+\cdots+(B_{\ov{p}}\cap B_{\ov{q}_j}).$$

(ii) Let $j=2$. By definition of $\pd$ and Property (i), we have
$B_{\ov{p}_{\nu_{1}}}+B_{\ov{p}_{\nu_{2}}}=B_{\ov{p}_{\nu_{1}}}\pd B_{\ov{p}_{\nu_{2}}}$. 
From (i), we deduce that
$+_{i\geq 2}B_{\ov{p}_{\nu_{1},\nu_{i}}}=B_{\ov{p}_{\nu_{1}}}\cap \left( +_{i\geq 2}B_{\ov{p}_{\nu_{i}}}\right)$
is the kernel of
\linebreak
$B_{\ov{p}_{\nu_{1}}}\oplus \left( +_{i\geq 2}B_{\ov{p}_{\nu_{i}}}\right)\to +_{i\geq 1} B_{\ov{p}_{\nu_{i}}}$. Thus, by induction, we have a commutative diagram,
$$\xymatrix{
0\ar[r]&
+_{i\geq 2}B_{\ov{p}_{\nu_{1},\nu_{i}}}\ar[r]&
B_{\ov{p}_{\nu_{1}}}\oplus \left( +_{i\geq 2}B_{\ov{p}_{\nu_{i}}}\right)\ar[r]&
+_{i\geq 1} B_{\ov{p}_{\nu_{i}}}\ar[r]&
0\\
0\ar[r]&
\pd_{i\geq 2}B_{\ov{p}_{\nu_{1},\nu_{i}}}\ar[r]\ar[u]_{\cong}&
B_{\ov{p}_{\nu_{1}}}\oplus \left( \pd_{i\geq 2}B_{\ov{p}_{\nu_{i}}}\right)\ar[r]\ar[u]_{\cong}&
\pd_{i\geq 1} B_{\ov{p}_{\nu_{i}}}\ar[r]\ar@{-->}[u]_{\cong}&
0,
}$$
in which the two vertical isomorphisms induce a dotted isomorphism between
$+_{i\geq 1} B_{\ov{p}_{\nu_{i}}}$
and 
$\pd_{i\geq 1} B_{\ov{p}_{\nu_{i}}}$.
\end{proof}

We exhibit now examples of balanced perverse cochain complexes.

 \begin{proposition}\label{prop:formsandaxioms}
 Let $F$ be a universal system of coefficients and $\ob{K}$ be a \ffs. Then, the perverse cochain complex of global sections,
 $\tF(\ob{K})_{\bullet}$, and its cohomology, $H(\tF(\ob{K})_\bullet)$, are balanced.
\end{proposition}

The proof uses interesting properties of the perverse cochain complex of global sections, that we state independently.

\begin{lemma}\label{lem:formsandcohomology}
Let $F$ be a universal system of coefficients, $\ob{K}$ be a \ffs, $\ov{p}$ be a GM-perversity and $\ov{q}$ be a predecessor of $\ov{p}$.
We denote by $\tm$ the associated peak, i.e., $\ov{p}(i)=\ov{q}(i)$ if $i\neq \tm$ and $\ov{p}(\tm)=\ov{q}(\tm)+1$.
 Then the following properties are satisfied.
 \begin{enumerate}[(i)]
 \item For any $k\leq \ov{p}(\tm)-2$, we have $\tF(\ob{K})^k_{\ov{p}}= \tF(\ob{K})^k_{\ov{q}}$.
 \item The morphism $f^k\colon H^k(\tF(\ob{K})_{\ov{q}})\to H^k(\tF(\ob{K})_{\ov{p}})$,
 induced by the inclusion,
 $\tF(\ob{K})_{\ov{q}}\subset \tF(\ob{K})_{\ov{p}}$,
is surjective if $k\leq \ov{p}(\tm)-1$
and injective if $k\leq \ov{p}(\tm)$.
 \end{enumerate}
\end{lemma}

\begin{proof}
(i) Let $\omega\in \tF(\ob{K})_{\ov{p}}^k$. We consider two cases:
\begin{itemize}
\item if $\ell\neq \tm$, then $\max(\|\omega\|_{\ell}, \| d\omega\|_{\ell})\leq \ov{p}(\ell)= \ov{q}(\ell)$,
\item if $\ell=\tm$, then
$\max(\|\omega\|_{\tm},\|d\omega\|_{\tm})\leq k+1\leq \ov{p}(\tm)-1= \ov{q}(\tm)$.
\end{itemize}
Therefore $\omega\in \tF(\ob{K})_{\ov{q}}^k$. The reverse inclusion is obvious. 

(ii) (a) Let $\omega\in \tF(\ob{K})^{k}_{\ov{p}}$, $d\omega=0$. We consider two cases:
\begin{itemize}
\item if $\ell\neq \tm$, then $\|\omega\|_{\ell}\leq \ov{p}(\ell)=\ov{q}(\ell)$,
\item if $\ell= \tm$, then $\|\omega\|_{\tm}\leq k\leq \ov{p}(\tm)-1 = \ov{q}(\tm)$.
\end{itemize}
This implies $\omega\in \tF(\ob{K})_{\ov{q}}$ and the surjectivity of $f^k$.

(b) Let $\omega\in \tF(\ob{K})^{k}_{\ov{q}}$ and $\alpha\in \tF(\ob{K})^{k-1}_{\ov{p}}$ with $d\alpha=\omega$. We consider two cases:
\begin{itemize}
\item if $\ell\neq \tm$, then $\max(\|\alpha\|_{\ell},\|\omega\|_{\ell})\leq \ov{q}(\ell)$,
\item  if $\ell= \tm$, then $\max(\|\alpha\|_{\tm},\|\omega\|_{\tm})\leq \max(k-1, \ov{q}(\tm))=\ov{q}(\tm)$.
\end{itemize}
This implies $\alpha\in \tF(\ob{K})_{\ov{q}}$ and the injectivity of $f^k$.\\
(Observe that the key point of this proof is the inequality $\|\omega\|\leq |\omega|$.)
\end{proof}

\begin{proof}[Proof of \propref{prop:formsandaxioms}]
(i) First, we verify the properties of \defref{def:balanced} for  the perverse cochain complex $\tF(\ob{K})_\bullet$. 

We begin with Property (a). It is trivially verified for $j=1$. Suppose $j\geq 2$. Let $k\leq \ov{p}(\tn_{\nu_{j}})-j+1$ and $(\omega_{i})_{1\leq i\leq j}\in \oplus_{i=1}^j\tF(\ob{K})_{\ov{p}_{\nu_{i}}}$ such that $\omega_{1}+\cdots+\omega_{j}=0$. 
If $j=2$, we obtain $\omega_{1}=-\omega_{2}\in \tF(\ob{K})_{\ov{p}_{\nu_{1},\nu_{2}}}^k$ and
$(\omega_{1},\omega_{2})=\varphi(\omega_{1})$, as expected.
If $j\geq 3$, from (i) of \lemref{lem:formsandcohomology}, we have $\omega_{i}\in \tF(\ob{K})_{\ov{p}_{\nu_{i},\nu_{j}}}$, for any $i\neq j$.
As $\omega_{j}=-\omega_{1}-\cdots -\omega_{j-1}$, we can write
$(\omega_{1},\ldots,\omega_{j})=\varphi((\omega_{i,\ell}))$,
where $(\omega_{i,\ell})\in \oplus_{1\leq i<\ell\leq j}\tF(\ob{K})^k_{\ov{p}_{\nu_{i},\nu_{\ell}}}$
is defined by $\omega_{i,\ell}=0$ if $\ell\neq j$ and $\omega_{i,j}=\omega_{i}$, (cf. \defref{def:balanced} for the construction of $\varphi$).

For proving (b), let $\omega\in \tF(\ob{K})^0_{\ov{p}}$ and $\ov{p}_{\nu_{j}}$ be a predecessor of $\ov{p}$ with $\ov{p}(\tn_{\nu_{j}})\geq 2$. If $\ell\neq \tn_{\nu_{j}}$, then 
$\max(\|\omega\|_{\ell},\|d\omega\|_{\ell}) \leq \ov{p}(\ell)=\ov{p}_{\nu_{j}}(\ell)$ and
$\max(\|\omega\|_{\tn_{\nu_{j}}},\|d\omega\|_{\tn_{\nu_{j}}})\leq 1\leq \ov{p}(\tn_{\nu_{j}})-1=\ov{p}_{\nu_{j}}(\tn_{\nu_{j}})$. We get $\omega\in \tF(\ob{K})^0_{\ov{p}_{\nu_{j}}}$.

 As for (c), let  $\omega\in \tF(\ob{K})_{\ov{q}}^1$ such that $\omega=df$ with $f\in \tF(\ob{K})^0_{\ov{p}}$ and $\ov{q}\leq \ov{p}$. For any $j$, we  have,
$\max(\|f\|_j,\|df\|_j)\leq \max(0,\ov{q}(j))\leq \ov{q}(j)$, which implies $f \in \tF(\ob{K})_{\ov{q}}^0$ and the injectivity of
$H^1(\tF(\ob{K})_{\ov{q}})\to H^1(\tF(\ob{K})_{\ov{p}})$. The same argument works if $\ov{p}=\ov{\infty}$.

(ii) We study now the perverse cohomology  $H(\tF(\ob{K})_\bullet)$. Property (c) comes from the first case and Property (b) is obvious from the previous result on $\tF(\ob{K})^0$. 
Thus, we are reduced to Property (a), which is a consequence of \lemref{lem:formsandcohomology}-(ii)  for $j=1$. Let $j\geq 2$. From \propref{prop:+point2}, we have to prove that the morphism,
$$f^k\colon H^k(\tF(\ob{K})_{\ov{p}_{\nu_{1}}})\pd\cdots\pd H^k(\tF(\ob{K})_{\ov{p}_{\nu_{j}}})\to H^k(\tF(\ob{K})_{\ov{p}}),$$
which sends $\langle [\omega_{1}],\ldots,[\omega_{j}]\rangle$ on 
$f_{1}([\omega_{1}])+\cdots+f_{j}([\omega_{j}]),$
is injective for any $k\leq \ov{p}(\tn_{\nu_{j}})-j+1$.
 Let 
 $$\Upsilon =\langle [\omega_1], \ldots ,[\omega_j]\rangle
 \in H^{k}(\tF(\ob{K})_{\ov{p}_{\nu_{1}}})\pd \cdots \pd H^{k}(\tF(\ob{K})_{\ov{p}_{\nu_{j}}}),$$ 
 such that $f^k(\Upsilon)=0$. The map
 $f_{i,j}\colon H^{k}(\tF(\ob{K})_{\ov{p}_{\nu_{i},\nu_{j}}})\to H^{k}(\tF(\ob{K})_{\ov{p}_{\nu_{i}}})$ being surjective for any $i\in\{1,\ldots,j-1\}$ (see (ii) of \lemref{lem:formsandcohomology}), we have
 $$
 \Upsilon= \langle f_{1,j}[\omega'_1],\ldots , f_{j-1,j}[\omega'_{j-1}],[\omega_j]\rangle=
\langle 0, \ldots , 0,[\omega'_j]\rangle).
$$
This implies $0=f^k(\Upsilon)=f_{j}([\omega'_{j}])$ and $\Upsilon=0$ because $f_{j}$ is injective (see (ii) of \lemref{lem:formsandcohomology}).
\end{proof}
 
 \begin{example}
Let $\ob{K}$ be a \ffs. The statement of
\lemref{lem:formsandcohomology} contains the fact that  $H^1(\cA(\ob{K})_{\ov{t}})=0$ implies $H^1(\cA(\ob{K})_{\ov{p}})=0$, for any GM-perversity $\ov{p}$. This example shows that the reverse way is not true.

We choose $n=3$ and  the \ffs, $\ob{K}$, associated to the cone on the torus, $c(S^1\times S^1)$, stratified by the cone point.
As a cone is a contractible space, from \propref{prop:casonulo},
we have
$H^0(\cA(\ob{K})_{\ov{0}})=H^0(\ob{K};\Q)=\Q$ and $H^1(\cA(\ob{K})_{\ov{0}})=H^1(\ob{K};\Q)=0$. 
The \ffs~$\ob{K}$ has one singular stratum  and a perversity consists of the integer $\ov{p}(3)$. There are only two possibilities, $\ov{p}(3)=0$ or 1, the value 1 corresponding to  $\ov{t}$.
From \exemref{exam:cone}, we have,
 $$H^i_{\ov{t}}(\ob{K};\Q)=\left\{\begin{array}{lcl}
H^i(S^1\times S^1;\Q)&\text{ if }& i\leq 1,\\
0&\text{ if }&i>1.
\end{array}\right.$$
Thus we have got $H^1(\cA(\ob{K})_{\ov{0}})=0$ and $H^1(\cA(\ob{K})_{\ov{t}})=\Q\oplus\Q$.
\end{example}

%%%%%%%%%%%%%%%%%%%%%%%%%%%%
\section{Minimal models of balanced perverse \cdga's}\label{sec:minimalalgebraic}

\begin{quote}
In this section, we construct a Sullivan minimal model, in the sense of \defref{def:minimalmodel}, of any cohomologically connected, balanced perverse \cdga's. Geometrical applications are given in the next section.
\end{quote}

As in the classical case, we need some connectivity hypothesis for the construction of a model.

\begin{definition}\label{def:cohomologicallyconnectedandregular}
A perverse \cdga, $A_{\bullet}$,
\index{Perverse!CDGA!cohomologically connected}
is \emph{cohomologically connected} if $H^0(A_{\bullet})=\Q_{\bullet}$.
\end{definition}

\begin{theorem}[Construction of a minimal model]\label{thm:constructionminimalmodel}
Let $A_{\bullet}$ be a cohomologically connected, balanced perverse \cdga.
\index{Sullivan!minimal model!of a cohomologically connected balanced CDGA}
Then, there exists a Sullivan minimal model of $A_{\bullet}$,
$$\rho_\bullet\colon B_\bullet=(\land \oplus_{\ov{p}} V_{[\ov{p}]},d)_\bullet\to A_{\bullet},$$
i.e., $\rho_{\ov{p}}(B_{\ov{p}})\subset A_{\ov{p}}$, the restriction
$\rho_{\ov{p}}\colon B_{\ov{p}}\to A_{\ov{p}}$ is a quasi-isomorphism for any $\ov{p}\in\hat{\cP}^n$ and the elements of $V_{[\ov{p}]}$ have a strictly positive degree.
This model is unique up to isomorphism.
\end{theorem}

The construction is done by induction on the degree and on the perverse degree. We first  establish some properties of an inductive step, described in the next statement.

\begin{lemma}\label{lem:etape1model}
Let $\ov{p}$ be a GM-perversity of 
ordered set of
predecessors, $(\ov{p}_{1},\ldots,\ov{p}_{s})$. Let  $A_{\bullet}$ be a cohomologically connected, balanced perverse \cdga~and
$$\rho_{\bullet}\colon B_{\bullet}=(\land \oplus_{\ov{r}<\ov{p}}V_{[\ov{r}]},d)\to  A_{\bullet},$$
be a morphism of perverse \cdga's, such that 
the restriction $\rho_{\ov{r}}\colon B_{\ov{r}}\to  A_{\ov{r}}$ is a quasi-isomorphism 
and $V^0_{[\ov{r}]}=0$,
 for any $\ov{r}<\ov{p}$. Then,  the following properties are satisfied.
\begin{enumerate}[(i)]
\item $B^1_{\ov{p}}=B^1_{\ov{p}_1}+\cdots+B^1_{\ov{p}_s}$.
\item Let $(\ov{r}_{\nu_{1}},\ldots,\ov{r}_{\nu_{j}})$ be any ordered set of $j$ predecessors of a GM-perversity $\ov{r}$,  with $\ov{r}\leq \ov{p}$. We denote by $\tn_{\nu_{i}}$ the peak associated to $\ov{r}_{\nu_{i}}$.
Then, the map $\tilde{\rho}\colon B_{\ov{r}_{\nu_{1}}}+\cdots+B_{\ov{r}_{\nu_{j}}}\to A_{\ov{r}_{\nu_{1}}}\pd\cdots\pd A_{\ov{r}_{\nu_{j}}}$, generated by $\rho$, is such that $H^k(\tilde{\rho})$  is an isomorphism for $k\leq \ov{r}(\tn_{\nu_{j}})-j+1$. As a direct consequence, the map $H^1(\tilde{\rho})\colon H^1(B_{\ov{r}_{\nu_{1}}}+\cdots+B_{\ov{r}_{\nu_{j}}})\to
H^1(A_{\ov{r}_{\nu_{1}}}\pd\cdots\pd A_{\ov{r}_{\nu_{j}}})$
is an isomorphism for any $j$.
\item The map $H^1(\rho_{\ov{p}})\colon H^1(B_{\ov{p}})\to H^1( A_{\ov{p}})$ is injective.
\end{enumerate}
\end{lemma}

\begin{proof}
(i) The inclusion
$B^1_{\ov{p}_1}+\cdots+B^1_{\ov{p}_s}\subset B^1_{\ov{p}}$ being obvious, we consider $\omega\in B^1_{\ov{p}}$. 
This element can be written as a sum $\omega=\sum_i\lambda_i\,\omega_i$, with $\lambda_i$ of degree~0 and 
$\omega_i\in \oplus_{\ov{r}<\ov{p}}V^1_{[\ov{r}]}$. 
As $V^0_{[\ov{r}]}=0$ and $V^1_{[\ov{p}]}=0$, we have $\lambda_i\in \Q$ and $\omega_i\in B_{\ov{p}_i}$ for a certain predecessor, $\ov{p}_i$, of $\ov{p}$. Regrouping these elements, we write $\omega$ as a sum of elements of $(B_{\ov{p}_i})_{1\leq i\leq s}$ and the property (i) is proved.

(ii) Observe  that this property is true if $j=1$, by hypothesis. 
Suppose now that (ii) is satisfied for any ordered sequence of $(j-1)$ predecessors of any GM-perversity less than or equal to $\ov{p}$. We  consider now an ordered family of $j$ predecessors, $(\ov{r}_{\nu_{1}},\ldots,\ov{r}_{\nu_{j}})$, of a GM-perversity $\ov{r}$, with $\ov{r}\leq \ov{p}$. With \corref{cor:+point2}, applied to the bottom line, and \propref{prop:capandgraded}, applied to the upper line, we  have a morphism of short exact sequences, whose vertical maps are induced by $\rho$:
$${\scriptsize \xymatrix@=14pt
{
 0\ar[r]&\ar[d]
 B^*_{\ov{r}_{\nu_{1},\nu_{2}}}+ \cdots+B^*_{\ov{r}_{\nu_{1},\nu_{j}}}\ar[r]&
\ar[d]
 B^*_{\ov{r}_{\nu_{1}}}\oplus (B^*_{\ov{r}_{\nu_{2}}}+\cdots+B^*_{\ov{r}_{\nu_{j}}})\ar[r]&
\ar[d]
B^*_{\ov{r}_{\nu_{1}}}+\cdots+B^*_{\ov{r}_{\nu_{j}}}\ar[r]&0\\
 0\ar[r]&A^*_{\ov{r}_{\nu_{1},\nu_{2}}}\pd \cdots \pd A^*_{\ov{r}_{\nu_{1},\nu_{j}}}\ar[r]&
A^*_{\ov{r}_{\nu_{1}}}\oplus (A^*_{\ov{r}_{\nu_{2}}}\pd \cdots\pd A^*_{\ov{r}_{\nu_{j}}})\ar[r]&
A^*_{\ov{r}_{\nu_{1}}}\pd\cdots\pd A^*_{\ov{r}_{\nu_{j}}}\ar[r]&0
}}$$
for $*\leq \ov{r}(\tn_{\nu_{j}})-j+2$. This morphism induces a morphism between long exact sequences and the induction hypothesis, associated to the five lemma, gives the result.

(iii) For getting the injectivity of
$H^1(\rho_{\ov{p}})\colon H^1(B_{\ov{p}})\to H^1( A_{\ov{p}})$, we decompose it in two maps,
$$\xymatrix@C=1.2cm{
H^1(B_{\ov{p}})=H^1(B_{\ov{p}_1}+\cdots+B_{\ov{p}_s})\ar[r]^-{H^1(\tilde{\rho})}&
H^1(A_{\ov{p}_1}\pd\cdots\pd A_{\ov{p}_s})\ar[r]^-{H^1(\ov{\psi})}&H^1(A_{\ov{p}}),
}$$
where the left-hand equality comes from (i).
The injectivity of $H^1(\tilde{\rho})$ comes from Property (ii) and the injectivity of $H^1(\ov{\psi})$ has been proved in \corref{cor:+point2}. 
\end{proof}

\begin{proof}[Proof of \thmref{thm:constructionminimalmodel}]
The unicity up to isomorphism is a direct consequence of \propref{prop:unicityminimal} and (2) of \propref{prop:propertiesofhomotopy}.
The construction of this model begins with a classical minimal model $(\land V_{[\ov{0}]}, d)$ of the \cdga~$ A_{\ov{0}}$. 

Let $\ov{p}\in\hat{\cP}^n$ be a fixed perversity and
suppose  we have  a morphism of  perverse \cdga's,  
$$\rho_{\bullet}\colon B_{\bullet}=(\land \oplus_{\ov{r}<\ov{p}}V_{[\ov{r}]},d)_{\bullet}\to  A_{\bullet},$$
satisfying the next properties for any GM-perversity $\ov{r}$, with $\ov{r}<\ov{p}$,
\begin{enumerate}[(i)]
\item $ V^{\leq 0}_{[\ov{r}]}=0$,
\item $\rho(B_{\ov{r}})\subset  A_{\ov{r}}$ and the restriction $\rho_{\ov{r}}\colon B_{\ov{r}}\to  A_{\ov{r}}$ is a quasi-isomorphism.
 \newcounter{enumi_saved}
      \setcounter{enumi_saved}{\value{enumi}}
 \end{enumerate}
 
For having a model for the perversity $\ov{p}$, we have to construct
 $$\rho_{\bullet}\colon B_{\bullet}=(\land \oplus_{\ov{r}\leq \ov{p}}V_{[\ov{r}]},d)_{\bullet}\to  A_{\bullet},$$
 verifying (i) and (ii) for $\ov{r}\leq \ov{p}$. Suppose that we have extended $\rho_{\bullet}$ in a morphism of  perverse \cdga's,  still denoted $\rho_{\bullet}$,
$$\rho_{\bullet}\colon B'_{\bullet}=(B\otimes \land V_{[\ov{p}]}^{<m},d)_{\bullet}=(\land( \oplus_{\ov{r}<\ov{p}}V_{[\ov{r}]}\oplus V^{<m}_{[\ov{p}]}),d)_{\bullet}\to  A_{\bullet},$$
such that
\begin{enumerate}[(i)]
  \setcounter{enumi}{\value{enumi_saved}}
\item $V^{\leq 0}_{[\ov{p}]}=0$,
\item $\rho(V^{<m}_{[\ov{p}]})\subset  A_{\ov{p}}$ and the restriction $\rho_{\ov{p}}\colon B'_{\ov{p}}\to  A_{\ov{p}}$ verifies,
\begin{enumerate}[a)]
\item $H^i(\rho_{\ov{p}})$ is an isomorphism for any $i\leq m-1$,
\item $H^i(\rho_{\ov{p}})$ is injective if $i=m$.
\end{enumerate}
\end{enumerate}
If $\ov{r}<\ov{p}$, as $B_{\ov{r}}=B'_{\ov{r}}$,  the map $\rho_{\ov{r}}\colon B'_{\ov{r}}\to A_{\ov{r}}$ is a quasi-isomorphism. Moreover, when this construction is performed for any $m$, by denoting $V_{[\ov{p}]}=\oplus_{m} V^m_{[\ov{p}]}$, we get a morphism,
$\rho_{\bullet}\colon (\land \oplus_{\ov{r}\leq \ov{p}}V_{[\ov{r}]},d)_{\bullet}\to A_{\bullet}$,
satisfying (i) and (ii) for any $\ov{r}\leq \ov{p}$. Thus, we are reduced to properties (iii) and (iv).

In degree~$m=0$, as $V_{[\ov{r}]}^{\leq 0}=0$ for any $\ov{r}\leq \ov{p}$, we have
$(\land \oplus_{\ov{r}\leq \ov{p}}V_{[\ov{r}]})^0_{\bullet}=\Q_{\bullet}$
and the map
$\rho_{\bullet}\colon \Q_{\bullet}\to A_{\bullet}$
is the canonical inclusion. The perverse \cdga, $A_{\bullet}$, being cohomologically connected by hypothesis, $H^0(\rho_{\bullet})_{\ov{r}}$ is an isomorphism, for any $\ov{r}\leq\ov{p}$.

Observe that \lemref{lem:etape1model} implies properties  (iii) and (iv) for $m=1$.
We show now that the morphism $\rho_{\bullet}$, defined on $B'_{\bullet}$, can be extended in a morphism of  perverse \cdga's, $\rho_{\bullet}\colon (B'\otimes \land V^m_{[\ov{p}]},d)_{\bullet}\to A_{\bullet}$, so  that properties (iii) and (iv) are satisfied for $m+1$.
We set
$$\left\{\begin{array}{lcl}
Y^m_{[\ov{p}]}[0]&=&\coker H^m(\rho_{\ov{p}})\colon H^m(B'_{\ov{p}})\to H^m(A_{\ov{p}}),\\
Z^m_{[\ov{p}]}[0]&=&\ker H^{m+1}(\rho_{\ov{p}})\colon H^{m+1}(B'_{\ov{p}})\to H^{m+1}(A_{\ov{p}}),\\
V^m_{[\ov{p}]}[0]&=&Y^m_{\ov{p}}[0]\oplus Z^m_{\ov{p}}[0].
\end{array}\right.$$
We extend the differential $d$ and the morphism $\rho_{\bullet}$ by:
\begin{itemize}
\item if $y\in Y^m_{[\ov{p}]}[0]$, then $dy=0$ and $\rho(y)\in A^m_{\ov{p}}$ is a cocycle representing the class corresponding to  $y\in\coker H^m(\rho_{\ov{p}})$,
\item if $z\in Z^m_{[\ov{p}]}[0]$, then $dz\in B'_{\ov{p}}$ is a cocycle representing the class corresponding to 
$z\in\ker H^{m+1}(\rho_{\ov{p}})$. 
The element $\rho(z)\in A_{\ov{p}}^m$ is defined by the equality $ \rho\circ d=d\circ \rho $.
\end{itemize}
  By construction, we have
  $$d(V^m_{[\ov{p}]}[0])\subset B'_{\ov{p}}=(B\otimes\land V^{< m}_{[\ov{p}]})_{\ov{p}} \text{ and }
  \rho(V^m_{[\ov{p}]}[0])\subset A_{\ov{p}}.$$
  Let $j$ be a fixed integer. Suppose we have defined, for all $i\in\{0,\ldots,j\}$, $V^m_{[\ov{p}]}[i]$, $\rho$ and $d$ on 
  $B'\otimes\land \oplus_{i\leq j}V^{m}_{\ov{p}}[i]$ so that, for all $k\leq j$,
  $$\left\{\begin{array}{l}
  d(V^m_{[\ov{p}]}[k])\in (B'\otimes\land \oplus_{i\leq k-1}V^{m}_{[\ov{p}]}[i])_{\ov{p}},\\
  \rho\circ d=d\circ \rho \text{ and } \rho(V^m_{[\ov{p}]}[k])\subset A_{\ov{p}}.
    \end{array}\right.$$
    We set
\begin{equation}\label{equa:vm+1}
V^m_{[\ov{p}]}[j+1]=\ker H^{m+1}(\rho_{\ov{p}})\colon H^{m+1}(B'\otimes \land (\oplus _{i\leq j}V^m_{[\ov{p}]}[i]))_{\ov{p}}\to H^{m+1}( A_{\ov{p}}).
\end{equation}
If $z\in V^m_{[\ov{p}]}[j+1]$, then 
$dz \in (B'\otimes \land(\oplus_{i\leq j} V^m_{[\ov{p}]}[i]))^{m+1}_{\ov{p}}$ is a cocycle
corresponding to $z$ in
$\ker H^{m+1}(\rho_{\ov{p}})$. The element $\rho(z)\in A^m_{\ov{p}}$ is defined by the equality $\rho\circ d=d\circ\rho$.
 Finally, we set
    $$V^m_{[\ov{p}]}=\oplus_{i\geq 0}V^m_{[\ov{p}]}[i].$$
  We verify now  properties  (iii), (iv)
  for $\rho_{\bullet}\colon (B'\otimes \land V^m_{[\ov{p}]},d)_{\bullet}\to A_{\bullet}$.
  Property (iii) is satisfied by construction and we are reduced to Property (iv). % 
  \begin{itemize}
  \item If $i\leq m-1$, then $(B'\otimes\land V^{m}_{[\ov{p}]})^i_{\ov{p}}={B'}^i_{\ov{p}}$ and \emph{$H^i(\rho_{\ov{p}})$ is an isomorphism.}
  \item \emph{The morphism $H^m(\rho_{\ov{p}})$ is surjective} by construction of $Y^m_{\ov{p}}[0]$.
  \item \emph{The morphism $H^m(\rho_{\ov{p}})$ is injective.} To prove that, we consider $\omega\in(B'\otimes\land V^m_{[\ov{p}]})^m_{\ov{p}}$ such that $d\omega=0$ and $\rho(\omega)$ is a boundary in $A^m_{\ov{p}}$. There exists $j\geq -1$ with $\omega \in B'\otimes \land \oplus _{i\leq j+1}V^m_{[\ov{p}]}[i]$. 
  We set $\hat{B}=B'\otimes \land \oplus _{i\leq j}V^m_{[\ov{p}]}[i]$ and decompose $\omega$ in 
  $$\omega=\omega_1+\omega_2\in ({B'}^0\otimes V^m_{[\ov{p}]}[j+1])\oplus \hat{B}^m.$$
  As, by induction, ${B'}^0=\Q$, we have 
  $\omega_1\in V^m_{[\ov{p}]}[j+1]$. The equality  $d\omega=0$ implies
  $d\omega_1\in d(\hat{B}^m)$. By construction of $V^m_{[\ov{p}]}[j+1]$, see (\ref{equa:vm+1}),  this implies $\omega_1=0$ and
  $\omega\in B'\otimes \land \oplus _{i\leq j}V^m_{[\ov{p}]}[i]$. 
  We iterate this process and we obtain 
 $\omega\in B'\otimes \land V^m_{[\ov{p}]}[0]$. As $\rho([\omega])=0$, we have
  $\omega\in B'\otimes \land Z^m_{[\ov{p}]}[0]$ that we decompose, as above, in
  $\omega=\omega_{1}+\omega_{2}\in Z^m_{[\ov{p}]}[0]\oplus \hat{B}^m$. The equality
  $d\omega=0$ implies 
  $d\omega_{1}\in d(\hat{B}^m)$
  and, by construction of 
  $Z^m_{[\ov{p}]}[0]$,
  we get $\omega_{1}=0$ and  $\omega\in B'_{\ov{p}}$. 
  Now  hypothesis (iv) b) implies the nullity in $\hat{B}$ of the cohomology class associated to $\omega$.
  \item \emph{The morphism $H^{m+1}(\rho_{\ov{p}})$ is injective.}
  Let $\omega\in(B'\otimes\land V^m_{[\ov{p}]})^{m+1}_{\ov{p}}$ such that  $d\omega=0$ and $\rho(\omega)$ is a boundary in $A_{\ov{p}}^{m+1}$. There exists an integer
  $j\geq -1$ with $\omega \in (B'\otimes \land \oplus _{i\leq j+1}V^m_{[\ov{p}]}[i])^{m+1}_{\ov{p}}$ and, by construction of
  $V^m_{[\ov{p}]}[j+2]$, there exists also
an element $v\in V^m_{[\ov{p}]}[j+2]$ such that
  $\omega-d(1\otimes v)$ is a boundary in $B'\otimes\land\oplus_{i\leq j+1}V^m_{[\ov{p}]}[i]$. Thus $\omega$ is a boundary in $B'\otimes \land V^m_{[p]}$.
  \end{itemize}
  Set $V^m_{[\ov{p}]}(i)=\oplus_{j\leq i}V^m_{[\ov{p}]}[i]$. By construction, the differential, $d$, verifies, for all $i\geq 0$,
  $$d V^m_{[\ov{p}]}[i]\subset 
  \left((\land\oplus_{\ov{r}<\ov{p}}V_{[\ov{r}]})
  \otimes
  (\land V_{[\ov{p}]}^{<m})
  \otimes
  (\land V^m_{[\ov{p}]}(i-1))\right)_{\ov{p}}.$$
 This is Property (\ref{equa:minimalinduction})   and we have constructed a Sullivan minimal model in the sense of 
  \defref{def:algebredeSullivan}.
     \end{proof}

 \begin{example}\label{exam:modelwithnegativedegrees}
 Consider a lattice of perversities containing
 $$\xymatrix{
 &{\ov{p}_1}&\\
{\ov{p}_2}\ar[ru]&{\ov{p}_3}\ar[u]&{\ov{p}_4}\ar[lu]\\
 {\ov{p}_5}\ar[u]\ar@{-->}[ur]&{\ov{p}_6}\ar[ru]\ar[lu]&{\ov{p}_7}\ar[u]\ar@{-->}[ul]\\
 &{\ov{p}_8}\ar[ru]\ar[u]\ar[lu]&
 }$$
 Suppose that $A_\bullet$ is a perverse \cdga~such that $A_{\ov{p}_8}=\Q\,e$ with $|e|=2$ and $H^+(A_{\ov{p}_i})=0$ if $i<8$. Then the minimal model will contain  elements,
 \begin{itemize}
 \item  of degree 1, $x$ of perversity $\ov{p}_5$, $y$ of perversity $\ov{p}_6$ and $z$ of perversity $\ov{p}_7$ such that
 $dx=dy=dz=e$, 
  \item of degree 0, $f$ of perversity $\ov{p}_2$, $g$ of perversity $\ov{p}_3$, $h$ of perversity $\ov{p}_4$ such that
 $df=x-y$, $dg=x-z$, $dh=y-z$, 
 \item of degree -1, $\alpha$ of perversity $\ov{p}_1$ such that $d\alpha=f-g+h$.
 \end{itemize}
 This example illustrates that we cannot have a model with generators of positive degree for an arbitrary perverse \cdga. 
As $H^1(A_{\ov{p}_6}\pd A_{\ov{p}_7})\to H^1(A_{\ov{p}_4})$
 is not injective, this perverse \cdga~is not balanced, see \corref{cor:+point2}(ii). Moreover, it cannot be the perverse \cdga~of  forms on a \ffs, as shows \propref{prop:formsandaxioms2}.
 \end{example}

 \begin{remark}\label{rem:+ou+point}
 In general, the operations $+$ and $\pd$, applied to the perverse \cdga~of forms on a \ffs, $\ob{K}$, are different.
  More precisely, we have $\cA(\ob{K})_{\ov{p}_1}+\cA(\ob{K})_{\ov{p}_2}=\cA(\ob{K})_{\ov{p}_1}\pd \cA(\ob{K})_{\ov{p}_2}$ but for a sum of three terms or more these operations do not coincide. For instance, in the case of three terms, they correspond to the next short exact sequences, 
   $$\xymatrix@1@=16pt{
 0\ar[r]&
 A_{\ov{p}_1}\cap (A_{\ov{p}_2}+A_{\ov{p}_3})\ar[r]&
 A_{\ov{p}_1}\oplus(A_{\ov{p}_2}+A_{\ov{p}_3})\ar[r]&
 A_{\ov{p}_1}+ A_{\ov{p}_2}+ A_{\ov{p}_3}\ar[r]&
 0}$$
 and
  $$\xymatrix@1@=12pt{
 0\ar[r]&
 (A_{\ov{p}_1}\cap A_{\ov{p}_2})+(A_{\ov{p}_1}\cap A_{\ov{p}_3})\ar[r]&
 A_{\ov{p}_1}\oplus(A_{\ov{p}_2}+A_{\ov{p}_3})\ar[r]&
 A_{\ov{p}_1}\pd A_{\ov{p}_2}\pd A_{\ov{p}_3}\ar[r]&
 0,}$$
 where we have denoted $\cA(\ob{K})$ by $A$.
But the equality
 $A_{\ov{p}_1}\cap (A_{\ov{p}_2}+A_{\ov{p}_3})= (A_{\ov{p}_1}\cap A_{\ov{p}_2})+(A_{\ov{p}_1}\cap A_{\ov{p}_3})$
 is true only in degree less than, or equal to, $\ov{p}_{3}(\tn_{3})-1$. This can be proved by arguments similar to those in the proof of \lemref{lem:formsandcohomology}; we do not need this result in the sequel.
 \end{remark}

%%%%%%%%%%%%%%%
\section{Minimal models of filtered spaces}\label{sec:modelCS}

\begin{quote}
The construction of a Sullivan minimal model can be done in the cases of the algebra of perverse forms on a connected \ffs, $\ob{K}$, and of its cohomology. Moreover, in the case $\ob{K}$ is coming from a pseudomanifold, we prove that the model we have built is a topological invariant. 
\end{quote}

Recall, from \defref{def:ffsconected}, that a \ffs, $\ob{K}$, is connected if the face set $\ob{K}^{[0]}$ is connected. For a GM-perversity $\ov{p}$, it is easy to see (cf. \remref{rem:petitemaisbien}) that the only $\ov{p}$-admissible 0-simplices and 1-simplices are elements of $K^{[0]}$, thus, if $\ob{K}$ is connected, the
perverse \cdga, $\cA(\ob{K})_\bullet= \widetilde{A}_{PL,\bullet}(\ob{K})$, is cohomologically connected. The next result is a direct consequence of \propref{prop:formsandaxioms}.

\begin{proposition}\label{prop:formsandaxioms2}
The perverse \cdga~of forms on a \ffs, $\ob{K}$ and their cohomology are balanced. Therefore, if $\ob{K}$ is connected, the perverse \cdga's, $\cA(\ob{K})_\bullet$,  and $H(\cA(\ob{K})_\bullet)$ with the trivial differential, admit Sullivan minimal models.
\end{proposition}

The next definition arises naturally from \propref{prop:formsandaxioms2}.

\begin{definition}\label{def:minimalmodelffs}
If $\ob{K}$ is a connected \ffs, 
\index{Sullivan!minimal model}\index{Sullivan!minimal model!of a connected filtered face set}
the Sullivan minimal model of $\cA(\ob{K})_\bullet$ is called the \emph{minimal model of $\ob{K}$.}
\end{definition}

In perverse degree $\ov{q}=0$,  the Sullivan minimal model of a normal \ffs, $\ob{K}$, is the minimal model of the associated face set. 

\begin{proposition}\label{prop:casonuloproducto}
Let $\ob{K}$ be a connected normal \ffs, of minimal model 
$(\land \oplus_{\ov{p}\in\hat\cP^n}V_{[\ov{p}]},d)_{\bullet}$.
Then $(\land V_{[\ov{0}]},d)$ is a minimal model of the face set associated to $\ob{K}$.
\end{proposition}

\begin{proof}
For that, we need a version of \propref{prop:casonulo} taking in account the structure of algebras. Let
$\Delta=\Delta^{j_{0}}\ast\cdots\ast\Delta^{j_{n}}$ be a filtered simplex and $\mu\colon \tDelta=
c\Delta^{j_{0}}\times\cdots\times c\Delta^{j_{n-1}}\times\Delta^{j_{n}}\to \Delta$ its blow-up. We work with the coordinates introduced in \remref{rem:blowup}, where $\mu$ sends the element
$((a^0_{0},\ldots,a^0_{j_{0}},t_{0}),(a^1_{0},\ldots,a^1_{j_{1}},t_{1}),\ldots, (a^n_{0},\ldots,a^n_{j_{n}}))$
on
$(a^0_{0},\ldots,a^0_{j_{0}}, t_{0}a^1_{0},\ldots,t_{0}a^1_{j_{1}},t_{0}t_{1}a^2_{0},\ldots,t_{0}t_{1}a^2_{j_{2}},\ldots,
t_{0}\ldots t_{n-1}a^n_{0},\ldots, t_{0}\ldots t_{n-1}a^n_{j_{n}})$.

Denote by $(u_{i})_{0\leq i\leq j_{0}+\cdots+j_{n}+n}$ the barycentric coordinates of $\Delta$ and by
$(v^k_{i})_{0\leq i\leq j_{k}}$ the barycentric coordinates of $\Delta^{j_{k}}$. The map $\mu$ induces a map
$\mu^*\colon A_{PL}(\Delta)\to \tA(\Delta)=A_{PL}(c\Delta^{j_{0}})\otimes\cdots\otimes A_{PL}(\Delta^{j_{n}})$, defined by 
$\mu^*(u_{i})=$\\
$$\left\{
\begin{array}{ll}
v_{i}^0\otimes 1\otimes\cdots\otimes 1
&\text{if } 0\leq i\leq j_{0}\\
t_{0}\otimes v_{i-j_{0}-1}^1
\otimes 1\otimes\cdots\otimes 1
&\text{if } j_{0}+1\leq i\leq j_{0}+j_{1}+1\\
t_{0}\otimes t_{1}\otimes v_{i-j_{0}-j_{1}-2}^2
\otimes 1\otimes\cdots\otimes 1
&\text{if } j_{0}+j_{1}+2\leq i\leq j_{0}+j_{1}+j_{2}+2\\
\cdots&\cdots\\
t_{0}\otimes t_{1}\otimes\cdots\otimes t_{n-1}\otimes  v_{i-j_{0}-\ldots - j_{n-1}-n}^n
&\text{if } j_{0}+\cdots+j_{n-1}+n\leq i\leq j_{0}+j_{1}+\cdots +j_{n}+n.
\end{array}\right.
$$
The faces in $\tDelta$ having a factor $\Delta^{j_{n-\ell}}\times\{1\}$, for $\ell\in \{1,\ldots,n\}$, are characterized by $t_{\ell}=0$. Thus, the forms
$\mu^*(u_{i})$
are of perverse degree 0 and we have
$\mu^*(u_{i}) \in \widetilde{A}_{PL,\ov{0}}(\Delta)$. This local construction generates a morphism of CDGA's,
$A_{PL}(\ob{K})\to \widetilde{A}_{PL,\ov{0}}(\ob{K})$, which satisfies the hypothesis of \thmref{thm:extendable} for any connected normal \ffs, $\ob{K}$. Thus, in the next diagram,
$$\xymatrix{
A_{PL}(\ob{K})\ar[r]^{\mu^*}&\widetilde{A}_{PL,\ov{0}}(\ob{K})\\
&(\land V_{[\ov{0}]},d)\ar[u]_{\rho_{\ov{0}}}\ar@{-->}[ul]^{\psi},
}$$
the morphism $\mu^*$ is a quasi-isomorphism (\thmref{thm:extendable}). Let $\rho_{\ov{0}}$ be the minimal model of
$\widetilde{A}_{PL,\ov{0}}(\ob{K})$. As $(\land V_{[\ov{0}]},d)$ is a cofibrant \cdga, there exists (see \propref{prop:propertiesofhomotopy}-(2)) a morphism of \cdga's, $\psi$, making the diagram commutative up to homotopy, and $\psi$ is a quasi-isomorphism.
\end{proof}

We may also compare the structure of products induced, in cohomology, on the blow-ups of Sullivan's forms and on the Thom-Whitney cochains, on one side, together with the product defined by G.~Friedman and J.~McClure (\cite{MR3046315}) in the topological case.

\begin{proposition}\label{prop:producto}
Let $\ob{K}$ be a  \ffs. Then, the integration map, $\int\colon \widetilde{A}_{PL}(\ob{K})\to \widetilde{C}^*(\ob{K})$,
induces an isomorphism of perverse algebras,
$$H^*_{\ov{\bullet}}(\ob{K};\widetilde{A}_{PL})\cong H^*_{\ov{\bullet}}(\ob{K};\widetilde{C}).$$

If $X$ is an $n$-dimensional pseudomanifold and $\ob{K}=\ob{\rm ISing}_*^{\cF}(X)$, the  cup-product of the previous perverse cohomology algebras coincides with the cup-product defined in \cite{MR3046315}.
\end{proposition}

\begin{proof}
We use an argument due to C.~Watkiss (unpublished) in the classical case of Sullivan's theory.
The two inclusions of universal systems of DGA's,
$$\xymatrix@1{
A_{PL}(\Delta^n)\ar[r]&(A_{PL}\otimes C^*)(\Delta^n)&C^*(\Delta^n),\ar[l]
}$$
satisfy the hypotheses of \thmref{thm:extendable} and induce an isomorphism of perverse \cdga's, 
$$H^*_{\ov{\bullet}}(\ob{K};\tA)\cong H^*_{\ov{\bullet}}(\ob{K};\widetilde{A_{PL}\otimes C}) \cong H^*_{\ov{\bullet}}(\ob{K};\tC).$$
This establishes the first assertion. The last part of the statement is proved in \cite[End of Section~4]{2013arXiv1302.2737D}.
\end{proof}

To any filtered space $X$ (\defref{def:espacefiltré}) whose regular part is connected, we  associate a \ffs, 
$\ob{L}=\ob{\rm ISing}_*^{\cF}(X)$, 
such that $\cA(\ob{L})_\bullet$ is a cohomologically connected, balanced perverse \cdga, see \propref{prop:formsandaxioms}.
Thus, by \thmref{thm:constructionminimalmodel}, a perverse minimal model of $\cA(\ob{L})_\bullet$ exists. \emph{We call it the perverse minimal model of $X$.} 
\index{Sullivan!minimal model!of a connected filtered space}
In the case $X$ is normal, the $\ov{0}$-part of this model is the minimal model of $X$, as shows \propref{prop:casonuloproducto}.

\smallskip
The next result is the topological invariance of the minimal model of $\ob{\rm ISing}_*^{\cF}(X)$. 

\begin{theorem}\label{thm:topologicalinvarianceofminimal}
Let $X$ be a PL-pseudomanifold  whose regular part is connected. Then,  the minimal model of $X$
does not depend on the stratification of $X$, in GM-perversity degrees  strictly less than~$\ov{\infty}$.
\end{theorem}

As the $\ov{\infty}$-intersection cohomology of a \ffs, $\ob{K}$, is the ordinary cohomology of its regular part
(\propref{prop:casoinfinito}), we cannot expect any topological invariance for the perversity $\ov{\infty}$. The simple example of a sphere $S^2$ with strata of codimension~0 shows that the regular part can have different homotopy types.
Nevertheless, the knowledge of a model of the regular part of a PL-pseudomanifold reveals crucial for some computation, see \thmref{thm:nodalformal}.

\begin{proof}[Proof of \thmref{thm:topologicalinvarianceofminimal}]
In \cite[Page~150]{MR800845}, King associates to any CS set, $X$, a CS set, $X^*$, which is an intrinsic coarsest stratification of $X$. In particular, the identity map induces a stratified map, $X\to X^*$, in the sense of \defref{def:appstratifiee}.  In \cite[Chapter 2]{IHGreg}, G. Friedman proves that the CS set, $X^*$, associated to a PL-pseudomanifold is also a PL-pseudomanifold.  We begin by proving that the regular part of $X^*$ is connected.

Let $R=X\backslash X_{n-1}$
and
$R^*=X^*\backslash X^*_{n-1}$ be the regular parts of $X$ and $X^*$, respectively. By construction, we have $R\subset R^*$. If $R^*$ is the union of two disjoint open sets, $R^*=U_1 \sqcup U_2$, then we have
$R=(R\cap U_1) \sqcup (R\cap U_2)$. As $R$ is connected, by hypothesis, and open, we must have
$U_1\cap R=\emptyset$ or $U_2\cap R=\emptyset$. The open set $R$ being dense in $X$, we obtain $U_1=\emptyset$ or $U_2=\emptyset$, which implies the connectivity of $R^*$.

We denote by $\ob{L}$ and $\ob{L}^*$ the \ffss~associated to $X$ and $X^*$, respectively, and consider the following diagram whose elements are detailed below,
$$\xymatrix{
\widetilde{A}_{PL,\ov{q}}(\ob{L}^*)
\ar@{-->}[rr]^-{\varphi_1}\ar[d]_{f_{\ob{L}^*}}
&&
\widetilde{A}_{PL,\ov{q}}(\ob{L})
\ar[d]^{f_{\ob{L}}}
\\
\tC^*_{\ov{q}}(\ob{L}^*)
\ar[d]_{g_{\ob{L}^*}}
&&
\tC^*_{\ov{q}}(\ob{L})
\ar[d]^{g_{\ob{L}}}
\\
C^{*}_{\GM,\ov{p}} (\ob{L}^*)=\hom(C_*^{\ov{p}}(X^*),\Q)
\ar[rr]^-{\varphi_2}
&&
C^{*}_{\GM,\ov{p}} (\ob{L})=\hom(C_*^{\ov{p}}(X),\Q)
\\
\hom(K_*^{\ov{p}}({X^*}),\Q)\ar[rr]^-{\varphi_3}
\ar[u]^{h_{\ob{L}^*}}
&&
\hom(K_*^{\ov{p}}({X}),\Q)
\ar[u]_{h_{\ob{L}}}
}$$

The perversities $\ov{p}$ and $\ov{q}$ are elements of $\cP^n$ such that  $\ov{p}(k)+\ov{q}(k)=k-2$, if $k\geq 2$. 
The bottom square commutes by construction and the vertical  maps are quasi-isomorphisms, as this is proved in
\begin{itemize}
\item \corref{cor:quasiisossurQ} for $f_{\ob{L}}$,  $f_{\ob{L}^*}$,  $g_{\ob{L}^*}$ and $g_{\ob{L}}$,
\item and, as $X^*$ is a PL-pseudomanifold, we can use \propref{prop:intersectionetintersection} for $h_{\ob{L}^*}$ and $h_{\ob{L}}$. 
\end{itemize}
The proof that $\varphi_3$ is a quasi-isomorphism being done in \cite[Theorem 9]{MR800845}, the proof of \thmref{thm:topologicalinvarianceofminimal} is reduced to the construction of $\varphi_1$ making the diagram commutative.

From \thmref{thm:stratifiedmapcomplex} (see also \corref{cor:amalgamation2}), we know that the stratified map,
$X\to X^*$, induces an amalgamation of the singular simplices, generated by elementary amalgamations of the type
$\Delta^p\ast\Delta^q\mapsto \emptyset \ast \Delta^{p+q+1}$. 
To prove that these amalgamations induce a morphism of perverse \cdga's, 
$\varphi_1\colon \widetilde{A}_{PL,\bullet}(\ob{L}^*)\to
\widetilde{A}_{PL,\bullet}(\ob{L})$,
making commutative the previous diagram, it is sufficient  to show it locally. This is done in \lemref{lem:amalgamation}.
\end{proof}

\begin{lemma}\label{lem:amalgamation}
Let $i\in\{0,\ldots,n-1\}$. We denote by $\Phi_i$ the identity map on a simplex $\Delta$, 
corresponding to the elementary amalgamation
$$\Delta^{j_0}\ast\cdots\ast\Delta^{j_n}
\mapsto
\Delta^{k_0}\ast\cdots\ast \Delta^{k_n},
$$
with \begin{eqnarray*}
k_a=j_a, &\text{ if }& a<i \text{ or } a>i+1,\\
k_i=-1,&\text{ and }&
k_{i+1}=j_i+j_{i+1}{+1}.
\end{eqnarray*}
Let
$\mu\colon \widetilde{\Delta}_1=c\Delta^{j_0}\times \cdots\times \Delta^{j_n}\to \Delta$
and
$\mu'\colon \widetilde{\Delta}_2=c\Delta^{k_0}\times \cdots\times \Delta^{k_n}\to \Delta$
be the two blow-ups.
Then, for all $i\in\{0,\ldots,n-1\}$, the map $\Phi_i$ lifts in a map $\widetilde{\Phi}_i$ between the two blow-ups, compatible with the face operators of $\Delta$ and such that $\mu'\circ \widetilde{\Phi}_i=\mu$. The induced cochain map,
${\widetilde{\Phi}_i^*}$, is such that,
the following diagram commutes,
$$\xymatrix{
A_{PL}(c\Delta^{j_0})\otimes \cdots\otimes A_{PL}(\Delta^{j_n})\ar[rr]^-{\int}
&&
\Q\\
A_{PL}(c\Delta^{k_0})\otimes \cdots\otimes A_{PL}(\Delta^{k_n})\ar[urr]_-{\int}
\ar[u]^{\widetilde{\Phi}_i^*}
&&
}$$
where $\int\omega=\int_{\mathring{\Delta}}(\mu^{-1})^*\omega$ if
$\omega \in A_{PL}(c\Delta^{j_0})\otimes \cdots\otimes A_{PL}(\Delta^{j_n})$
and a similar formula for the second integral map.
Moreover, the map $\widetilde{\Phi}_i^*$ verifies
$$\displaystyle{
\widetilde{\Phi}_i^*(\widetilde{A}_{PL,\ov{q}}(\Delta^{k_0}\ast \cdots\ast  \Delta^{k_n}))\subset
\widetilde{A}_{PL,\ov{q}}(\Delta^{j_0}\ast \cdots\ast  \Delta^{j_n})
},$$
 for all positive perversity $\ov{q}$.
\end{lemma}

\begin{proof}
Recall, from  \remref{rem:blowup}, that the two blow-ups can be described  by
$$\mu((x_0,t_0),\ldots,(x_{n-1},t_{n-1}),x_n)=
x_0+t_0x_1+t_0t_1x_2+\cdots+t_0\ldots t_{n-1}x_n$$
 with $x_i=(x_{i,0},\ldots,x_{i,j_{i}})\in\R^{j_i+1}$, $t_i\in\R$, $t_{i}+\sum_{k=0}^{j_{i}}x_{i,k}=1$,  
 for all $i$, and a similar formula for $\mu'$.
(With this setting, in the particular case $\Delta^{j_{a}}=\emptyset$, we have $c\Delta^{j_{a}} =\{(0,1)\}$.)
For the study of $\Phi_i$, we have two cases, depending if $i+1=n$ or not.

1)  We begin with  $i+1\neq n$.
We construct a map
$\widetilde{\Phi}_i\colon c\Delta^{j_0}\times \cdots\times \Delta^{j_n}
\to
c\Delta^{k_0}\times \cdots\times \Delta^{k_n}$
by $\widetilde{\Phi}_i=\id\times f_i\times \id$, where
$$f_i\colon c\Delta^{j_i}\times c\Delta^{j_{i+1}}\to c\emptyset \times c(\Delta^{j_i}\ast\Delta^{j_{i+1}})$$
is defined  by
$$f_i((x_i,t_i),(x_{i+1},t_{i+1}))=((0,1),((x_i,t_ix_{i+1}),t_it_{i+1})).$$
We check easily from the definition that the map $\widetilde{\Phi}_i$ verifies $\mu'\circ\widetilde{\Phi}_i=\mu$ and  is compatible with face operators. Thus it induces a \cdga's map
$\widetilde{\Phi}_i^*\colon
A_{PL}(c\Delta^{k_0})\otimes \cdots\otimes A_{PL}(\Delta^{k_n})
\to
A_{PL}(c\Delta^{j_0})\otimes \cdots\otimes A_{PL}(\Delta^{j_n})$.

Let $\omega=\omega_0\otimes\cdots\otimes\omega_n\in A_{PL}(c\Delta^{k_0})\otimes\cdots\otimes A_{PL}(\Delta^{k_n})$. In the next equalities, we use the fact that $\mu$ and $\mu'$ are diffeomorphisms on the interior of the integration domains,
\begin{eqnarray*}
\int_{\widetilde{\Delta}_2}\omega
&=&
\int_{\mathring{\Delta}}({\mu'}^{-1})^*\omega=
\int_{\mathring{\Delta}}\left(({\mu}^{-1})^*\circ \widetilde{\Phi}^*_i\right)(\omega)\\
&=&
\int_{\widetilde{\Delta}_1}\widetilde{\Phi}^*_i(\omega).
\end{eqnarray*}
This proves the equality
$\int\circ \,\widetilde{\Phi}^*_i=\int$.
We show now the compatibility with  perversities, i.e.,
$$\|\omega\|_{\ell}\leq \ov{q}(\ell)\Rightarrow \|\widetilde{\Phi}_i^*\omega\|_{\ell}\leq \ov{q}(\ell),$$
for all $\ell\in\{1,\ldots,n\}$. 
If $\Delta^{j_{n-\ell}}=\emptyset$, the previous implication follows from $\|\widetilde{\Phi}_i^*\omega\|_{\ell}=-\infty$. We suppose now that $\Delta^{j_{n-\ell}}\neq\emptyset$.

--- Consider first  $n-\ell=0$, with $i\neq 0$. (The case $i=0$, $\ell=n$ is done below.)
The commutativity of the next diagram,
$$\xymatrix{
\left(\Delta^{j_0}\times \{1\}\right)\times \cdots\times c\Delta^{j_i}\times c\Delta^{j_{i+1}}\times\cdots\times \Delta^{j_n}
\ar[d]^-{\widetilde{\Phi}_i}\ar[r]^-{\rm pr}&\Delta^{j_0}\times\{1\}\\
\left(\Delta^{j_0}\times \{1\}\right)\times \cdots\times c\emptyset\times c(\Delta^{j_i}\ast\Delta^{j_{i+1}})\times\cdots\times\Delta^{j_n},\ar[ru]_-{\rm pr}&
}$$
 implies
$$\|\widetilde{\Phi}_i^*(\omega)\|_{\ell}\leq \|\omega\|_{\ell},$$
since the projections ${\rm pr}$ are used for the determination of $\|-\|_{\ell}$, see \remref{rem:degreealongfibre}.
A similar argument works for all the perverse degrees, except in the cases $n-\ell=i$ and $n-\ell=i+1$. 

--- We continue  with $n-\ell=i+1$. 
In this case, the restriction of  the map $\widetilde{\Phi}_i=\id\times f_i\times \id$,
$$\xymatrix{
c\Delta^{j_0}\cdots\times c\Delta^{j_i}\times\left( \Delta^{j_{i+1}}\times \{1\}\right)\times \cdots\times \Delta^{j_n}
\ar[d]_{\widetilde{\Phi}_i=\id\times f_i\times \id}\\
c\Delta^{j_0}\cdots\times c\emptyset\times \left((\Delta^{j_i}\ast \Delta^{j_{i+1}})\times \{1\}\right)\times \cdots \times\Delta^{j_n},
}$$
is defined by $f_{i}\colon c\Delta^{j_{i}}\times (\Delta^{j_{i+1}}\times\{1\})\to c\emptyset\times \left((\Delta^{j_i}\ast \Delta^{j_{i+1}})\times \{1\}\right)$, which sends the element
$((x,t),(y,0))$ on $((0,1),((x,ty),0))$. 
(Observe that the face $\Delta^{j_i}\times \{1\}$ of $c\Delta^{j_i}$, used in the definition of perverse degree, 
corresponds to $t=0$, see \remref{rem:blowup}.)
We denote by ${\rm pr_1}$ the projection of the domain of  $\widetilde{\Phi}_i$ on
$c\Delta^{j_0}\times\cdots\times c\Delta^{j_i}\times\left( \Delta^{j_{i+1}}\times \{1\}\right)$, which is used for the determination of
$\|\widetilde{\Phi}_i^*(\omega)\|_{\ell}$,
and by ${\rm pr}_2$ the projection of the codomain of $\widetilde{\Phi}_i$ on
$c\Delta^{j_0}\times \cdots\times c\emptyset\times \left((\Delta^{j_i}\ast \Delta^{j_{i+1}})\times \{1\}\right)$, which is used for the determination of 
$\|\omega\|_{\ell}$. The equality ${\rm pr}_2\circ \widetilde{\Phi}_i=(\id\times f_{i})\circ {\rm pr}_1$ 
implies
$$\|\widetilde{\Phi}_i^*(\omega)\|_{\ell}\leq \|\omega\|_{\ell}.$$

--- The last perversity we have to study corresponds to $n-\ell=i$. The restriction of $\widetilde{\Phi}_i$ to
$\widetilde{S_i}=c\Delta^{j_0}\times\cdots\times\left(\Delta^{j_i}\times\{1\}\right)\times c\Delta^{j_{i+1}}\times\cdots\times \Delta^{j_n}$
is defined by
$f_i((x,0),(y,s))=((0,1),((x,0),0))$. Thus we have
\begin{eqnarray*}
\widetilde{\Phi}_i(\widetilde{S_i})&=&
c\Delta^{j_0}\times \cdots\times % 
c\emptyset\times \left(\Delta^{j_i}\times\{1\}\right)\times c\Delta^{j_{i+2}}\times\cdots\times \Delta^{j_n}\\
&\subset&
c\Delta^{j_0}\times \cdots\times % 
c\emptyset\times (\Delta^{j_i}\ast\Delta^{j_{i+1}}\times\{1\})\times c\Delta^{j_{i+2}}\times\cdots\times \Delta^{j_n}.
\end{eqnarray*}
We denote by ${\rm pr}_1$ the projection of $\widetilde{S}_i$ on 
$c\Delta^{j_0}\times \cdots\times 
c\Delta^{j_{i-1}}\times \left(\Delta^{j_i}\times\{1\}\right)$ and by ${\rm pr}_2$ the projection of
$c\Delta^{j_0}\times \cdots\times 
c\emptyset\times (\Delta^{j_i}\ast\Delta^{j_{i+1}}\times\{1\})\times c\Delta^{j_{i+2}}\times\cdots\times \Delta^{j_n}$ on 
$c\Delta^{j_0}\times \cdots\times 
c\emptyset\times (\Delta^{j_i}\ast\Delta^{j_{i+1}}\times\{1\})$.
The projection ${\rm pr}_1$ is used for the determination of $\|\widetilde{\Phi}_i^*\omega\|_{\ell}$ and the projection ${\rm pr}_2$ for the determination of $\|\omega\|_{\ell-1}$. The equality
$(\id\times h_{i})\circ {\rm pr}_{1}={\rm pr}_2\circ \widetilde{\Phi}_i$,
where $h_{i}\colon \Delta^{j_i}\times\{1\}
\to
c\emptyset\times\left((\Delta^{j_i}\ast\Delta^{j_{i+1}})\times\{1\}\right)$
is defined by
$h_{i}(x,0)=((0,1),((x,0),0)$,
implies
$$\|\widetilde{\Phi}_i^*(\omega)\|_{\ell}\leq \|\omega\|_{\ell-1}\leq \ov{q}(\ell-1)\leq \ov{q}(\ell).$$

\medskip
2) We study now the case $i+1=n$. The map $\widetilde{\Phi}_{n-1}=\id\times f_{n-1}$, with $f_{n-1}\colon c\Delta^{j_{n-1}}\times  \Delta^{j_n}\to c\emptyset\times\left(\Delta^{j_{n-1}}\ast\Delta^{j_n}\right)$, is defined by
$f_{n-1}((x,t),y)=((0,1),(x,ty))$. The proof goes like in the previous case, except for $\ell=1$.  The restriction of $\widetilde{\Phi}_{n-1}$ to
$\widetilde{S}_{n-1}=c\Delta^{j_0}\times\cdots\times
\left(\Delta^{j_{n-1}}\times\{1\}\right)
\times \Delta^{j_n}$
is defined by $f_{n-1}((x,0),y)=((0,1),(x,0))$ and we have
$\widetilde{\Phi}_{n-1}(\widetilde{S}_{n-1})\subset
c\Delta^{j_0}\times\cdots\times c\emptyset
\times (\Delta^{j_{n-1}}\ast\Delta^{j_{n}})$. In the next  diagram, the map $h_{n-1}$ is defined by
$h_{n-1}(x,0)=((0,1),(x,0))$,
$$\xymatrix{
c\Delta^{j_0}\times\cdots\times
\left(\Delta^{j_{n-1}}\times\{1\}\right)
\times \Delta^{j_n}
\ar[r]^-{\rm pr}
\ar[d]_{\widetilde{\Phi}_{n-1}=\id\times f_{n-1}}
&c\Delta^{j_0}\times\cdots\times
\left(\Delta^{j_{n-1}}\times\{1\}\right)
\ar[dl]^{\id\times h_{n-1}}\\
c\Delta^{j_0}\times\cdots\times
c\emptyset\times\left(\Delta^{j_{n-1}}\ast\Delta^{j_n}\right).&
}$$
The commutativity of this diagram implies, for $\ell=1$,
$$\|\widetilde{\Phi}_{n-1}^*(\omega)\|_{\ell}= \|{\rm pr}^*\circ (\id\times h_{n-1})^*\omega\|_{\ell}\leq 0,$$
since the projection ${\rm pr}$ is used for the determination of $\|-\|_{\ell}$.
\end{proof}

\begin{remark}
(In this remark, we keep the notations of the proof of \thmref{thm:topologicalinvarianceofminimal}.)
In this proof, the hypothesis ``$X$ PL-pseudomanifold'' is used only for proving that the maps $h_{L}$ and $h_{L^*}$ are quasi-isomorphisms. We believe that the map 
$\varphi_{2}$
is a quasi-isomorphism under the weaker hypothesis that $X$ is a recursive CS set in the sense of G. Friedman, \cite{IHGreg}. This should need a direct proof, similar to the proof made by G.~Friedman in \cite{IHGreg}. We do not go further in this direction.
\end{remark}

%%%%%%%%%
%%%%%%%%%%%%%%%%%%
\chapter{Formality and  examples}\label{chap:formalityexamples}

In this chapter, we consider algebraic structures defined on the field of the rational numbers and any vector space is supposed to be a rational vector space.

%%%%%%%%%%%%%%%%%
\section{Intersection-formality}\label{subsec:formality}
\begin{quote}
A notion of formality is defined and examples are given. This opens a framework for a study of the formality of singular projective algebraic varieties, as asked by M.~Goresky in the introduction of \cite{MR761809}.
We use also
 \exemref{exam:cone} and \exemref{exam:suspensionCP2} to construct
 models, not necessarily cofibrant, for the cone and suspension of a face set.
 They provide an explicit example of a non intersection-formal PL-pseudomanifold which is formal, as space.
 \end{quote}
 
When $\ob{K}$ is a \ffs, a  notion of perverse formality comes naturally from the fact that the perverse \cdga~of forms, $\cA(\ob{K})_\bullet$, and its perverse cohomology algebra are belonging to the same category.

\begin{definition}\label{def:ffsformal}
A connected \ffs, $\ob{K}$, is \emph{intersection-formal}
\index{Intersection-formality!of a filtered face set}
 if there is an isomorphism between the minimal models of $\cA(\ob{K})_\bullet$ and $H(\cA(\ob{K})_\bullet)$, for $\bullet<\ov{\infty}$. 
\end{definition}

An equivalent formulation is the existence of a sequence of quasi-isomorphisms in $\cdgaf$, between $\cA(\ob{K})_\bullet$ and its cohomology.

\begin{definition}\label{def:CSformal}
A PL-pseudomanifold, $X$, whose regular part is connected,  
\index{Intersection-formality!of a PL-pseudomanifold}
is \emph{intersec\-tion-formal} if its associated \ffs, $\ob{\rm ISing}_*^{\cF}(X)$, is intersec\-tion-formal.
\end{definition}

\thmref{thm:topologicalinvarianceofminimal} %and \cite{MR800845} imply 
implies
immediately the next result.

\begin{proposition}\label{prop:topologicalinvarianceformality}
The intersection-formality of a  PL-pseudomanifold, whose regular part is connected, is independent of the stratification.
\end{proposition}

In the literature, Massey products are sometimes introduced for the detection of non formal spaces. We emphasize that the existence of non trivial Massey products guaranties the non-formality of a space but the reverse is harder to express. Mention, for instance from \cite[Page 262]{MR0382702}, that  the vanishing of each Massey product is not sufficient for getting the formality; for that, this vanishing has to be done in an uniform way. Before the introduction of examples, we adjust the definition of Massey products and their basic properties (see \cite[Definition 2.89 and Proposition 2.90]{MR2403898}) to the perverse frame.

\begin{definition}\label{def:masseyproduct}
Let $(A,d)_{\bullet}$ be a perverse \cdga, $x\in A_{\ov{p}_{1}}$, $y\in A_{\ov{p}_{2}}$, $z\in A_{\ov{p}_{3}}$ be cocycles, of associated cohomology classes, $[x]$, $[y]$, $[z]$, and such that
there exist $\alpha\in A_{\ov{p}_{1}\oplus\ov{p}_{2}}$, $\beta\in A_{\ov{p}_{2}\oplus\ov{p}_{3}}$ with
$d\alpha=xy$ and $d\beta=yz$. 
\index{Massey product}
The element
$\alpha z - (-1)^{|x|}x\beta$ is a cocycle. The \emph{triple Massey product} is the set
$\langle [x],[y],[z]\rangle\subset H_{\ov{p}_{1}\oplus\ov{p}_{2}\oplus \ov{p}_{3}}(A,d)$
formed by all the cohomology classes $[\alpha z - (-1)^{|x|}x\beta]$, constructed using all the possible choices of the elements $\alpha$ and $\beta$. This Massey product is said \emph{trivial} if $0\in \langle [x],[y],[z]\rangle$.
\end{definition}

If we quotient $H_{\bullet}(A,d)$ by the ideal generated by $[x]$ and $[z]$, the set $\langle [x],[y],[z]\rangle$ projects to a single element. Also, this element is zero if, and only if, $\langle [x],[y],[z]\rangle$ is trivial.

\begin{proposition}\label{prop:masseyproduct}
Let $\ob{K}$ be a  connected \ffs, of minimal model,\linebreak $\rho_{\bullet}\colon (\land \oplus_{\ov{p}\in\hat\cP^n}V_{[\ov{p}]},d)_{\bullet}\to \cA(\ob{K})_\bullet$.
If there exists a non trivial triple  Massey product in $(\land \oplus_{\ov{p}\in\cP^n}V_{[\ov{p}]},d)$, then $\ob{K}$ is not intersection-formal.
\end{proposition}

\begin{proof}
Suppose that $(\land \oplus_{\ov{p}\leq\ov{t}}V_{[\ov{p}]},d)$ is also a perverse minimal model of $(\oplus_{\ov{p}\leq\ov{t}}H_{\ov{p}}(\cA(\ob{K})))$, i.e., there is a quasi-isomorphism
$\varphi_{\bullet}\colon (\land \oplus_{\ov{p}\leq\ov{t}}V_{[\ov{p}]},d)_{\bullet}\to
 (\oplus_{\ov{p}\leq\ov{t}}H_{\ov{p}}(\cA(\ob{K})))_{\bullet}$.
Let
$x\in (\land \oplus_{\ov{p}\leq\ov{t}}V_{[\ov{p}]})_{\ov{p}_{1}}$, 
$y\in (\land \oplus_{\ov{p}\leq\ov{t}}V_{[\ov{p}]})_{{\ov{p}_{2}}}$, 
$z\in (\land \oplus_{\ov{p}\leq\ov{t}}V_{[\ov{p}]})_{{\ov{p}_{3}}}$ 
be cocycles such that
there exist 
$\alpha\in (\land \oplus_{\ov{p}\leq\ov{t}}V_{[\ov{p}]})_{\ov{p}_{1}\oplus\ov{p}_{2}}$, 
$\beta\in (\land \oplus_{\ov{p}\leq\ov{t}}V_{[\ov{p}]})_{\ov{p}_{2}\oplus\ov{p}_{3}}$ with
$d\alpha=xy$ and $d\beta=yz$. The cocycle
$\alpha z - (-1)^{|x|}x\beta$ is sent by $\varphi$ to an element of the ideal generated by $\varphi(x)$ and $\varphi(z)$. The morphism $\varphi$ being a quasi-isomorphism of algebras, the element $[\alpha z - (-1)^{|x|}x\beta]$ is in the ideal generated by $[x]$ and $[z]$. Thus, the Massey product 
$\langle [x],[y],[z]\rangle$
 is trivial.
\end{proof}

We give now some easy constructions of models, beginning with  the cone of a face set, see  \exemref{exam:cone}.

\begin{proposition}\label{prop:modelcone}
Let $\ob{S}$ be a  connected face set of Sullivan  minimal  model $(\land V,d)$. 
\index{Perverse!model!of a cone}\index{Cone on a face set!model of a}
Then a perverse model of the cone, $c\ob{S}\in\fil$, is $(\land V,d)_{\bullet}$, where the perverse degree of  $\omega \in \land V$ is defined by
$\|\omega\|=\max(|\omega|,|d\omega|)$.
\end{proposition}

In the case of the cone, $c\ob{S}\in \fil$, a perversity is determined by the value $\ov{q}(n)$ and, with the described perverse degree, we have $(\land V,d)_{\ov{q}}= \tau_{\leq \ov{q}(n)}(\land V,d)$, see \defref{def:troncature}.

\begin{proof}
From \exemref{exam:cone}, we know that the perverse \cdga, $\widetilde{A}_{PL,\bullet}(c\ob{S})$, is quasi-isomorphic to
$A_{PL}(\ob{S})_{\bullet}$, with
$A_{PL}(\ob{S})_{\ov{q}}=\{\omega\in A_{PL}(\ob{S})\mid \|\omega\|=\max(|\omega|,|d\omega|)\leq \ov{q}(n)\}$.
Let $\varphi\colon (\land V,d)\to A_{PL}(\ob{S})$ be the minimal  model of the \cdga, $A_{PL}(\ob{S})$. As above, we define a perverse \cdga, $(\land V,d)_{\bullet}$, by
$(\land V,d)_{\ov{q}}=\{\omega\in \land V\mid \|\omega\|=\max(|\omega|,|d\omega|)\leq \ov{q}(n)\}$.

As $\varphi$ keeps the degree, i.e., $|\varphi(\omega)|=|\omega|$ if $\varphi(\omega)\neq 0$, we have an induced morphism of perverse \cdga's,
$\varphi_{\bullet}\colon (\land V,d)_{\bullet}\to A_{PL}(c\ob{S})_{\bullet}$,
which induces a quasi-isomorphism,
$\varphi_{\ov{q}}\colon (\land V,d)_{\ov{q}}\to A_{PL}(c\ob{S})_{\ov{q}}$, for any GM-perversity, $\ov{q}$, determined by $\ov{q}(n)$.
\end{proof}

Observe that $(\land V,d)_{\ov{0}}=\Q$ is a model of the contractible face set, $c\ob{S}$, as expected.

\medskip
As the perverse model of a PL-pseudomanifold ``contains'' the model of the underlying space in perverse degree~0, we see that \emph{any intersection-formal PL-pseudomanifold is formal as space.} The next result shows that these two notions, formal and intersection-formal, are distinct.

\begin{proposition}
There exist  PL-pseudomanifolds that are formal as spaces but  not intersection-formal.
\end{proposition}

The next example is an illustration of this statement.

\begin{example}
Denote by $\psi\colon S^2\times S^2\to S^4$ the map obtained by collapsing the two 2-dimensional spheres. We denote by $E$ the pullback of the Hopf fibration $S^3
\to S^7\to S^4$ along $\psi$. The minimal model of the PL-pseudomanifold $E$ is (see \cite[Example 2.91]{MR2403898})
$(\land (x,\alpha,y,\beta,a),d)$, with $|x|=|y|=2$, $|\alpha|=|\beta|=|a|=3$, $d\alpha=x^2$, $d\beta=y^2$, $da=xy$. 
Set $u=\alpha y-xa$ and $v=x\beta-ay$.

The non-zero groups of rational cohomology are
$H^0(E;\Q)=\Q$,
$H^2(E;\Q)=\Q[x]\oplus \Q[y]$, $H^5(E;\Q)=\Q[u]\oplus \Q[v]$, $H^6(E;\Q)=\Q[u y]$.
We observe that $\{[u]\}=\langle [x],[x],[y]\rangle$ and
$\{[v]\}=\langle [x],[y],[y]\rangle$. 
As $xv-uy=d(\alpha\beta)$, we have also $[uy]=[xv]$.
The cone, $cE$, being of dimension~8, a perversity is determined by one integer and we denote by $\ov{\ell}$ the perversity such that $\ov{\ell}(8)=\ell$, the top GM-perversity being  $\ov{6}$. 
From \propref{prop:modelcone}, we compute the perverse minimal model of $cE$ and find
$\rho_{\bullet}\colon (\land \oplus_{\ov{p}\leq \ov{t}} V_{[\ov{p}]},d)_{\bullet}\to (\land (x,\alpha,y,\beta,a),d)_{\bullet}$, with
\begin{itemize}
\item $V_{[\ov{2}]}=\Q \hat{x}\oplus \Q \hat{y}$, $d\hat{x}=d\hat{y}=0$, $\rho(\hat{x})=x$, $\rho(\hat{y})=y$. 
\item $V_{[\ov{4}]}=\Q \hat{\alpha}\oplus \Q\hat{\beta}\oplus \Q\hat{a}$, 
$d\hat{\alpha}= \hat{x}^2$, $d\hat{\beta}=\hat{y}^2$, $d\hat{a}=\hat{x}\hat{y}$,
$\rho(\hat{\alpha})=\alpha$,
$\rho(\hat{\beta})=\beta$, $\rho(\hat{a})=a$.
\item $V_{[\ov{5}]}=\Q\hat{u}\oplus \Q\hat{v}$, $d\hat{u}=d\hat{v}=0$, $\rho(\hat{u})=u$, $\rho(\hat{v})=v$.
\item $V_{[\ov{6}]}=\Q\xi_{1}\oplus \Q\xi_{2}$, $d\xi_{1}=\hat{\alpha} \hat{y} -\hat{x} \hat{a}-\hat{u}$, 
$d\xi_{2}=\hat{x}\hat{\beta} - \hat{a}\hat{y}-\hat{v}$
\end{itemize}
From this determination, we note the existence of two non trivial triple Massey products,
$\hat{u}\in\langle [\hat{x}],[\hat{x}],[\hat{y}]\rangle$
and
$\hat{v}\in \langle [\hat{x}],[\hat{y}], [\hat{y}]\rangle$. \propref{prop:masseyproduct} implies the non intersection-formality of $cE$ despite the formality of $cE$ as space.
\end{example}

\medskip
We continue with perverse models of the suspension of a face set.

\begin{proposition}\label{prop:modelofsuspension}
Let $\ob{S}$ be a  connected face set of Sullivan  minimal  model $(\land V,d)$. 
\index{Perverse!model!of a suspension}\index{Suspension of a face set!model of a}
Then, the perverse minimal model of the suspension, $\Sigma\ob{S}\in \fil$, is the perverse minimal model of the perverse \cdga, $(\land(t,dt)\otimes (\land V,d))_{\bullet}$, with\\
{$(\land(t,dt)\otimes (\land V,d))_{\ov{q}}=
dt\otimes \land t\otimes \land V\oplus \{f(t)\otimes w\in \land t\otimes \land V\mid f(0)=f(1)=0\text{ or } |\omega|\leq \ov{q}(n)\},$}
for any GM-perversity, $\ov{q}$.
\end{proposition}

\begin{proof}
We proceed as in the proof of \propref{prop:modelcone}, by using the results obtained in \exemref{exam:suspensionCP2}, replacing the \cdga, $A_{PL}(\ob{S})$, by the minimal model, $(\land V,d)$, of~$\ob{S}$.
\end{proof}

We note that $(\land(t,dt)\otimes (\land V,d))_{\bullet}$ contains elements of degree~0 and is not a model as in \defref{def:algebredeSullivan}. 
On the other side, an explicit description of the perverse  minimal model of $\Sigma \ob{S}$ is awkward because it is  built by starting with the minimal model of the face set $\Sigma\ob{S}$. 
This last one has the (suspended) dual of a free Lie algebra as vector spaces of indecomposables. 
For instance, if $\ob{S}$ corresponds to the space $\C P(2)$, we have $\Sigma(\C P(2))=S^3\vee S^5$ whose minimal model is
$(\land s\sharp \L(x,y),d)$,
with $|x|=2$, $|y|=4$ and the differential $d$ is the suspension of the transposition of the bracket of the free Lie algebra,
$\L(x,y)$. %, $a\otimes b\mapsto [a,b]$. 
Below, we built the minimal perverse model,
$\varphi_{\bullet}\colon B_{\bullet}\to \widetilde{A}_{PL,\bullet}(\Sigma \C P(2))$, in low degrees, such that $H^i_{\bullet}(\varphi)$ is an isomorphism for $i\leq 6$. (This gives the minimal perverse model in the range of the cohomology.) Taking in account the dimension of the space, the perversities are determined by the value of $\ov{q}(5)$, the top perversity being given by $\ov{t}(5)=3$.  As previously, we denote by $\ov{\ell}$ the perversity such that $\ov{\ell}(5)=\ell$.
Observe also that, as $\C P(2)$ is a formal space, we may replace the model of $\C P(2)$ by  its cohomology algebra, $(H, 0)$. 
In this case, there is no ambiguity in the definition of $\ttau^{\geq \ell} H$ (see \exemref{exam:suspensionCP2}) and 
we are reduced to the determination of the perverse minimal model of 
$E_{\ov{\ell}}= H^{\leq \ell} \oplus s(H^{>\ell})$, with a trivial differential and $H=(\land u)/u^3$, $|u|=2$. The product on $E_{\bullet}$ comes from the product induced by the product of $H$ on the quotient $H^{\leq \ell}$ and the formulae,
$s\eta_{1}\cdot s\eta_{2}=0$, $\eta_{1}\cdot s\eta_{2}=(-1)^{|\eta_{1}|} s(\eta_{1}\cdot\eta_{2})$.
\begin{itemize}
\item If $\ell=0$ or 1, we have $E_{\ov{\ell}}=\Q\oplus sH$ and
$B^{\leq 6}_{\ov{\ell}}=
(\land (\alpha_{3},\alpha_{5}),d)^{\leq 6}$,
with $d\alpha_{3}=d\alpha_{5}=0$, $|\alpha_{i}|=i$ and $\|\alpha_{i}\|=0$. The map $\varphi$ sends $\alpha_{3}$ to $su$ and $\alpha_{5}$ to $s(u^2)$.
\item If $\ell=2$ or 3, we have $E_{\ov{\ell}}=\Q\oplus \Q u \oplus s(\Q u^2)$ and, in the model, we have to kill the class associated to $\alpha_{3}$ and to add a new cocycle which reaches $u$.
Thus, we introduce $\beta_{2}$, $\beta'_{2}$, with $d\beta_{2}=0$, $d\beta'_{2}=\alpha_{3}$. Doing that, we have also introduced a cocycle, $\alpha_{3}\beta'_{2}$, which has to be killed. Also, we observe that the product $\alpha_{3}\beta_{2}$ has to be identified to $\alpha_{5}$. Finally, we set
$$B^{\leq 6}_{\ov{\ell}}=
(\land (\alpha_{3},\alpha_{5}, \beta_{2}, \beta'_{2},\beta_{4},\beta'_{4}),d)^{\leq 6},$$
with $|\beta_{2}|=|\beta'_{2}|=2$, $|\beta_{4}|=|\beta'_{4}|=4$, $d\beta_{2}=0$, $d\beta'_{2}=\alpha_{3}$, $d\beta_{4}=\alpha_{3}\beta_{2}-\alpha_{5}$ and $d\beta'_{4}=\alpha_{3}\beta'_{2}$. The map $\varphi$ sends $\beta_{2}$ to $u$ and $\beta'_{2}$, $\beta_{4}$, $\beta'_{4}$ to $0$. All the $\beta_{i}$ and $\beta'_{i}$ are of perverse degree~2.
(Observe that we do not have to consider $\beta^2_{2}$ and $\beta'_{2}\beta_{2}$  which are not of this perversity.)
\end{itemize}

%%%%%%
%
\section{A model for isolated singularities}\label{subsec:isolated}
\begin{quote}
We extend the two situations, of the cone and of the suspension, to the case of PL-pseudomanifolds with isolated singularities.
They are also  considered in \cite[Proposition 5.1]{2013arXiv1302.2737D} where the structure of their Steenrod squares is determined.
\end{quote}

Let $M$ be a PL-pseudomanifold, obtained from a manifold with boundary, $(W,\partial W)$, by attaching cones on the connected components of the boundary, i.e., $M$ is the push out
$$\xymatrix{
\sqcup_{u\in I}\partial_{u} W\ar[r]^-{\iota}\ar[d]&W\ar[d]\\
\sqcup_{u\in I} c(\partial_{u} W)\ar[r]&M,
}$$
where the $\partial_{u} W$'s are the connected components of $\partial W$, and $c(\partial_{u} W)$ is the cone on a  component. 
We filter the pseudomanifold, $M$, by 
the cone points of $c(\partial_{u}W)$.
As the singularities are points, a perversity $\ov{q}$ is determined by one number, $\ov{q}(n)$, with $n=\dim M$.

\begin{proposition}\label{prop:modelisolated}
Let $\ov{q}$ be a GM-perversity and $M$ be an $n$-dimensional PL-pseudo\-manifold, as above. Let $\varphi\colon (A_{1},d_{1})\to (A_{2},d_{2})=\oplus_{u\in I}(A_{2}(u),d_{2})$ be a \emph{surjective} model of the inclusion $\iota\colon\sqcup_{u\in I}\partial_{u} W\to W$. 
\index{Perverse!model!for isolated singularities}
Then a (non cofibrant) perverse model of $M$ is given by:
$$\cM(M)_{\ov{q}}=(A_{1},d_{1})\oplus_{A_{2}}\left( \oplus_{u\in I}\tau_{\leq\ov{q}(n)}(A_{2}(u),d_{2})\right),$$
 where the truncation $\tau_{\leq\ov{q}(n)}$ is defined in \defref{def:troncature}.
\end{proposition}

\begin{proof}[Proof of \propref{prop:modelisolated}]
We start with a pushout of spaces
$$\xymatrix{
\sqcup_{u\in I}\partial_{u} W\ar[r]^-{\iota}\ar[d]&W\ar[d]\\
 \sqcup_{u\in I}c(\partial_{u} W)\ar[r]&M.
}$$
We may suppose that $M$, $W$, $\partial_{u} W$ and $c(\partial_{u} W)$ are triangulated in such a way that any simplex is filtered, for the filtration by the cone point. 

Let $X$ be one of the spaces above, of associated simplicial complex $X^\tau$and of associated \ffs, $\ob{X}^\tau$. 
G. Friedman proves, in particular,  that if the triangulation is full (which is the case in our situation) then the cochains
 $C^*_{\GM,\ov{p}}(X)$ and $C^*_{\GM,\ov{p}}(X^\tau)$ are quasi-isomorphic for any GM-perversity $\ov{p}$ (see \cite[Chapter 3 and Chapter 5]{IHGreg}). 
 Thus, the isomorphism
$$C^*_{\GM,\ov{p}}({M}^\tau)\cong C^*({W}^\tau)\oplus_{(\oplus_{u\in I}C^*({\partial_{u} W}^\tau))}\left(\oplus_{u\in I}C^*_{\GM,\ov{p}}({c(\partial_{u} W)^\tau})\right)$$
gives a quasi-isomorphism
 $$C^*_{\GM,\ov{p}}({M})\simeq C^*({W})\oplus_{(\oplus_{u\in I}C^*({\partial_{u} W}))}\left(\oplus_{u\in I}C^*_{\GM,\ov{p}}{(c(\partial_{u} W)})\right).$$

 Moreover, we know, from \corref{cor:quasiisossurQ}, that $\widetilde{A}_{PL,\ov{q}}(\ob{X})$ and $C^*_{\GM,\ov{p}}(X;\Q)$ are quasi-isomorphic if $\ov{p}+\ov{q}=\ov{t}$.
 This implies that the canonical \cdga~map,
 $$
\widetilde{A}_{PL,\ov{q}}(M)\to  {A}_{PL}( W)\oplus_{(\oplus_{u\in I}{A}_{PL}(\partial_{u} W))}\left(\oplus_{u\in I}\widetilde{A}_{PL,\ov{q}}(c\partial_{u} W)\right),
$$
is a quasi-isomorphism.
The \cdga~of forms on the cone is quasi-isomorphic to a truncation, i.e., there is a quasi-isomorphism
$ \widetilde{A}_{PL,\ov{q}}(c\partial_{u} W)\simeq \tau_{\leq \ov{q}(n)} {A}_{PL}(\partial_{u} W)$ which induces a quasi-isomorphism
$$\widetilde{A}_{PL,\ov{q}}(M)\simeq 
{A}_{PL}( W)\oplus_{(\oplus_{u\in I}{A}_{PL}(\partial_{u} W))}\left(\oplus_{u\in I}\tau_{\leq \ov{q}(n)} {A}_{PL}(\partial_{u} W)\right).
$$
With the surjective model $\varphi\colon (A_{1},d_{1})\to \oplus_{u\in I}(A_{2}(u),d_{2})$ of the statement, we get a morphism of short exact sequences,
$$\footnotesize{\xymatrix@C-=0.4cm{
0\ar[r]&
\ker \varphi\ar[r]\ar[d]&
(A_{1},d_{1})\oplus_{(\oplus_{u\in I}A_{2}(u))} \left(\oplus_{u\in I}\tau_{\leq\ov{q}(n)}(A_{2},d_{2})\right)\ar[r]\ar[d]&
\oplus_{u\in I}\tau_{\leq\ov{q}(n)}(A_{2}(u),d_{2})\ar[r]\ar[d]&
0\\
0\ar[r]&
\ker {A}_{PL}(\iota)\ar[r]&
{A}_{PL}( W)\oplus_{(\oplus_{u\in I}{A}_{PL}(\partial_{u} W))}\left(\oplus_{u\in I}\tau_{\leq \ov{q}(n)} {A}_{PL}(\partial_{u} W)\right)\ar[r]&
\oplus_{u\in I}\tau_{\leq \ov{q}(n)} {A}_{PL}(\partial_{u} W)\ar[r]&
0.
}}$$
The result follows with an application of the five lemma to the associated  long exact sequences.
\end{proof}

In \propref{prop:modelisolated},  the elements of $\cM(M)_{\ov{q}}$ are couples, $(\omega, \varphi(\omega))$ such that  $\omega\in A_{1}$ and
$$\left\{
\begin{array}{lcl}
\varphi(\omega)=0
&\text{if}&
|\omega|>\ov{q}(n),\\
\varphi(\omega) \;\text{is a cocycle}
&\text{if}&
|\omega|=\ov{q}(n),\\
\text{no condition}
&\text{if}&
|\omega|<\ov{q}(n).
\end{array}\right.$$
This implies immediately:
$$H^k_{\ov{q}}(M;\Q)=\left\{\begin{array}{lcl}
H^k(W;\Q)
&\text{if}&
k\leq \ov{q}(n),\\
\ker \left(H^k(W;\Q)\to H^k(\partial W;\Q)\right)% 
&\text{if}&
k=\ov{q}(n)+1,\\
H^k(W,\partial W;\Q)
&\text{if}&
k>\ov{q}(n)+1.
\end{array}\right.$$

In the case of simply connected spaces, the model arising in \propref{prop:modelisolated} can be simplified as follows.

\begin{corollary}\label{cor:modelisolated}
Let $\ov{q}$ be a GM-perversity and $M$ be an $n$-dimensional PL-pseudo\-manifold, as above, and such that $W$ and the $\partial_{u}W$'s are simply connected. Let $\varphi\colon (A_{1},d_{1})\to(A_{2},d_{2})= \oplus_{u\in I}(A_{2}(u),d_{2})$ be a model of the inclusion $\iota\colon\sqcup_{u\in I}\partial_{u} W\to W$, such that $A^0_{1}=A_{2}(u)^0=\Q$,  $A^1_{1}=A_{2}(u)^1=0$ and $\varphi$ surjective in strictly positive degrees. Then a (non cofibrant) perverse model of $M$ is given by:
$$\cM(M)_{\ov{q}}=(A_{1},d_{1})\oplus_{A_{2}} \left(\oplus_{u\in I}\tau_{\leq\ov{q}(n)}(A_{2}(u),d_{2})\right),$$
 where the truncation $\tau_{\leq\ov{q}(n)}$ is defined in \defref{def:troncature}.
\end{corollary}

\begin{proof}
Proof of \propref{prop:modelisolated} is still valid with the following modifications.
\begin{itemize}
\item The arguments with long exact sequences are starting in degree~1 instead of degree~0.
\item The result is still true in degree~0 because
$$\left((A_{1},d_{1})\oplus_{(\oplus_{u\in I}A_{2}(u))}\left( \oplus_{u\in I}\tau_{\leq\ov{q}(n)}(A_{2}(u),d_{2})\right)\right)^0=\Q.$$
\end{itemize}
\end{proof}

%%%%%%%%%%%%%%%%%%%%%%%%%%%
\section{Thom spaces}\label{subsec:Thom}

\begin{quote}
 The construction done in the previous section is applied to the case of the Thom space associated to a vector bundle and specialized to projective cones. We deduce that all singular quadrics are intersection-formal. 
 \end{quote}
 
Let $\R^m\to E\to B$ be a vector bundle. We denote by $D_{E}\to B$ the associated disk-bundle and by $S_{E}\to B$ the associated sphere-bundle. The \emph{Thom space,} $\Th(E)$, is  the quotient of the disk-bundle by the sphere-bundle. We filter $\Th(E)$ by the point of compactification and a perversity is determined by the number $\ov{q}(n)$ where $n=\dim \Th(E)$. The next result suffices for the study of projective cones.

\begin{proposition}\label{prop:thomformal}
Let $f\colon E\to B$ be a vector bundle of rank $2r$, with $B$ a formal manifold. 
\index{Intersection-formality!of some Thom spaces}
The associated Thom space, 
$\Th(E)$, filtered by the compactification point, is an intersection-formal space.
\end{proposition}

\begin{proof}
We work over the field $\Q$.
Let $\ov{q}$ be a GM-perversity and $\cM({B})$ be any model of the space $B$.
Denote by $c=c(E)\in \cM^{2r}({B})$ a representative  of the Euler class. A surjective model of $S_{E}\to D_{E}$ is given by
$$\varphi\colon (\cM(B)\otimes \land (x,y),D) \to (\cM(B)\otimes \land x,d),$$
with $|x|=2r-1$, $|y|=2r$, $Dx=c-y$, $Dy=0$, $dx=c$, 
$\varphi$ is the identity map on $\cM(B)\otimes \land x$ and $\varphi(y)=0$.  
\propref{prop:modelisolated} gives as perverse model of the Thom space,
$$\cM(\Th(E))_{\ov{q}}=(\cM(B)\otimes \land (x,y),D)\oplus_{\cM(B)\otimes \land x}
\tau_{\leq\ov{q}(n)}(\cM(B)\otimes \land x,d).
$$
The manifold $B$ being formal, we may choose $\cM(B)=(H^*(B),0)$ and obtain,
$$\cM(\Th(E))_{\ov{q}}=(H^*(B)\otimes \land (x,y),D)\oplus_{H^*(B)\otimes \land x}
\tau_{\leq\ov{q}(n)}(H^*(B)\otimes \land x,d).$$
This pullback can be described by:
$$\cM(\Th(E))^k_{\ov{q}}=\left\{
\begin{array}{cl}
(H(B)\otimes \land (x,y))^k&\text{ if } k<\ov{q}(n),\\
\{\omega\in (H(B)\otimes \land (x,y))^k\mid D(\omega)_{|y=0}=0\}&\text{ if }k=\ov{q}(n),\\
\cI\langle y\rangle&\text{ if } k>\ov{q}(n),
\end{array}\right.$$
where $\cI\langle y\rangle=H(B)\otimes \land x\otimes y$ is the differential ideal  generated by $y$.
If $\ov{p}\leq \ov{q}$, the morphisms
$\varphi_{\ov{p}}^{\ov{q}}\colon \cM(\Th(E))_{\ov{p}}\to \cM(\Th(E))_{\ov{q}}$
are the canonical inclusions.
From  this presentation, we recover (see \cite[Page~77]{MR2207421})  the intersection cohomology vector space of the Thom space,
\begin{equation}\label{equa:thom}
H_{\ov{q}}^k(\Th(E))=\left\{
\begin{array}{lcl}
H^k(B)
&\text{if}&
k\leq \ov{q}(n),\\
\im(-\cup c\colon H^{k-2r}(B)\to H^k(B))
&\text{if}&
k=\ov{q}(n)+1,\\
H^{k-2r}(B)
&\text{if}&
k>\ov{q}(n)+1.
\end{array}\right.
\end{equation}
If $\ov{p}\leq \ov{q}$, the morphisms,
$\psi_{\ov{p}}^{\ov{q}}\colon H^k_{\ov{p}}(\Th(E))\to H^k_{\ov{q}}(\Th(E))$
are the canonical inclusions, except for
$\ov{p}(n)+1< k\leq \ov{q}(n)$
where $\psi_{\ov{p}}^{\ov{q}}(\gamma)=\gamma\cup c$. 
Let $\ov{q}_{1}$, $\ov{q}_{2}$ be two GM-perversities and $a_{1}\in H^*_{\ov{q}_{1}}(\Th(E))$,
$a_{2}\in H^*_{\ov{q}_{2}}(\Th(E))$. We specify the product $a_{1}\cdot a_{2}\in H^*_{\ov{q}_{1}\oplus \ov{q}_{2}}(\Th(E))$ as follows.
\begin{itemize}
\item If $|a_{1}|\leq \ov{q}_{1}(n)+1$ and $|a_{2}|< \ov{q}_{2}(n)+1$, we have
$a_{1}\cdot a_{2}=a_{1}\cup a_{2}$.
\item If $|a_{1}|= \ov{q}_{1}(n)+1$ and $|a_{2}|= \ov{q}_{2}(n)+1$, then $a_{1}=a'_{1}\cup c$ and $a_{2}=a'_{2}\cup c$ and we have
$a_{1} \cdot a_{2}=a'_{1}\cup a'_{2}\cup c$.
\item If $|a_{1}|\leq  \ov{q}_{1}(n)+1$ and $|a_{2}|> \ov{q}_{2}(n)+1$, 
with $|a_{1}|+|a_{2}|\leq (\ov{q}_{1}\oplus\ov{q}_{2})(n)+1$,
we have
$a_{1}\cdot a_{2}=a_{1}\cup a_{2}\cup c$.
\item If $|a_{1}|\leq  \ov{q}_{1}(n)+1$ and $|a_{2}|> \ov{q}_{2}(n)+1$, 
with $|a_{1}|+|a_{2}|> (\ov{q}_{1}\oplus\ov{q}_{2})(n)+1$,
we have
$a_{1}\cdot a_{2}=a_{1}\cup a_{2}$.
\item If $|a_{1}|\geq  \ov{q}_{1}(n)+1$ and $|a_{2}|\geq  \ov{q}_{2}(n)+1$, we have
$a_{1}\cdot a_{2}=a_{1}\cup a_{2}\cup c$.
\end{itemize}

We construct now an explicit \cdga's map,  $\Phi_{\ov{q}}\colon \cM(\Th(E))_{\ov{q}}\to (H_{\ov{q}}^k(\Th(E)),0)$.
An element $\omega\in H^*(B)\otimes\land(x,y)$ is a sum of terms,
$a\otimes y^i$ and $b\otimes xy^i$,
with $i\geq 0$ and $a,\,b\in H^*(B)$. 
We define $\Phi_{\ov{q}}(\omega)\in H_{\ov{q}}^k(\Th(E))$ as follows.
\begin{itemize}
\item If $|\omega|\leq \ov{q}(n)+1$, we set $\Phi_{\ov{q}}(a\otimes y^i)=a\cup c^i$ and $\Phi_{\ov{q}}(b\otimes xy^i)=0$.
\item If $|\omega|> \ov{q}(n)+1$, we have $i\geq 1$ and set 
$\Phi_{\ov{q}}(a\otimes y^i)=a\cup c^{i-1}$ and $\Phi_{\ov{q}}(b\otimes xy^i)=0$.
\end{itemize}
By definition, the map $\Phi_{\ov{q}}$ takes value in $H_{\ov{q}}^k(\Th(E))$. 
We prove that $\Phi_{\ov{q}}\circ \varphi_{\ov{p}}^{\ov{q}}=\psi_{\ov{p}}^{\ov{q}}\circ \Phi_{\ov{p}}$, if $\ov{p}\leq \ov{q}$.
This is clear except for $\ov{p}(n)+1< k\leq \ov{q}(n)$, where we have,
$$(\psi_{\ov{p}}^{\ov{q}}\circ \Phi_{\ov{p}})(a\otimes y^i)=\psi_{\ov{p}}^{\ov{q}}(a\cup c^{i-1})=a\cup c^i
=\Phi_{\ov{q}}(a\otimes y^i)=(\Phi_{\ov{q}}\circ \varphi_{\ov{p}}^{\ov{q}})(a\otimes y^i).$$

We check now the compatibility of $\Phi_{\ov{q}}$ with differentials, which reduces, in this case, to $\Phi_{\ov{q}}(D\omega)=0$. The differential of $\omega$ is determined by
$$D(a\otimes y^i)=0\text{ and }
D(b\otimes xy^i)=(-1)^{|b|}(b\cup c\otimes y^i-b\otimes y^{i+1}).$$
Thus, we are only concerned with the terms $\omega=b\otimes xy^i$.

$\bullet$ If $|\omega|< \ov{q}(n)$, then there is no restriction on $\omega$ and we have, 
$$\Phi_{\ov{q}}(D(b\otimes xy^i))=(-1)^{|b|} (b\cup c\cup c^i-
b\cup c^{i+1})=0.$$

$\bullet$ If $|\omega|=\ov{q}(n)$, then  $D(\omega)_{|y=0}=0$ implies $b\cup c=0$ if $i=0$ and we have, 
\begin{eqnarray*}
\Phi_{\ov{q}}(D(b\otimes x))&=&-(-1)^{|b|} \Phi_{\ov{q}}(b\otimes y)=-(-1)^{|b|} b\cup c=0,\\
\Phi_{\ov{q}}(D(b\otimes xy^i))
&=&
(-1)^{|b|}(b\cup c\cup c^i-b\cup c^{i+1})=0, \text{ if } i>0.
\end{eqnarray*}

$\bullet$ If $|\omega|>\ov{q}(n)$, then $b=0$ if $i=0$, and we have,
$$\Phi_{\ov{q}}(D(b\otimes xy^i))=
(-1)^{|b|}(b\cup c\cup c^{i-1}-b\cup c^{i})=0.
$$
Let $\ov{q}_{1}$ and $\ov{q}_{2}$ be two GM-perversities.
For the compatibility with products, it is sufficient to establish
$\Phi_{\ov{q}_{1}\oplus\ov{q}_{2}}((a_{1}\otimes y^i) (a_{2}\otimes y^j))=
\Phi_{\ov{q}_{1}}(a_{1}\otimes y^i)\cdot \Phi_{\ov{q}_{2}}(a_{2}\otimes y^j)$,
i.e.,
$$\Phi_{\ov{q}_{1}\oplus\ov{q}_{2}}(a_{1}\cup a_{2}\otimes y^{i+j})
=
\Phi_{\ov{q}_{1}}(a_{1}\otimes y^i)\cdot \Phi_{\ov{q}_{2}}(a_{2}\otimes y^j).$$
We specify the different cases  and set $\omega_{1}=a_{1}\otimes y^i$, $\omega_{2}=a_{2}\otimes y^j$.
  \begin{itemize}
\item Suppose $|\omega_{1}|\leq \ov{q}_{1}(n)+1$ and $|\omega_{2}|< \ov{q}_{2}(n)+1$. Then, we have,
$$\Phi_{\ov{q}_{1}}(\omega_{1})\cdot\Phi_{\ov{q}_{2}}(\omega_{2})=
(a_{1}\cup c^i)\cdot  (a_{2}\cup c^j)=a_{1}\cup a_{2}\cup c^{i+j}=
\Phi_{\ov{q}_{1}\oplus\ov{q}_{2}}(\omega_{1} \omega_{2}).
$$
\item Suppose $|\omega_{1}|= \ov{q}_{1}(n)+1$ and $|\omega_{2}|= \ov{q}_{2}(n)+1$. Then, we have,
\begin{eqnarray*}
\Phi_{\ov{q}_{1}}(\omega_{1})\cdot \Phi_{\ov{q}_{2}}(\omega_{2})&=&
(a_{1}\cup c^i)\cdot (a_{2}\cup c^{j})=a_{1}\cup  a_{2}\cup c^{i+j-1}
=\Phi_{\ov{q}_{1}\oplus\ov{q}_{2}}(\omega_{1} \omega_{2}).
\end{eqnarray*}
\item Suppose $|\omega_{1}|\leq \ov{q}_{1}(n)+1$ and $|\omega_{2}|> \ov{q}_{2}(n)+1$,
with $|\omega_{1}|+|\omega_{2}|\leq (\ov{q}_{1}\oplus \ov{q}_{2})(n)+1$. Then we have,
\begin{eqnarray*}
\Phi_{\ov{q}_{1}}(\omega_{1})\cdot \Phi_{\ov{q}_{2}}(\omega_{2})&=&
(a_{1}\cup c^i)\cdot (a_{2}\cup c^{j-1})=(a_{1}\cup c^i)\cup (a_{2}\cup c^{j-1})\cup c\\
&=& a_{1}\cup a_{2}\cup c^{i+j}
=
\Phi_{\ov{q}_{1}\oplus\ov{q}_{2}}(\omega_{1} \omega_{2}).
\end{eqnarray*}
\item Suppose $|\omega_{1}|\leq \ov{q}_{1}(n)+1$ and $|\omega_{2}|> \ov{q}_{2}(n)+1$,
with $|\omega_{1}|+|\omega_{2}|> (\ov{q}_{1}\oplus \ov{q}_{2})(n)+1$. 
Then, we have,
\begin{eqnarray*}
\Phi_{\ov{q}_{1}}(\omega_{1})\cdot \Phi_{\ov{q}_{2}}(\omega_{2})&=&
(a_{1}\cup c^i)\cdot (a_{2}\cup c^{j-1})=(a_{1}\cup c^i)\cup (a_{2}\cup c^{j-1})\\
&=& a_{1}\cup a_{2}\cup c^{i+j-1}
=
\Phi_{\ov{q}_{1}\oplus\ov{q}_{2}}(\omega_{1} \omega_{2}).
\end{eqnarray*}
\item Suppose $|\omega_{1}|> \ov{q}_{1}(n)+1$ and $|\omega_{2}|> \ov{q}_{2}(n)+1$. Then, we have,
\begin{eqnarray*}
\Phi_{\ov{q}_{1}}(\omega_{1})\cdot \Phi_{\ov{q}_{2}}(\omega_{2})&=&
(a_{1}\cup c^{i-1})\cdot (a_{2}\cup c^{j-1})=(a_{1}\cup c^{i-1})\cup (a_{2}\cup c^{j-1})\cup c\\
&=& a_{1}\cup a_{2}\cup c^{i+j-1}
=
\Phi_{\ov{q}_{1}\oplus\ov{q}_{2}}(\omega_{1} \omega_{2}).
\end{eqnarray*}
\end{itemize}

\end{proof}

Recall the notion of 
projective cone of a smooth projective variety. Let $S$ be a smooth algebraic subvariety of $\C P^n$, we embedd it in $\iota\colon S\to \C P^{n+1}$ through the canonical map, $ \C P^n\to \C P^{n+1}$, sending $[x_0:\ldots:x_n]$ to
$[x_0:\ldots:x_n:0]$. By definition, the \emph{projective cone of $S$} is the union of all the lines that intersect $S$ and contain the point $[0:\ldots:0:1]$. It can also be expressed as the Thom space of the restriction of the tautological line bundle over $\C P^{n+1}$,
$$\xymatrix{
E\ar[r]\ar[d]&
E(\gamma^1)\ar[d]\\
S\ar@{^(->}[r]^-{\iota}&
\C P^{n+1}.
}$$
The characteristic class of this vector bundle is $c_{1}(E)=\iota^*(c_{1})\in H^2(S)$, with $c_{1}$ the K¨\"ahler class of $\CP^{n+1}$. From \cite{MR0382702} and \propref{prop:thomformal}, we deduce directly the next result.

\begin{proposition}\label{prop:projectiveformal}
The projective cones of a smooth projective variety are intersection-formal. 
\index{Intersection-formality!of singular quadrics}
Thus, any singular quadric is intersection-formal.
\end{proposition}

We use the model of \propref{prop:thomformal} for the complete determination of the intersection cohomology algebra of the Segre embedding.

\begin{example}\label{exem:segre}
\emph{
The Segre embedding} (see \cite{MR901594}) 
\index{Segre embedding}
is defined by $\CP(1)\times \CP(1)\to \CP(3)$, 
$([x_{0}:x_{1}],[y_{0}:y_{1}])\mapsto [x_{0}y_{0}:x_{0}y_{1}:x_{1}y_{0}:x_{1}y_{1}]$. 
Let $u$ and $v$ be the fundamental classes of the $\CP(1)$'s.
The corresponding line bundle, $\C\to E\to \CP(1)\times \CP(1)$,  has for Chern class $c=c_{1}(E)=u+v$. The regular component is $\CP(1)\times \CP(1)$ and the link is the associated sphere-bundle.

We denote by $\ov{\ell}$ the perversity such that $\ov{\ell}(6)=\ell$. For the description of the intersection cohomology of the Thom space, $\Th(E)$, we have only to consider the GM-perversities, $\ov{0}$, $\ov{2}$, $\ov{4}$. 
A perverse model of the associated Thom space is given by
$$(A,\delta)_{\ov{\ell}}=(H\land (x,y),D)\oplus_{H\otimes\land x}\tau_{\leq \ell}(H\otimes\land x,d),$$
where $H$ is the quotient $H=\land(u,v)/(u^2,v^2)$,
$|x|=1$, $|y|=|u|=|v|=2$,
$dx=u+v$, $Dx=u+v-y$, $Dy=0$.
From, this model, we deduce the perverse cohomology algebra,
whose elements are specified below,
%%%%
\renewcommand{\arraystretch}{1.5}
\setlength{\tabcolsep}{.7cm}
%%%%%%%
\begin{center}
   \begin{tabular}{| c || c | c|c| c|}
     \hline
    $ H^k_{\ov{\ell}}(\Th(E);\Q)$ & $\ell=0$ or 1& $\ell=2$&$\ell =3$ or 4 \\ \hline
     $k=0$ &$\Q$  & $\Q$&$\Q$ \\ \hline
     $k=1$ &  0 & 0&0 \\\hline
    $ k=2$ & $\Q[y]$ & $\Q[u]\oplus \Q[v]$&$\Q[u]\oplus \Q[v]$\\\hline
     $k=3$ & 0 & 0&0 \\\hline
     $k=4$ & $\Q[\alpha]\oplus\Q[\beta]$ & $\Q[\alpha]\oplus\Q[\beta]$&$\Q[uv]$ \\\hline
     $k=5 $& 0 & 0&0 \\\hline
     $k=6$ & $\Q[\gamma]$ & $\Q[\gamma]$&$\Q[\gamma]$ \\
     \hline
   \end{tabular}
 \end{center}
We specify now the maps $\psi_{\ov{p}}^{\ov{q}}\colon H_{\ov{p}}^k(\Th(E);\Q)\to H_{\ov{q}}^k(\Th(E);\Q)$,
with $\ov{p}\leq \ov{q}$.
\begin{itemize}
\item $\psi_{\ov{0}}^{\ov{2}}$ is determined by $[y]\mapsto [u]+[v]$ in degree 2 and the identity otherwise.
\item $\psi_{\ov{2}}^{\ov{4}}$ is determined by $[\alpha]\mapsto [uv]$, $[\beta]\mapsto [uv]$ in degree 4  and the identity otherwise.
\end{itemize}

\noindent
The structure of perverse algebra is given by the following equalities.
\begin{itemize}
\item The classes $[\alpha]$ and $[\beta]$ correspond to the cocycles $yu$ and $yv$, respectively. The differential
$D(xy)=uy+vy-y^2$ implies  $[y]^2=[\alpha]+[\beta]$. Using similar arguments, we obtain the cohomology algebra of $ H^*_{\ov{0}}(\Th(E);\Q)=H^*(\Th(E);\Q)$, with $2[y]\,[\alpha]=2[y]\,[\beta]=[y]^3=[\gamma]$.
\item As $[\alpha]$ and $[\beta]$ are represented by $yu$ and $yv$, respectively, the product $H^*_{\ov{0}}\otimes H^*_{\ov{2}}\to H^*_{\ov{2}}$ is completed by
$[y]\,[u]=[\alpha]$, $[y]\,[v]=[\beta]$, 
$[u]\,[\alpha]=[v]\,[\beta]=0$, $[u]\,[\beta]=[v]\,[\alpha]=[\gamma]$.
\item Finally, the product $H^*_{\ov{2}}\otimes H^*_{\ov{2}}\to H^*_{\ov{4}}$ reduces to
$[u]\,[v]=[uv]$.
\end{itemize}
The other products are zero for reason of dimension or belong to  the cohomology algebra of the regular component, $H_{\ov{\infty}}^*(\Th(E);\Q)=H$. 
\end{example}

%%%%%%%%%%%%%%%%%%
\section{Nodal Hypersurfaces in $\CP(4)$}\label{subsec:nodal}

\begin{quote}
  We continue the study of intersection-formality with nodal hypersurfaces in $\CP(4)$ and prove their intersection-formality.
\end{quote}

A nodal hypersurface in $\CP(4)$
\index{Nodal hypersurfaces}\index{Calabi-Yau quintic}
 is a hypersurface whose  singulatities are ordinary double points, the \emph{nodes.}
 \index{Nodes}
  For instance,  the \emph{Calabi-Yau quintic,} $X_{0}$, is defined by the polynomial
$$P(z)=z_{0}^5+z_{1}^5
+z_{2}^5+z_{3}^5 +z_{4}^5
-5z_{0}z_{1}z_{2}z_{3}z_{4}.$$

\begin{theorem}\label{thm:nodalformal}
Any  nodal hypersurface in $\CP(4)$
\index{Intersection-formality!of nodal hypersurfaces}
 is intersection-formal.
\end{theorem}

Come back to the general case and let $\ov{V}$ be a nodal hypersurface in $\CP(4)$, with $N$ isolated singularities. 
We denote by $\ov{V}_{\reg}$ the regular component and by ${V}$ a small resolution of $\ov{V}$.
Our strategy of proof is the construction of a model of $\ov{V}$ in the spirit of \corref{cor:modelisolated}.  First, we  determine a (classical) Sullivan model of the regular component, $\ov{V}_{\reg}$, from  a geometrical local analysis.
The link around each singular point is a product $\CP(1)_{(i)}\times S^3_{(i)}$ that can be obtained as follows.\\
Let
$D^2\to E_{D}(i)\xrightarrow[]{\pi} \CP(1)_{(i)}\times \CP(1)_{(i)}$
be the normal bundle associated to the Segre embedding
\index{Segre embedding}
$\CP(1)_{(i)}\times \CP(1)_{(i)}\hookrightarrow \CP(3)$.
We set $E_{D}=\sqcup_{i=1}^N E_{D}(i)$
and denote by
$\partial E_{D}=\sqcup_{i=1}^N \CP(1)_{(i)}\times S^3_{(i)}
\xrightarrow[]{\pi_{S}} 
\sqcup_{i=1}^N \CP(1)_{(i)}\times \CP(1)_{(i)}$
 the associated circle-bundles.

We consider also a second resolution, $W$, obtained from the minimal resolution, $V$, with a blow-up, $\Bl$, of the $\CP(1)_{(i)}$'s, i.e.,  we have a commutative diagram, where $f$ and $g$ are embeddings,
\begin{equation}\label{equa:2resolutions}
\xymatrix{
\sqcup_{i=1}^N \CP(1)_{(i)}\times \CP(1)_{(i)}\ar[d]^-{g}\ar[r]
&\sqcup_{i=1}^N \CP(1)_{(i)}\ar[d]^-{f}\ar[r]
&\sqcup_{i=1}^N \ast\ar[d]\\
W\ar[r]^{\Bl}
&V\ar[r]
&
\ov{V}
}\end{equation}
The regular part, $\ov{V}_{\reg}$, is the complement of the divisor $\cD=\sqcup_{i=1}^N \CP(1)_{(i)}\times \CP(1)_{(i)}$ in the non-singular manifold, $W$, and we know from J.~Morgan (see \cite{MR516917}) how to build a rational (non-free) Sullivan model of it. The two resolutions can also be described as pushout's,
\begin{equation}\label{equa:lagrosse}
{W}=\ov{V}_{\reg}\cup_{\sqcup_{i=1}^N \CP(1)_{(i)}\times S^3_{(i)}}  E_{D}
\end{equation}
in contrast with
\begin{equation}\label{equa:lapetite}
{V}=\ov{V}_{\reg}\cup_{\sqcup_{i=1}^N \CP(1)_{(i)}\times S^3_{(i)}} (\sqcup_{i=1}^N \CP(1)_{(i)}\times D^4_{(i)}).
\end{equation}
In terms of normal bundle, the construction of $W$ is done from a replacement of the fibration
$\pi_{S}\colon \partial E_{D}\to \sqcup_{i=1}^N \CP(1)_{(i)}\times \CP(1)_{(i)}$
by its composition with the canonical projection $\pr_{1}\colon \CP(1)_{(i)}\times \CP(1)_{(i)}\to\CP(1)_{(i)}$.
This gives a trivial sphere-bundle, of basis $\CP(1)_{(i)}$ and fiber $F_{(i)}=S^3_{(i)}$, as shows the next diagram,
restricted to one component,
$$\xymatrix@=6pt{
F\ar[rr]\ar[dd]&&\partial E_{D}\ar[rr]\ar[dd]^{\pi_{S}}&&S^{7}\ar[dd]\\
&\fbox{b}&&\fbox{c}&\\
\CP(1)\ar[rr]\ar[dd]&&\CP(1)\times \CP(1)\ar[rr]^-f\ar[dd]^{\pr_{1}}&&\CP(3)\\
&\fbox{a}&&&\\
\ast\ar[rr]&&\CP(1).&&
}$$
The square \fbox{a} and the rectangle  $\fbox{\text{a}} \cup \fbox{\text{b}}$ being pullbacks, the square \fbox{b} is a pullback. As  \fbox{c} is a  pullback also, the rectangle
$\fbox{\text{b}} \cup \fbox{\text{c}}$
is a pullback. This implies that the left-hand vertical map is the Hopf fibration, $S^1\to F=S^3\to\CP(1)$.  
The next cube provides a map, $W\to V$,
compatible with the decomposition in pushouts of (\ref{equa:lagrosse}) and (\ref{equa:lapetite}),
\begin{equation}\label{equa:VandW}
\xymatrix{
&W
\ar[dd]|(.50){\hole} 
&&
E_{D}\ar[ll]\ar[dd]\\
\ov{V}_{\reg}\ar[ru]\ar@{=}[dd]
&&
 \partial E_{D}\ar[ll]\ar[dd]_<<<<<<<<<<<<<<<<{\psi}\ar[ru]
&\\
&V&&\sqcup_{i=1}^N \CP(1)_{(i)}\times D^4_{(i)}
\ar[ll] |(.59){\hole}
\\
\ov{V}_{\reg}\ar[ru]
&&
\sqcup_{i=1}^N \partial(\CP(1)_{(i)}\times D^4_{(i)})\ar[ru]\ar[ll]&
}\end{equation}
We use the front face for a construction of rational models, starting from
$H^*(\CP(1)_{(i)})\otimes H^*(S^3_{(i)})$ as model of $\partial (\CP(1)_{(i)}\times D^4_{(i)})$
and
$\oplus_{i=1}^N(H^*(\CP(1)_{(i)}\times \CP(1)_{(i)})\otimes \land \theta_{i}),d)$
as model of $\partial E_{D}$, with
$d\theta_{i}=a_{i}+b_{i}$, where $a_{i}$ and $b_{i}$ are generators of $H^2(\CP(1)_{(i)}\times \CP(1)_{(i)})$.
The model  of $\partial E_{D}$ arrives as a model of the total spaces of the fibrations,
$S^1\to \partial E_{D}(i)\to \CP(1)_{(i)}\times \CP(1)_{(i)}$,
whose Euler class is $a_{i}+b_{i}$, see the definition of the Segre embedding in \exemref{exem:segre}.
In summary, we have a commutative square of \cdga's,
\begin{equation}\label{equa:modelnormalbundle}
\xymatrix{
\cN'\ar@{->>}[r]^-{\rho'}&
\oplus_{i=1}^N
(H^*(\CP(1)_{(i)})\otimes H^*(\CP(1)_{(i)})\otimes \land \theta_{i},d)\\
\cN\ar[u]_{\sim}\ar[r]^-{\rho}&
(\oplus_{i=1}^N H^*(\CP(1)_{(i)})\otimes H^*(S^3_{i}),0),
\ar[u]_{\cM_{\psi}}^{\sim}
}\end{equation}
where 
\begin{itemize}
\item  the top horizontal map is a surjective model of the inclusion $\partial E_{D}\to \ov{V}_{\reg}$,
\item the right-hand vertical map is  a model of $\psi$,
\item the \cdga, $\cN$, is the fibered product of these two maps.
\end{itemize}
By construction, the two vertical maps are quasi-isomorphisms.
For the determination of the model $\cM_{\psi}$ of $\psi$, we start from the commutative diagram
$$\xymatrix{
\sqcup_{i=1}^N  \partial(\CP(1)_{(i)}\times D^4_{(i)})\ar[d]&
\partial E_{D}\ar[l]_-{\psi}\ar[d]^-{\pi_{S}}\\
\sqcup_{i=1}^N \CP(1)_{(i)}&
\sqcup_{i=1}^N\CP(1)_{(i)}\times \CP(1)_{(i)}\ar[l]_-{\pr_{1}}.
}$$
We denote by $(1_{i}, \alpha_{i})$ a basis of $H^*(\CP(1)_{(i)})$ and $(\ov{1}_{i}, \ov{\alpha}_{i})$ a basis of $H^*(S^3_{(i)})$. 
The left-hand vertical map being a  trivial bundle, we have $\cM_{\psi}(\alpha_{i})=a_{i}$. For degree reasons, the image of $\ov{\alpha}_{i}\in H^3(S^3_{(i)})$ can be written as
$\cM_{\psi}(\ov{\alpha}_{i})=\lambda_{i}a_{i}\theta_{i}+\mu_{i}b_{i}\theta_{i}$, with $\lambda_{i},\mu_{i}\in\Q$. This image  being a cocycle, we have
$$\lambda_{i}a_{i}(a_{i}+b_{i})+\mu_{i}b_{i}(a_{i}+b_{i})=(\lambda_{i}+\mu_{i})a_{i}b_{i}=0,$$
which implies $\lambda_{i}=-\mu_{i}$. As we are working over the field $\Q$, we may suppose $\mu_{i}=1$ and
$\cM_{\psi}(\ov{\alpha}_{i})=(b_{i}-a_{i})\theta_{i}$.

 In diagram (\ref{equa:modelnormalbundle}), we filter $\cN'$ and $\cN$ as follows,
 \begin{eqnarray*}
 \cF^0\cN'=\{x\in \cN'\mid \rho'(x)\in \oplus_{i=1}^N
H^*(\CP(1)_{(i)})\otimes H^*(\CP(1)_{(i)}) \}
 &\subseteq &
 \cF^{-1}\cN'=\cN',\\
  \cF^0\cN=\{x\in \cN\mid \rho(x)\in  \oplus_{i=1}^N
H^*(\CP(1)_{(i)})\}
 &\subseteq &
  \cF^{-1}\cN=\cN.
 \end{eqnarray*}
These filtrations are of length~1 and, thanks to the commutativity of (\ref{equa:modelnormalbundle}), the map $\cN\to \cN'$ is compatible with the filtrations and induces a morphism of spectral sequences, $\Phi_{j}^{r,s}\colon E_{j}^{r,s}\cN \to E_{j}^{r,s}\cN'$, $j=0,1$. 

The \cdga~$(E_{1}^{r,s}\cN',d_{1})$ is determined 
\index{Morgan's model}
by Morgan in \cite[Theorem 2.3]{MR516917}:  $E_{1}^{0,q}\cN'=H^q(W)$ and
$E_{1}^{-1,q}\cN'= \oplus_{i=1}^N
s^{-1} H^q(\CP(1)_{(i)}\times \CP(1)_{(i)})$,
with the convention $(s^r A)^k=A^{r+k}$ for any $r\in\Z$.
The differential $d_{1}$ comes from the Gysin sequence of  the sphere-bundle associated to the normal bundle. Moreover, with \cite[Theorem 10.1]{MR516917},  we know also that the \cdga~$(E_{1}^{*,*}\cN',d_{1})$ is a rational model, $\cM'_{\ov{V}_{\reg}}$, of the complement~$\ov{V}_{\reg}$, i.e.,
$$\cM'_{\ov{V}_{\reg}}=(H^*(W)\oplus \oplus_{i=1}^N s^{-1}(H^*(\CP(1)_{(i)}\times \CP(1)_{(i)}),d),$$
with a law of algebra defined by,
\begin{itemize}
\item $\alpha\cdot\beta=\alpha\cup \beta$, if $\alpha, \beta\in H^*(W)$,
\item $s^{-1}\alpha\cdot s^{-1}\beta=0$, if $s^{-1}\alpha, s^{-1}\beta\in \oplus_{i=1}^N s^{-1}(H^*(\CP(1)\times \CP(1))_{(i)}$,
\item $\alpha\cdot s^{-1}\beta=s^{-1}(g^*(\alpha)\cup \beta)$, if $\alpha\in H^*(W)$ and $s^{-1}\beta\in \oplus_{i=1}^N s^{-1}(H^*(\CP(1)\times \CP(1))_{(i)}$, 
with the previous embedding, $g\colon \sqcup_{i=1}^N \CP(1)_{(i)}\times \CP(1)_{(i)}\to W$, % is the previous embedding, 
(\ref{equa:2resolutions}).
\end{itemize}

Concerning $\cN$, the \cdga~$(E_{0}^{0,*}\cN,d_{0})$ appears as the next (homotopy) pullback,
\begin{equation}\label{equa:ssdegree0}
\xymatrix{
\cN\ar@{->>}[r]
&
(\oplus_{i=1}^N H^*(\CP(1)_{(i)})\otimes H^*(S^3_{(i)}),0)\\
(E_{0}^{0,*}\cN,d_{0})\ar[u]
\ar[r]
&
(\oplus_{i=1}^N H^*(\CP(1)_{(i)}),0).\ar[u]
}
\end{equation}
This square is a model of the (homotopy) pushout
\begin{equation}\label{equa:prethom}
\xymatrix{
\ov{V}_{\reg}\ar[d]&
\sqcup_{i=1}^N \CP(1)_{(i)}\times S^3_{(i)}\ar[l]\ar[d]\\
V&
\sqcup_{i=1}^N \CP(1)_{(i)}\times D^4_{(i)}.\ar[l]_-{f}
}
\end{equation}
Therefore $(E_{0}^{0,*}\cN,d_{0})$ is a model of $V$ and $E_{1}^{0,*}\cN=H(E_{0}^{0,*}\cN,d_{0})=H^*(V)$.
We specify now the term $E_{1}^{-1,*}\cN$. The short exact sequence,
$(E_{0}^{0,q}\cN,d_{0})\hookrightarrow (\cN^q,d_{0})\twoheadrightarrow (E_{0}^{-1,q+1}\cN,d_{0})$,
implies $H^q(E_{0}^{-1,*}\cN,d_{0})\cong H^{q+1}(V,\ov{V}_{\reg})$. Using excision and the Thom isomorphism, we get,
\begin{eqnarray*}
H^{q+1}(V,\ov{V}_{\reg})&\cong &
H^{q+1}(\sqcup_{i=1}^N \CP(1)_{(i)}\times D^4_{(i)},  \sqcup_{i=1}^N \CP(1)_{(i)}\times S^3_{(i)})\\
&\cong&
 \oplus_{i=1}^N H^{q-3}( \CP(1)_{(i)}).
\end{eqnarray*}
In conclusion, as \cdga, we have
$$(E_{1}\cN,d_{1})=(H^*(V)\oplus\oplus_{i=1}^N s^{-3}H^*(\CP(1)_{(i)}),d_{1}),$$
with a  product given by
\begin{itemize}
\item $\alpha\cdot\beta=\alpha\cup \beta$, if $\alpha, \beta\in H^*(V)$,
\item $s^{-3}\alpha\cdot s^{-3}\beta=0$, if $s^{-3}\alpha, s^{-3}\beta\in \oplus_{i=1}^N s^{-3} H^*(\CP(1)_{(i)})$,
\item $\alpha\cdot s^{-3}\beta=s^{-3}(f^*(\alpha)\cup \beta)$, if $\alpha\in H^*(V)$, $s^{-3}\beta\in \oplus_{i=1}^N s^{-3}H^*(\CP(1)_{(i)})$,
\end{itemize}
and a differential $d_{1}$ which is the Gysin differential. 
This differential can be seen as the transfer map of the embedding $f\colon \sqcup_{i=1}^N \CP(1)_{(i)}\to V$ and $d_{1}(s^{-3}\alpha_{i})$ is the fundamental class of $H^*(V)$, through the isomorphisms,
$$H^2(\CP(1)_{(i)})
\xrightarrow[]{\cong}
H_{0}(\CP(1)_{(i)})
\xrightarrow[f_{*}]{\cong}
H_{0}(V)
\xrightarrow[]{\cong}
H^{6}(V).$$

The map of \cdga's, $\Phi\colon (E_{1}\cN,d_{1})\to (E_{1}\cN',d_{1})$, is defined by $H^*(\Bl)\colon H^*(V)\to H^*(W)$ and $\Phi(s^{-3}1_{i})= s^{-1}(b_{i}-a_{i})$, $\Phi(s^{-3}\alpha_{i})=s^{-1}\theta_{i}(b_{i}-a_{i})$. As this map is a quasi-isomorphism, we  choose $(E_{1}\cN,d_{1})$ as \cdga~model of $\ov{V}_{\reg}$. With the description of  $H^*(V)$  in \cite[Theorem 3.2]{MR2240431}, we have proved the next statement. (We notice also that the cohomology of $V$ coincides with the intersection cohomology for the middle perversity of $V$, see \cite{MR696691}.)

\begin{lemma}\label{lem:modelofVreg}
A model, $\cM_{\ov{V}_{\reg}}=\oplus_{k=0}^6 \cM_{\ov{V}_{\reg}}^k$, of the complement,
$\ov{V}_{\reg}=V\backslash \sqcup_{i=1}^N  \CP(1)_{(i)}$,
is described in the next array, the differential being specified below.

\renewcommand{\arraystretch}{1.5}
\setlength{\tabcolsep}{.4cm}
%%%%%%%
\begin{center}
   \begin{tabular}{| c || c | c|}
     \hline
     & $H^*(V)$&
     $\oplus_{i=1}^N s^{-3}H^*(\CP(1)_{(i)})$\\\hline
      $k=0$&$\Q$&$0$\\\hline
       $k=1$&$0$&$0$\\\hline
        $k=2$&$\Q[\omega,\cE]$&$0$\\\hline
         $k=3$&$\Q[\cA,\cA']$&$\Q[s^{-3}1_{i}\mid 1\leq i \leq N]$\\\hline
          $k=4$&$\Q[\omega^2,\cE']$&$0$\\\hline
     $k=5$&$0$&$\Q[s^{-3}\alpha_{i}\mid 1\leq i \leq N]$\\\hline
      $k=6$ & $\Q[\omega^3]$ & $0$\\
     \hline
   \end{tabular}
 \end{center}
 \end{lemma}

Most of the law of algebra of $H^*(V)$ can be deduced from Poincar\'e duality and the Hard Lefschetz theorem (\cite{MR751966}). (For instance, we choose Poincar\'e dual bases, $\cA=(a_{1},\ldots,a_{v})$ and $\cA'=(a'_{1},\ldots,a'_{v})$.) 
A part stays in the shadow, as the product of two elements of $\cE$, but this information
plays no role for the proof of the intersection-formality. The structure of $H^*(V)$-module on the generators $s^{-3}1_{i}$, $s^{-3}\alpha_{i}$, has been described before the statement.

As for the differential, we already know that  $d(s^{-3}\alpha_{i})=\omega^3$ for any $i=1,\ldots,N$.
 We  specify now the differential $d^3\colon \Q[s^{-3}1_{i}\mid 1\leq i \leq N] \to \Q[\omega^2,\cE']$.
 With $\im d^3=\Q[\cE']$, we  choose a basis of $\Q[\cE']$, extracted from $(d^3(s^{-3}1_{i}))_{1\leq i\leq N}$, 
and suppose that is $(d^3(s^{-3}1_{j}))_{1\leq j\leq p}$. We set $e'_{j}=d^3(s^{-3} 1_{j})$, for $j=1,\ldots,p$. 
 As basis of $\ker d^3$, we may choose a family 
$(s^{-3}1_{k}'=s^{-3}1_{k}+\sum_{j=1}^p \nu_{kj}\,s^{-3}1_{j})_{p+1\leq k\leq N}$, $\nu_{kj}\in\Q$. In summary, we have
\begin{equation}\label{equa:d3}
d^3( s^{-3}1_{i})=\left\{
 \begin{array}{cl}
 e'_{i}&\text{ if } 1\leq i\leq p,\\
 -\sum_{j\leq p}\nu_{ij}\,e'_{j}&\text{ if } p+1\leq i\leq N.
 \end{array}\right.
 \end{equation}
 We denote by $\cE=(e_{1},\ldots,e_{p})$  the orthogonal basis of $\cE'$, for the Poincar\'e duality. The equalities (\ref{equa:d3}) are connected to  $f^*(e_{j})=\sum_{i=1}^N \lambda_{ji}\,\alpha_{i}$, $\lambda_{ji}\in\Q$,  as follows.
 By definition of the algebra structure, we have
 $e_{j}\cdot (s^{-3} 1_{i})=s^{-3}(f^*(e_{j})\cup 1_{i})= \lambda_{ji}\,s^{-3}\alpha_{i}$.
Applying the differential on each side of this equality, we get
$$\lambda_{ji}\,\omega^3=\left\{
\begin{array}{cl}
e_{j}\cdot e'_{i}&\text{ if } 1\leq i\leq p,\\
-e_{j}\cdot\sum_{j\leq p}\nu_{ij}\,e'_{j}&\text{ if } p+1\leq i\leq N.
\end{array}\right.$$
This implies
$$f^*(e_{j})=\alpha_{j}-\sum_{k=p+1}^N\nu_{kj}\,\alpha_{k}.$$
(In the case of the Calabi-Yau quintic, we have $N=125$, $p=24$ and $v=1$.)

\medskip
The injection of the link in the regular space,
$L=\sqcup_{i=1}^N \partial(\CP(1)_{(i)}\times D^4_{(i)})
\to
\ov{V}_{\reg}
$,
  is the bottom map of (\ref{equa:VandW}) and its model, given by (\ref{equa:modelnormalbundle}), is described in the next lemma. 
  
    \begin{lemma}\label{lem:modelinclusion}
A model of the inclusion of the link, $L$, in $\ov{V}_{\reg}$,
  $$\mu\colon (\cM_{\ov{V}_{\reg}},d_{f})\to
  (\cM_{L},0)=\oplus_{i=1}^N
( H^*(\CP(1)_{(i)})\oplus s^{-3}H^*(\CP(1)_{(i)}),0)$$
with
\begin{itemize}
\item $\mu=\id$ on $s^{-3}1_{i}$ and $s^{-3}\alpha_{i}$,
\item $\mu(e_{j})=f^*(e_{j})$, $j=1,\ldots,p$,
\item $\mu(1)=+_{i=1}^N 1_{i}$ and $\mu=0$ on the other elements.
\end{itemize}
Concerning the product on $\cM_{L}$, the elements $1_{i}$'s are  neutral elements and we have
$(s^{-3}1_{i})\cdot\alpha_{i}=s^{-3}\alpha_{i}$. In summary, $\cM_{L}$ is the cohomology algebra of $\sqcup_{i=1}^N \CP(1)_{(i)}\times S_{(i)}^3$.
\end{lemma}

%%%%%%%%%%%
\begin{proof}[Proof of \thmref{thm:nodalformal}]
We transform $\mu$ in a surjective map, in strictly positive degrees, by
$$\mu'=(\cM'_{\ov{V}_{\reg}},d_{f})=(\cM_{\ov{V}_{\reg}}\otimes\land(x_{p+1},\ldots,x_{N},y_{p+1},\ldots, y_{N}),d_{f})
\to (\cM_{L},0),$$
with $|x_{k}|=2$, $|y_{k}|=3$, $d_{f}x_{k}=y_{k}$, $\mu'(x_{k})=\alpha_{k}$, $\mu'(y_{k})=0$.
Now,  \corref{cor:modelisolated} gives an intersection model of $\ov{V}$, 
$$\left(\cM_{\ov{V}}\right)_{\ov{q}}=\cM'_{\ov{V}_{\reg}}\oplus_{\cM_{L}}\tau_{\leq \ov{q}(n)}\cM_{L},$$
together with the canonical inclusions, $\varphi_{\ov{p}}^{\ov{q}}\colon (\cM_{\ov{V}})_{\ov{p}}\to (\cM_{\ov{V}})_{\ov{q}}$, if $\ov{p}\leq \ov{q}$. 
Denote by $\ov{\ell}$ the GM-perversity such that $\ov{\ell}(6)=\ell$. In the next array, we collect the various models and their cohomology.

 %%%%
\renewcommand{\arraystretch}{1.5}
\setlength{\tabcolsep}{.3cm}
%%%%%%%
 \begin{center}
 \begin{tabular}{| c || c |c |c |c |c |c |c |}
    \hline
k&0&1&2&3&4&5&6\\\hline
$\left(\cM_{\ov{V}}\right)^k_{\ov{0}}$&$\Q$&
0&
$\Q[\omega]$&
$\Q[\cA,\cA',\cB]$&
$(\cM'_{\ov{V}_{\reg}})^4$&
$(\ker \mu')^5$&
$(\cM'_{\ov{V}_{\reg}})^6$\\\hline
$H^k_{\ov{0}}({\ov{V}})$&$\Q$&
0&
$\Q[\omega]$&
$\Q[\cA,\cA',\cB]$&
$\Q[\omega^2,\cE']$&
$0$&
$\Q[\omega^3]$\\\hline
$\left(\cM_{\ov{V}}\right)^k_{\ov{2}}$&$\Q$&
0&
$(\cM'_{\ov{V}_{\reg}})^2$&
$\Q[\cA,\cA',\cB]$&
$(\cM'_{\ov{V}_{\reg}})^4$&
$(\ker \mu')^5$&
$(\cM'_{\ov{V}_{\reg}})^6$\\\hline
$H^k_{\ov{2}}({\ov{V}})$&$\Q$&
0&
$\Q[\omega,\cE]$&
$\Q[\cA,\cA']$&
$\Q[\omega^2,\cE']$&
$0$&
$\Q[\omega^3]$\\\hline
$\left(\cM_{\ov{V}}\right)^k_{\ov{4}}$&$\Q$&
0&
$(\cM'_{\ov{V}_{\reg}})^2$&
$(\cM'_{\ov{V}_{\reg}})^3$&
$(\cM'_{\ov{V}_{\reg}})^4$&
$(\ker \mu')^5$&
$(\cM'_{\ov{V}_{\reg}})^6$\\\hline
$H^k_{\ov{4}}({\ov{V}})$&$\Q$&
0&
$\Q[\omega,\cE]$&
$\Q[\cA,\cA',\cB']$&
$\Q[\omega^2]$&
$0$&
$\Q[\omega^3]$\\\hline
  \end{tabular}
 \end{center}
with
$\cB=(y_{p+1},\ldots,y_{N})$,
$\cB'=(s^{-3}1'_{p+1},\ldots, s^{-3}1'_{N})$.
 Let $\ov{p}\leq \ov{q}$. The morphisms
 $\psi_{\ov{p}}^{\ov{q}}\colon H_{\ov{p}}(\ov{V})\to H_{\ov{q}}(\ov{V})$
are defined by
\begin{itemize}
\item $\psi_{\ov{0}}^{\ov{1}}=\id$ and $\psi_{\ov{1}}^{\ov{2}}$ is the identity except on $\cB$ where it takes the value~0,
\item $\psi_{\ov{2}}^{\ov{3}}$ is the identity except on $\cE'$ where it takes the value~0, and $\psi_{\ov{4}}^{\ov{3}}=\id$.
\end{itemize}

We construct now a perverse \cdga's map, $\chi_{\bullet}$, such that the following diagram commutes.
\begin{equation}\label{equa:lechi}
\xymatrix{
(\cM_{\ov{V}})_{\ov{0}}\ar[r]_{\varphi_{\ov{0}}^{\ov{2}}} 
\ar[d]_{\chi_{\ov{0}}}
&
(\cM_{\ov{V}})_{\ov{2}}\ar[r]_{\varphi_{\ov{2}}^{\ov{4}}}
\ar[d]^{\chi_{\ov{2}}}
&
(\cM_{\ov{V}})_{\ov{4}}
\ar[d]^{\chi_{\ov{4}}}
\\
H_{\ov{0}}(\ov{V})\ar[r]^{\psi_{\ov{0}}^{\ov{2}}}
&H_{\ov{2}}(\ov{V})\ar[r]^{\psi_{\ov{2}}^{\ov{4}}}
&H_{\ov{4}}(\ov{V})
}
\end{equation}
In each degree, we choose a supplementary subspace, $\cZ^c$, of the subspace of cocycles, $\cZ$, and vector space bases $\B_{\cZ^c}$, $\B_{\cZ}$, for each of them, with
$\nu\in\{1,\ldots,v\}$, $j\in\{1,\ldots,p\}$,
$k,k',k''\in\{p+1,\ldots,N\}$.
\begin{itemize}
\item Degree~2: $\B_{\cZ}=(\omega,\cE)$, $\B_{\cZ^c}=(x_{k})$.
\item Degree~3: $\B_{\cZ}=(\cA,\cA',y_{k},s^{-3}1'_{k})$, $\B_{\cZ^c}=(s^{-3}1_{j})$.
\item Degree~4: $\B_{\cZ}=(\omega^2, \cE')$, 
$\B_{\cZ^c}=(\omega x_{k},e_{j}x_{k}, x_{k}x_{k'})$ with $k\leq k'$.
\item Degree~6: $\B_{\cZ}=(\omega^3, \omega y_{k}, a_{\nu}y_{k}, a'_{\nu}y_{k},
s^{-3}1'_{k}y_{k'}, y_{k''}y_{k'''},e'_{j}x_{k}-s^{-3}1_{j}y_{k})$, with $k''<k'''$ and
 $\B_{\cZ^c}=(\omega^2x_{k},e_{j}x_{k}x_{k'},\omega x_{k}x_{k'},x_{k}x_{k'}x_{k''},s^{-3}1_{j}y_{k})$,
 with $k\leq k'\leq k''$.
\end{itemize}
The description of the elements of degree~5 is not necessary because all of them are sent to~0 by $\chi_{\bullet}$.
For any element, $a$, in one of these bases, we set $\chi_{\bullet}(a)=[a]$, if $a\in\B_{\cZ}$, and $\chi_{\bullet}(a)=0$, if $a\in\B_{\cZ^c}$. 
(In this definition, we mean that $\chi_{\ov{q}}(a)$ is defined if the element $a$ is of perverse degree $\ov{q}$.)
The compatibility of $\chi_{\bullet}$ with the differentials and the commutativity of (\ref{equa:lechi}) are direct.
We need only to verify,
\begin{equation}\label{equa:chiproduct}
\chi_{\bullet}(a)\chi_{\bullet}(b)=\chi_{\bullet}(ab).
\end{equation}
This is immediate if $a,\,b\in \B_{\cZ}$. 
Consider now $a\in\B_{\cZ^c}$. If the product $ab\in \B_{\cZ^c}$, the equality (\ref{equa:chiproduct}) is satisfied. 
An inspection of the previous list of bases shows that the only case where $a b\notin\B_{\cZ^c}$ is $e'_{j}x_{k}$. 
We have $\chi_{\bullet}(x_{k})=0$, by definition, and
$$
\chi_{\bullet}(e'_{j}x_{k})=
\chi_{\bullet}(e'_{j}x_{k}-s^{-3}1_{j}y_{k})+\chi_{\bullet}(s^{-3}1_{j}y_{k})=
\chi_{\bullet}(e'_{j}x_{k}-s^{-3}1_{j}y_{k}).
$$
For the determination of the cohomology class associated to
$e'_{j}x_{k}-s^{-3}1_{j}y_{k}$, we first observe that this element is in the kernel of $\mu'$ and thus has perverse degree $\ov{0}$. The triviality of this class comes from
$d_{f}(s^{-3}1_{j}x_{k})=e'_{j}x_{k}-s^{-3}1_{j}y_{k}$
and $s^{-3}1_{j}x_{k}\in \ker \mu'$.
\end{proof}
%
%
%%%%%%%%%%%%%%%%%%%%%%
\appendix
%\renewcommand{\thesection}{A.\arabic{section}}
%\part{Topological setting}
\chapter{Topological setting}
\section{Filtered spaces}\label{sec:filteredspaces}

\begin{quote}
A filtered space is a topological space together with a filtration by closed subsets. This simple notion is sufficient to define an intersection homology   associated to a loose perversity, with
classical properties, as the existence of a Mayer-Vietoris sequence and some particular case of  K\"unneth formula.
\end{quote}

\begin{definition}\label{def:espacefiltré} 
A \emph{filtered  space} is a topological space, $X$, 
\index{Space!filtered|see{Filtered space}}\index{Filtered!space}
together with a filtration by closed subspaces,
$$X_0\subseteq X_1\subseteq\ldots\subseteq X_n=X,$$
such that $X_n\backslash X_{n-1}$ is not empty.
The \emph{formal dimension} of $X$ is $n$, denoted by $d(X)=n$. 
\index{Dimension!formal}
The subspace $X_i$ is called the \emph{$i$-skeleton} of $X$ but the index $i$ and the formal dimension, $d(X)$, are not necessarily related to a notion of geometrical dimension. 

The connected components, $S$, of $X_{i}\backslash X_{i-1}$ are called \emph{strata} of $X$ and we set  $d(S)=i$. 
The strata in $X_n\backslash X_{n-1}$ are called the \emph{regular strata} of $X$.  
We denote by $\cS_X$ the family of non-empty strata of $X$.
The subspace $X_{n-1}$ is the \emph{singular set} and is  also denoted by~$\Sigma$.
\index{Strata}
\index{Strata!regular}\index{Regular!strata}
\index{Set!singular}
\end{definition} 
  
 By convention, we set $X_j=\emptyset$ if $j<0$ and $X_j=X$ if $j>n$. 
 The filtration of a non-empty topological space $X$ by $X_{n-1}=\emptyset$ and $X_{n}\,=X$  is called \emph{trivial} of formal dimension $n$. \index{Filtration!trivial}
If $(X,(X_i)_{0\leq i\leq n})$ and $(Y,(Y_i)_{0\leq i\leq m})$  are filtered spaces, we define  a canonical filtration on the product $X\times Y$  by
$$(X\times Y)_i = \displaystyle \bigcup_{k + j =i} X_k \times Y_j, \text{ for }i \in \{0, \ldots , n+m\}.$$

\begin{definition}\label{def:conicfiltration}
Let  $(X, (X_i)_{0\leq i\leq n})$ be a filtered space, the open cone on $X$ is the quotient  $\mathring{c}X = X \times [0,1[ \big/ X \times \{ 0 \}$. We denote by
$\vartheta$
the cone point.
The \emph{conic filtration,} $(\mathring{c}X)_{0\leq i\leq n+1}$, on the open cone is defined by $\left(\mathring{c}X\right) _i =\mathring{c}X_{i-1}$, with the convention
$\mathring{c}\,\emptyset=\{ \vartheta\}$.
Observe that $\mathring{c}X \backslash \{ \vartheta \}$ is the product  $X \times ]0,1[$, where $]0,1[$ owns the trivial filtration of formal dimension~1.
\index{Filtration!conic}
\end{definition}

This conic filtration generates a canonical filtration on the suspension $\Sigma X$.
The join $X*Y$ of two filtered spaces is also canonically filtered by observing that $X * Y$ is the union of three open sets, $X \times Y \times ]0,1[$, $X \times \mathring{c}Y$, $\mathring{c}X \times Y$, and the filtrations on the respective intersections coincide.

\begin{definition}\label{def:filteredsimplex}
Let  $(X, (X_i)_{0\leq i\leq n})$ be a filtered space and let $\Delta$ be an euclidean simplex.
\index{Simplex!filtered|see{Filtered simplex}}\index{Filtered!simplex}
A  \emph{filtered simplex} is a continous map, $\sigma\colon\Delta\to X$, such that one of the following equivalent properties is satisfied.
\begin{enumerate}[(a)]
\item For all $i\in \{0,\ldots, n\}$, $\sigma^{-1}X_i$ is a face, $[0,1,\ldots,m_{i}]$, of $\Delta$ or $\sigma^{-1}X_i=\emptyset$.
\item There exists a decomposition $\Delta=\Delta^{j_0}\ast\Delta^{j_1}\ast\cdots\ast\Delta^{j_n}$ 
such that
$$
\sigma^{-1}X_{i} = \Delta^{j_0}\ast\Delta^{j_1}\ast\cdots\ast\Delta^{j_i},
$$
for all~$i \in \{0, \ldots, n\}$. We call it \emph{the $\sigma$-decomposition of $\Delta$.}
\end{enumerate}
\emph{Let $R$ be a commutative ring.} We denote by $C_*(X)$ the usual singular chain complex of $X$, with coefficients in $R$, and by $C_*^{\cF}(X)$ the subcomplex generated by the filtered simplices.
\index{Decomposition@$\sigma$-Decomposition}
\end{definition}
The simplices $\Delta^{j_i}$, which arise in \defref{def:filteredsimplex}, can be empty. (An empty set  appears automatically if the filtration of $X$ is stationary  for an index~$i$.)
 We use the convention $\emptyset * Y=Y$, for any space $Y$.

 \begin{definition}\label{def:soliddecomposition}
 \index{Decomposition@$\sigma$-Decomposition!solid}
 Let $X$ be a filtered space.
 The \emph{solid $\sigma$-decomposition of a filtered simplex, $\sigma\colon \Delta\to X$,} is obtained by keeping only the non-empty elements, $\Delta^{j_{i}}$, of the $\sigma$-decomposition, written in the same order, i.e.,
$\Delta=\Delta_{1}\ast\cdots\ast\Delta_{p}$.
 \end{definition}
 
 \begin{definition}\label{def:perversedegreesimplex}
Let  $(X, (X_i)_{0\leq i\leq n})$ be a filtered space and let $\sigma\colon\Delta \to X$ be a filtered simplex. The \emph{perverse degree} of $\sigma$ is the $(n+1)$-uple,
$$\|\sigma\|=(\|\sigma\|_0,\ldots,\|\sigma\|_n),$$  
where $\|\sigma\|_{\ell}$ is the dimension of the smallest skeleton containing $\sigma^{-1} X_{n-\ell}$, with the convention $\|\sigma\|_\ell=-\infty$ if $\sigma^{-1} X_{n-\ell}=\emptyset$.\\
If $\sigma\colon\Delta=\Delta^{j_0}\ast \cdots\ast\Delta^{j_n}\to X$ is such that $\Delta^{j_0} \ast\cdots\ast\Delta^{j_{n-\ell}}\neq \emptyset$, then
 $\|\sigma\|_{\ell}=\dim \sigma^{-1}X_{n-\ell}=\dim (\Delta^{j_0}\ast\cdots\ast\Delta^{j_{n-\ell}})$.
 \index{Perverse!degree}
 \end{definition}

 \begin{example}\label{exam:filteredsimplex}
 The next picture corresponds to a filtered space of dimension 2 with a curve as subspace $X_{1}$ and a singleton as subspace $X_{0}$. We have represented several situations of filtered simplices together with their decomposition and their perverse degree.

 \centerline{\includegraphics[scale=0.24]{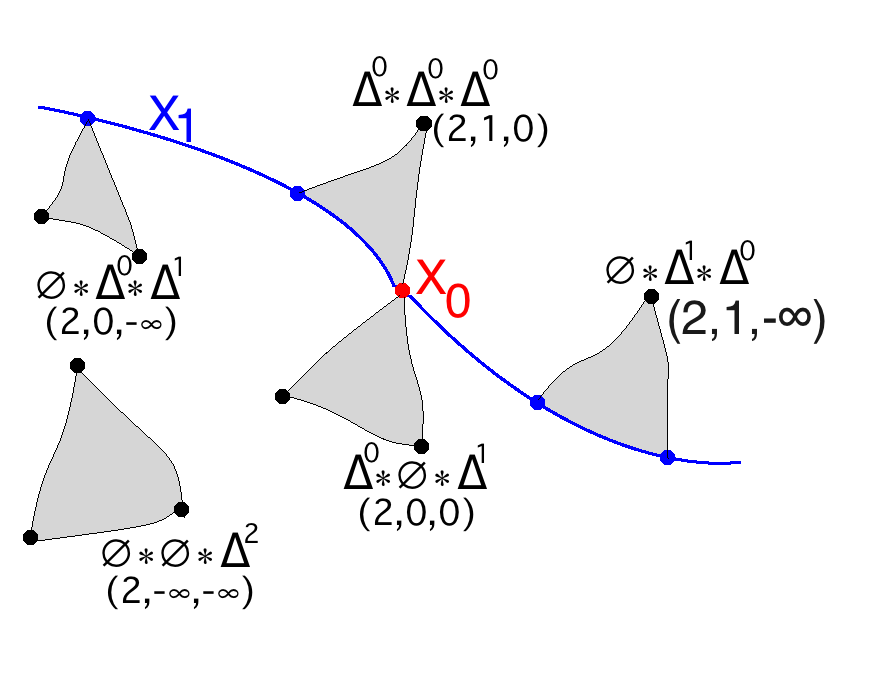}}
 \end{example}

\begin{definition}\label{def:chaineadmis}(\cite{MR572580}, \cite{MR1143404})
Let  $(X, (X_i)_{0\leq i\leq n})$ be a filtered space and let $\ov{p}$ be a loose perversity. %,
 A \emph{$\ov{p}$-admissible simplex} is a filtered simplex,
$\sigma\colon\Delta%
\to X$, such that
$$\|\sigma\|_{\ell}\leq \dim \Delta -\ell+\ov{p}(\ell),$$
for all $\ell\in\{0,\ldots,n\}$.
A \emph{chain $c$ is $\ov{p}$-admissible} if there exist $\ov{p}$-admissible simplices, $\sigma_j$, so that
$c=\sum_j\lambda_j\sigma_j$, $\lambda_j\in R$.
\index{Filtered!simplex!admissible for a perversity}
\index{Filtered!chain!admissible for a perversity}
\end{definition}

\begin{remark}\label{rem:perversity0}
A $\ov{p}$-admissible simplex, $\sigma\colon\Delta\to\ob{K}$,  verifies
 $$\dim\Delta=\|\sigma\|_0\leq \dim \Delta -0 + \ov{p}(0).$$
We impose  $\ov{p}(0)=0$ in the definition of loose perversity and this restriction makes vacuous the condition on $\|\sigma\|_0$.
 (On the other side, we may observe also that, with a negative value for $\ov{p}(0)$, we  should not have  any $\ov{p}$-admissible simplex.)
 \end{remark}
 
 Let $\ov{p}$ be a GM-perversity and $(X, (X_i)_{0\leq i\leq n})$ be a filtered space. We check easily (see \remref{rem:petitemaisbien}) that a filtered  0-simplex, $\sigma\colon \Delta^0\to X$,   is $\ov{p}$-admissible if, and only if, $\sigma(\Delta^0)\subset X\backslash X_{n-1}$. This leads naturally to the next definition.
 
 \begin{definition}\label{def:filteredspaceconnected}
 A filtered space, $(X, (X_i)_{0\leq i\leq n})$, is \emph{connected}, 
 \index{Filtered!space!connected}
 if the regular part is it.
 \end{definition}
 
 As usual in this theory, the submodule generated by the admissible $\ov{p}$-chains is not a subcomplex of the singular chains and we need to set the following definition.

\begin{definition}\label{def:chaineintersection} 
Let  $(X, (X_i)_{0\leq i\leq n})$ be a filtered space and let $\ov{p}$ be a loose perversity. 
A  chain $c$ is \emph{an intersection chain} for $\ov{p}$ if $c$ and its boundary $\partial c$ are $\ov{p}$-admissible chains. We denote by $C^{\ov{p}}_*(X)$ the complex of intersection chains, 
with coefficients in a commutative  ring, $R$.
 The associated homology and cohomology are denoted by
$H^{\ov{p}}_*(X)$ and $H_{\ov{p}}^*(X)$  and called \emph{intersection homology and cohomology for $\ov{p}$.} 
\index{Filtered!chain!of intersection for a perversity}
\index{Intersection homology}\index{Intersection cohomology}
\end{definition}

First, we characterize the $\ov{p}$-admissible simplices which are of $\ov{p}$-intersection.

\begin{definition}\label{def:badface}
Let $X$ be a filtered space, $\ov{p}$ be a loose perversity and $\sigma\colon \Delta=\Delta^{j_{0}}\ast\cdots\ast\Delta^{j_{n}}\to X$ be a $\ov{p}$-admissible simplex. We denote by $\ell(\sigma) \in\{0,\ldots,n\}$ the integer such that
\begin{itemize}
\item $\|\sigma\|_{\ell(\sigma) }=\dim\Delta-\ell(\sigma) +\ov{p}(\ell(\sigma) )$, and
\item $\|\sigma\|_{m}<\dim\Delta-m+\ov{p}(m)$, for all $m>\ell(\sigma) $.
\end{itemize}
As $\|\sigma\|_{0}=\dim\Delta-0+\ov{p}(0)$, the integer $\ell(\sigma) $ exists. 
When $\dim(\Delta^{j_{0}}\ast\cdots\ast\Delta^{j_{n-\ell(\sigma) }})\neq \dim\Delta$,
the restriction, $\tau_{\sigma}\colon \Delta^{j_{0}}\ast\cdots\ast\Delta^{j_{n-\ell(\sigma) }}\to X$,
\index{Bad face}
is called \emph{the bad face} of~$\sigma$.
\end{definition}

\begin{example}\label{exam:filteredsimplex2}
\exemref{exam:filteredsimplex} provides an illustration of the previous definition. We choose the perversity $\ov{0}$; a similar argument works for any loose perversity. Here the dimension is equal to ~2. For the simplex $\emptyset\ast\Delta^1\ast\Delta^0$, a computation gives:
$$
\dim\Delta-m+\ov{0}(m)=\left\{
\begin{array}{cl}
2& \text{ if } m=0,\\
1& \text{ if } m=1,\\
0&\text{ if } m=2,
\end{array}\right.
\,\text{ and }
\|\sigma\|_{m}=\left\{
\begin{array}{cl}
2& \text{ if } m=0,\\
1& \text{ if } m=1,\\
-\infty&\text{ if } m=2.
\end{array}\right.
$$
Thus, by definition, its bad face is $\emptyset\ast\Delta^1\ast\emptyset$. 
To be complete, mention that $\emptyset\ast\Delta^0\ast\Delta^1$, $\Delta^0\ast\emptyset\ast\Delta^1$ and $\emptyset\ast\emptyset\ast\Delta^2$ have no bad face. Also, the bad face of $\Delta^0\ast\Delta^0\ast\Delta^0$ is $\Delta^0\ast\emptyset\ast\emptyset$.
\end{example}

\begin{proposition}\label{prop:badface}
Let $X$ be a filtered space, $\ov{p}$ be a loose perversity and $\sigma\colon \Delta\to X$ be a $\ov{p}$-admissible simplex.
\begin{enumerate}[(a)]
\item The simplex $\sigma$ is of $\ov{p}$-intersection if, and only if, it does not contain bad faces.
\item A codimension~1 face   of $\sigma$ is not $\ov{p}$-admissible if, and only if, it contains the bad face of $\sigma$.
\item Let $\sigma'\colon \Delta\to X$ be a $\ov{p}$-admissible simplex of $X$, having a not $\ov{p}$-admissible face $\sigma''$ (of codimension~1) in common with $\sigma$. Then $\sigma''$ contains the bad face of $\sigma$, i.e., $\sigma$ and $\sigma'$ have the same bad face.
\end{enumerate}
\end{proposition}

\begin{proof}
Recall that, by definition, $\sigma\colon \Delta=\Delta^{j_{0}}\ast\cdots\ast\Delta^{j_{n}}\to X$  is $\ov{p}$-admissible if we have
$$\dim\Delta^{j_{0}}\ast\cdots\ast\Delta^{j_{n-m}}=\|\sigma\|_{m}\leq \dim\Delta-m+\ov{p}(m),$$
for all $m\in\{1,\ldots,n\}$. 
Let $F_{i}$ be a face of codimension~1 of $\Delta^{j_{i}}$, with $\Delta^{j_{i}}\neq\emptyset$.
The restriction
$\sigma_{i}\colon \Delta^{j_{0}}\ast\cdots\ast \Delta^{j_{i-1}}\ast F_{i}\ast\Delta^{j_{i+1}}\ast\cdots\ast\Delta^{j_{n}}\to X$
of $\sigma$ is $\ov{p}$-admissible if, and only if,
$$\|\sigma_{i}\|_{m}\leq \dim\Delta-1-m+\ov{p}(m),$$
for all $m\in\{1,\ldots,n\}$. This perverse degree verifies
$$\|\sigma_{i}\|_{m}=\left\{
\begin{array}{ll}
\|\sigma\|_{m}-1 &\text{ if } m\leq n-i,\\
\|\sigma\|_{m} &\text{ if } m>n-i.
\end{array}\right.$$
As $\|\sigma\|_{m}\leq \dim\Delta-1-m+\ov{p}(m)$ if $m>\ell(\sigma)$, the simplex $\sigma_{i}$ is $\ov{p}$-admissible if, and only if,
\begin{equation}\label{equa:mpervers}
\|\sigma\|_{m}<\dim\Delta-m+\ov{p}(m),
\end{equation}
for any $m$ and $i$ such that $n-i<m\leq \ell(\sigma)$ and $\Delta^{j_{i}}\neq\emptyset$.

(a) Inequality (\ref{equa:mpervers}) cannot be verified for $m=\ell(\sigma)$, because of the next contradiction,
\begin{equation}\label{equa:contrapervers}
\dim\Delta-\ell(\sigma)+\ov{p}(\ell(\sigma))=\|\sigma\|_{\ell(\sigma)}<\dim\Delta-\ell(\sigma) +\ov{p}(\ell(\sigma)).
\end{equation}
Recall that $\tau_{\sigma}$ denotes the bad face of $\sigma$, relatively to $\ov{p}$, and that $\sigma_{i}$ is a face  of codimension~1 of $\sigma$. The previous observation implies:
\begin{eqnarray*}
\sigma \text{ is of } \ov{p}\text{-intersection}&\Leftrightarrow&
\sigma_{i} \text{ is } \ov{p}\text{-admissible for any } i\in\{0,\ldots,n\} \\ &&\text{ such that } \Delta^{j_{i}}\neq\emptyset\\
&\Leftrightarrow&  \Delta^{j_{i}}=\emptyset \text{ if } n-\ell(\sigma)<i\\
&\Leftrightarrow& \dim\sigma=\dim\tau_{\sigma}.
\end{eqnarray*}
But, by definition of the bad face, the equality $\dim\sigma=\dim\tau_{\sigma}$ is a contradiction.

\smallskip
(b)  Thanks to the conditions (\ref{equa:mpervers}) and (\ref{equa:contrapervers}), we know that 
 the restriction $\sigma_{i}$ is $\ov{p}$-admissible if, and only if, $i\leq n-\ell(\sigma)$ and $\Delta^{j_{i}}\neq \emptyset$. In other words, $\sigma_{i}$ is not $\ov{p}$-admissible if, and only if, the bad face $\tau_{\sigma}$ of $\sigma$ is a face of $\sigma_{i}$. (Observe that $\dim\tau_{\sigma}\neq\dim\sigma$ as required in the definition of a bad face.)

\smallskip
(c) 
Let $\sigma'\colon \Delta= \Delta^{k_{0}}\ast\cdots\ast\Delta^{k_{n}}\to X$ and
 $\sigma''\colon \Delta^{j_{0}}\ast\cdots\ast\Delta^{j_{i-1}}\ast F_{i}\ast\Delta^{j_{i+1}}\ast\cdots\ast\Delta^{j_{n}}\to X$ be the not $\ov{p}$-admissible  face of $\sigma'$, in common with $\sigma$. As proved in (b), we have $i> n-\ell(\sigma)$.
For $*\leq n-\ell(\sigma)$, if the integers $k_{\ast}$ and $j_{\ast}$ are not equal,  they differ only for one index, $m\in \{0,\ldots,n-\ell(\sigma)\}$, for which we have  $k_{m}=j_{m}+1$. 
By definition, the $\ov{p}$-admissibility of $\sigma'$ implies
$\|\sigma'\|_{\ell(\sigma)}\leq \dim\Delta-\ell(\sigma)+\ov{p}(\ell(\sigma))$ and we have:
\begin{eqnarray*}
\dim\Delta-\ell(\sigma)+\ov{p}(\ell(\sigma))
\geq \|\sigma'\|_{\ell(\sigma)}
&=&\dim (\Delta^{k_{0}}\ast\cdots\ast\Delta^{k_{n-\ell(\sigma)}})\\
&=&\dim (\Delta^{j_{0}}\ast\cdots\ast\Delta^{j_{n-\ell(\sigma)}})+1= \|\sigma\|_{\ell(\sigma)}+1\\
&=&\dim\Delta-\ell(\sigma)+\ov{p}(\ell(\sigma))+1.
\end{eqnarray*}
Thus, the integers $k_{*}$ and $j_{*}$ are equal for $*\leq n-\ell(\sigma)$ and the bad face of $\sigma$ is a face of $\sigma''$.
\end{proof}

In this context of filtered spaces, we prove the existence of a Mayer-Vietoris sequence
\index{Mayer-Vietoris sequence}
 and recall the intersection homology of a cone.

\begin{proposition}\label{prop:propertiesintersectionhomology}
Let $X$
be a filtered space of formal dimension $d(X)=n$ and let $\ov{p}$ be a loose perversity. Then the following properties are satisfied, for intersection homology with coefficients in a commutative ring, $R$.
\begin{enumerate}[(i)]
\item If $\cU=\{U,V\}$ is an open cover of $X$, there exists a long exact sequence
$$\ldots\to H^{\ov{p}}_{*}(U\cap V)\to
 H^{\ov{p}}_{*}(U)\oplus H^{\ov{p}}_{*}(V)\to
 H^{\ov{p}}_*(X)\to H^{\ov{p}}_{*-1}(U\cap V)\to\ldots
$$
where $U\cap V$, $U$, $V$ are endowed with the induced filtration.
\item The canonical projection $\pr\colon \R\times X\to X$ induces an isomorphism
$$H^{\ov{p}}_*(\pr)\colon H^{\ov{p}}_*(\R\times X)\to  H^{\ov{p}}_*(X),$$
whose inverse is induced by the  inclusion
$j_{0}\colon X\to \R\times X$, $x\mapsto (0,x)$.
\item If $U_0\subset U_1\subset \cdots$ are open sets of $X$ so that $X=\cup_iU_i$, then the natural map
$\lim_{\to} H^{\ov{p}}_*(U_i)\to H^{\ov{p}}_*(X)$ is an isomorphism.
\item If $M$ is a topological manifold, then the following sequence is split exact for each i,
$$
\xymatrix@=16pt{
0\ar[r]&(H_*(M)\otimes H_*^{\ov{p}}(X))_i\ar[r]&
H^{\ov{p}}_i(M\times X)\ar[r]&
(H_*(M)\ast H^{\ov{p}}_*(X))_{i-1}\ar[r]&0.
}$$
\item Suppose $X$ is compact and $\ov{p}$ is a perversity. 
If the cone $\mathring{c}X$  is endowed with the conic filtration, 
\index{Intersection homology!of a cone}
we have
$$H^{\ov{p}}_k(\mathring{c}X)=\left\{
%:
\begin{array}{cl}
H^{\ov{p}}_k(X)&\text{ if } k\leq n-\ov{p}(n+1)-1
,\\[.2cm]
0&\text{ if } k\geq n-\ov{p}(n+1)\text{ and } k\neq 0,\\[.2cm]%
R&\text{ if } k\geq n-\ov{p}(n+1) \text{ and } k=0.
\end{array}\right.$$
\end{enumerate}
\end{proposition}

\begin{proof}
\begin{enumerate}[(i)]
\setcounter{enumi}0
\item This is a consequence of \lemref{lem:petitechaine}, as in the classical case.
\end{enumerate}

\begin{enumerate}[(i)]
\setcounter{enumi}1
\item  The map $f\colon \R\times X\times [0,1]\to \R\times X$, $(u,x,t)\mapsto (ut,x)$ induces a chain map
$C_{*}^{\ov{p}}(\R\times X\times [0,1])\to C_{*}^{\ov{p}}(\R\times X)$
and thus a homomorphism
$H_{*}^{\ov{p}}(\R\times X\times [0,1])\to H_{*}^{\ov{p}}(\R\times X)$.
With the notation and the result of \lemref{lem:homotopie1}, as  $f\circ \iota_{0}=j_{0}\circ \pr$ and $f\circ \iota_{1}=\id$, we deduce 
$H^{\ov{p}}_{*}(j_{0})\circ H_{*}^{\ov{p}}(\pr)=\id$. This equality and $\pr\circ j_{0}=\id_{X}$ imply that 
$H^{\ov{p}}_{*}(j_{0})$ and $H_{*}^{\ov{p}}(\pr)$ are inverse of each other.
\end{enumerate}

\begin{enumerate}[(i)]
\setcounter{enumi}2
\item This is obvious.
\end{enumerate}

\begin{enumerate}[(i)]
\setcounter{enumi}3
\item A careful reading of the proof of \cite[Theorem 4]{MR800845} shows that this particular K\"unneth formula exists if properties (i), (ii) and (iii) are satisfied.
\end{enumerate}

\begin{enumerate}[(i)]
\setcounter{enumi}4
\item Let $X$ be a compact filtered space and $\mathring{c}X$ be the associated open cone, endowed with the conic filtration. The next computation is already done in previous works.
\end{enumerate}
Let $\sigma\colon \Delta^k\to \mathring{c}X$ be a $\ov{p}$-admissible filtered simplex. We consider two cases.

$\bullet$ \emph{Let $k=\dim\Delta\leq n-\ov{p}(n+1)$}. As
$$\|\sigma\|_{n+1}\leq k-(n+1)+\ov{p}(n+1)<0,$$
we have $\sigma^{-1}(\mathring{c}X)_0=\emptyset$ and the simplex $\sigma$ does not meet the cone point, $\vartheta$, of $\mathring{c}X$. This implies $C^{\ov{p}}_{\leq n-\ov{p}(n+1)}(\mathring{c}X)=
C^{\ov{p}}_{\leq n-\ov{p}(n+1)}(X\times ]0,1[)$ and property (ii) implies 
$$H^{\ov{p}}_k(\mathring{c}X)=H^{\ov{p}}_k(X\times ]0,1[)= H^{\ov{p}}_k(X),$$
for all $k\leq n-\ov{p}(n+1)-1$.

$\bullet$ \emph{Let $k=\dim\Delta\geq n-\ov{p}(n+1)$.} In this case, we show that the classical homotopy operator, used to prove the contractibility of the singular chain complex of a cone, is compatible with the intersection setting. If $s\in [0,1]$ and $\langle x,t\rangle \in \mathring{c}X$, we set $s\cdot \langle x,t\rangle=\langle x,st\rangle$. If $\sigma\colon \Delta\to \mathring{c}X$ is a simplex, we define $c\sigma\colon \{\vartheta\}\ast\Delta\to \mathring{c}X$ by
$$c\sigma(sa+(1-s)\vartheta)=s\cdot \sigma(a).$$
As $X$ is compact, this map is 
continuous
 and we extend it in a linear map
$c\sigma\colon C_*(\mathring{c}X)\to C_*( \mathring{c}X)$. 
%. 
Let $i\in\{1,\ldots,n+1\}$ and $\sigma\colon \Delta\to \mathring{c}X$ be a $\ov{p}$-admissible simplex. We have
\begin{eqnarray*}
(c\sigma)^{-1}((\mathring{c}X)_{n+1-i}))&=&\{sa+(1-s)\vartheta\mid s\cdot \sigma(a)\in (\mathring{c}X)_{n+1-i}
=\mathring{c}X_{n-i}\}\\
&=&\{\vartheta\}\ast\sigma^{-1}(\mathring{c}X_{n-i}),
\end{eqnarray*}
which is a face of $\Delta$. 
If $\sigma^{-1}(\mathring{c}X_{n-i})\neq\emptyset$, we have 
\begin{eqnarray*}
\dim (c\sigma)^{-1}((\mathring{c}X)_{n+1-i})&=&1+\dim \sigma^{-1}(\mathring{c}X_{n-i})\leq 1+\dim \Delta-i+\ov{p}(i)\\
&=&
\dim (\{\vartheta\}\ast\Delta)-i+\ov{p}(i).
\end{eqnarray*}
Therefore, the simplex $c\sigma$ is $\ov{p}$-admissible. 

If $\sigma^{-1}(\mathring{c}X_{n-i})=\emptyset$, then 
$\dim(c\sigma)^{-1}((\mathring{c}X)_{n+1-i})=0$ and, by hypothesis, we have
\begin{eqnarray*}
0&\leq& k-n+\ov{p}(n+1)=k+1-(n+1)+\ov{p}(n+1)\\
&\leq& k+1-i+\ov{p}(i),
\end{eqnarray*}
because $\ov{p}$ is a perversity. The last expression is equal to
$\dim(\{\vartheta\}\ast\Delta)-i+\ov{p}(i)$ and the simplex $c\sigma$ is $\ov{p}$-admissible.

Now we compute $H_k^{\ov{p}}(\mathring{c}X)$ by distinguishing two cases.
\begin{itemize}
\item[---] $k\neq 0$. The result is clear for $k<0$ and we may suppose $k>0$. For any $\eta\in C_k^{\ov{p}}(\mathring{c}X)$, we have $\partial c\eta+c\partial\eta=\eta$, which implies $H_k^{\ov{p}}(\mathring{c}X)=0$.
\item[---] $k=0$. For any $\eta\in C_0^{\ov{p}}(\mathring{c}X)$, we have the formula
$\partial c\eta=\eta-\vartheta$. This implies that all the generators of $C_0^{\ov{p}}(\mathring{c}X)$ are identified in homology and, therefore, $H_0^{\ov{p}}(\mathring{c}X)=R$, because $X_{n}\backslash X_{n-1}\neq\emptyset$.
\end{itemize}
\end{proof}

\begin{lemma}\label{lem:homotopie1}
Let $\ov{p}$ be a loose perversity.
\index{Homotopy!and intersection homology}
For any filtered space, $X$, the canonical injections, $\iota_{0},\,\iota_{1}\colon X\to X\times I$, defined by
$\iota_{k}(x)=(x,k)$ for $k=0,\,1$, induce the same homomorphism in intersection homology,
 $H_{*}^{\ov{p}}(\iota_{0})=H_{*}^{\ov{p}}(\iota_{1})\colon H_{*}^{\ov{p}}(X)\to H_{*}^{\ov{p}}(X\times [0,1])$.
\end{lemma}

\begin{proof}
 In the non-stratified setting, the proof (see \cite[Th\'eor\`eme 5.33]{1198.55001} for instance) uses a chain homotopy between $C_*(\iota_0)$ and $C_*(\iota_1)$, defined as follows.
 If $\Delta^m=\langle e_0,\ldots,e_m\rangle$, we denote by $a_j=(e_j,0)$ and $b_j=(e_j,1)$ the points of $\Delta^m\times I$ corresponding to vertices. A singular $(m+1)$-chain of $\Delta^m\times I$ is defined by
 $$P=\sum_{j=0}^m (-1)^j \langle a_0,\ldots,a_j,b_j,\ldots,b_{m}\rangle.$$
 This chain generates a homotopy $h\colon C_*(X)\to C_{* +1}(X\times I)$ defined by
 $h(\sigma)=(\sigma\times \id)_{*}(P)$ and which satisfies the expected equality
 $(dh+hd)(\sigma)=C_*(\iota_1)(\sigma)-C_*(\iota_0)(\sigma)$.
 
 Let $\sigma$ be a $\ov{p}$-admissible simplex of $X$, we are reduced to prove that $h(\sigma)$ is a $\ov{p}$-admissible simplex of $X\times [0,1]$. Let $j\in\{0,\ldots,m\}$. We denote by $\tau_j\colon \Delta^{m+1}=\langle v_0,\ldots,v_{m+1}\rangle\to \Delta^{m}\times I$
 the $(m+1)$-simplex defined by
 $(v_0,\ldots,v_j,v_{j+1},\ldots,v_{m+1})\mapsto (a_0,\ldots,a_j,b_j,\ldots,b_m)$. 
Let $F$  be any face of the simplicial decomposition
 $$\Delta^m\times I=\cup_{j=0}^{m}\tau_j(\Delta^{m+1}).$$
  As
 $\tau_j^{-1}(F)$ is identified to $F\cap\tau_j(\Delta^{m+1})$, we have  $\dim\tau_j^{-1}(F)\leq \dim F$, for any $0\leq j\leq m$. From this inequality, we deduce
 \begin{eqnarray*}
 \dim \tau_{j}^{-1}(\sigma\times\id)^{-1}(X_{n-i}\times I)
 &\leq &
 \dim \tau_{j}^{-1}(\sigma^{-1}(X_{n-i})\times I)\\
 &\leq &
 \dim (\sigma^{-1}(X_{n-i})\times I)\\
 &\leq &
 m-i+\ov{p}(i)+1.
 \end{eqnarray*}
 By definition of the $\ell$-perverse degree of $h(\sigma)$, we obtain,
  \begin{eqnarray*}
  \|h(\sigma)\|_{\ell}
  &\leq &
  \max_{j}\|(\sigma\times \id)\circ \tau_{j}\|_{\ell}\\
  &\leq &
  m+1-\ell+\ov{p}(\ell)=\dim\Delta^{m+1}-\ell+\ov{p}(\ell),
   \end{eqnarray*}
   and $h(\sigma)$ is $\ov{p}$-admissible.
 \end{proof}

\begin{lemma}\label{lem:petitechaine}
Let $\cU=(U,V)$ be an open cover of the filtered space, $X$, and let $\ov{p}$ be a loose perversity.
We define a homomorphism,
$\varphi\colon C_{*}^{\ov{p}}(U)\oplus  C_{*}^{\ov{p}}(V)\to  C^{\ov{p}}_*(X)$,
by 
$\varphi(\xi_{1},\xi_{2})=\xi_{1}+\xi_{2}$. Then the following properties are satisfied.
\begin{enumerate}[(i)]
\item For any $\xi\in C_{*}^{\ov{p}}(X)$, there exists an integer $k\geq 0$, such that the iterated barycentric subdivision of $\xi$ verifies $(sd)^k\xi\in\im\varphi$.
\item The injection $\im\varphi\hookrightarrow C_{*}^{\ov{p}}(X)$ induces an isomorphism in homology. 
\end{enumerate}
\end{lemma}

\begin{proof}
We adapt the classical proof to the intersection setting (see also \cite{MR2276609} and \cite{MR1245833}). For that, we refer to the presentation  of  \cite[Section 4 of Chapter 4]{MR0210112} or \cite[Section 2.1]{MR1867354}. The key is the construction of  chain maps,
$sd\colon C_*^{\ov{p}}(X)\to C_*^{\ov{p}}(X)$ and  $T\colon C_*^{\ov{p}}(X)\to C_{*+1}^{\ov{p}}(X)$, such that $\partial T+T \partial=\id -sd$.  
We proceed in several steps.

$\bullet$ First, we define $sd$ in the intersection setting. 
Denote by $\Delta^{per}(\Delta)$ the subcomplex generated by the  linear simplices, $\xi$, of $\Delta$ such that 
$$\dim\xi^{-1} (F)\leq \dim F,$$ for any  face $F$ of $\Delta$. 
(Recall from \cite[Page 176]{MR0210112} that a  simplex $\sigma\colon \Delta^q\to \Delta^r$is said \emph{linear} if $\sigma(\sum t_{i}e_{i})=\sum t_{i}\sigma(e_{i})$ for $t_{i}\in [0,1]$ and $\sum t_{i}=1$. A linear simplex is therefore entirely determined by its values on the vertices.)
 Observe that if $\sigma\colon\Delta\to X$ is filtered and $\zeta\in \Delta^{per}(\Delta)$, then
\begin{itemize}
\item[--] $\sigma_*(\zeta)^{-1}(X_i)=\zeta^{-1}\sigma^{-1}(X_i)$ is a face of 
$\Delta$ because $\sigma$ is filtered and $\zeta$ linear,
\item[--] $\|\sigma_*(\zeta)\|_i=\dim \zeta^{-1}\sigma^{-1}(X_{n-i})\leq \dim \sigma^{-1}(X_{n-i})=\|\sigma\|_i$.
\end{itemize}
Therefore, if $\sigma$ is $\ov{p}$-admissible and $\zeta\in \Delta^{per}_{\dim \Delta}(\Delta)$ then 
we have
$$\|\sigma_{*}(\zeta)\|_{i}\leq \|\sigma\|_{i}\leq \dim\Delta-i+\ov{p}(i),$$
and $\sigma_*(\zeta)$ is $\ov{p}$-admissible.
We define
$sd$  on a linear simplex,
$\xi\colon\Delta^{\ell}\to\Delta$,
by
$$sd(\xi)=\left\{\begin{array}{ll}
\xi&\text{ if } \ell =0,\\
b(\xi)\cdot sd(\partial \xi)&\text{ if } \ell >0,
\end{array}\right.$$
where $b(\xi)$ is the barycenter of $\xi(\Delta^{\ell})$ and the definition of
$b(\xi)\cdot sd(\partial \xi)$
is recalled below. 
If $\sigma\colon\Delta\to X$ is a singular simplex, we set  $sd(\sigma)=\sigma_{*}(sd([\Delta]))$, as in \cite{MR0210112} or \cite{MR1867354}.

We want to prove that the image by $sd$ of a chain of $\ov{p}$-intersection is of $\ov{p}$-intersection. As $sd$ commutes with the differential, it is sufficient to prove that the image by $sd$ of a $\ov{p}$-admissible simplex is $\ov{p}$-admissible.
Therefore, we are reduced to the verification of the inclusion, 
$$sd(\Delta^{per}(\Delta))\subset \Delta^{per}(\Delta).$$
(Observe that $[\Delta]\in \Delta^{per}_{\dim\Delta} (\Delta)$.)
 The inclusion
$sd(\Delta^{per}_{\ell}(\Delta))\subset \Delta^{per}_{\ell}(\Delta)$
is clear for $\ell =0$. 
Let $\xi\in \Delta^{per}_{\ell} (\Delta)$.
By induction, we know that $sd(\partial(\xi))=\sum_in_i\tau_i$, with $\tau_i\in \Delta^{per}_{\ell-1}(\Delta)$. Therefore, we have
$b(\xi)\cdot sd(\partial(\xi))=\sum_i\,n_i\,b(\xi)\cdot \tau_i$, where $b(\xi)\cdot\tau_i$ is defined on $\Delta^{\ell}=\Delta^{l-1}\ast\{\vartheta\}$ by
$$(b(\xi)\cdot\tau_i)(t\vartheta+(1-t)a)=tb(\xi)+(1-t)\tau_i(a).$$ 
We have to prove that $b(\xi)\cdot\tau_i\in \Delta^{per}(\Delta)$. If $F$ is a  face of $\Delta$, we claim that
$\dim (b(\xi)\cdot \tau_i)^{-1}(F)\leq \dim F$. We consider two cases:
\begin{itemize}
\item[--] $b(\xi)\notin F$. Then $(b(\xi)\cdot \tau_i)^{-1}(F)=\tau_i^{-1}(F)$ and we already know that
$\dim \tau_i^{-1}(F)\leq \dim F$.
\item[--] $b(\xi)\in F$. In this case, $\Delta^{\ell}=\xi^{-1}(F)=(b(\xi)\cdot\tau_{i})^{-1}(F)$ which gives 
$\dim(b(\xi)\cdot\tau_{i})^{-1}(F)=\dim\xi^{-1}(F)\leq \dim F$.
\end{itemize}

\smallskip
$\bullet$ We extend now the definition of $T$ to the intersection setting.
Denote by $\Delta^{per+1}(\Delta)$ the subspace generated by the  linear simplices, $\xi$, of $\Delta$ such that 
$$\dim\xi^{-1} (F)\leq1+ \dim F,$$ for any  face $F$ of $\Delta$. If $\sigma\colon\Delta\to X$ is filtered and $\zeta\in \Delta^{per+1}(\Delta)$, then
\begin{itemize}
\item[--] $\sigma_*(\zeta)^{-1}(X_i)=\zeta^{-1}\sigma^{-1}(X_i)$ is a face of 
$\Delta$ because $\sigma$ is filtered and $\zeta$ linear,
\item[--] $\|\sigma_*(\zeta)\|_i=\dim \zeta^{-1}\sigma^{-1}(X_{n-i})\leq \dim \sigma^{-1}(X_{n-i})+1=\|\sigma\|_i+1$.
\end{itemize}
Therefore, if $\sigma$ is $\ov{p}$-admissible and $\zeta\in \Delta^{per+1}_{\dim\Delta+1}(\Delta)$,  then 
we have
$$\|\sigma_{*}(\zeta)\|_{i}\leq \|\sigma\|_{i}+1\leq \dim\Delta+1-i+\ov{p}(i),$$
and $\sigma_*(\zeta)$ is $\ov{p}$-admissible.
We define
$T$  on a linear simplex,
$\xi\colon\Delta^{\ell}\to\Delta$,
by
$$T(\xi)=\left\{\begin{array}{ll}
0&\text{ if } \ell =-1,\\
b(\xi)\cdot (\xi-T(\partial(\xi)))&\text{ if } \ell \geq 0,
\end{array}\right.$$
where $b(\xi)$ is the barycenter of $\xi(\Delta^{\ell})$.
If $\sigma\in C_\ell^{\ov{p}}(X)$, we set
$T(\sigma)=\sigma_{*}(T([\Delta^\ell]))$.
As in the case of $sd$, since $T\partial+\partial T=\id-sd$, for proving that the image by $T$ of a $\ov{p}$-intersection simplex is of $\ov{p}$-intersection, it is sufficient to prove that $$T(\Delta^{per}(\Delta))\subset \Delta^{per+1}(\Delta).$$
The inclusion $T(\Delta^{per}_{\ell}(\Delta))\subset \Delta^{per+1}_{\ell+1}(\Delta)$ is obvious if $\ell=-1$. 
Let $\xi\in \Delta^{per}_{\ell} (\Delta)$.
By induction, we know that $\xi-T(\partial(\xi))=\sum_in_i\tau_i$, with $\tau_i\in \Delta^{per+1}_{\ell}(\Delta)$. Therefore, we have
$b(\xi)\cdot (\xi-T(\partial(\xi)))=\sum_i\,n_i\,b(\xi)\cdot \tau_i$ and we
have to prove that $b(\xi)\cdot\tau_i\in \Delta^{per+1}_{\ell+1}(\Delta)$.
If $F$ is a face of $\Delta^{\ell}$, we claim that 
$\dim(b(\xi)\cdot\tau_{i})^{-1}(F)\leq \dim F+1$. We consider two cases:
\begin{itemize}
\item[--] $b(\xi)\notin F$. Then $(b(\xi)\cdot\tau_{i})^{-1}(F)=\tau_{i}^{-1}(F)$ and we  already know that
$\dim\tau_{i}^{-1}(F)\leq\dim F+1$.
\item[--] $b(\xi)\in F$. In this case, $\Delta^{\ell}=\xi^{-1}(F)$ and
$\Delta^{\ell+1}=(b(\xi)\cdot\tau_{i})^{-1}(F)$
which gives
$\dim (b(\xi)\cdot\tau_{i})^{-1}(F)=\dim\xi^{-1}(F)+1\leq \dim F +1$.
\end{itemize}

\smallskip
$\bullet$ Proof of (i). 
Let $\xi=\sum_{j\in J} n_{j}\sigma_{j}$ be a chain of $\ov{p}$-intersection, written as a sum of filtered simplices.
Recall from \defref{def:badface}, the integer $\ell(\sigma)$, associated to a simplex $\sigma$. We extend this definition to the chain $\xi$ by $\ell(\xi)=\max\{\ell(\sigma_{j})\mid j\in J \text{ such that } n_{j}\neq 0\}$ and $\ell(0)=-\infty$.

We have to show the existence of an integer $k$ such that the iterated barycentric subdivision of $\xi$ verifies 
\begin{equation}\label{equa:barycentric}
sd^k\xi\in C_{*}^{\ov{p}}(U)+  C_{*}^{\ov{p}}(V).
\end{equation}
From the classical theory, we can find $r\geq 0$, with $sd^r\xi\in C_{*}(U)+C_{*}(V)$. Thus, we can suppose
\begin{equation}\label{equa:barycentric2}
\xi\in C_{*}^{\ov{p}}(X)\cap(C_{*}(U)+  C_{*}(V)).
\end{equation}
On the set of chains of $\ov{p}$-intersection, we define an equivalence relation by
$$\xi\sim\zeta \text{ if there exists } k \geq 0, \text{ such that } sd^k\xi-sd^k\zeta\in C^*_{\ov{p}}(U)+C^*_{\ov{p}}(V).$$
This equivalence relation is clearly compatible with sums, i.e.,  
$\xi\sim\zeta$ and $\xi'\sim\zeta'$ imply $\xi+\xi'\sim \zeta+\zeta'$. 
The interest of this relation relies in the fact that Property (i) is equivalent to $\xi\sim 0$. We prove it by induction on $\ell(\xi)$. 
This is obviously true if $\ell(\xi)=0$. 
We suppose it is true for any chain of $\ov{p}$-intersection, $\zeta$, such that $\ell(\zeta)<m$ and we consider a chain of $\ov{p}$-intersection, $\xi$, with $\ell(\xi)=m$.
We decompose $\xi$ in $\xi=\xi_{0}+\cdots+\xi_{m}$, with the next characterization of the components $\xi_{i}$.
\begin{itemize}
\item The chain $\xi_{0}$ is formed of simplices which are of $\ov{p}$-intersection, i.e., $\xi_{0}$ is composed of   simplices $\sigma$ such that $\ell(\sigma)=0$.
\item The chain $\xi_{i}$, for $i>0$, is composed of simplices $\sigma$ having a bad face and such that $\ell(\sigma)=i$.
\end{itemize} 
By induction and the compatibility of the equivalence relation with sums, we may suppose $\xi=\xi_{m}$. 
Denote by $(\tau_{i})_{i\in I}$ the bad faces that appear for the simplices of $\xi$. We decompose $\xi$ in $\xi=\sum_{i\in I} \xi(i)$, where the simplices of $\xi(i)$ have $\tau_{i}$ as bad face. 
Let $\sigma$ be a simplex of $\xi(i)$.
As the sum $\xi$ is of $\ov{p}$-intersection, there exists at least a simplex $\sigma'$ which has  $\tau_{i}$ as face. From (c) of \propref{prop:badface}, we know that $\sigma$ and $\sigma'$ have the same bad face. Therefore $\sigma'$ is a simplex of $\xi(i)$ and $\xi(i)$ is of $\ov{p}$-intersection. With the compatibility of the equivalence relation with sums,  we may suppose that each simplex of $\xi$ has the same bad face, 
$\tau\colon \Delta^{j_{0}}\ast\cdots\ast \Delta^{j_{n-m}}\to X$.

Grants to (\ref{equa:barycentric2}), we may suppose  also $\im \tau\subset U$.  There exists a subdivision such that
\begin{equation}\label{equa:subdivi}
sd^k\xi=\sum_{\im\beta_{a}\cap\im\tau=\emptyset}n_{a}\beta_{a}+
\sum_{\im\beta_{b}\cap\im\tau\neq\emptyset}n_{b}\beta_{b}\in C_{*}^{\ov{p}}(X),
\end{equation}
with $\im\beta_{b}\subset U$, for all $b$. Each element $\beta$ of these sums is obtained from a simplex, $\sigma\colon \Delta=\Delta^{j_{0}}\ast\cdots\ast\Delta^{j_{n}}\to X$, of $\xi$, verifying $\ell(\sigma)=m$ and $\dim \Delta^{j_{0}}\ast\cdots\ast \Delta^{j_{n-m}}\neq \dim\Delta$, and from an element $F$ of the iterated barycentric subdivision of $\Delta$, of the same dimension than $\Delta$, i.e.,
$$\beta\colon F\cap (\Delta^{j_{0}}\ast\cdots\ast\Delta^{j_{n}})\to X.$$
Observe:
\begin{eqnarray*}
\|\beta\|_{n}&=&\dim (F\cap \Delta^{j_{0}})\leq \dim \Delta^{j_{0}}< \dim\Delta-n+\ov{p}(n),\\
\ldots&&\\
\|\beta\|_{m+1}&=&
\dim (F\cap (\Delta^{j_{0}}\ast\cdots\ast\Delta^{j_{n-m-1}}))\leq \dim(\Delta^{j_{0}}\ast\cdots\ast\Delta^{j_{n-m-1}})\\
&<& \dim\Delta-(m+1)+\ov{p}(m+1)= \dim F-(m+1)+\ov{p}(m+1),\\
\|\beta\|_{m}&=&
\dim (F\cap (\Delta^{j_{0}}\ast\cdots\ast\Delta^{j_{n-m}}))=
 \dim(\Delta^{j_{0}}\ast\cdots\ast\Delta^{j_{n-m}})\\
&\leq& \dim\Delta-m+\ov{p}(m) = \dim F-m+\ov{p}(m),
\end{eqnarray*}
where the strict inequalities appear for $\|\beta\|_{i}$ with $i>m=\ell(\sigma)$.
From the previous inequalities, we deduce $\ell(\beta)\leq m$. 
Thus, we have proved that
$$sd^k \xi =(sd^k\xi)_{0}+\cdots+(sd^k \xi)_{m-1}+(sd^k \xi)_{m}.$$
From the induction and the compatibility of the equivalence relation with sums, it is sufficient to prove that
$(sd^k\xi)_{m}\sim 0$. For that, we observe that we have $\ell(\beta)=m$ if, and only if, 
$\dim (F\cap (\Delta^{j_{0}}\ast\cdots\ast\Delta^{j_{n-m}}))= \dim(\Delta^{j_{0}}\ast\cdots\ast\Delta^{j_{n-m}})$,
which implies $F\cap (\Delta^{j_{0}}\ast\cdots\ast\Delta^{j_{n-m}})\neq\emptyset$. In particular, if $\ell(\beta)=m$, we have $\im\beta\cap\im\tau\neq\emptyset$ and thus $\im\beta\subset U$.

As $\dim(F\cap (\Delta^{j_{0}}\ast\cdots\ast\Delta^{j_{n-m}}))=\dim(\Delta^{j_{0}}\ast\cdots\ast\Delta^{j_{n-m}})\neq\dim\Delta=\dim F$, we
 have proved that the chain of $\ov{p}$-intersection,
$(sd^k\xi)_{m}$, verifies
$$ (sd^k\xi)_{m}=
\sum_{\im\beta_{b}\cap\im\tau\neq\emptyset, \,\ell(\beta_{b})=m}n_{b}\beta_{b}\in C_{*}^{\ov{p}}(U)
$$
and this gives the inductive step.

\smallskip
$\bullet$ Proof of (ii).
We prove that the inclusion $\iota\colon \im\varphi\hookrightarrow C_{*}^{\ov{p}}(X)$ induces an isomorphism in homology.
Let $[\xi]\in H_{*}^{\ov{p}}(X)$. With the already established properties of $sd$ and $T$, we have $[\xi]=[sd^i\xi]$, for all $i\geq 0$.
\begin{itemize}
\item[--] For the surjectivity, let $[\xi]\in H_{*}^{\ov{p}}(X)$.  Thanks to the property (i), there exists $k\geq 0$ with $sd^k\xi\in\im\varphi$. We get
$$[\xi]=\iota_{*}[sd^k \xi]\in\im\iota_{*}.$$
\item[--] Concerning the injectivity, let $[\alpha]\in H_{*}^{\ov{p}}(\im\varphi)$ and $\xi\in C_{*+1}^{\ov{p}}(X)$ such that
$\alpha=\partial\xi$. With the property (i), there exists $k\geq 0$ with $sd^k\xi\in\im\varphi$. This implies
$$[\alpha]=[sd^k\alpha]=[sd^k(\partial\xi)]=[\partial (sd^k\xi)]$$
and $[\alpha]=0$ in  $H_{*}^{\ov{p}}(\im\varphi)$.
\end{itemize}
\end{proof}

%%%%%%%%%%%
\section{Stratified spaces}\label{sec:stratified}

\begin{quote}
Filtered spaces are sufficient for a definition of intersection homology, but this generality does not bring a good behavior for maps. For having induced morphisms  between intersection homology groups, we introduce stratified spaces and maps, the key point (\thmref{thm:stratifiedmapcomplex}) being the fact that stratified maps decrease the perverse degree of \defref{def:perversedegreesimplex}.
We define also a notion of s-homotopy between stratified maps and prove that two s-homotopic stratified maps induce the same homomorphism in intersection homology.
\end{quote}

\begin{definition}\label{def:espacestratifie}
A \emph{stratified space} is a filtered space
\index{Space!stratified|see{Stratified space}}\index{Stratified!space}
such that 
 any pair of strata, $S$ and $S'$ with $S\cap \overline{S'}\neq \emptyset$,  verifies
$S\subset \ov{S'}$.
\end{definition}

For instance, the CS sets of King \cite{MR800845}
 are stratified spaces, see \thmref{thm:pseudostratifie} and \defref{def:CS}. Also, the previous constructions of cone, suspension, product and join of stratified spaces 
 are stratified.
 
 \begin{definition}\label{def:appstratifiee}
A \emph{stratified map,} $f\colon (X, (X_j)_{0\leq j\leq n})\to (Y,(Y_j)_{0\leq j\leq m})$, is a continuous map between stratified spaces, such that, for any stratum $S\in\cS_X$, there exists a stratum $S^f\in \cS_Y$ with $f(S)\subset S^f$
and $m-d(S^f)\leq n-d(S)$.
A stratified map is \emph{stratum preserving} if $n=m$ and $f^{-1}Y_{n-\ell}=X_{n-\ell}$, for any $\ell\geq 0$.
\index{Maps!stratified}\index{Stratified!map}\index{Stratified!map!stratum preserving}
\end{definition}

The previous conditions on  stratum preserving map correspond to  definitions  of Friedman \cite{MR2009092} and Quinn, \cite{MR928266}, \cite{MR873296}.
Observe  that a continuous map is stratified if, and only if, for any stratum 
$S' \in \cS_{Y}$, $f^{-1}(S')$ is the empty set or there exists a family of strata, $\{S_i \mid i \in I\} \subset \cS_{X}$, such that
$$f^{-1} (S') = \bigcup_{i \in I} S_i, \text{ with } m-d(S')\leq n-d(S_{i}).$$
In the particular case of two stratified spaces on the same  space, $(X,(X_{i})_{1\leq i\leq n})$ and $(X,(X'_{i})_{1\leq i\leq n})$, the identity map,
$\id\colon (X,(X_{i})_{1\leq i\leq n})\to (X,(X'_{i})_{1\leq i\leq n})$,
is stratified if, and only if, the strata coming from the filtration $(X'_{i})_{1\leq i\leq n}$ are union of strata associated to the filtration $(X_{i})_{1\leq i\leq n}$.

 \begin{theorem}\label{thm:stratifiedmapcomplex}
 A stratified map, $f\colon X\to Y$, induces a chain map
 $f_*\colon C_*^{\cF}(X)\to C_*^{\cF}(Y)$, defined by $\sigma\mapsto f\circ \sigma$.
 Moreover, for all $\ell\geq 0$,  
 we have
 $$\|f_*(\sigma)\|_\ell\leq \|\sigma\|_\ell.$$
 \end{theorem}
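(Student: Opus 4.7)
The plan is to show first that $f\circ\sigma$ is filtered for any filtered $\sigma$, by determining the $(f\circ\sigma)$-decomposition explicitly as a ``coarsening'' of the $\sigma$-decomposition. The perverse degree inequality will then be immediate from the filtered-map hypothesis $f^{-1}Y_{m-\ell}\subseteq X_{n-\ell}$, and $f_*$ is a chain map for the usual reason: the restriction of a filtered simplex to a face is again filtered, and $f\circ(\sigma|_{\partial_i\Delta})=(f\circ\sigma)|_{\partial_i\Delta}$.

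Write $\Delta^{(i)}=\Delta^{j_0}\ast\cdots\ast\Delta^{j_i}$. For each index $i$ with $\Delta^{j_i}\neq\emptyset$, the set $\Delta^{(i)}\setminus\Delta^{(i-1)}$ is non-empty and connected; since $\sigma^{-1}(X_i)=\Delta^{(i)}$, the map $\sigma$ sends it into $X_i\setminus X_{i-1}$, hence into a single stratum $S_i\in\cS_X$ (the strata at level $i$ being the connected components of $X_i\setminus X_{i-1}$). Under the stratified-map hypothesis, $f(S_i)\subseteq S_i^f=\cS_f(S_i)$, a stratum of $Y$ at some level $k_i$.

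I next establish that the sequence $(k_i)$ is weakly increasing on the set of indices where $\Delta^{j_i}\neq\emptyset$. When $\Delta^{j_i}\neq\emptyset$, one has $\Delta^{(i-1)}\subseteq\overline{\Delta^{(i)}\setminus\Delta^{(i-1)}}$, so continuity of $\sigma$ gives $\sigma(\Delta^{(i-1)})\subseteq\overline{S_i}$. Whenever $\Delta^{j_{i-1)}}\neq\emptyset$ as well, the non-empty set $\sigma(\Delta^{(i-1)}\setminus\Delta^{(i-2)})$ lies in $S_{i-1}$ and witnesses $S_{i-1}\cap\overline{S_i}\neq\emptyset$, so \defref{def:espacestratifie} gives $S_{i-1}\preceq S_i$. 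Applying \propref{prop:orden} to $f$ yields $S_{i-1}^f\preceq S_i^f$, and then \propref{prop:propespacestratifie}(iii) turns this into $k_{i-1}\le k_i$. Empty intermediate levels are handled identically, by comparing $S_a$ and $S_b$ directly whenever $\Delta^{j_a},\Delta^{j_b}\neq\emptyset$ are separated by empty ones, using transitivity of $\preceq$.

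With monotonicity in hand, $(f\circ\sigma)^{-1}(Y_s)$ is the disjoint union of those $\Delta^{(i)}\setminus\Delta^{(i-1)}$ with $k_i\le s$, which equals $\Delta^{(r(s))}$ for $r(s)=\max\{i:\Delta^{j_i}\neq\emptyset,\ k_i\le s\}$; so $f\circ\sigma$ is filtered, its decomposition being obtained from that of $\sigma$ by grouping consecutive $\Delta^{j_i}$ with equal $k_i$. The filtered-map condition then gives $(f\circ\sigma)^{-1}(Y_{m-\ell})\subseteq\sigma^{-1}(X_{n-\ell})=\Delta^{(n-\ell)}$, and passing to dimensions yields $\|f_*(\sigma)\|_\ell\le\|\sigma\|_\ell$, with empty preimages handled by the $-\infty$ convention. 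The principal obstacle is the monotonicity step, since the filtered-map hypothesis alone is too weak to guarantee that the $(f\circ\sigma)$-decomposition respects the order of the $\Delta^{j_i}$; it is precisely here that the stratified structure enters, via \propref{prop:orden} and \propref{prop:propespacestratifie}(iii), ensuring that $f\circ\sigma$ genuinely coarsens the join structure of $\Delta$ rather than reshuffling it.
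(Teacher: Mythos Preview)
Your proof is correct and follows essentially the same route as the paper: identify the strata meeting $\sigma(\Delta)$, show they form a chain for $\preceq$, apply \propref{prop:orden} to push this through $f$, and conclude that each $(f\circ\sigma)^{-1}(Y_s)$ is a face of $\Delta$. The paper isolates the ``chain of strata'' step as a separate lemma (\lemref{lem:stratasimplex}) and then computes $(f\circ\sigma)^{-1}(Y_{m-\ell})$ explicitly as $\sigma^{-1}(X_{d(S_{k_a})})$ for a suitable index, whereas you embed the chain argument inline and obtain the perverse-degree inequality directly from the containment $(f\circ\sigma)^{-1}(Y_{m-\ell})\subseteq\sigma^{-1}(X_{n-\ell})$ between faces --- a slightly cleaner endgame, but the substance is the same.
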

 
The next result is a direct consequence of Definitions \ref{def:chaineadmis}, \ref{def:chaineintersection} 
and  \thmref{thm:stratifiedmapcomplex}.

\begin{corollary}\label{cor:stratifiedinducedhomology}
A stratified map, $f\colon X\to Y$,
induces a morphism between the complexes of intersection chains,
$f_*\colon C_*^{\ov{p}}(X) \to C_*^{\ov{p}}(Y)$,
for any loose perversity $\ov{p}$.
\end{corollary}

In the next definition, $[0,1]$ is trivially filtered of formal dimension~1 and $X\times [0,1]$ endows the  product filtration.

\begin{definition}\label{def:homotopystratified}
Two stratified maps, $f_0,\,f_1\colon X\to Y$, are \emph{s-homotopic} if there exists a stratified map,
$f\colon X\times [0,1]\to Y$, such that $f(x,0)=f_0(x)$ and $f(x,1)=f_1(x)$.
\index{Maps!homotopic}\index{Homotopy!of stratified maps}
\end{definition}

\begin{proposition}\label{prop:shomotopyandintersectionhomology}
If $f_0,\,f_1\colon X\to Y$ are two s-homotopic stratified maps, they induced the same homomorphism in intersection homology,
$$H^{\ov{p}}_*(f_0)=H^{\ov{p}}_*(f_1)\colon H^{\ov{p}}_*(X)\to H^{\ov{p}}_*(Y),$$
for any loose perversity $\ov{p}$.
\end{proposition}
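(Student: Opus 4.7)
My strategy is the classical one: build, from the s-homotopy $f\colon X\times[0,1]\to Y$, an explicit chain homotopy between $f_{0*}$ and $f_{1*}$, then show it restricts to intersection chains.

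First, I set up the \emph{prism operator.} Given a filtered simplex $\sigma\colon\Delta^k\to X$ with $\sigma$-decomposition $\Delta^k=\Delta^{j_0}\ast\cdots\ast\Delta^{j_n}$, I use the standard triangulation of the prism $\Delta^k\times[0,1]$ into $(k+1)$-simplices. Writing $a_p=(v_p,0)$ and $b_p=(v_p,1)$ for the lifts of the vertices $v_p$ of $\Delta^k$, let $\iota_i\colon\Delta^{k+1}\to\Delta^k\times[0,1]$ be the affine inclusion onto the sub-simplex $[a_0,\ldots,a_i,b_i,\ldots,b_k]$, for $i=0,\ldots,k$. I then set
\[
P(\sigma)=\sum_{i=0}^{k}(-1)^{i}\,f\circ(\sigma\times\mathrm{id})\circ\iota_i,
\]
which on singular chains satisfies the usual identity $\partial P+P\partial=f_{1*}-f_{0*}$ (this is a purely combinatorial fact about the prism decomposition applied to $f\circ(\sigma\times\mathrm{id})$, independent of any filtration).

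The essential obstacle is to check that $P$ preserves $\ov{p}$-admissibility. For each $i$, the map $\tau_i:=(\sigma\times\mathrm{id})\circ\iota_i$ is a filtered simplex of $X\times[0,1]$, which carries the product filtration of dimension $n+1$ satisfying $(X\times[0,1])_j=X_{j-1}\times[0,1]$. For $\ell\in\{0,\ldots,n+1\}$, setting $N=\|\sigma\|_\ell=\dim\sigma^{-1}(X_{n-\ell})$, a direct inspection of which vertices $a_p,b_p$ have image in $X_{n-\ell}\times[0,1]$ shows that $\tau_i^{-1}(X_{n-\ell}\times[0,1])$ equals the face $[a_0,\ldots,a_i,b_i,\ldots,b_N]$ if $i\leq N$, and the face $[a_0,\ldots,a_N]$ if $i>N$ (both being faces of $[a_0,\ldots,a_i,b_i,\ldots,b_k]$, and reducing to $\emptyset$ when $N=-\infty$). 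Hence $\tau_i$ is filtered and
\[
\|\tau_i\|_\ell \leq \|\sigma\|_\ell+1.
\]
The $\ov{p}$-admissibility of $\sigma$ then gives $\|\tau_i\|_\ell\leq(k-\ell+\ov{p}(\ell))+1=(k+1)-\ell+\ov{p}(\ell)$, so $\tau_i$ is $\ov{p}$-admissible in $X\times[0,1]$. Applying \thmref{thm:stratifiedmapcomplex} to the stratified map $f$ yields $\|f\circ\tau_i\|_\ell\leq\|\tau_i\|_\ell$, showing that $f\circ\tau_i$ is a $\ov{p}$-admissible simplex of $Y$.

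Finally, for any $c\in C_*^{\ov{p}}(X)$, the chain $P(c)$ is $\ov{p}$-admissible by the previous step. Using the identity
\[
\partial P(c)=f_{1*}(c)-f_{0*}(c)-P(\partial c)
\]
together with \propref{prop:stratifiedinducedhomology} (which ensures $f_{0*}(c),f_{1*}(c)\in C_*^{\ov{p}}(Y)$) and the $\ov{p}$-admissibility of $P(\partial c)$ (since $\partial c$ is itself a $\ov{p}$-admissible chain), one sees that $\partial P(c)$ is $\ov{p}$-admissible, so $P(c)\in C_{*+1}^{\ov{p}}(Y)$. Thus $P$ restricts to a chain homotopy $C_*^{\ov{p}}(X)\to C_{*+1}^{\ov{p}}(Y)$ between $f_{0*}$ and $f_{1*}$, and the claimed equality of induced maps in intersection homology follows.
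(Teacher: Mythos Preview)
Your proof is correct and follows essentially the same route as the paper: both build the chain homotopy from the standard prism decomposition of $\Delta^k\times[0,1]$ and verify that each prism simplex $(\sigma\times\mathrm{id})\circ\iota_i$ is $\ov{p}$-admissible in $X\times[0,1]$, then push forward along the stratified map $f$ via \thmref{thm:stratifiedmapcomplex}. Your version is in fact slightly more explicit than the paper's --- you compute the preimages $\tau_i^{-1}(X_{n-\ell}\times[0,1])$ directly rather than appealing to the inequality $\dim\tau_j^{-1}(F)\leq\dim F$ for faces of the prism triangulation, and you spell out the final step showing that $P(c)$ (and not just its summands) lies in $C_{*+1}^{\ov{p}}(Y)$.
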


\begin{proof}
 Let $f\colon X\times I\to Y$ be an s-homotopy from $f_0$ to $f_1$ and
 $\iota_k\colon X\to X\times I$, $x\mapsto (x,k)$, be the canonical inclusions for $k=0,\,1$. 
 The proof follows directly from \corref{cor:stratifiedinducedhomology} and \lemref{lem:homotopie1}.
 \end{proof}

 Before proving \thmref{thm:stratifiedmapcomplex}, we need results on the behavior of strata.

\begin{proposition}\label{prop:propespacestratifie}
If $(X,(X_i)_{0\leq i\leq n})$ is a stratified space, the following properties are satisfied.
\begin{enumerate}[(i)]
\item The relation $S\preceq S'$, defined on the set of strata by  $S\subset \overline{S'}$, is an order relation.
\item The formal dimension map, $d\colon \cS_X\to \N$, is strictly increasing.
\item The closure of a stratum is a union of lower strata, i.e., $\overline{S}=\cup_{S'\preceq S} S'$.
\end{enumerate}
\end{proposition}

\begin{proof}
\begin{enumerate}[(i)] 
\item 
Let $S$ and $S'$ be strata such that
$S\subset \overline{S'}$ and $S'\subset \overline{S}$.
Suppose $d(S)\leq d(S')$. 
Since each stratum, $S$,  is closed in $X_{d(S)}\backslash X_{d(S)-1}$ and $X_{d(S)}$ is closed in $X$, the stratum $S$ is closed in $X\backslash X_{d(S)-1}$, which implies
$$S=\ov{S}\cap (X\backslash X_{d(S)-1}).$$ Then, the inclusion $S'\subset \ov{S}$ implies $S'\backslash X_{d(S)-1}\subset S$.
The inclusion $S'\cap X_{d(S)-1}\subset S'\cap X_{d(S')-1}$ and the equality $S'\cap X_{d(S')-1}=\emptyset$, by definition of $d(S')$, imply $S'\subset S$ and the equality $S'=S$.
This establishes the antisymetry property; the reflexivity and transitivity are obvious.
\item Suppose $S\preceq S'$. 
As $X_{d(S')}$ is closed, we have $S\subset \ov{S'}\subset X_{d(S')}$ which implies $d(S)\leq d(S')$. Since $S\subset X\backslash X_{d(S)-1}$, we have
$$S=S\cap (X\backslash X_{d(S)-1})\subset \ov{S'}\cap (X\backslash X_{d(S)-1}).$$
 If  $d(S)=d(S')$, we know that $S'$ is closed in $X\backslash X_{d(S)-1}$, which implies
$$S'=\ov{S'}\cap(X\backslash X_{d(S)-1}).$$
We get $S\subset S'$ and $S=S'$.
\item The inclusion $\cup_{S'\preceq S} S'\subset \ov{S}$ is obvious. Consider $x\in \ov{S}$. Since the strata form a partition, there exists a unique stratum $S'$ such that $x\in S'$. From $\ov{S}\cap S'\neq \emptyset$ and from \defref{def:espacestratifie}, we deduce $S'\subset \ov{S}$. We get $x\in S'$ and $S'\preceq S$, which proves the remaining part of the equality.
\end{enumerate}
\end{proof}

As strata give a partition, the stratum $S^f$ of \defref{def:appstratifiee} is uniquely determined by $f$ and $S$. 

\begin{proposition}\label{prop:orden}
If $f\colon X\to Y$ is a stratified map, the induced map,
$\cS_f \colon  \!\!({{\cS}}_{X},\preceq)  \to ({{\cS}}_{Y},\preceq)$, defined by
$\cS_f(S)=S^f$,
is increasing.
\end{proposition}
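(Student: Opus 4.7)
The goal is to show that for $S_1,S_2\in \cS_X$ with $S_1\preceq S_2$ (i.e.\ $S_1\subset \overline{S_2}$), the images satisfy $\cS_f(S_1)\preceq \cS_f(S_2)$, that is $S_1^f\subset \overline{S_2^f}$.

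My plan is to combine continuity of $f$ with the defining property of a stratified space (Definition~\ref{def:espacestratifie}). First, from $f(S_2)\subset S_2^f$ and continuity, I get
$$f(\overline{S_2})\subset \overline{f(S_2)}\subset \overline{S_2^f}.$$
Since $S_1\subset \overline{S_2}$, this yields $f(S_1)\subset \overline{S_2^f}$. On the other hand, by definition $f(S_1)\subset S_1^f$. Combining these two inclusions, and using that $S_1$ is non-empty (hence so is $f(S_1)$), we conclude that $S_1^f\cap \overline{S_2^f}\neq\emptyset$.

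At this point the hypothesis that $Y$ is stratified takes over: Definition~\ref{def:espacestratifie} applied to the pair $(S_1^f,S_2^f)$ of strata of $Y$ forces $S_1^f\subset \overline{S_2^f}$, which is exactly $\cS_f(S_1)\preceq \cS_f(S_2)$. The argument is a short three-line chain; the only point requiring care is the non-emptiness of $f(S_1)$, which is guaranteed by the convention that $\cS_X$ consists of non-empty strata. There is no serious obstacle beyond keeping track of which inclusion comes from continuity and which from the stratified hypothesis on the target.
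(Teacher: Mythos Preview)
Your proof is correct and follows essentially the same argument as the paper: both use continuity to get $f(S_1)\subset\overline{f(S_2)}\subset\overline{S_2^f}$, deduce $S_1^f\cap\overline{S_2^f}\neq\emptyset$, and invoke the stratified-space axiom on $Y$. Your version is slightly more explicit about the non-emptiness of $f(S_1)$ and the intermediate inclusion $\overline{f(S_2)}\subset\overline{S_2^f}$, but the route is the same.
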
 

\begin{proof}
Let $S_1 \preceq S_2$ in ${{\cS}}_{X}$. Since $f$ is continuous and $S_1 \subset \overline{S_2}$, we have
$f(S_1) \subset \ov{f(S_2)}$. This implies $S_1^{f} \cap \ov{S_2^{f}} \neq \emptyset$, which gives $\cS_f(S_1) \preceq \cS_f(S_2)$.
\end{proof}

\begin{lemma}\label{lem:stratasimplex}
Let  $(X, (X_i)_{0\leq i\leq n})$ be a stratified space and let $\sigma\colon\Delta
\to X$ be a filtered simplex.  Then the following properties are satisfied.
\begin{enumerate}[(i)]
%:
\item There is a stratum $S\in\cS_X$ such that $\sigma(\mathring{\Delta})\subset S$.
\item The family of strata that have a non-empty intersection with the image of $\sigma$ is totally ordered. If $S_1\prec S_2\prec \cdots\prec S_p$ is this family, then we have
\begin{equation}\label{equa:petiteequation}
\sigma^{-1}X_i=\left\{
\begin{array}{cl}
\emptyset&\text{ if } i<d(S_1),\\
\sigma^{-1}(S_1\sqcup \cdots \sqcup S_k)&
\text{ if }d(S_k)\leq i < d(S_{k+1}),
\,1\leq k<p,\\
\sigma^{-1}(S_1\sqcup \cdots \sqcup S_p)&
\text{ if } i\geq d(S_p).
\end{array}\right.
\end{equation}
\end{enumerate}
\end{lemma}

\begin{proof} 
(i) Let $k$ be the integer such  that 
$\sigma(\Delta)\subset X_k$ and $\sigma(\Delta)\not\subset X_{k-1}$. Thus  there exists $t\in\Delta$ with $\sigma(t)\notin X_{k-1}$ and we consider the stratum, $S$,  characterized by $\sigma(t)\in S\subset X_k\backslash X_{k-1}$. By hypothesis, the pullback $\Delta'=\sigma^{-1}(X_{k-1})$ is a face (perhaps empty) of $\Delta$. We have $\Delta'\ne \Delta$ because $t\notin \Delta'$. From $\sigma(\Delta\backslash \Delta')\subset X_k\backslash X_{k-1}$ and
$\sigma(t)\in \sigma(\Delta\backslash \Delta')\cap S$, we deduce $\sigma(\Delta\backslash \Delta')\subset S$ and
$$\sigma(\mathring{ \Delta})\subset \sigma(\Delta\backslash \Delta')\subset S.$$

(ii) Let $i\in\{0,\ldots,n\}$. From the definition of filtered simplex, we know that\linebreak
$\sigma^{-1}X_i\backslash \sigma^{-1}X_{i-1}$ is connected. Moreover, this complement can be decomposed in a disjoint union of closed subsets of $\sigma^{-1}X_i\backslash \sigma^{-1}X_{i-1}$,
$$\sigma^{-1}X_i\backslash \sigma^{-1}X_{i-1}=\bigsqcup_{d(S)=i}\sigma^{-1}S.$$
Suppose $\sigma^{-1}X_i\backslash \sigma^{-1}X_{i-1}\neq \emptyset$. As it is connected, it is the pullback of \emph{one} stratum $\tS_i$ of $X_i\backslash X_{i-1}$.
Therefore, there exists a family of integers,
$0\leq d_1<d_2<\cdots <d_p\leq n$, such that
$$\sigma^{-1}X_i\backslash \sigma^{-1}X_{i-1}=
\left\{
\begin{array}{lcl}
\emptyset&\text{if}&i\notin \{d_1,\ldots,d_p\},\\
\sigma^{-1}(\tS_i)&\text{if}&i=d_k \text{ for some } k\in\{1,\ldots,p\}.
\end{array}\right.
$$
Setting $\tS_{d_k}=S_k$, this gives the equalities (\ref{equa:petiteequation}). 

\smallskip
Let $k\in\{2,\ldots,p\}$, we are reduced to prove $S_{k-1}\prec S_k$. Recall the equality
$\sigma^{-1}X_{d_k}=\sigma^{-1}(S_1\sqcup\cdots\sqcup S_k)$. 
 We  have proved the equalities
$
\sigma^{-1}X_{d_k}\backslash \sigma^{-1}X_{d_{k-2}}=\sigma^{-1}(S_{k-1}\sqcup S_k)
$
and
$\displaystyle{
\sigma^{-1}\ov{S_k}=\bigcup_{S \preceq S_k} \sigma^{-1}S }$,
see (iii) of \propref{prop:propespacestratifie}.
In particular, $\sigma^{-1}(\ov{S_k} \cap  (X_{d_{k-1}}\backslash X_{d_{k-2}})) = \sigma^{-1}S_{k-1}$ or $\emptyset$ and 
we have
\begin{eqnarray*}
\sigma^{-1}X_{d_k}\backslash \sigma^{-1}X_{d_{k-2}}&=&
\sigma^{-1}(S_{k-1} \sqcup( \ov{S_k} \cap (X_{d_k}\backslash X_{d_{k-1}}) ))\\
 &=& 
\sigma^{-1}(S_{k-1} \cup (\ov{S_k} \cap (X_{d_k}\backslash X_{d_{k-2}}) ))\\
&=&\sigma^{-1}(S_{k-1}) \cup\sigma^{-1}(\ov{S_k} \cap (X_{d_k}\backslash X_{d_{k-2}}) ).
\end{eqnarray*}

The subset $\sigma^{-1}S_{k-1}$(resp. $\sigma^{-1}(\ov{S_{k}}\cap (X_{d_{k}}\backslash X_{d_{k-2}})$)  is closed in 
$\sigma^{-1}X_{d_{k-1}}\backslash \sigma^{-1}X_{d_{k-2}}$ (resp. $\sigma^{-1}X_{d_{k}}\backslash \sigma^{-1}X_{d_{k-2}}$), which is closed in the connected space $\sigma^{-1}X_{d_k}\backslash \sigma^{-1}X_{d_{k-2}}$. 

  From the formula above and the connectivity of
  $\sigma^{-1}X_{d_k}\backslash \sigma^{-1}X_{d_{k-2}}$, we deduce that
$\sigma^{-1}(S_{k-1})\cap \sigma^{-1}(\ov{S_k}\cap (X_{d_k}\backslash X_{d_{k-2}}))\neq \emptyset$
which implies $S_{k-1}\cap \ov{S_k}\neq \emptyset$ and $S_{k-1}\preceq S_k$, by definition. Finally, as $d(S_{k-1})=d_{k-1}\neq d_k=d(S_k)$, we have $S_{k-1}\prec S_k$.
\end{proof}

\begin{proof}[Proof of \thmref{thm:stratifiedmapcomplex}]
Let $f\colon (X, (X_i)_{0\leq i\leq n})\to (Y, (Y_i)_{0\leq i\leq m})$ be a stratified map
and $\sigma\colon \Delta\to X$ be a filtered simplex.

If $(f\circ\sigma)^{-1}(Y_{m-\ell})=\emptyset$ then $\|f\circ\sigma\|_\ell=-\infty$.

If $\sigma^{-1}X_{n-\ell}=\emptyset$, then from
$(f\circ\sigma)^{-1}(Y_{m-\ell})\subset \sigma^{-1}X_{n-\ell}=\emptyset$, we deduce
$\|f\circ\sigma\|_\ell=\|\sigma\|_\ell=-\infty$. 

We may thus suppose $(f\circ\sigma)^{-1}(Y_{m-\ell})\neq \emptyset$ and $\sigma^{-1}X_{n-\ell}\neq\emptyset$, which imply 
$0\leq \ell\leq m$.
We have to prove that $(f\circ \sigma)^{-1}(Y_{m-\ell})$ is a face of $\Delta$ and that
$$\dim (f\circ \sigma)^{-1}(Y_{m-\ell})\leq \dim \sigma^{-1}(X_{n-\ell}).$$
Recall that the family of strata meeting the image of $\sigma$ can be ordered as
$$S_1\prec S_2\prec \cdots\prec S_p,$$
which implies, with \propref{prop:orden},
$$S_1^f\preceq S_2^f\preceq\cdots\preceq S^f_p.$$
Let $T$ be a stratum of $Y$ such that $T\cap (f\circ \sigma)(\Delta)\neq\emptyset.$, i.e., $f^{-1}(T)\cap \sigma(\Delta)\neq \emptyset$. As the map $f$ is stratified, there exists $k\in\{1,\ldots,p\}$, such that $T=S_k^f$ and
$$m-d(T)\leq n-d(S_k).$$
We denote by $0\leq k_0< \cdots <k_q=p$ the integers such that
$$\left\{\begin{array}{l}
T_1=S_1^f=\cdots=S^f_{k_1}, \text{ with }S^f_{k_1}\neq S^f_{k_1+1},\\
T_2=S^f_{k_1+1}=\cdots=S^f_{k_2}, \text{ with }S^f_{k_2}\neq S^f_{k_2+1},\\
\ldots\\
T_q=S^f_{k_{q-1}+1}=\cdots=S^f_{k_q}=S^f_p.
\end{array}
\right.$$
From that, we deduce:
\begin{enumerate}[(1)]
\item $\{T\in\cS_Y\mid T\cap (f\circ \sigma)(\Delta)\neq\emptyset\}=\{T_1\prec T_2\prec\cdots\prec T_q\}$,
\item $m-d(T_j)\leq n-d(S_{k_j})$,
\item $\displaystyle{f^{-1}(T_j)=\bigcup _{k=k_{j-1}+1}^{k_j}S_k}$, for any $j\in\{1,\ldots,q\}$.
\end{enumerate}
From (ii) of \propref{prop:propespacestratifie}, we deduce:
$$0\leq d(T_1)<d(T_2)<\cdots <d(T_q)\leq m.$$
With the convention $d(T_{q+1})=m$ and $d(T_{0})=0$, we denote by $a\in\{0,\ldots,q\}$ the integer such that
$$d(T_a)\leq m-\ell<d(T_{a+1}).$$
As $(f\circ\sigma)^{-1}Y_{m-\ell}\neq\emptyset$, the integer $a$ is different of 0 and we have obtained
\begin{eqnarray*}
(f\circ\sigma)^{-1}(Y_{m-\ell})&=&\bigcup_{j=0}^{m-\ell} \sigma^{-1}f^{-1}(Y_j\backslash Y_{j-1})=
 \bigcup_{i=0}^a\sigma^{-1}f^{-1}(T_i)\\
&=& \bigcup_{k=1}^{k_a}\sigma^{-1}(S_k)=
 \sigma^{-1}(X_{d(S_{k_a})}),
\end{eqnarray*}
which is a face of the simplex $\Delta$. The simplex $f\circ \sigma$ is thus filtered. From Property (2) above, we deduce
$$d(S_{k_a})\leq n-m+d(T_a)\leq n-m+(m-\ell)\leq n-\ell,$$
which implies
$\sigma^{-1}(X_{d(S_{k_a})})\subset \sigma^{-1}(X_{n-\ell})$
and the expected inequality between the dimensions,
$$\dim (f\circ \sigma)^{-1}(Y_{m-\ell})=\dim \sigma^{-1}(X_{d(S_{k_a})})\leq \dim \sigma^{-1}(X_{n-\ell}).$$
\end{proof}

\begin{corollary}\label{cor:amalgamation2}
Let $f\colon X\to Y$ be a stratified map and $\sigma\colon\Delta\to X$ be a filtered simplex of solid $\sigma$-decomposition
$\Delta=\Delta_{1}\ast\cdots\ast\Delta_{p}$. 
 \index{Decomposition@$\sigma$-Decomposition!solid}
 Then, the solid $(f\circ\sigma)$-decomposition is of the shape
 $\Delta=\Delta'_1\ast\cdots\ast\Delta'_q$,
where the $\Delta'_{k}$'s are obtained by amalgamations of the type
$\Delta^i\ast \Delta^j\mapsto \emptyset\ast\Delta^{i+j+1}$.
In the case of a stratum preserving stratified map, the $\sigma$- and $f\circ\sigma$-decompositions of $\Delta$ are identical, as well as their solid decompositions.
\end{corollary}

\begin{proof}
In the previous proof, with the same notation, we have shown that the solid $(f\circ \sigma)$-decomposition,
 $\Delta=\Delta'_1\ast\Delta'_2\ast\cdots\ast\Delta'_q$,
verifies
 $\Delta'_a=\Delta_{k_{a-1}+1}\ast\cdots\ast \Delta_{k_a}$, for any $a\in \{1,\ldots,q\}$
 and
 $$(f\circ\sigma)^{-1}(T_1\sqcup\cdots\sqcup T_a)=
 \Delta'_1\ast\cdots\ast\Delta'_a,\, \text{ for any } a\in\{1,\ldots,q\}.$$
This establishes the first part of the statement. 

In the case of a stratum preserving stratified map, with property (ii) of \propref{prop:propespacestratifie}, the relation 
  $S_1\prec S_2\prec \cdots\prec S_p$
  and the stratum preserving condition imply
  $S_1^f\prec S_2^f\prec \cdots\prec S_p^f$.
   Therefore the equation (3) above reduces to
  $f^{-1}(T_j)=S_{k_j}$ and we have
  $(f\circ \sigma)^{-1}(T_j)=\sigma^{-1}(S_{k_j})$, as announced. 
  \end{proof}

%%%%%%%%%%%%
\section{CS Sets and pseudomanifolds}\label{sec:locallyconelikespaces}

\begin{quote}
Independence of the stratification is one important feature of the theory of Goresky and MacPherson. Proved in \cite{MR696691} for pseudomanifolds by using  sheaves, this result is also established for CS sets, by King, with a singular point of view. Here, we prove the existence of a quasi-isomorphism between King's intersection complex and the complex $C^{\ov{p}}_*(X)$ introduced in \secref{sec:filteredspaces}, when $X$ is a pseudomanifold. From \cite[Theorem 9]{MR800845}, we deduce the independence of the stratification for our intersection homology. 
\end{quote}

\begin{definition}\label{def:CS}
A \emph{CS set} of dimension $n$
 \index{Set!CS}\index{Link!of a point}\index{CS set|see{Set CS}}
  is a filtered space,
$$
\emptyset\subset X_0 \subseteq X_1 \subseteq \cdots \subseteq X_{n-2} \subseteq X_{n-1} \subsetneqq X_n =X,
$$
such that, for all $i$, 
$X_i\backslash X_{i-1}$ is an $i$-dimensional metrizable topological  manifold or the empty set. Moreover, for each point  $x \in X_i \backslash X_{i-1}$, $i\neq n$, there exist
\begin{enumerate}[(a)]
\item an open neighborhood, $V$, of $x$ in $X$, endowed with the induced filtration,
\item an open neighborhood, $U$, of $x$ in  $X_i\backslash X_{i-1}$, 
\item a compact, filtered  space, $L$,  of formal dimension $n-i-1$, whose cone, $\mathring{c}L$ is endowed with the conic filtration, % 
\item a   homeomorphism, $\varphi \colon U \times \mathring{c}L\to V$, 
such that
\begin{enumerate}[(i)]
\item $\varphi(u,\vartheta)=u$, for any $u\in U$, with $\vartheta$  the cone point,
\item $\varphi(U\times \mathring{c}L_j)=V\cap X_{i+j+1}$, for any $j\in \{0,\ldots,n-i-1\}$.
\end{enumerate}
\end{enumerate}
The  couple $(V,\varphi)$ is called a \emph{conic chart} of $x$
 and the filtered  space, $L$, the \emph{link} of $x$. 
 \end{definition}

  Observe  that property (d) (i) implies $V\cap X_{i-1}=\emptyset$ 
and both properties (d) (i) and (ii) imply $V\cap X_{i}=\varphi(U\times \{\vartheta\})=U$.
As $X_i\backslash X_{i-1}$ is locally connected, we may choose $U$ connected. 
Observe also, from \defref{def:espacefiltré}, that the link $L$ is not empty.

The CS sets were introduced by Siebenmann (\cite{MR0319207}), without a restriction of finite formal dimension on the filtration. 
They include metrizable topological manifolds, 
differentiable stratified sets X in the sense of Thom,...

Pseudomanifolds are particular cases of CS sets which allow proofs by induction on the dimension.

\begin{definition}\label{def:pseudo}
An \emph{$n$-dimensional topological pseudomanifold} (or pseudomanifold in short) is a CS set of dimension~$n$, in which $X_{n-2}=X_{n-1}$ and all the links, $L$, are $(n-i-1)$-dimensional topological pseudomanifolds.
\index{Pseudomanifold}
\end{definition}

Observe that this definition makes sense with an induction on the dimension, starting from pseudomanifolds of dimension 0 which are discrete topological spaces, by definition. Also, one can prove that, in a pseudomanifold, $X$, the subspace
$X_{n}\backslash X_{n-2}$ is dense.

\begin{theorem}\label{thm:pseudostratifie}
A CS set
\index{Stratified!space}\index{Set!CS}
%, 
is a stratified space.
\end{theorem}

\begin{proof}
Let $S$ and $S'$ be strata such that
$S\cap \overline{S'}\neq \emptyset$,  we have to prove
$S\subset \ov{S'}$.
Set $i=d(S)$ and $j=d(S')$. The space $X_i\backslash X_{i-1}$ being locally connected, we may cover $S$ by  connected open sets $U$ of $S$ satisfying the local condition of \defref{def:CS}, for some chart $(V,\varphi)$. Denote by $\cU$ this cover of $S$ and let $U\in\cU$. We first prove the \\
\centerline{\emph{Claim: if $U\cap \ov{S'}\neq \emptyset$, then $U\subset \ov{S'}$.}}

Suppose $U\cap \ov{S'}\neq \emptyset$ and recall $U=V\cap S=\varphi(U\times\{\vartheta\})$ and $V\cap X_{i-1}=\emptyset$. 
The property $U\cap \ov{S'}\neq \emptyset$ implies $V\cap \ov{S'}\neq \emptyset$. As $V$ is open, we deduce $V\cap S'\neq \emptyset$ and $j\geq i$, because $V\cap X_{i-1}=\emptyset$.

$\bullet$ If $i=j$, we have
$$\emptyset\neq S'\cap V\subset X_i\cap V=U=S\cap V,$$
which implies $S\cap S'\neq \emptyset$ and $S=S'$. This gives $U\subset S=S'\subset \ov{S'}$. % 

$\bullet$ If $i<j$, we have
\begin{eqnarray*}
\emptyset \neq S'\cap V\subset (X_j\backslash X_{j-1})\cap V&=&
\varphi(U\times (L_{j-i-1}\backslash L_{j-i-2})\times ]0,1[)\\
&=&
\varphi(\bigsqcup_T U\times T\times ]0,1[),
\end{eqnarray*}
where $T$ runs over the connected components of 
$L_{j-i-1}\backslash L_{j-i-2}$. We may write
$$(X_j\backslash X_{j-1})\cap V=\bigsqcup_{S''}\bigsqcup_{C''}C'',$$
where $S''$ runs over the strata of $X_j\backslash X_{j-1}$ and $C''$ over the connected components of $S''\cap V$. By local connectivity of $X_j\backslash X_{j-1}$, the $C''$ are open in $(X_j\backslash X_{j-1})\cap V$.
As $S'\cap V\neq \emptyset$, there  exists  a non empty connected component, $C'$, of $S'\cap V$ and a non empty connected component, $T$, of $L_{j-i-1}\backslash L_{j-i-2}$ with
$$C'=\varphi(U\times T\times ]0,1[).$$
Observe that $U\times\{\vartheta\}$ is included in the closure of $U\times T\times ]0,1[$ in $U\times \mathring{c}L$. This implies
that $U$ is included in the closure of $C'$ in $V$ and gives $U\subset \ov{S'}$.

\smallskip 
As the claim is proved, we know that, for any $U\in\cU$, we have $U\subset \ov{S'}$ or $U\cap \ov{S'}=\emptyset$. The condition
$S\cap \ov{S'}\neq \emptyset$ and the connectivity of $S$ imply $U\subset \ov{S'}$, and therefore $S\subset \ov{S'}$.
\end{proof}

In \cite{MR800845}, King defines a subcomplex of the singular chain complex of a filtered space as follows. 

\begin{definition}\label{def:kingchaineadmis}
Let  $(X, (X_i)_{0\leq i\leq n})$ be a filtered space and let $\ov{p}$ be a loose perversity.  
A \emph{singular simplex, $\sigma\colon \Delta\to X$, is King $\ov{p}$-admissible} if
\index{Simplex!King admissible for a perversity}
$\sigma^{-1}(X_{n-j}\backslash X_{n-j-1})$ is contained in the $(\dim\Delta-j+\ov{p}(j))$-skeleton of $\Delta$, for all $j\in\{0,1,\ldots,n\}$. A \emph{chain $c$ is King $\ov{p}$-admissible} if there exist $\ov{p}$-admissible simplices, $\sigma_j$, so that
\index{Chain!King admissible for a perversity}
$c=\sum_j\lambda_j\sigma_j$,  $\lambda_j\in R$.

Let $K^{\ov{p}}_*(X)$ be the chain complex which consists of the singular chains $c$, such that $c$ and its boundary $\partial c$ are  King $\ov{p}$-admissibles. 
\end{definition}

\begin{proposition}\label{prop:intersectionetintersection}
Let $X$ be a pseudomanifold and let $\ov{p}$ be a  perversity. Then the canonical inclusion $C^{\ov{p}}_*(X)\to K^{\ov{p}}_*(X)$ induces an isomorphism in homology.
\end{proposition}
\begin{proof} Let $\sigma\colon\Delta\to X$ be a $\ov{p}$-admissible simplex. 
As the set $\sigma^{-1}X_{n-i}$ is a face of $\Delta$ of dimension less than, or equal to, $(\dim \Delta-i+\ov{p}(i))$,  its subset 
$\sigma^{-1}(X_{n-i}\backslash X_{n-i-1})$ is also included in this skeleton.
This shows that any $\sigma$ in $C_*^{\ov{p}}(X)$ belongs also to $K_*^{\ov{p}}(X)$ and we have a canonical inclusion 
$C_*^{\ov{p}}(X)\subset K_*^{\ov{p}}(X)$
We have now only to check  the five conditions of \cite[Theorem~10]{MR800845}. They are direct consequences of 
\propref{prop:propertiesintersectionhomology}.
 \end{proof}
 
 Theorem~9 of \cite{MR800845} implies immediately: 
 
 \begin{corollary}\label{cor:independancestratification}
Let X be a pseudomanifold and let $\ov{p}$ be a  perversity. Then the  homology of $C^{\ov{p}}_*(X)$ is independent of the stratification of $X$.
\end{corollary}

 This result is not true if $\ov{p}$ is not a perversity, as shows an example of King, \cite[Page 155]{MR800845}. Also, we mention that, in \cite{IHGreg}, G. Friedman establishes this result in the more general setting of recursive CS sets.
 
 \begin{remark}
  If $\sigma\colon \Delta\to X$ is a simplex of a filtered space $X$ of formal dimension $n$, we consider the two following conditions,
\begin{enumerate}[(a)]
\item $\sigma^{-1} X_{n-j}$ is included in the $(\dim\Delta-j+\ov{p}(j))$-skeleton of $\Delta$,
for all $j$,
\item $\sigma^{-1}(X_{n-j}\backslash X_{n-j-1})$ is included in the $(\dim\Delta-j+\ov{p}(j))$-skeleton of $\Delta$,
for all $j$.
\end{enumerate}
 In the proof of \propref{prop:intersectionetintersection}, we used that condition (a) implies condition (b). As already quoted by King (\cite{MR800845}), in the case of a perversity $\ov{p}$,  condition (b) implies also condition (a). For proving that, we observe that $\sigma^{-1}X_{n-j}$ is a union of $\sigma^{-1}(X_{n-j-k}\backslash X_{n-j-k-1})$ and that
 \begin{eqnarray*}
 \dim\Delta-(j+k)+\ov{p}(j+k)&\leq&
 \dim\Delta-(j+k)+\ov{p}(j)+k\\
 &\leq&
 \dim\Delta-j+\ov{p}(j),
 \end{eqnarray*}
for any $k$, $k\geq 0$.
 \end{remark}

%%%%%%%%%%%%%%%
\backmatter
%%%%%%%%%%%%%%%%
\bibliographystyle{amsplain}
%%%%%%%%%%%%%%%%%%%%
\bibliography{IntersectionPostArxiv}
%%%%%%%%%%%%%%%%%%%%
\printindex
\end{document}